\DeclareMathOperator{\coker}{coker}
\newcommand{\too}[1]{\overset{#1}\to}
\def\tTN{{\tT}_{\mathcal N}}
\def\tropa{  {\operatorname{trop}}\, }
\def\Null{{\operatorname{Null}}}
\def\precpr{\preceq_{\operatorname{Null}}}
\def\strop{  {\operatorname{strop}}\, }
\def\vsupp{\nu$-$\operatorname{supp}}
\def\congg{{\operatorname{cong}}}
\def\Tker{\tT\operatorname{-}\ker}
\def\TkerC{\tT_{\operatorname{cong}}\operatorname{-}\ker}
\def\val{\operatorname{val}}
\def\Gz{ {\Gamma}_{\zero}}
\def\module0{module$^\dagger$}
\def\ssemiring0{$s$-semiring$^\dagger$}
\newtheorem{theorem}{Theorem}[section]
\newtheorem{example}[theorem]{Example}
\newcommand{\Real}{\mathbb R}
\newcommand{\Net}{\mathbb N}
\newcommand{\one}{\mathbb{1}}
\newcommand{\zero}{\mathbb{0}}
\newcommand{\Trop}{\mathbb T}
\newcommand{\trop}[1]{\mathcal{#1}}
\newcommand{\tG}{\trop{G}}
\newcommand{\tT}{\trop{T}}
\newcommand{\Spec}{Spec}
\newcommand{\Hom}{Hom}
    \newenvironment{proof}{
    \smallskip
    \noindent\emph{Proof.}}{\hfill\(\Box\)
    \bigskip
    } \fi
\newcommand{\ifdef}[3]{\ifthenelse{\equal{#1}{true}}{#2}{#3}}
\definecolor{lgray}{gray}{0.90}
\def\Spec{\operatorname{Spec}}
\def\ctw{\cdot_{\operatorname{tw}}}
\def\vep{\varepsilon}
\def\GM{group module}
\def\({\left(}
\def\){\right)}
\def\N{{\mathbb N}}
\def\Z{{\mathbb Z}}
\def\Q{{\mathbb Q}}
\def\pipe{{\underset{{\ \, }}{\mid}}}
\def\vsemifield0{$\nu$-semifield$^\dagger$}
\def\vsemiring0{$\nu$-semiring$^\dagger$}
\def\pipe1{{\underset{{1}}{\mid}}}
\def\lmod1{\mathrel  \pipe1  \joinrel \joinrel =}
\def\CFunFF1{\operatorname{CFun} (F,F)}
\def\semiring0{semiring$^{\dagger}$}
\def\Semiring0{Semiring$^{\dagger}$}
\def\Semirings0{Semirings$^{\dagger}$}
\def\semidomain0{semidomain$^{\dagger}$}
\def\semifield0{semifield$^{\dagger}$}
\def\semifields0{semifields$^{\dagger}$}
\def\vsemifields0{$\nu$-semifields$^{\dagger}$}
\def\domain0{domain$^{\dagger}$}
\def\predomain0{pre-domain$^{\dagger}$}
\def\predomains0{pre-domains$^{\dagger}$}
\def\domains0{domains$^{\dagger}$}
\def\vdomains0{$\nu$-domains$^{\dagger}$}
\def\Fun{\operatorname{Fun}}
\def\domains0{domains$^\dagger$}
\def\ker{\operatorname{ker}}
\newcommand{\etype}[1]{\renewcommand{\labelenumi}{(#1{enumi})}}
\def\eroman{\etype{\roman}}
\def\pipe{{\underset{{\tG}}{\mid}}}
\def\lmod{\mathrel  \pipe \joinrel \joinrel =}
\def\pipe{{\underset{{\tG}}{\mid}}}
\def\Ann{{\operatorname{Ann}}}
\def\a{\alpha}
\newtheorem{thm}[theorem]{Theorem}
\newtheorem*{thm*}{Theorem}
\newtheorem{cor}[theorem]{Corollary}
\def\Hom{\operatorname{Hom}}
\def\Mor{\operatorname{Mor}}
\newtheorem{lem}[theorem]{Lemma}
\newtheorem{rem}[theorem]{Remark}
\newtheorem{prop*}{Proposition}
\newtheorem{prop}[theorem]{Proposition}
\newtheorem{defn}[theorem]{Definition}
\newtheorem*{examp*}{Example}
\newtheorem*{examples*}{Examples}
\newtheorem*{remark*}{Remark}
\newtheorem*{defn*}{Definition}
\newtheorem*{nota}{Notation}
\def\la{\lambda}
\def\tT{\mathcal T}
\def\Fun{\operatorname{Fun}}
\def\tTz{\tT_\zero}
\def\tTMz{\tT_{\mathcal M,\zero}}
\def\tTNz{\tT_{\mathcal N,\zero}}
\numberwithin{equation}{section}
\def\M0{M_{\zero}}
\def\supp{\operatorname{supp}}
\def\tGz{\mathcal G_\zero}
\def\AA{\tilde A}
\def\BA{\tilde B}
\def\CA{\tilde C}
\def\DA{\tilde D}
\def\PS{P}
\def\Cong{\Phi}
\def\CongT{\Phi|_\tT}
\def\Diag{{\operatorname{Diag}}}
\def\semirings0{semirings$^\dagger$}
\newtheorem*{nothma}{\textbf{Proposition A}}
\newtheorem*{nothmb}{\textbf{Theorem B}}
\newtheorem*{nothmc}{\textbf{Corollary C}}
\newtheorem*{nothmd}{\textbf{Theorem D}}
\newtheorem*{nothme}{\textbf{Theorem E}}
\newtheorem*{nothmf}{\textbf{Theorem F}}
\newtheorem*{nothmg}{\textbf{Proposition G}}
\newcommand{\nPS}[1]{\PS_{(!#1)}}
\newcommand{\nPSo}[1]{\nPS{\one}}
\newcommand{\absl}[1]{|{#1}|}
\newcommand{\adj}[1]{\operatorname{adj}({#1})}
\begin{document}


\title[Categories with negation]
{Categories with negation}


\author[J.~Jun]{Jaiung~Jun~$^1$}
\dedicatory{In honor of our friend and colleague, S.K. Jain.}
\thanks{The authors would like to thank Oliver Lorscheid for helpful comments concerning the first draft of the paper.}
\address{$^1$Department of Mathematics, University of Iowa, Iowa City, IA 52242, USA} \email{$^1$jujun0915@gmail.com}

\author[L.~Rowen]{Louis Rowen~$^2$}
\address{$^2$Department of Mathematics, Bar-Ilan University, Ramat-Gan 52900,
Israel} \email{$^2$rowen@math.biu.ac.il}

\subjclass[2010]{Primary     08A05, 08A30, 14T05, 16Y60,  20N20.
  Secondary   06F05, 08A72, 12K10, 13C60.
}



\keywords{Category,  pseudo-triple, matroid, pseudo-system,  triple,
system, semiring, semifield, congruence, module, negation,
surpassing relation, symmetrization, congruence, tropical algebra,
supertropical algebra, bipotent, meta-tangible, symmetrized,
   hypergroup, hyperfield, polynomial, prime, dimension, algebraic, tensor product.}



\begin{abstract}

We continue the theory of $\tT$-systems from the work of the second
author,  describing both  ground systems and module systems over a
ground system (paralleling the theory of modules over an algebra).
Prime ground systems are introduced as a way of developing geometry.
One basic result is that the polynomial system over a prime system
is prime.

For module systems, special attention also is paid to tensor
products and $\Hom$. Abelian categories are replaced by
``semi-abelian'' categories (where $\Hom(A,B)$ is not a group) with
a negation morphism. The theory, summarized categorically at the
end,
 encapsulates general algebraic structures lacking
 negation but possessing a map resembling negation,
such as tropical algebras, hyperfields and fuzzy rings. We see
explicitly how it encompasses tropical algebraic theory and
hyperfields.
\end{abstract}
\dedicatory{In honor of our friend and colleague, S.K. Jain.}
\maketitle


  {\small \tableofcontents}



\section{Introduction}
This paper, based on \cite{JuR1}, is the continuation of a project
started in \cite{Row0,Row16} as summarized~in \cite{Row17}, in which
a generalization of classical algebraic theory was presented to
provide
 applications in related algebraic theories.  It is an attempt to understand why
basic algebraic theorems are mirrored in supertropical algebra,
spurred by the realization that some of the same results were
obtained in parallel research on hypergroups and hyperfields
\cite{Bak,GJL,Ju,Ju1, Ju2,Vi} and fuzzy rings \cite{Dr,DW,GJL},
which lack negatives although each has an operation resembling
negation \footnote{In a hypergroup, for each element $a$ there is a
unique element called $-a$, such that $0 \in a \boxplus (-a)$.}
\footnote{ A fuzzy ring $A$ has an element~$\varepsilon$ such
$1+\varepsilon$ is in a distinguished ideal  $A_0$, and we define
$(-)a = \varepsilon a$.}.

The underlying idea is to take a set $\tT$ that we want to study. In
the situation considered here, $\tT$ also has a partial additive
algebraic structure which is not defined on all of $\tT$; this is
resolved by having $\tT$ act on a  set $\mathcal A$ with a fuller
algebraic structure.

Lorscheid \cite{Lor1,Lor2} developed this idea when $\tT$ is a
monoid which is a subset of a semiring $\mathcal A$. Since semirings
may lack negatives,  we
  introduce a formal \textbf{negation map}  $(-)$ in Definition~\ref{negmap}, resembling negation, often requiring
   that $\tT $ generates $\mathcal A$ additively.  In tropical algebra,
 $(-)$ can be taken to be the identity map.  Or it can be supplied via ``symmetrization''
 (\S\ref{symm}),
motivated by \cite{AGG1,Ga,GaP,Pl}. Together with the negation map,
$\mathcal A$ and $\tT $ comprise a pseudo-\textbf{triple} $(\mathcal
A, \tT, (-))$. This is rounded out to a \textbf{pseudo-system}
$(\mathcal A, \tT, (-),\preceq)$ with a \textbf{surpassing relation}
$\preceq$, often a partial order (PO),
  replacing equality in the algebraic theory. Ironically,
 equality in classical mathematics is the only situation in
which $\preceq$ is an equivalence. We set forth a systemic
foundation for affine geometry (based on prime systems) and
representation theory, as well as to lay out the groundwork for
further research, cf.~\cite{AGR} for linear algebra, \cite{GatR} for
exterior algebra, \cite{JMR} for projective modules, and \cite{JMR1}
for homology, and other work in progress.

Familiar concepts from classical algebra were applied in
\cite{Row16} to produce new triples and systems, e.g., direct powers
{\cite[Definition 2.6]{Row16}}, matrices \cite[\S 6.5]{Row16},
involutions \cite[\S6.6]{Row16}, polynomials \cite[\S 6.7]{Row16},
localization \cite[\S 6.8]{Row16}, and tensor products {\cite[\S
8.6]{Row16}}. Recently  tracts, a special case of pseudo-systems,
were introduced in  \cite{BB1}  in order to investigate matroids. At
the end of this paper we also view systems categorically,   to make
them formally applicable to varied situations.

 Since the main motivation comes from tropical and
supertropical algebra, in Appendix A (\S\ref{comp}), we coordinate
the systemic theory with the main approaches to tropical
mathematics, demonstrating the parallels of some tropical notions
such as bend congruences (introduced by J.~Giansiracusa and
N.~Giansiracusa in \cite{GG}, and tropical ideals (introduced by
Maclagan and Rincon in \cite{MacR}). For example,
  \cite[Proposition~7.5]{Row17} says that the bend relation implies
  $\circ$-equivalence, cf.~Definition~\ref{sys1}.

 Category   theory
involving explicit algebraic structure  can be described in
universal algebra in terms of operations and identities,  reviewed
briefly in \S\ref{unalg}. But
 the ``surpassing relation''~$\preceq$  also is required.

  A crucial issue here is the ``correct'' definition of morphism.
    One's initial instinct would be to take ``homomorphisms,'' preserving
    all the structure from universal algebra.
However, this does not  tie into hyperfields, for which  a more
    encompassing definition pertains in \cite{Ju,Ju2}.
  Accordingly, we   define
    the more general $\preceq$-\textbf{morphism} in
   Definition~\ref{mor}, satisfying $f(a+a') \preceq f(a) + f(a')$.
 Another key point
 is that in the theory of semirings and their modules, homomorphisms
are described in terms of \textbf{congruences}. The trivial
congruences contain the diagonal,   not just zero.
%

$\Mor(A,B)_\preceq$ denotes the semigroup of $\preceq$-morphisms
from $A$ to $B,$  and has the sub-semigroup $\Hom(A,B)$ of
homomorphisms. Since $(\Mor(A,B)_\preceq,+)$ and $(\Hom(A,B),+)$
  no longer
 are   groups,  one needs to weaken the notion of additive
and abelian categories, respectively in \S\ref{semiadd} to
\textbf{semi-additive categories} and \textbf{semi-abelian
categories}, \cite[\S1.2.7]{grandis2013homological}.

The tensor product and its abstraction to monoidal categories,
 an important aspect of algebra,  is exposed in \cite{EGNO} for
monoidal abelian categories. But, to our dismay,  the functoriality
of the tensor product  runs into stumbling blocks  because of the
asymmetry involved in
  $\preceq$. So we have a give and play between $\preceq$-morphisms
  and  homomorphisms, which is treated in \S\ref{tenpro}.

\subsection{Objectives}\label{out1}$ $

Our objectives in this paper are as follows:


\begin{enumerate}
\item  Lay out the notions in \S\ref{modsys0} of $\tT$-module, negation map, ``ground'' triples and $\tT$-systems, which should parallel the classical structure theory of algebras.
In the process, we consider convolutions and the ensuing
construction of polynomials, Laurent series, etc.

\item Study affine geometry in in \S\ref{Pri} in terms of polynomials, with  special attention to the theory of prime ground  triples, to lay out the
   groundwork for the spectrum of prime congruences.

 \item Elevate congruences to their proper role in the theory, in
     \S\ref{homim} and
   \S\ref{modsys},
   since the  process of modding out ideals suffers from the lack of negation.
Investigate which classical module-theoretic
    concepts (such as  $\Hom$ and direct sums) have analogs for module triples over ground triples, viewed in terms of their
    congruences.  Special attention is given to the  tensor product of
    triples (Definition~\ref{bil3}).
In general $\preceq$-morphisms do not permit us to build tensor
    categories, as seen in Example~\ref{nonmonoidal}, but we do
    have the usual theory using homomorphisms, in view of \cite{KN}.

    \item  Express these notions in categorical terms.   This should
   parallel the theory of modules over semirings, which has been developed in the last few years by Katsov~\cite{Ka1,Ka2,Ka3},
   Katsov and Nam~\cite{KN},
   Patchkoria~\cite{Pat}, Macpherson~\cite{AM},
   Takahashi~\cite{Tak}.

%

%

    \item Provide the functorial context for the main
categories of this paper, as indicated in the diagram given in
\S\ref{func1}.

\item  Relate this theory in Appendix A to other approaches in
 tropical algebra.

 \item Define negation morphisms and negation functors, together with a surpassing relation, in the context of $N$-categories,
in Appendix B. In the process, we generalize abelian categories to
semi-abelian categories with  negation, and lay out the role of
functor categories.

\end{enumerate}

\subsection{Main results}\label{out0}$ $

Our results in geometry require prime congruences
(Definition~\ref{Pricon}).

 \begin{nothma}[{\bf Proposition \ref{prime2}}]
 For every $\tT$-congruence $\Cong$ on a commutative $\tT$-semiring system,  $\sqrt{\Cong}$
  is an intersection  of prime $\tT$-congruences.
 \end{nothma}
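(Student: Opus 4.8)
The plan is to mimic the classical commutative-algebra proof that the radical of an ideal is the intersection of the primes containing it, transported to the setting of $\tT$-congruences via the correspondence between congruences and ideals developed in \S\ref{homim}. First I would unwind the definition of $\sqrt{\Cong}$: a pair $(a,b)$ lies in $\sqrt{\Cong}$ when some power relation $(a^n, b^n)$ (suitably interpreted through the negation map $(-)$ and the surpassing relation $\preceq$, so that one works with $a^n \preceq b^n$ or $a^n (-)b^n \in$ the null-part) lands in $\Cong$. The containment $\sqrt{\Cong} \subseteq \bigcap \mfp$, taken over all prime $\tT$-congruences $\mfp \supseteq \Cong$, is the easy direction: if $(a^n,b^n) \in \Cong \subseteq \mfp$ and $\mfp$ is prime, then the defining multiplicative-closure property of a prime congruence (Definition~\ref{Pricon}) forces $(a,b) \in \mfp$, by an induction on $n$ exactly as in the ring case.

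The substantive direction is $\bigcap \mfp \subseteq \sqrt{\Cong}$, equivalently its contrapositive: if $(a,b) \notin \sqrt{\Cong}$, I must produce a prime $\tT$-congruence containing $\Cong$ but not $(a,b)$. Here I would run a Zorn's lemma argument. Consider the set $\mathcal S$ of $\tT$-congruences $\Psi \supseteq \Cong$ such that no power pair $(a^n,b^n)$ (again read through $(-)$ and $\preceq$) lies in $\Psi$; this is nonempty since $\Cong$ itself is in it, and it is closed under unions of chains because the "no power pair" condition is a condition on each individual element, hence preserved under directed unions of congruences. Pick a maximal element $\mfp$. The claim is that $\mfp$ is prime. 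To see this, suppose $(xy, x'y')$-type products (in the precise form demanded by Definition~\ref{Pricon}) lie in $\mfp$ while the corresponding "factor" pairs do not; then $\mfp$ together with each factor pair generates a strictly larger congruence, which by maximality must contain some power pair $(a^{n_1},b^{n_1})$ and $(a^{n_2},b^{n_2})$ respectively, and multiplying these relations — using the compatibility of $\preceq$ and $(-)$ with multiplication, and distributivity in the $\tT$-semiring system — yields a power pair $(a^{N},b^{N})$ inside $\mfp$, a contradiction. Finally, since the hypothesis is about an arbitrary $(a,b) \notin \sqrt{\Cong}$, intersecting all such $\mfp$ recovers exactly $\sqrt{\Cong}$.

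The main obstacle I anticipate is purely in the bookkeeping of "powers" and "products" in the systemic setting: because $\Cong$ is a congruence (a subset of $\mathcal A \times \mathcal A$) rather than an ideal, and because equality is replaced by $\preceq$, the naive manipulation "$a^n \equiv b^n$ and $a^m \equiv b^m$ imply $a^{n+m}\equiv b^{n+m}$" must be justified through the congruence relation's compatibility with the ring operations and through the behavior of $(-)$ and $\preceq$ under multiplication — this is where the meta-tangibility/systemic axioms from \S\ref{modsys0} get used. A secondary subtlety is ensuring the generated congruence $\langle \mfp, (a,b)\rangle$ is again a $\tT$-congruence (closed under the $\tT$-action), which should follow formally from the construction of generated congruences, but must be checked so that the Zorn argument stays inside the correct category. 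Modulo these verifications, the argument is the standard one, and I would present it in that order: reduce to the contrapositive, set up $\mathcal S$, apply Zorn, prove maximal elements are prime, conclude.
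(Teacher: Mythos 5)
Your high-level plan---Zorn's lemma to produce, for each tangible pair outside $\sqrt{\Cong}$, a prime $\tT$-congruence missing it, and then intersecting---is exactly the route the paper takes (Proposition~\ref{prime2} is essentially a one-line application of Proposition~\ref{primemax}(i), the systemic version of ``a congruence maximal disjoint from a multiplicative set is prime''). The substantive gap is in how you unwind the definition of $\sqrt{\Cong}$. You read the power $(a_0,a_1)^n$ as the componentwise power $(a_0^n,a_1^n)$, equivalently ``$a^n(-)b^n$ is null,'' but in the paper's definition the exponent means the $n$-fold twist product $\ctw$: this is the image of $(a(-)b)^n$ in the symmetrization and carries all the binomial cross-terms (already $(a_0,a_1)^2 = (a_0^2+a_1^2,\, 2a_0a_1)$, not $(a_0^2,a_1^2)$). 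This is not mere bookkeeping: it is exactly the twist power that makes $S_{\bold a}=\{\bold a^n : n\in\Net\}$ a $\ctw$-multiplicative subset of $\widehat{\mathcal A}$, the hypothesis of Proposition~\ref{primemax}. The set $\{(a_0^n,a_1^n):n\}$ is \emph{not} closed under $\ctw$ (its twist products acquire cross-terms like $a_0^n a_1^m+a_0^m a_1^n$), so a Zorn-maximal congruence avoiding it need not be prime, and your argument stalls at the crucial step.

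A secondary issue: in ``multiplying these relations $\ldots$ yields a power pair inside $\mfp$'' you are reasoning as though $\mfp,\Cong',\Cong''$ were ideals, where one writes $a^{n_1}=p_1+c_1$, $a^{n_2}=p_2+c_2$ and bilinearity drops $a^{n_1+n_2}$ into $\mfp$. For congruences, membership of $(a^{n_1},b^{n_1})$ in the join $\mfp\vee\Cong'$ and of $(a^{n_2},b^{n_2})$ in $\mfp\vee\Cong''$ only visibly puts their product in $\mfp\vee\Cong'\vee\Cong''$, not in $\mfp$, without a more careful decomposition; and the paper's primality notion is phrased via the twist product of congruences, not the componentwise one you implicitly use. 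The paper avoids all of this by simply invoking Proposition~\ref{primemax}. If you replace componentwise powers with twist powers throughout and cite that proposition instead of re-deriving primality, the remainder of your outline goes through.
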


   \medskip

 \begin{nothmb}[{\bf Theorem \ref{polyroot}}] Over a commutative prime triple $\mathcal A$,
any nonzero polynomial $f \in \tT[\la]$ of degree $n$ cannot have
$n+1$ distinct $\circ$-roots in $\tT$. \end{nothmb}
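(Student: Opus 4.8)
The natural plan is induction on the degree $n$, following the classical argument that a polynomial of degree $n$ over a domain has at most $n$ roots, with two systemic ingredients replacing the usual ones: a division procedure for the monic linear polynomial $\la+(-)a$ inside the polynomial system $\mathcal A[\la]$ (constructed as in \cite[\S6.7]{Row16}), and the primeness of $\mathcal A$, used to split the constant produced by evaluating a product of two factors. It seems cleanest to prove the slightly stronger statement in which $f\in\mathcal A[\la]$ is allowed to be any polynomial of degree $n$ whose leading coefficient is \emph{not} a quasi-zero (the stated case $f\in\tT[\la]$ then follows, since a tangible leading coefficient is automatically not a quasi-zero). This strengthening is exactly what lets the induction close, because the quotient produced below generally has its coefficients only in $\mathcal A$, not in $\tT$.

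For $n=0$ such an $f$ is a constant that is not a quasi-zero, hence is not $\circ$-equivalent to $0$ (see Definition~\ref{sys1}), so it has no $\circ$-root at all, in particular fewer than $n+1=1$. For the inductive step, suppose $f$ has degree $n\ge 1$ and $\circ$-roots $a_1,\dots,a_{n+1}\in\tT$ that are \emph{$\circ$-distinct}, meaning $a_i+(-)a_j$ is not a quasi-zero for $i\ne j$ (this is the reading of ``distinct $\circ$-roots'' that the argument requires, and the natural one once one works modulo $\circ$). Dividing $f$ by the monic polynomial $\la+(-)a_1$, the division procedure yields $q\in\mathcal A[\la]$ and a constant $r\in\mathcal A$ with $f$ $\circ$-equivalent to $(\la+(-)a_1)q+r$. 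Evaluating at $a_1$, the term $(\la+(-)a_1)q$ contributes $(a_1+(-)a_1)q(a_1)$, which is a quasi-zero, so $f(a_1)$ and $r$ are $\circ$-equivalent; since $f(a_1)$ is a quasi-zero, so is $r$, and therefore $f$ is $\circ$-equivalent to $(\la+(-)a_1)q$. Comparing the degree-$n$ coefficients on the two sides shows that $q$ has degree exactly $n-1$ and that its leading coefficient is $\circ$-equivalent to that of $f$, hence is not a quasi-zero. Now, for $2\le j\le n+1$, $f(a_j)$ is $\circ$-equivalent to $(a_j+(-)a_1)\,q(a_j)$, and the left-hand side is a quasi-zero because $a_j$ is a $\circ$-root of $f$; thus $(a_j+(-)a_1)\,q(a_j)$ is a quasi-zero. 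Since $\mathcal A$ is prime and the factor $a_j+(-)a_1$ is not a quasi-zero (by $\circ$-distinctness), primeness forces $q(a_j)$ to be a quasi-zero, i.e.\ $a_j$ is a $\circ$-root of $q$. So $q$ has degree $n-1$, non-quasi-zero leading coefficient, and the $n$ $\circ$-distinct $\circ$-roots $a_2,\dots,a_{n+1}$, contradicting the inductive hypothesis.

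I expect the real work to lie in the factorization step and its bookkeeping rather than in the primeness application: one must check that the division procedure behaves as claimed in a semiring with a negation map (the only subtlety being the quasi-zero discrepancies the negation map creates at the top coefficient, which is precisely why the conclusion must be phrased modulo $\circ$), pin down the exact meaning of ``$\circ$-root'' and ``$\circ$-distinct'' so that they feed correctly into the primeness hypothesis, and make the degree bookkeeping rigorous so that $q$ really drops exactly one in degree. A less self-contained alternative is to read the $n+1$ root conditions as saying that the coefficient vector of $f$ is a tangible $\circ$-null vector of the $(n+1)\times(n+1)$ Vandermonde matrix $\big(a_j^{\,i}\big)_{i,j}$, whose ``determinant'' is, up to $\circ$, the product $\prod_{i<j}(a_j+(-)a_i)$ and hence not a quasi-zero by primeness; the systemic analogue of ``a nonsingular matrix annihilates no nonzero vector'' then yields the contradiction. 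Since this route invokes the linear-algebra machinery of \cite{AGR}, I would keep the inductive argument as the primary proof.
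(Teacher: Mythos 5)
The ``less self-contained alternative'' you mention at the end is in fact the paper's proof: it writes the coefficient vector $v$ of $f$, notes that $Av$ is a quasi-zero for the Vandermonde matrix $A=\big(a_j^{\,i}\big)$, and multiplies by the $(-)$-adjoint to get $\prod_{i>j}(a_j(-)a_i)\,v=\adj A\, A v\preceq_\circ\mathcal A^\circ$, whence $\prod_{i>j}(a_j(-)a_i)\in\mathcal A^\circ$ --- contradicting distinctness via unique negation and primeness. So what you present as the fallback is the intended argument; the machinery it invokes (Lemma~\ref{Vand}) is already in the paper, so there is no economy gained by avoiding it.

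Your primary inductive route, by contrast, has a gap that I do not think is repairable in the form you give, and it is exactly the issue you half-flag as ``bookkeeping.'' Synthetic division of $f=\sum b_i\la^i$ by $\la(-)a_1$ (taking $q_{n-1}=b_n$, $q_{i-1}=b_i+a_1q_i$, $r=b_0+a_1q_0$) gives
\[
(\la(-)a_1)q+r \;=\; f+\sum_i (a_1q_i)^\circ\la^i,
\]
that is, it \emph{surpasses} $f$ ($f\preceq_\circ(\la(-)a_1)q+r$); it is not $\circ$-equivalent to $f$ in the sense of Definition~\ref{sys1}, because adding a quasi-zero to a polynomial changes $f^\circ$ in general. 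Worse, the three subsequent reductions you make all rely on being able to \emph{cancel} quasi-zeros from a sum, which is precisely what fails in a semiring. Concretely, from $f(a_1)\preceq_\circ (a_1(-)a_1)q(a_1)+r$ with $f(a_1)\in\mathcal A^\circ$ you conclude $r\in\mathcal A^\circ$; but ``$c^\circ+r$ is a quasi-zero'' does not force $r$ to be one. In the standard supertropical triple (with $(-)=\mathrm{id}$, $a^\circ$ the ghost of $a$), take $c=5$ and $r=3$: then $c^\circ+r=c^\circ$ is a quasi-zero while $r$ is tangible. For the same reason, ``$f$ is $\circ$-equivalent to $(\la(-)a_1)q$'' does not follow from $r$ being quasi-zero: $\la$ and $\la+5^\circ$ are already not $\circ$-equivalent as functions. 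And the final primeness step, ``$(a_j(-)a_1)q(a_j)\in\mathcal A^\circ$ with $a_j(-)a_1\notin\mathcal A^\circ$ forces $q(a_j)\in\mathcal A^\circ$,'' needs to be deduced from the paper's congruence-theoretic definition of a prime triple (Definition~\ref{Pricon}); it is not the definition. The Vandermonde route avoids all of these cancellations by only ever \emph{producing} quasi-zeros (multiplication into $\mathcal A^\circ$, transitivity of $\preceq_\circ$), never removing them, and confines the primeness application to a single product at the end. That structural difference is why the paper proceeds as it does, and why I would not try to push the inductive argument through.
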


 \medskip

  \begin{nothmc}[{\bf Corollary~\ref{poly11}}]
 If $(\mathcal A,\tT,(-))$ is a  prime commutative
 triple with $\tT$ infinite, then so is the polynomial triple $(\mathcal A[\la],\tT_{\mathcal A[\la]},(-)).$
\end{nothmc}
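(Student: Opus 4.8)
The plan is to check, in turn, that $\mathcal A[\la]$ is commutative, that $\tT_{\mathcal A[\la]}$ is infinite, and that $(\mathcal A[\la],\tT_{\mathcal A[\la]},(-))$ is prime, the last being the real content. Commutativity is inherited from $\mathcal A$. Infinitude of $\tT_{\mathcal A[\la]}$ is immediate and in fact does not use the hypothesis on $\tT$: the monomials $\la^i$, $i\in\N$ (indeed all $c\la^i$, $c\in\tT$), are tangible and pairwise distinct. So the only place where Theorem~\ref{polyroot} and the infinitude of $\tT$ are needed is in proving that $\mathcal A[\la]$ is again a prime triple.

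For this I would argue ``pointwise, then count roots''. First record two properties of the substitution map: for each $a\in\tT$, $\la\mapsto a$ is a homomorphism $\mathcal A[\la]\to\mathcal A$ (so it commutes with products and with the negation map) carrying the $\circ$-ideal of $\mathcal A[\la]$ into that of $\mathcal A$; consequently $a$ is a $\circ$-root of a polynomial $P$ precisely when the value of $P$ at $a$ lies in the $\circ$-ideal, and $\circ$-equivalence in $\mathcal A[\la]$ is reflected after evaluation. Now take $F,G,H,K\in\tT_{\mathcal A[\la]}$ satisfying the ``twisted'' relations required by the prime condition of Definition~\ref{Pricon}, namely that $FH$ and $GK$ are $\circ$-equivalent and $FK$ and $GH$ are $\circ$-equivalent; the goal is that $F,G$ are $\circ$-equivalent or $H,K$ are. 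Evaluating at an arbitrary $a\in\tT$ and applying primeness of $\mathcal A$ shows that, for each $a$, either $F(a),G(a)$ are $\circ$-equivalent or $H(a),K(a)$ are. Hence $\tT=Z_F\cup Z_H$, where $Z_F=\{a\in\tT:F(a),G(a)\ \text{are}\ \circ\text{-equivalent}\}$ and $Z_H$ is defined analogously. Since $\tT$ is infinite, one of $Z_F,Z_H$, say $Z_F$, is infinite. But $Z_F$ is the set of $\circ$-roots in $\tT$ of the difference polynomial $F+(-)G$, which has finite degree $d$; by Theorem~\ref{polyroot}, applied to the prime commutative triple $\mathcal A$, a nonzero polynomial of degree $d$ has at most $d$ distinct $\circ$-roots in $\tT$, so $F+(-)G$ must be the zero polynomial, i.e.\ $F$ and $G$ are $\circ$-equivalent. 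This is exactly the prime condition, so $\mathcal A[\la]$ is prime.

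The step I expect to demand the most care is fitting this argument to the exact form of Definition~\ref{Pricon}: the prime condition is phrased via twisted products and in terms of $\circ$-equivalence (equivalently, a prime congruence) rather than literal equality, so one must confirm (i) that substitution genuinely respects the twisted products and $\circ$-equivalence; (ii) that $Z_F$ really is cut out as the $\circ$-roots of a polynomial eligible for Theorem~\ref{polyroot} --- if $\tT_{\mathcal A[\la]}$ comprises only tangible data, one may need to replace $F+(-)G$ by an auxiliary polynomial with the same $\circ$-roots, or invoke the tangible-coefficient form of Theorem~\ref{polyroot}; and (iii) that ``$F+(-)G$ is the zero polynomial'' unwinds to the relation between $F$ and $G$ demanded by Definition~\ref{Pricon} (this uses that $0$ lies in the $\circ$-ideal). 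If the prime condition is the simpler one --- $F\cdot G$ in the $\circ$-ideal forces $F$ or $G$ to be --- the same argument works with $H,K$ suppressed. In every version the only non-formal ingredient is Theorem~\ref{polyroot}, which is exactly what converts ``infinitely many $\circ$-roots'' into ``zero polynomial''; the pigeonhole step is immediate from $\tT$ being infinite.
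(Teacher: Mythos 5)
Your overall strategy --- verify the elementwise prime criterion of Lemma~\ref{primecrit}(i) for $\Diag_{\mathcal A[\la]}$ by evaluating at points of $\tT$, covering $\tT$ by the two agreement sets, and using Theorem~\ref{polyroot} to forbid an infinite agreement set --- is sound, and it is essentially the same underlying idea as the paper's, organized differently. The paper factors primeness as semiprime plus $\tT$-irreducible (Lemma~\ref{primecrit1}), obtains semiprimeness from the radical-lifting proposition, and obtains irreducibility from Lemma~\ref{Zartyp}; your pigeonhole on $\tT=Z_F\cup Z_H$ together with finiteness of $\circ$-root sets is precisely the disjointness statement of Lemma~\ref{Zartyp} in disguise. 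So the route is legitimate, and the direct verification is arguably more transparent than the paper's one-line citation.

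The endgame, however, is not closed. First, the condition to be checked is $(F,G)\ctw(H,K)\in\Diag_{\mathcal A[\la]}$, i.e.\ the single equality $FH+GK=FK+GH$, and the required conclusion is $(F,G)\in\Diag$, i.e.\ $F=G$ literally --- not ``$F$ and $G$ are $\circ$-equivalent,'' which is weaker; your systematic substitution of $\circ$-equivalence for membership in the diagonal means that what you prove at the end is not the prime condition of Definition~\ref{Pricon}. Second, the mechanism ``infinitely many $\circ$-roots $\Rightarrow F(-)G$ is the zero polynomial $\Rightarrow F=G$'' does not function as written: $F=c\la^i$ and $G=d\la^j$ are tangible monomials, so $F(-)G$ is never the zero polynomial. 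What Theorem~\ref{polyroot} actually gives is an outright contradiction when $i\ne j$ (there $F(-)G\in\tT[\la]$ is nonzero of degree $\max(i,j)$ with infinitely many $\circ$-roots), which forces $i=j$; and in the case $i=j$ the coefficient $c(-)d$ is in general not tangible (e.g.\ in the symmetrized triple), so Theorem~\ref{polyroot} does not apply to $F(-)G$ at all. One then needs a separate cancellation step --- from $ca^i=da^i$ for some $a\in\tT$ deduce $c=d$, using primeness of $\mathcal A$ --- to reach $F=G$. You flag the tangibility issue yourself but leave it unresolved; since this is exactly the step that converts ``infinitely many agreement points'' into the diagonal membership the definition demands, it must be supplied for the proof to be complete.
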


  \medskip
This corollary   plays a key role in the geometry of systems, in
terms of the prime spectrum.

 \begin{nothmd} [{\bf Theorem \ref{thma}, Artin-Tate lemma, $\preceq$-version}]
    Suppose
$\mathcal A'=\mathcal A[a_1,a_2,\dots ,a_n]$ is a  $\preceq$-affine
system over~$\mathcal A$, and $\mathcal K$ a subsystem of $\mathcal
A',$ with $\mathcal A'$ having a $\preceq$-base $v_1 =1, \ \dots,
v_d$ of $\mathcal A'$ over $K$. Then $\mathcal K$ is
$\preceq$-affine over $\mathcal A$.
\end{nothmd}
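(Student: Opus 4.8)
The plan is to adapt the classical proof of the Artin--Tate lemma to the $\preceq$-setting, where the main subtlety is that one cannot freely subtract and must track surpassing relations $\preceq$ through every step. First I would fix the generators: write $\mathcal A' = \mathcal A[a_1,\dots,a_n]$ and let $v_1 = 1, v_2, \dots, v_d$ be the given $\preceq$-base of $\mathcal A'$ as a $\mathcal K$-module. Since each $a_i$ lies in $\mathcal A'$, I would express $a_i \preceq \sum_j k_{ij} v_j$ for suitable $k_{ij} \in \mathcal K$; similarly, because the products $v_i v_j$ again lie in $\mathcal A'$, I would write $v_i v_j \preceq \sum_\ell k_{ij\ell} v_\ell$ with $k_{ij\ell} \in \mathcal K$. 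The key move is then to let $\mathcal K_0$ be the subsystem of $\mathcal A$ generated (as a $\preceq$-affine system over $\mathcal A$, i.e. over the image of $\mathcal A$) by the finitely many coefficients $k_{ij}$ and $k_{ij\ell}$. By construction $\mathcal K_0$ is $\preceq$-affine over $\mathcal A$, hence --- invoking the $\preceq$-analog of the Hilbert basis theorem / Noetherian behavior available from the earlier polynomial results (Corollary~\ref{poly11} and the prime-polynomial machinery of \S\ref{Pri}) --- $\mathcal K_0$ satisfies the ascending chain condition on congruences that we need.

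Next I would show that $\mathcal A'$ is a finitely generated $\mathcal K_0$-module system. The point is that the $\mathcal K_0$-submodule $M$ spanned by $v_1, \dots, v_d$ is closed under multiplication up to $\preceq$: using $v_i v_j \preceq \sum_\ell k_{ij\ell} v_\ell$ with $k_{ij\ell} \in \mathcal K_0$, any product of the $a_i$'s (which generate $\mathcal A'$ over $\mathcal A$) can be rewritten, via repeated substitution and the surpassing transitivity from Definition~\ref{sys1}, as an element $\preceq$ some $\mathcal K_0$-combination of the $v_j$. Hence every element of $\mathcal A'$ is $\preceq$-reachable from $M$, so $\mathcal A' = M$ is module-finite over $\mathcal K_0$. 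Then $\mathcal K$ sits between $\mathcal K_0$ and $\mathcal A'$: $\mathcal K_0 \subseteq \mathcal K \subseteq \mathcal A'$, with $\mathcal A'$ a finitely generated $\mathcal K_0$-module.

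Finally I would deduce that $\mathcal K$ itself is a finitely generated $\mathcal K_0$-module, whence $\preceq$-affine over $\mathcal A$ (being $\preceq$-affine over $\mathcal K_0$, which is $\preceq$-affine over $\mathcal A$, and composing finite generation). In the classical proof this is immediate because submodules of finitely generated modules over a Noetherian ring are finitely generated. In our setting I expect this to be the main obstacle: one must replace ``submodule'' by ``subsystem'' and ``finitely generated'' by its $\preceq$-version, and verify that the congruence-theoretic Noetherian property of $\mathcal K_0$ (ACC on $\mathcal K_0$-congruences, which I would establish from the polynomial results quoted above) really does force the sub-$\mathcal K_0$-module $\mathcal K$ of the finite module $\mathcal A'$ to be $\preceq$-finitely generated. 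The care needed is that a generating set for $\mathcal K$ over $\mathcal K_0$ must be extracted not from equalities but from $\preceq$-relations, so one works with the congruence generated by the relations among the $v_j$ and pulls back a finite generating set along the surpassing relation; the asymmetry of $\preceq$ means one checks the direction of each inequality survives. Once $\mathcal K$ is module-finite over $\mathcal K_0$, appending the finitely many module generators to the finitely many algebra generators of $\mathcal K_0$ exhibits $\mathcal K$ as $\preceq$-affine over $\mathcal A$, completing the proof.
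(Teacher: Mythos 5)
Your first two steps match the paper exactly: write $a_u\preceq\sum_k\alpha_{uk}v_k$ and $v_iv_j\preceq\sum_k\alpha_{ijk}v_k$, let $H=\mathcal A[\alpha_{ijk},\alpha_{uk}]$ (your $\mathcal K_0$), and show that the $v_i$ still $\preceq$-span $\mathcal A'$ over $H$. The divergence is in the last step, and that is where your plan has a genuine gap. You propose to finish by the classical route: establish ACC on $\mathcal K_0$-congruences ("from the polynomial results quoted above"), then deduce that the sub-$\mathcal K_0$-module $\mathcal K$ of the finite $\mathcal K_0$-module $\mathcal A'$ is $\preceq$-finitely generated. Neither part of this is supplied by the paper. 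Corollary~\ref{poly11} is a primeness statement, and Proposition~\ref{prime2} concerns radicals; there is no Hilbert-basis-type theorem here for systems, and Proposition~\ref{prime1} takes ACC as a hypothesis rather than proving it. Likewise, no analogue of "a submodule of a finite module over a Noetherian base is finite" appears anywhere in the $\preceq$-framework, and you would need to prove it before invoking it. So the step you flag as "the main obstacle" really is one, and the machinery you intend to lean on does not exist in this paper.

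The paper avoids the Noetherian argument altogether. Its proof reduces to Lemma~\ref{5.14}: since $v_1=1,\dots,v_d$ is a $\preceq$-base of $\mathcal A'$ over $\mathcal K$, and the $v_i$ still $\preceq$-span $\mathcal A'$ over the sub-semiring $H\subseteq\mathcal K$, one rearranges $w v_1\preceq\sum h_iv_i$ into $(h_1(-)w)v_1+\sum_{i\geq 2}h_iv_i\in\mathcal A'_\Null$ and uses $\preceq$-independence to conclude $w\preceq h_1+w^\circ$. This gives $\mathcal A'\preceq H$ directly, hence $\mathcal K\subseteq\mathcal A'\preceq H$ with $H$ obviously $\preceq$-affine over $\mathcal A$, and there is no need for any chain condition. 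This is a characteristic simplification in the $\preceq$-setting (exploiting $v_1=1$ and the definition of $\preceq$-independence) and it is precisely what replaces the Noetherian step. To repair your proof you would either need to prove the missing ACC/Noetherian lemmas from scratch, or replace your step 4 by an argument of the Lemma~\ref{5.14} type.
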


 \begin{nothme}[{\bf Theorem \ref{thma1}}] If $(\mathcal A, \tT, (-))$ is a semiring-group system
  and $(\mathcal A ' = \mathcal A[a_1, \dots, a_m],
\tT',(-))$ is a $\preceq$-affine semiring-group system, in which
$(f,g)(a_i, \zero)$ is invertible for every symmetrically
functionally tangible pair $(f,g)$ of polynomials, then $a_1, \dots,
a_m$ are symmetrically algebraic over $ \tT$,  and
$\widehat{\mathcal A '}$ has a symmetric base over
$\tT$.\end{nothme}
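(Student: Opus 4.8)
The plan is to view this as the $\preceq$-analogue of Zariski's lemma (the Nullstellensatz form) and to prove it by induction on $m$, feeding the inductive step into the Artin--Tate lemma, \thmref{thma}. The hypothesis that $(f,g)(a_i,\zero)$ is invertible for every symmetrically functionally tangible pair $(f,g)$ plays the role of the classical hypothesis ``$\mathcal A'$ is a field'': it says exactly that the functionally tangible elements of any subsystem $\mathcal A[a_{i_1},\dots,a_{i_k}]\subseteq \mathcal A'$ are invertible, so each such subsystem is again a semiring-group system. I would record this first, as it keeps the induction inside the class of semiring-group systems, and would also note the (routine) identification of the functionally tangible elements of $\mathcal A[a_i]$ with the values $(f,g)(a_i,\zero)$ of tangible pairs.

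For the base case $m=1$, first show $a_1$ is symmetrically algebraic over $\tT$. If not, the evaluation $\mathcal A[\la]\to\mathcal A[a_1]$, $\la\mapsto a_1$, is an isomorphism of systems. Since a semiring-group system is in particular prime, \thmref{polyroot} and the accompanying facts on the polynomial system over a prime triple force a functionally tangible element of positive degree --- e.g.\ the image of the pair $(1+\la,\zero)$ --- to be non-invertible in $\mathcal A[\la]$, contradicting the hypothesis. Hence $a_1$ is a $\circ$-root of some $f+(-)g$ with $(f,g)$ a symmetrically functionally tangible pair, and we may take the degree $n$ of such a relation minimal. The leading coefficient is tangible, hence invertible, so $a_1^n$ can be solved for, up to $\preceq$, in terms of $1,a_1,\dots,a_1^{n-1}$; an induction on powers then shows these $\preceq$-span $\widehat{\mathcal A[a_1]}$ over $\widehat\tT$, while minimality of $n$ gives $\preceq$-independence. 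Thus $\widehat{\mathcal A[a_1]}$ has a symmetric base over $\tT$, settling $m=1$.

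For $m>1$, put $\mathcal A_1 = \mathcal A[a_1]$, a semiring-group system by the opening remark; then $\mathcal A' = \mathcal A_1[a_2,\dots,a_m]$ is $\preceq$-affine over $\mathcal A_1$ and the invertibility hypothesis is inherited. By the induction hypothesis $a_2,\dots,a_m$ are symmetrically algebraic over $\tT_{\mathcal A_1}$ and $\widehat{\mathcal A'}$ has a symmetric, hence $\preceq$-, base over $\mathcal A_1$. Applying \thmref{thma} with $\mathcal K = \mathcal A_1$ then gives that $\mathcal A_1$ is $\preceq$-affine over $\mathcal A$. The main obstacle is the remaining step: deducing from ``$\mathcal A_1 = \mathcal A[a_1]$ is $\preceq$-affine over $\mathcal A$ with $\tT_{\mathcal A_1}$ a group'' that $a_1$ is symmetrically algebraic over $\tT$ --- the systemic counterpart of the fact that $K(x)$ is not a finitely generated $K$-algebra for transcendental $x$. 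Classically one uses unique factorization and the infinitude of irreducible polynomials; here I would instead invoke the infinitude of $\preceq$-inequivalent tangible polynomials (Corollary~\ref{poly11}) together with the surpassing relation, arguing that a transcendental $a_1$ would force $\mathcal A_1$ to contain $\preceq$-inverses of the images of infinitely many $\preceq$-independent tangible polynomials $1+c\la$ ($c\in\tT$), which no $\preceq$-affine system over $\mathcal A$ can support. Once $a_1$ is symmetrically algebraic over $\tT$, the base case supplies a symmetric base of $\widehat{\mathcal A_1}$ over $\tT$; composing it with the base of $\widehat{\mathcal A'}$ over $\mathcal A_1$ yields a symmetric base of $\widehat{\mathcal A'}$ over $\tT$, completing the induction. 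The remaining work is bookkeeping: making the degree-reduction in the base case precise with $\preceq$ in place of equality, and checking that bases pass cleanly through symmetrization.
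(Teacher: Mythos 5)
The skeleton you propose — induction on $m$, Artin--Tate for the inductive step, a separate argument to rule out transcendence of $a_1$ — is exactly the paper's route, and you have correctly identified the hard step (your ``infinitely many $1+c\la$'' argument is in substance Lemma~\ref{key1} of the paper). But the opening remark contains an error that propagates and breaks the inductive step.

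You claim the invertibility hypothesis ``says exactly that the functionally tangible elements of any subsystem $\mathcal A[a_{i_1},\dots,a_{i_k}]\subseteq\mathcal A'$ are invertible, so each such subsystem is again a semiring-group system.'' That is not what the hypothesis says: $(f,g)(a_i,\zero)$ is invertible \emph{in} $\mathcal A'$, with inverse a priori lying anywhere in $\mathcal A'$, not inside $\mathcal A[a_1]$. Classically this is the familiar point that $K[x_1]$ need not be a field even when $K[x_1,\dots,x_m]$ is. So $\mathcal A_1=\mathcal A[a_1]$ in your inductive step is \emph{not} a semiring-group system, the induction hypothesis cannot be invoked over it, and the whole step collapses. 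The paper's fix is to work over the localization $\mathcal K=(S^{-1}\widehat{\mathcal A'},\tT_{S^{-1}\widehat{\mathcal A'}},(-))$ with $S=\{(f,g)(a_1,\zero)\}$, the systemic analogue of passing from $K[x_1]$ to $K(x_1)$; that \emph{is} a semiring-group system and the induction applies over it. A related misfire: you apply Theorem~\ref{thma} to conclude ``$\mathcal A_1$ is $\preceq$-affine over $\mathcal A$,'' but $\mathcal A[a_1]$ is $\preceq$-affine over $\mathcal A$ \emph{by definition} (it is generated by $a_1$), so Artin--Tate is vacuous there. The genuine content of Artin--Tate in this argument is that the localization $\mathcal K$ is $\preceq$-affine over $\mathcal A$ — which is not automatic — and it is from \emph{that} nontrivial conclusion that Lemma~\ref{key1} extracts $\mathcal K=\mathcal A[a_1]$, i.e.\ algebraicity of $a_1$. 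Once the intermediate object is the localization rather than $\mathcal A[a_1]$, your sketch of the transcendence step becomes the right argument, and the composition of bases via Lemmas~\ref{key111} and~\ref{5.14} finishes as you say.
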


 The next result is rather easy, but its
statement is nice.

 \begin{nothmf} [{\bf Theorem \ref{polyzyz}}] Both the polynomial $\mathcal A[\la_1, \dots, \la_n]$ and Laurent
polynomial systems $\mathcal A[[\la_1, \dots, \la_n]]$ in $n$
commuting indeterminates over a $\tT$-semiring-group system have
dimension $n$.
\end{nothmf}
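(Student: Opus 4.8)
The plan is to establish that the dimension of the polynomial system $\mathcal A[\la_1,\dots,\la_n]$ over a $\tT$-semiring-group system $\mathcal A$ is exactly $n$, and then to argue that passing to Laurent polynomials does not change this. Recall that dimension here should be the supremum of lengths of chains of prime $\tT$-congruences (Definition~\ref{Pricon}), or equivalently, by the prime-correspondence machinery developed for prime systems, the transcendence-type degree counting how many of the $\la_i$ are ``algebraically independent'' over $\mathcal A$. The lower bound $\dim \mathcal A[\la_1,\dots,\la_n] \ge n$ is the transparent direction: the $n$ indeterminates $\la_1,\dots,\la_n$ form a $\preceq$-algebraically independent family, so one can exhibit a strictly ascending chain of prime $\tT$-congruences of length $n$ by successively adjoining $\la_1$, then $\la_1,\la_2$, and so on — at each stage one uses that the polynomial system over a prime system is prime (the basic result advertised in the abstract, i.e.\ Corollary~C / Corollary~\ref{poly11} together with Proposition~\ref{prime2}) to see that the intermediate quotients remain prime, hence the kernels of the corresponding projections are prime $\tT$-congruences, and they are properly nested because each new indeterminate is genuinely transcendental.

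For the upper bound $\dim \mathcal A[\la_1,\dots,\la_n]\le n$, I would argue by induction on $n$, reducing to the one-variable case $\dim \mathcal A[\la] \le \dim \mathcal A + 1$ and the fact that $\mathcal A$ itself, being a semiring-\emph{group} system, has dimension $0$ (its tangible part is a group, so there is no room for a nontrivial chain of prime $\tT$-congruences on $\mathcal A$; this is where the ``group'' hypothesis is used). The one-variable step is the analogue of the classical statement $\dim R[\la] = \dim R + 1$ for the relevant class of rings: given a chain of prime $\tT$-congruences in $\mathcal A[\la]$, one contracts along the inclusion $\mathcal A \hookrightarrow \mathcal A[\la]$, controls how much the chain can ``collapse'' under contraction (at most one consecutive pair can have the same contraction, by a Going-Up / incomparability argument phrased for $\tT$-congruences, using Theorem~B / Theorem~\ref{polyroot} on the number of $\circ$-roots to bound the fibers), and concludes that the original chain has length at most $\dim\mathcal A + 1 = 1$.

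Finally, for the Laurent system $\mathcal A[[\la_1,\dots,\la_n]]$ — here presumably the Laurent polynomial system, i.e.\ the localization of $\mathcal A[\la_1,\dots,\la_n]$ at the multiplicative monoid generated by the $\la_i$ — I would invoke the behavior of prime $\tT$-congruences under localization (developed in the style of \cite[\S6.8]{Row16}): inverting the $\la_i$ only removes those prime $\tT$-congruences that meet the multiplicative set, and since the $\la_i$ are units in $\mathcal A[[\la_1,\dots,\la_n]]$ whereas the chain witnessing dimension $n$ can be taken to avoid them (the $\la_i$ remain transcendental, not nilpotent-like, at each stage), the full length-$n$ chain survives; combined with the upper bound, which is inherited because localization does not increase dimension, this gives $\dim \mathcal A[[\la_1,\dots,\la_n]] = n$ as well. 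The main obstacle I anticipate is the one-variable upper bound: making precise the ``at most one collapse under contraction'' principle in the systemic setting, since the usual commutative-algebra proof leans on localization and Nakayama-type arguments that must be replaced by the surpassing-relation bookkeeping and the root-counting bound of Theorem~\ref{polyroot}; everything else is either the already-granted primality of polynomial systems or routine transport of localization facts.
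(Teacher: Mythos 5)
Your proposal takes a genuinely different route from the paper, and unfortunately the route has a gap precisely where you anticipated trouble: the one-variable upper bound.

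The paper's proof is much shorter and leans on a structural fact that your argument never invokes. Because $\tT$-morphisms of semiring-group systems must send $\tT$ into $\tT$, and $\tT_{\mathcal A[[\la_1,\dots,\la_n]]}$ consists of (Laurent) monomials with tangible coefficients, a $\tT$-homomorphism out of the Laurent polynomial triple must send each $\la_i$ to an element of the target $\tT$. Lemma~\ref{homim1} then shows that modding out by any non-projectively-trivial prime $\tT$-congruence is forced to be a \emph{substitution} homomorphism that solves some $\la_i$ in terms of the others, so the quotient is again a Laurent polynomial triple of strictly lower transcendence degree. A chain of prime $\tT$-congruences therefore gives a strictly decreasing chain of transcendence degrees $n > n-1 > \cdots$, which terminates in $n$ steps. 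This is why the theorem "comes easily" in the paper's words: the restrictiveness of the morphism notion does all the work. Your proposal instead tries to transplant the classical chain of ideas ($\dim R[\la] = \dim R + 1$ via Going-Up, incomparability, and contraction bookkeeping), and this is where it breaks down.

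Concretely: you assert that "at most one consecutive pair can have the same contraction, by a Going-Up / incomparability argument phrased for $\tT$-congruences, using Theorem~\ref{polyroot}." None of this is available. Incomparability and Going-Up for $\tT$-congruences are not developed anywhere in the paper, and even classically these require real input (integrality, localization, a DVR-type argument) rather than being formal. Theorem~\ref{polyroot} bounds the number of distinct $\circ$-roots of a single polynomial of degree $n$; it does not directly control the length of a chain of prime $\tT$-congruences lying over a fixed one, and the bridge from root-counting to an incomparability statement would itself be a substantial lemma you would need to prove. Without it, your upper bound argument does not close. (Worth noting: in ordinary commutative algebra $\dim R[\la]$ can be as large as $2\dim R + 1$; getting $\dim R + 1$ requires exactly the machinery you are trying to wave at. The paper avoids this entirely by exploiting that its prime $\tT$-congruences of a Laurent polynomial triple are far more rigid than classical prime ideals.) Your treatment of the Laurent case is also unnecessary overhead: the paper works directly with the Laurent polynomial triple and proves both cases in one stroke, rather than localizing from the polynomial case.
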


 Tensor products are described in detail in \S\ref{tenpro1}. From
 the categorical point of view:

 \begin{nothmg}  [{\bf Proposition \ref{kercok1}}]
The category of module systems $\mathcal A $-module triples  is a
monoidal semi-abelian category,  with respect to negated
$\tT$-tensor products.
\end{nothmg}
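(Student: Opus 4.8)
The plan is to deduce the proposition from three essentially independent pieces: first, that the category of module triples over $\mathcal A$ is semi-additive and carries a negation functor; second, that it has kernels and cokernels in the congruence-theoretic sense, so that it is semi-abelian in the sense of \S\ref{semiadd} (cf.\ Appendix~B and \cite[\S1.2.7]{grandis2013homological}); and third, that the negated $\tT$-tensor product of Definition~\ref{bil3} equips it with a monoidal structure compatible with the first two. I would set these up in order and then assemble them.

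For the first piece I would check that for module triples $M,M'$ over $\mathcal A$ the set $\Hom(M,M')$ is a commutative monoid under pointwise addition (addition being available on the target), that composition is biadditive, and that the one-element triple is a zero object; finite direct sums exist and are simultaneously products by the direct-power/direct-sum construction of \cite[Definition~2.6]{Row16}, giving semi-additivity. The negation map $(-)$ of $\mathcal A$ acts on every module triple $M$ as multiplication by $(-)\one$, and since homomorphisms are $\mathcal A$-linear they commute with this operation; hence $N\colon M\mapsto M,\ f\mapsto (-)\circ f$ is an endofunctor with $N^2=\id$, i.e.\ a negation functor as in Appendix~B, and the surpassing relation $\preceq$ is inherited pointwise from $\mathcal A$. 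For the second piece, because modding out by a submodule is unavailable, I would take kernels and cokernels congruence-theoretically: for $f\colon M\to M'$ put $\ker f=\TkerC f$ (the congruence $\{(a,b):f(a)=f(b)\}$ with its induced module-triple structure) and $\coker f=M'/\Cong$, where $\Cong$ is the $\tT$-congruence generated by the pairs $(f(a),\zero)$. That these satisfy the appropriate universal properties, with exactness statements holding only in their $\preceq$-weakened form, is exactly the content of the congruence theory developed in \S\ref{homim}--\S\ref{modsys}; invoking it yields precisely a semi-abelian category with negation in the sense of \S\ref{semiadd}.

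For the third piece, let $\otimes$ denote the negated $\tT$-tensor product (Definition~\ref{bil3}), with unit object $\mathcal A$ regarded as a module over itself. I would verify: (a) $\otimes$ is a bifunctor on the category of module triples \emph{with homomorphisms as morphisms} --- one must restrict to $\Hom$, since the asymmetry of $\preceq$ obstructs functoriality for general $\preceq$-morphisms, as Example~\ref{nonmonoidal} shows; (b) the unit isomorphisms $\mathcal A\otimes M\cong M\cong M\otimes\mathcal A$ and the associativity isomorphism $(M\otimes M')\otimes M''\cong M\otimes(M'\otimes M'')$ follow from the universal property of $\otimes$ as the object classifying $\tT$-bilinear, negation-compatible maps, both sides solving the same universal problem; (c) the pentagon and triangle coherence axioms then follow formally from the uniqueness clause in that universal property, as in the classical monoidal case (cf.\ \cite{KN}, \cite{EGNO}); and (d) $\otimes$ is additive, indeed right exact, in each variable with respect to the congruence-cokernels of the second piece, and $N(M\otimes M')\cong N(M)\otimes M'\cong M\otimes N(M')$, so the monoidal structure is compatible both with the semi-abelian structure and with the negation functor. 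Combining (a)--(d) with the first two pieces gives the proposition.

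The hard part will be reconciling the asymmetry of $\preceq$ with the bifunctoriality and coherence of $\otimes$: one must run the monoidal axioms inside the subcategory on which $\otimes$ is genuinely functorial (homomorphisms), while still carrying the $\preceq$-morphisms and the negation functor as extra structure on that category, and one must confirm that the congruence-theoretic cokernels of the second step are exactly the ones with respect to which $\otimes$ is right exact --- so that ``monoidal'' and ``semi-abelian'' refer to mutually compatible data rather than to two unrelated structures. I expect the formal coherence checks (pentagon, triangle, unit) to be routine once the correct universal property of the negated $\tT$-tensor product is in place, and the genuinely delicate point to be the interplay between $\preceq$, the negation map, and functoriality flagged already in \S\ref{tenpro}.
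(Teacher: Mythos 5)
Your overall architecture is the same as the paper's, which dispatches this proposition in one line by citing Lemma~\ref{kercok} (the factorization of any morphism through $\mathcal M/\TkerC(f)$, giving the ``cokernel followed by kernel'' property that defines semi-abelian) and Remark~\ref{adjmap} (the adjoint isomorphism $\Hom(\mathcal M_1 \otimes \mathcal M_2,\mathcal M_3)\cong\Hom(\mathcal M_1,\Hom(\mathcal M_2,\mathcal M_3))$, imported from the semiring literature \cite{Ka2,golan92}, giving the monoidal structure). Your three pieces (semi-additivity plus negation functor; congruence-theoretic kernels and cokernels yielding the factorization; the negated $\tT$-tensor product with its universal property) unpack precisely those two references, and your observation that functoriality of $\otimes$ forces one to work with homomorphisms rather than $\preceq$-morphisms is exactly the point the paper makes via Example~\ref{nonmonoidal} and Proposition~\ref{bil37}. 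So this is the right argument, just spelled out rather than cited.

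Two small calibration points worth fixing if you write it up fully. First, the paper's cokernel is not $\mathcal N$ modulo the congruence generated by the pairs $(f(a),\zero)$; it is $\mathcal N/f(\mathcal M)_{\congg}$ where $f(\mathcal M)_{\congg}$ is the \emph{congruence image} of Definition~\ref{cok8}, generated by $\Diag_{\mathcal N}$ together with pairs $(f(a_1),f(a_2))$ --- this is the version that cooperates with the $N$-category framework, in which ``null'' replaces ``zero'' and one should not single out $\zero$. Second, you do not actually want to claim a zero object; the whole Grandis-style $N$-category setup is designed to work with a class of null objects (those $\mathcal M$ with $\mathcal M=\mathcal M_\Null$) rather than a distinguished zero, and the semi-additivity required here is that of a $\tT$-linear $N$-category with categorical sums. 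Neither of these affects the correctness of the strategy, only the precise bookkeeping.
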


\section{Background}\label{modsys0}$ $

Here is a review of what is needed to understand the main results.

\subsection{Basic structures}\label{bn}$ $

A \textbf{\semiring0}  $(\mathcal A, +, \cdot, 1)$ is an additive
abelian semigroup $(\mathcal A, +)$ and a multiplicative monoid
$(\mathcal A, \cdot, \one)$ satisfying the usual distributive laws.
A \textbf{semiring} is a \semiring0 which contains a $\zero$
element.  A \textbf{semidomain} is a
 semiring $\mathcal A$ such that $\mathcal A \setminus
\{\zero\}$ is closed under multiplication, i.e., $ab=\zero$ only
when $a=\zero$ or $b=\zero$ for all $a, b \in \mathcal A$. We do not
assume commutativity of multiplication, since we want to consider
matrices and other noncommutative structures more formally. But   we
often specialize to the commutative situation (for example in
considering prime spectra) when the proofs are simpler. We stay
mainly with the associative case, in contrast to \cite{Row16}.
Distributivity is a stickier issue, since we do not want to
forego the applications to hypergroups.

As customary, $\Net$ denotes the nonnegative integers, $\Q$  the
rational numbers, and $\Real$   the real numbers, all ordered
monoids under addition. Let us give a brief review of the theory of
triples, from~\cite{Row16}.


\subsubsection{$\tT$-modules}\label{out}$ $

 First we put $\tT$ into the limelight, starting
with the following fundamental basic structure.

\begin{defn}\label{modu2}   A (left) $\tT$-\textbf{\module0} over a set $ \tT$
 is an additive semigroup $( \mathcal A,+)$ with a scalar
multiplication $\tT\times \mathcal A \to \mathcal A$ satisfying $a
(\sum _{j=1}^u b_j) = \sum _{j=1}^u (a b_j), \ \forall a \in \tT$
(distributivity with respect to $\tT$).

A  $\tT$-\textbf{module}   over $ \tT$
 is a $\tT$-\module0 $( \mathcal A,+,\zero_\mathcal A)$
satisfying $a\zero_{\mathcal A }= \zero_{\mathcal A }$ for any $a
\in \mathcal T$.
\end{defn}

In other words, $\tT$ acts on $\mathcal A$.  Often  $\tT$ is a
multiplicative group. For example, $\tT$ might be a hyperfield
generating $\mathcal A$ inside its power set. Or $\tT$ might be an
ordered group, and $\mathcal A$ its supertropical semiring (or
symmetrized semiring). Or $\mathcal A$ might be a fuzzy ring, and
$\tT$ its subset of invertible elements, as described in
\cite[Appendix B]{Row16}.
\begin{defn}
 A   $\tT$-\textbf{monoid module} over a monoid $ (\tT, \cdot, \one)$
 is a  $\tT$-module $(\mathcal A,+)$ that also respects the monoid structure, i.e., $(\mathcal A,+)$ satisfies the
  following additional axioms, $\forall   a_i \in \tT,$ $b
\in \mathcal A$:

 \begin{enumerate}\label{distr31}\eroman
   \item   $ \one _\tT b =  b.$
   \item   $(a_1 a_2)b = a_1 (a_2 b).$
 \end{enumerate}

We delete the prefix $\tT$- when it is unambiguous. When  $(\tT,
\cdot)$ is a group, we call $\mathcal A$ a  \textbf{\GM}\ to
emphasize this fact. In \S\ref{loc} we shall see how localization
permits us to reduce from monoid modules to \GM s.
\end{defn}

%
%

\subsubsection{Negation maps}$ $

We need some formalism to get around the lack of negation.

\begin{defn}\label{negmap}
 A \textbf{negation map} on  a $\tT$-module  $\mathcal A$
is a
 semigroup automorphism
$(-) $ of $(\mathcal A,+)$ and a map $\mathcal T \to \mathcal T$ of
order at most two, also denoted as $(-),$
 respecting the $\tT$-action in the sense that
$$a((-)b) = ((-)a)b$$ for $a \in \tT,\ b \in \mathcal A.$
When $\mathcal T$ is a subset of $\mathcal A$, we   require that
$(-)$ on $\mathcal A$ restricts to $(-)$ on $\tT$.
\end{defn}

For monoid modules, it is enough to know $(-)\one_{\tT}:$

 \begin{lem}\label{circm}
Let $\tT$ be a monoid, and take $\varepsilon$ in $\tT$ with
$\varepsilon^2 = \one.$
 \begin{enumerate}
 \eroman
    \item  There is a unique negation
map on $\tT$ and $\mathcal A$ for which $(-)\one_{\tT} =
\varepsilon,$ given by $(-)a = \varepsilon a$ and $(-)b =
\varepsilon b$ for $a \in \tT, \ b \in \mathcal A$. Furthermore,
\begin{equation}\label{neg0}
(-)(ab) = ((-)a)b = a((-)b).
\end{equation}
\item When $\mathcal A$ also is a \semiring0 and $\tT \subseteq  \mathcal A$
generates $(\mathcal A,+),$ then \eqref{neg0} holds
 for any $a,b \in \mathcal A$.
      \end{enumerate}
\end{lem}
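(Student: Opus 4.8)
The plan is to prove part (i) first, then bootstrap to part (ii). For part (i), existence is handled by simply \emph{defining} $(-)a := \varepsilon a$ for $a \in \tT$ and $(-)b := \varepsilon b$ for $b \in \mathcal A$, and then checking that this satisfies all the axioms of Definition~\ref{negmap}. I would verify in turn: (a) that $(-)$ is a semigroup endomorphism of $(\mathcal A,+)$ — this is immediate from distributivity of the $\tT$-action, since $\varepsilon(\sum_j b_j) = \sum_j \varepsilon b_j$; (b) that it has order at most two — this uses $\varepsilon^2 = \one$ together with axiom (i) of the monoid module ($\one_\tT b = b$) and axiom (ii) (associativity of the action: $\varepsilon(\varepsilon b) = (\varepsilon^2) b = \one_\tT b = b$), which also shows it is bijective, hence an automorphism; (c) that the map $\tT \to \tT$, $a \mapsto \varepsilon a$, has order at most two by the same computation inside $\tT$; (d) compatibility $a((-)b) = ((-)a)b$, i.e. $a(\varepsilon b) = (a\varepsilon) b = (\varepsilon a) b$, which follows from associativity of the action and commutativity of $\varepsilon$ with $a$ in the monoid $\tT$ — wait, one must be slightly careful here if $\tT$ is only a monoid and not commutative; but $\varepsilon a = a \varepsilon$ need not hold in general, so instead I would note that the compatibility required is literally $a(\varepsilon b) = ((-)a) b$ and define $(-)a$ so as to make this hold, or observe that the relevant identity \eqref{neg0} is exactly what part (i) asserts and prove it directly as the next step. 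Concretely, \eqref{neg0} reads $(-)(ab) = \varepsilon(ab) = (\varepsilon a) b = ((-)a)b$ by associativity of the action of $\tT$ on itself (viewing $ab \in \tT \subseteq$ acting, or $b \in \mathcal A$), and equals $a(\varepsilon b)$ — here one does need $\varepsilon (ab) = a(\varepsilon b)$; since $\tT$ is a monoid this is $\varepsilon a b = a \varepsilon b$ in $\tT$, which requires $\varepsilon$ central. I expect the paper tacitly assumes $\varepsilon$ is central (it is a "$(-)$" after all, and in all the examples $\varepsilon$ commutes with everything); I would state this or deduce it.

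For uniqueness in part (i): if $(-)'$ is any negation map with $(-)'\one_\tT = \varepsilon$, then for $b \in \mathcal A$ the compatibility axiom $a((-)'c) = ((-)'a)c$ applied with $a = \one_\tT$ and with the monoid module axiom gives $(-)'b = (-)'(\one_\tT b) = ((-)'\one_\tT) b = \varepsilon b$, and similarly $(-)'a = \varepsilon a$ on $\tT$; so the negation map is forced, establishing uniqueness.

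For part (ii): we are now given that $\mathcal A$ is a \semiring0 and $\tT$ generates $(\mathcal A,+)$. I want \eqref{neg0} for arbitrary $a, b \in \mathcal A$. Write $a = \sum_{i} s_i$ and $b = \sum_j t_j$ with $s_i, t_j \in \tT$. Then using biadditivity of multiplication (the distributive laws in the \semiring0), $ab = \sum_{i,j} s_i t_j$, and $(-)(ab) = \varepsilon \sum_{i,j} s_i t_j = \sum_{i,j}\varepsilon(s_i t_j) = \sum_{i,j} (\varepsilon s_i) t_j$ by part (i) applied to the tangible product $s_i t_j$; then reassembling the sum over $i$ gives $\big(\sum_i \varepsilon s_i\big)\big(\sum_j t_j\big) = ((-)a) b$, and symmetrically $= a((-)b)$. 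The only subtlety is to make sure every intermediate product that appears, like $s_i t_j$ or $\varepsilon s_i$, lies in $\tT$ so that part (i) applies — $\varepsilon s_i \in \tT$ since $\varepsilon, s_i \in \tT$ and $\tT$ is a monoid, and $s_i t_j \in \tT$ likewise — and that $(-) = \varepsilon\cdot$ is additive, which is exactly (a) above.

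The main obstacle I anticipate is not depth but bookkeeping about \emph{where the scalars live}: one must consistently distinguish the $\tT$-action on $\mathcal A$ from multiplication inside $\tT$ (and inside $\mathcal A$ in part (ii)), invoke the monoid-module axioms (i) and (ii) at exactly the right moments, and confirm centrality of $\varepsilon$ is available. Once that is pinned down, both parts are short formal verifications.
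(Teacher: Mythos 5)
Your proposal is correct and follows essentially the same route as the paper: verify the negation-map axioms directly from the definition $(-)=\varepsilon\cdot$, and in part (ii) expand into tangible summands and use part (i) termwise. You go slightly beyond the paper in two useful ways — you actually prove the asserted uniqueness (the paper's written proof only verifies existence and \eqref{neg0}), and you correctly flag that the step $a(\varepsilon b)=(\varepsilon a)b$ tacitly requires $\varepsilon$ to be central in $\tT$, a hypothesis the paper's own computation $a\,\one_\tT\,\varepsilon b = \varepsilon a\,\one_\tT\, b$ also uses without comment.
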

\begin{proof} (i)
$a((-)b) = a(\varepsilon   b) = a\,\one_{\tT}\, \varepsilon b =
\varepsilon a\,\one_{\tT}\,  b = ((-)a)b,$ and $(-)(ab) =
\varepsilon (a b) = (\varepsilon a)   b = ((-)a)b.$

(ii) Write $a = \sum_i a_i$ for $a_i \in \tT$. Then
\[
(-)(ab) = \varepsilon\left(  \sum_i a_i\right)b =
\sum_i\left((-)a_i\right)b = \left(\sum_i a_i\right)((-)b)= a((-)b)
.
\]
\end{proof}


 We write $a(-)b$ for $a
+ ((-)b),$ and $a^\circ$ for $a(-)a,$ called a \textbf{quasi-zero}.
Thus, in classical algebra, the only quasi-zero is $\zero$ itself if
the negation map is the usual negation.
 Of special interest are the sets $\mathcal
A^\circ := \{ a ^\circ: a \in \mathcal A\}$ and $\tT^\circ := \{ a
^\circ: a \in \tT \}$,  inferred from the ``diodes'' of \cite{Ga},
Izhakian's thesis \cite{zur05TropicalAlgebra}, and the ``ghost
ideal'' of \cite{IR} and studied explicitly in the symmetrized case
in \cite[\S 3.5.1]{Row16}, and \cite[\S 4]{CC2} (over the Boolean
semifield $\mathbb B$).

\begin{example}\label{neg217} Major instances of negation maps:
 \begin{enumerate}
 \eroman
    \item Equality (taking $\varepsilon = \one$).
      \item The switch map in symmetrization,
      in Definition~\ref{wideh7} below (taking $\varepsilon = (\zero, \one)$ under the twisted multiplication).
\item The negation map in a hypergroup (taking $\varepsilon = -\one$).
\item The negation map $(-)a \mapsto \varepsilon a$ in a fuzzy ring.
        \end{enumerate}
\end{example}

%

%
%
%
%
%
%

\subsubsection{Pseudo-triples and triples}$ $


\begin{defn}\label{modu21} $ $
\begin{enumerate}\eroman
   \item
     A  \textbf{pseudo-triple} $(\mathcal A, \tT, (-))$ is a $\tT$-module $\mathcal A$ with a negation map
        $(-)$.
   \item
     A $\tT_{\mathcal
        A}$-\textbf{pseudo-triple} $(\mathcal A, \tT_{\mathcal
        A}, (-))$ is a pseudo-triple  $(\mathcal A, \tT_{\mathcal
        A}, (-))$, with $\tT_{\mathcal A}$ designated as a distinguished subset of $\mathcal A$.
        The  elements of $\tT_\mathcal A$ are called \textbf{tangible}.
    \item
    A $\tT_{\mathcal
        A}$-\textbf{triple} is a $\tT_{\mathcal
        A}$-pseudo-triple $(\mathcal A, \tT_{\mathcal A}, (-))$, in which
    $\tT_{\mathcal A}  \cap \mathcal A^ \circ = \emptyset$ and
    $\tT_{\mathcal A} $ generates $( \mathcal A,+).$ If $\mathcal A$
    has a zero element~$\zero$ we only require that $\tT_{\mathcal A}$
    generates $(\mathcal A \setminus \{\zero\},+).$
    \item
A $\tT$-\textbf{\semiring0 pseudo-triple} $(\mathcal A,
\tT_{\mathcal
    A}, (-))$ is $\tT_{\mathcal
        A}$-pseudo-triple for which $\mathcal
A$ is also  a \semiring0, 
and the \semiring0 multiplication on $\mathcal A$ restricts to the
$\tT_{\mathcal
        A}$-\module0
multiplication $\tT_{\mathcal A} \times \mathcal A \to \mathcal A.$
\end{enumerate}

\end{defn}

\begin{rem}\label{ps1}  The condition  that $\tT_{\mathcal A} \cap
\mathcal A^\circ = \emptyset$  in (iii), fails in the max-plus
algebra, but holds in the other examples of interest to~us.

When $\one_A \in \mathcal A,$ we can put $\tT_{\mathcal
        A} = \tT \one_A,$ and thus get a $\tT_{\mathcal
        A}$-pseudo-triple.
        The most straightforward way of ensuring that $\tT_{\mathcal A}$ generates $(\mathcal A,+)$ is   to restrict $(\mathcal A,+)$ to the
$\tT$-submodule generated by~$\tT_{\mathcal A}$.  In this case we
define the \textbf{height} of an element $c \in \mathcal A$ as the
minimal $t$ such that $c = \sum _{i=1}^t a_i$ with each $a_i \in
\tT_{\mathcal A}.$ ($\zero$ has height~ 0.) The \textbf{height}
of~$\mathcal A$ is the maximal height of its elements (which is said
to be~$\infty$ if these heights are not bounded).
\end{rem}

The following properties play a basic role in the theory.

\begin{defn} \label{metadef} $ $
\begin{enumerate} \eroman
    \item
        A $\tT$-triple $ (\mathcal A, \tT_\mathcal A , (-))$ has
    \textbf{unique negation} if $a_0 + a_1 \in \mathcal A^\circ$ for
    $a_i \in \tT_{\mathcal A}$ implies $a_1 = (-)a_0.$
    \item
$(\mathcal A, \tT_{\mathcal A}, (-))$  is \textbf{meta-tangible} if
$a_0+a_1 \in \tT_{\mathcal A}  $ for any $a_1 \ne
(-)a_0$ in $\tT_{\mathcal A}  $.
\item
    (A very important special case) A $\tT$-pseudo-triple $(\mathcal A,
    \tT_\mathcal A, (-))$ is  $(-)$-\textbf{bipotent} if $a + b \in \{a ,b\}$
    whenever $a, b \in \tT_\mathcal A$ with $b \neq (-) a.$ In other words, $a + b
    \in \{a ,b, a^\circ \}$ for all $a,b \in \tT_\mathcal A$.
\end{enumerate}

\end{defn}

The unique negation property is needed to get started in the theory,
and meta-tangibility is a rather pervasive property studied in
\cite{Row16}.

\begin{defn} \label{homodef} $ $
  A
    \textbf{homomorphism}
    $\varphi: (\mathcal A, \tT_\mathcal A , (-)) \to  (\mathcal A', \tT_{\mathcal A '}, (-))$
of pseudo-triples is a $\tT$-module homomorphism $\varphi: \mathcal
A\to \mathcal A'$ satisfying $\varphi(a) \in \tT_\mathcal A ' \cup
\{ \zero\},\footnote{One might prefer to require $\mathcal A$ only
to be a $\tT$-\module0 with $\varphi(a) \in \tT_\mathcal A '$ for
all $a\in \tT_\mathcal A$, but we want to permit projections of the
free module to be module homomorphisms.} \forall a \in \tT_\mathcal
A$, and $\varphi((-)a) = (-)\varphi(a),\ \forall a \in \tT_\mathcal
A$.
\end{defn}

\subsubsection{Surpassing relations and systems}$ $

Having generalized negation to the negation map, our next goal is  a workable substitute for equality.

\begin{defn}[ {\cite[Definition 4.5]{Row16}}]\label{precedeq07}
A \textbf{surpassing relation} on a pseudo-triple $(\mathcal A, \tT,
(-))$, denoted
  $\preceq $, is a partial pre-order on $ \tT$ and on $\mathcal A$ satisfying the following, for elements  $a_i, a \in \tT$ and $b_i, b \in  \mathcal A$:

  \begin{enumerate}
 \eroman
  \item   $b_1 \preceq b$
  whenever $b_1 + c^\circ = b$ for some $c\in \mathcal A^\circ$.
     \item If $b_1 \preceq b_2$ then $(-)b_1 \preceq (-)b_2$.
      \item   If   $b_1 \preceq b_2$ then $ab_1 \preceq ab_2.$
 \item If $b_1 \preceq b_2$ and $b_1' \preceq b_2'$ for $i= 1,2$ then  $b_1 + b_1' \preceq b_2
   + b_2'.$
   \item  If $a_1 \preceq a_2$ then $a_1 =  a_2.$
   \end{enumerate}
By a \textbf{surpassing PO}, we mean a surpassing relation that is a PO. A \textbf{$\tT$-surpassing relation} on a $\tT$-pseudo-triple $\mathcal A$  is a surpassing relation which also satisfies the following extra condition:
\[
b^\circ \not \preceq a \textrm{ for any }a \in \tT, \ b \in \mathcal A.
\]
\end{defn}

\begin{lem} When $\zero \in \mathcal A$, condition (i) is a formal consequence of the following weaker condition:
\[
  \zero  \preceq c^\circ,~\forall c \in
\mathcal A.
\]
\end{lem}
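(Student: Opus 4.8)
The plan is to assume the weaker hypothesis $\zero \preceq c^\circ$ for all $c \in \mathcal A$, and deduce condition (i) of Definition~\ref{precedeq07}: that $b_1 \preceq b$ whenever $b = b_1 + c^\circ$ for some $c \in \mathcal A^\circ$. The strategy is simply to build condition (i) by combining the weak hypothesis with axioms (iii)--(iv), which by the statement of the lemma we are entitled to keep (they do not reference (i)). First I would record the trivial fact that $b_1 \preceq b_1$; this is immediate since $\preceq$ is a partial pre-order on $\mathcal A$, hence reflexive. Next, given $c \in \mathcal A^\circ$, the weak hypothesis gives $\zero \preceq c^\circ$. Here one must observe that for $c \in \mathcal A^\circ$ one has $c^\circ = c(-)c$, and this element still lies in $\mathcal A^\circ$; but more to the point, the hypothesis is stated for \emph{all} $c \in \mathcal A$, so in particular it applies to whatever element we need.

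The key step is then additivity: from $b_1 \preceq b_1$ and $\zero \preceq c^\circ$, axiom (iv) (applied with the pairs $(b_1, b_1)$ and $(\zero, c^\circ)$) yields
\[
b_1 + \zero \;\preceq\; b_1 + c^\circ,
\]
that is, $b_1 \preceq b_1 + c^\circ = b$, which is exactly condition (i). The one technical wrinkle is the use of $b_1 + \zero = b_1$, which requires $\zero$ to be a genuine additive identity on $\mathcal A$; this is guaranteed by the standing hypothesis $\zero \in \mathcal A$ (in a $\tT$-module with a zero element, $\zero$ is absorbing and additively neutral by definition).

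I expect the main — and essentially only — obstacle to be bookkeeping about what ``$\zero \in \mathcal A$'' licenses: one must be sure that the weak condition is really being quantified over all of $\mathcal A$ (so that it covers the particular $c$ arising in (i)), and that $\mathcal A^\circ \subseteq \mathcal A$ so the instance $\zero \preceq c^\circ$ is available. Beyond that the argument is a two-line application of reflexivity plus axiom (iv), and conversely the weak condition is the special case $b_1 = \zero$ of (i) (taking $c \in \mathcal A^\circ$ and using $\zero + c^\circ = c^\circ$), so the two are genuinely equivalent in the presence of a zero element, though only the stated direction is claimed.
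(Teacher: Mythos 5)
Your proof is correct and coincides with the paper's: both apply reflexivity of $\preceq$ together with axiom (iv) to $\zero \preceq c^\circ$, obtaining $b_1 = b_1 + \zero \preceq b_1 + c^\circ = b$. The extra remarks about the quantification over $c$ and the role of $\zero$ as additive identity are harmless and do not change the argument.
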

\begin{proof}  Since $\zero
\preceq c^\circ$, it follows from $(iv)$ that $b_1 = b_1 + \zero
\preceq  b_1 + c^\circ = b .$
\end{proof}

\begin{defn}\label{precmain0}
Let $(\mathcal A, \tT, (-))$ be a $\tT$-pseudo-triple.

     We define the  relation $\preceq_\circ$ on $\mathcal A$ by:
    \[
    a \preceq_\circ b \textrm{ if and only if } b=a + c^\circ \textrm{ for some } c \in \mathcal A.
    \]
\end{defn}

\begin{defn}
    We define the following subset of $\mathcal A$:
        \[
{\mathcal A}_{\Null} = \{ b \in \mathcal A: b+b'\succeq b', \forall
b' \in  \mathcal A\}.\]
\end{defn}

 \begin{rem}\label{PO1} \begin{enumerate}\eroman\item
One can easily check that $ {\mathcal A}_{\Null}$ is a sub-\module0
of ${\mathcal A}$. Furthermore, when $\zero \in \mathcal A,$
$\mathcal A_{\Null}$ has the following simpler description
\[
{\mathcal A}_{\Null} = \{ b \in \mathcal A: b\succeq \zero \}.
\]
\item  ${\mathcal A}_{\Null}
= \mathcal A^\circ$,  when $\zero \in \mathcal A$ and $\preceq$ is
$\preceq_\circ$ as given in Definition~\ref{precmain0}. In this
case, for a $\tT$-pseudo-triple, ${\tT}_{\Null} := \emptyset $.

\item In the hypergroup setting of
\cite[\S 3.6, Definition~4.23]{Row16}, ${\mathcal A}_{\Null}$
consists of those sets containing $\zero$, which is the version
usually considered in the hypergroup literature, for instance, in
\cite{GJL}.
\item ${\mathcal A}_{\Null}= \mathcal A\setminus \{\zero\}$ for the Green relation \cite[Remark~3.1(i)]{Row16}, so in this case
the theory degenerates.
\end{enumerate}
\end{rem}

A motivating example from classical algebra (for which $\preceq$ is
just equality) is for $\mathcal A$ to be an associative algebra
graded by a monoid; $\tT$~could be the subset of homogeneous
elements, in particular, a submonoid of~$(\mathcal A, \cdot)$. A
transparent example is when $\tT$ is the multiplicative subgroup of
a field~$\mathcal A$. We are more interested in the non-classical
situation, involving semirings which are not rings. Our structure of
choice is as follows:

\begin{defn}
  A \textbf{system} (resp.~\textbf{pseudo-system}) is a quadruple
  $(\mathcal A, \tT  ,
(-), \preceq),$ where $\preceq$ is a surpassing relation  on the
triple
  (resp.~pseudo-triple) $(\mathcal A, \tT  , (-))$, which is \textbf{uniquely
negated} in the sense that if $a + a' \in \tT_\Null$ for $a, a' \in
\tT,$ then $a' = (-)a$. (Compare with Definition~\ref{metadef}.)

A \textbf{semiring system} is a system where $\mathcal A$ is a
semiring.

A \textbf{semiring-group system} is a semiring system where $\tT$ is
a group.

The \textbf{default $\circ$-pseudo-system} of a pseudo-triple
$(\mathcal A, \tT ,
 (-))$ is
$(\mathcal A, \tT , (-),\preceq_\circ )$.

$\tT$-systems, etc., are defined analogously, where we assume that
$\tT \subseteq \mathcal A.$\end{defn}

It is convenient to modify $\preceq$, generalizing
Definition~\ref{precmain0}.
\begin{defn} Given a pseudo-system    $(\mathcal A, \tT  ,
(-), \preceq),$ define $\precpr$ by $b \precpr b'$ if $b' = b+ c$
for some $c \in \mathcal A_\Null.$

 The \textbf{default $\preceq$-pseudo-system} of a pseudo-system $(\mathcal A, \tT ,
 (-))$ is
$(\mathcal A, \tT , (-),\precpr )$.
\end{defn}

$\precpr$ is a PO if it satisfies   the condition, called
\textbf{upper bound}~(ub), that $a + b +c = a$ for $b,c \in \mathcal
A_{\Null}$ implies $a+b = a$.

When $\mathcal A_\Null\mathcal = \mathcal A^\circ$, then $\precpr$
is just $\preceq_\circ$, which happens in virtually all of our
applications, in which case we can skip the technicality of
utilizing the system in defining another surpassing relation.

%
Systems
 are the main subject in
\cite{Row16}, employed there to establish basic connections with
tropical structures, hypergroups, and  fuzzy rings, by means of the
following  examples:

\begin{itemize}
\item (The standard supertropical $\tT$-triple) $(\mathcal A, \tT, (-))$ where  $\mathcal A = \tT
\cup \mathcal A ^\circ$ and $(-)$ is the identity map. ($ \mathcal A
^\circ$ is   called the set of ``ghost elements'' $\tG$.) We get the
default $\circ$-system,  and $\preceq$ is a PO.

\item (The hypersystem  \cite[\S 3.6.1]{Row16}) Let $\tT$ be a hyperfield. Then one can associate a triple $(\mathcal S (\tT),\tT,(-))$, where $\mathcal P (\tT)$
is its power set (with componentwise operations), $\mathcal S (\tT)$
is the additive sub-semigroup of $\mathcal P (\tT)$ spanned by the
singletons, and $(-)$ on the
 power set is induced from the hypernegation. Here $\preceq$ is $\subseteq$, which   is a PO.

\item Symmetrized systems. (See  \S \ref{symm} below.)

\item   Fuzzy
 rings are described as default $\circ$-systems,  in \cite[Appendix B]{Row16}.
\end{itemize}

 This wealth of examples   motivates a
further development of the algebraic theory of $\tT$-systems, which
is the rationale for this paper.

\begin{rem} Recently \cite{BB1}, \cite[\S1.2.1]{BL} defined a \textbf{tract} to be
a   pair $(G;N_G)$ consisting of an abelian group $G$ (written
multiplicatively), together with a subset $N_G$ (called the nullset
of the tract) of the group semiring $N[G]$   satisfying:
\begin{itemize}
\item The zero element of $N[G]$ belongs to $N_G$, and the identity
element 1 of $G$ is not in $N_G$.

\item  $N_G$ is closed under the natural
action of $G$ on $N[G]$.

\item There is a unique element $\varepsilon$ of G with $1+\varepsilon \in
N_G$.
\end{itemize}

Tracts  are special cases of semiring-group $\tT$-systems, where
$\tT$ is the given Abelian group $ G$, $\mathcal A = \mathbb N[G],$
$\vep = (-) \one,$ and $N_G $ can be taken to be ${\mathcal
A}_{\operatorname{Null}},$ often taken to be $\mathcal A^\circ$.

Likewise, pasteurized  blueprints \cite{BL} are semiring
$\tT$-systems with unique negation, when the map $\preceq$ is  a PO.
\end{rem}




\subsection{Symmetrization and the twist action}$ $\label{symm}

In this subsection, we utilize an idea of Gaubert \cite{Ga} to
provide a negation map and surpassing relation for an arbitrary
$\tT$-module $\mathcal{A}$, when it is lacking.

\begin{defn} \label{Liesup}
Let $\tT  = \tT_0 \cup \tT_1$.    A $\tT$-module $\mathcal{A}$ is
said to be a \textbf{$\tT$-super module} if $\mathcal{A}$ is a
 $\Z _2$-graded semigroup $ {\mathcal A}_0 \oplus
{\mathcal A}_1$, satisfying   $\tT_i {\mathcal A}_j \subseteq
{\mathcal A}_{i+j}$, where $\tT_i = \tT \cap \mathcal A_i $,
subscripts modulo 2.
\end{defn}

 From now on, $\widehat{\mathcal A}$
 denotes $\mathcal A \times \mathcal A$, i.e., ${\mathcal A}_0 =
{\mathcal A}_1 ={\mathcal A}. $

 \begin{defn}\label{wideh7}  The \textbf{switch map} on $\widehat{\mathcal A}$ is given by
 $(b_0,b_1)\mapsto (b_1,b_0).$ \end{defn}

 In order to identify the second component as the negation of
 the first, we employ an idea dating back to Bourbaki~\cite{Bo} and the Grothendieck group completion, as well as
 \cite{BeE,CC2,Ga,JoM, Ju1, Row16} (done here in generality for
 $\tT$-modules),  which   comes from the familiar construction
 of $\Z$ from~$\N$. The idea arises from the elementary computation:
 $$(a_0-a_1) (b_0-b_1) =
 (a_0b_0 + a_1 b_1)-( a_0 b_1 + a_1 b_0).$$

\begin{defn}\label{twistact} $\widehat{\mathcal A}$ is called the \textbf{symmetrization}
of $\mathcal A.$ For any $\tT$-module $\mathcal A$, the
\textbf{twist action}, denoted $\ctw$, on~$\widehat{\mathcal A}$
over $\widehat \tT $
 is given by the super-action, namely
 \begin{equation}\label{twi} (a_0,a_1)\ctw (b_0,b_1) =
 (a_0b_0 + a_1 b_1, a_0 b_1 + a_1 b_0).  \end{equation}

 When $\tT$ is a monoid, we view $\widehat
\tT $ as a monoid, also via the \textbf{twist action} as in
\eqref{twi}, where $a_i,b_i \in \tT$, the    unit element of
$\widehat \tT $ being $(\one_\tT, \zero).$
\end{defn}

\begin{lem} $\widehat{\mathcal A}$ is a
$\widehat \tT $-module. When $\mathcal A$ is a $ \tT $-monoid
module, $\widehat{\mathcal A}$ is a $\widehat \tT $-monoid module.
\end{lem}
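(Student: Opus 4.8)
The plan is to verify the module axioms for $\widehat{\mathcal A}$ over $\widehat{\tT}$ directly from the definition of the twist action in \eqref{twi}, leaning on the fact that $\mathcal A$ is already a $\tT$-module (and, in the second assertion, a $\tT$-monoid module). First I would check the distributivity axiom of Definition~\ref{modu2}: given $(a_0,a_1)\in\widehat{\tT}$ and a finite sum $\sum_j (b_{0,j},b_{1,j})$ in $\widehat{\mathcal A}$, the sum is computed componentwise, so $(a_0,a_1)\ctw\sum_j(b_{0,j},b_{1,j}) = \bigl(a_0\sum_j b_{0,j} + a_1\sum_j b_{1,j},\ a_0\sum_j b_{1,j} + a_1\sum_j b_{0,j}\bigr)$, and distributivity of the original $\tT$-action together with commutativity and associativity of $(\mathcal A,+)$ lets me pull the sum outside, yielding $\sum_j\bigl((a_0,a_1)\ctw(b_{0,j},b_{1,j})\bigr)$. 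This already shows $\widehat{\mathcal A}$ is a $\widehat{\tT}$-\module0; if $\mathcal A$ has a zero, $(\zero,\zero)$ is absorbed by $\ctw$ by the same componentwise reasoning, giving a $\widehat{\tT}$-module.

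Next, for the monoid-module statement, I would verify the two axioms of Definition~\ref{distr31}. The unit axiom $\one_{\widehat{\tT}}\ctw(b_0,b_1) = (b_0,b_1)$ is immediate from $\one_{\widehat{\tT}} = (\one_\tT,\zero)$: the twist formula gives $(\one_\tT b_0 + \zero b_1,\ \one_\tT b_1 + \zero b_0) = (b_0,b_1)$ using $\one_\tT b = b$ and $\zero_\tT b = \zero$ (the latter from the monoid-module structure, where $\zero \in \tT$ acts as zero). For associativity of the action, I would expand $\bigl((a_0,a_1)\ctw(a_0',a_1')\bigr)\ctw(b_0,b_1)$ and $(a_0,a_1)\ctw\bigl((a_0',a_1')\ctw(b_0,b_1)\bigr)$ using \eqref{twi} twice; both sides become the pair whose first component is $a_0a_0'b_0 + a_0a_1'b_1 + a_1a_0'b_1 + a_1a_1'b_0$ and whose second component is $a_0a_0'b_1 + a_0a_1'b_0 + a_1a_0'b_0 + a_1a_1'b_1$, where the equality of the two groupings follows termwise from associativity of the $\tT$-monoid-module action $(a_1a_2)b = a_1(a_2b)$ and commutativity/associativity of $(\mathcal A,+)$. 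One should also record that $\widehat{\tT}$ with the twist multiplication is genuinely a monoid (associativity of $\ctw$ on $\widehat{\tT}$ is the special case of the displayed identity with all entries in $\tT$, and $(\one_\tT,\zero)$ is a two-sided unit), which is already asserted in Definition~\ref{twistact}.

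The computations are entirely routine bookkeeping; the only mild subtlety — and the step I would be most careful about — is the bookkeeping of the cross terms in the associativity check, making sure the eight monomials on each side match up correctly after reindexing, and in particular that no use is made of commutativity of multiplication in $\tT$ (since the paper does not assume it), so that the argument goes through in the noncommutative setting as well. Everything else is a direct transcription of the $\tT$-module axioms through the componentwise sum and the bilinear-looking formula \eqref{twi}.
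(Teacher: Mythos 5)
Your proof is correct and follows the same direct-verification strategy as the paper; the difference is one of completeness. The paper only spells out the associativity check, disposing of it with a slick parity observation (the index $0$ appears an odd number of times in every monomial of the first component and an even number of times in the second, independently of where the parentheses sit), whereas you enumerate all eight monomials on each side explicitly. You also explicitly verify the distributivity and unit axioms, which the paper leaves implicit. Your note that commutativity of multiplication in $\tT$ is never used is correct and relevant. One small imprecision: your unit-axiom check invokes $\zero_\tT b = \zero_{\mathcal A}$, which is not literally one of the axioms in Definition~\ref{distr31} (those only give $\one_\tT b = b$ and $a\zero_{\mathcal A}=\zero_{\mathcal A}$); it is the standard convention for the $\tT_\zero$-action implicit in writing $\one_{\widehat\tT}=(\one_\tT,\zero)$, so the argument is fine, but it is worth being aware this is an additional (conventional) hypothesis rather than a consequence of the stated axioms.
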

\begin{proof} To see that the twist action is associative over $\widehat \tT $, we note for $a_i, b_i \in \tT$ and $c_i \in \mathcal M$
 that \begin{equation}\begin{aligned}((a_0,a_1)\ctw (b_0,b_1)) \ctw (c_0,c_1) & =
 (a_0b_0 + a_1 b_1, a_0 b_1 + a_1 b_0) \ctw (c_0,c_1)\\ & = (a_0b_0c_0 + a_1 b_1c_0 + a_0 b_1c_1
  + a_1 b_0c_1, a_0b_0c_1 + a_1 b_1c_1+ a_0 b_1c_0
 + a_1 b_0c_0),\end{aligned}\end{equation}
 in which we see that the subscript $\phantom{w}_0$ appears an odd
 number of times on the left and an even
 number of times on the right, independently of the original placement of
 parentheses.
\end{proof}

 \begin{rem}\label{princmod} For any $(a_0, a_1) \in \widehat{\tT},$  $\{(a_0, a_1)\}\ctw
\widehat{\mathcal A} \subseteq \widehat{\mathcal A}$ yields a $ \tT
$-submodule of $\mathcal A$.
\end{rem}

Symmetrization plays an especially important role in geometry,
cf.~\S\ref{weakn}, and in homology theory, to be seen in
\cite{JMR1}.

But $\widehat \tT $ is too big for our purposes (since it is not a
group), so we take instead  $\tT_{\widehat{\mathcal A} }
    =(\tT \times \{ \zero \}) \cup ( \{ \zero \} \times \tT)$.

\begin{lem} If $\tT$ is a group, then $\tT_{\widehat{\mathcal A}}  $ also is a
group under the twist action.\end{lem}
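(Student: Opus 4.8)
The plan is to verify directly that $\tT_{\widehat{\mathcal A}} = (\tT \times \{\zero\}) \cup (\{\zero\} \times \tT)$ is closed under the twist multiplication inherited from $\widehat\tT$, and that each element has a two-sided inverse with respect to the unit $(\one_\tT, \zero)$. First I would record the multiplication table on the two ``pieces'' of $\tT_{\widehat{\mathcal A}}$: using \eqref{twi}, for $a,b \in \tT$ one computes $(a,\zero)\ctw(b,\zero) = (ab,\zero)$, $(a,\zero)\ctw(\zero,b) = (\zero,ab)$, $(\zero,a)\ctw(b,\zero) = (\zero,ab)$, and $(\zero,a)\ctw(\zero,b) = (ab,\zero)$. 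Each product again lies in $(\tT \times \{\zero\}) \cup (\{\zero\}\times\tT)$ since $\tT$ is closed under its monoid multiplication and $ab \in \tT$; hence $\tT_{\widehat{\mathcal A}}$ is a submonoid of $\widehat\tT$, with the same unit $(\one_\tT,\zero)$. Associativity is free, being inherited from $\widehat\tT$ (established in the preceding Lemma).

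Next I would exhibit inverses. Given $a \in \tT$ with inverse $a^{-1} \in \tT$ (here is where the group hypothesis on $\tT$ enters), the table shows $(a,\zero)\ctw(a^{-1},\zero) = (aa^{-1},\zero) = (\one_\tT,\zero)$, so $(a,\zero)$ is invertible with inverse $(a^{-1},\zero)$. For the ``odd'' piece, $(\zero,a)\ctw(\zero,a^{-1}) = (aa^{-1},\zero) = (\one_\tT,\zero)$, so $(\zero,a)$ is invertible with inverse $(\zero,a^{-1})$ — note the inverse of an odd element is again odd, consistent with the $\Z_2$-grading. Since the twist multiplication on $\tT$-components is commutative when $\tT$ is commutative, and in any case the computations above are symmetric in the two factors, these inverses are two-sided, so every element of $\tT_{\widehat{\mathcal A}}$ is a unit. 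This establishes that $\tT_{\widehat{\mathcal A}}$ is a group.

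There is no serious obstacle here: the statement is essentially a bookkeeping verification that the twist action restricts well to the chosen subset and that the grading parity is respected under multiplication and inversion. The only point requiring a moment's care is confirming that the product never escapes $\tT_{\widehat{\mathcal A}}$ into a ``mixed'' element like $(x,y)$ with both $x,y \neq \zero$ — but the four cases above show that exactly one coordinate of any product of basis-type elements is $\zero$ (assuming $\tT$ has no zero divisors issue, which does not arise since we are multiplying elements of the group $\tT$, never adding). So the proof is a direct computation using \eqref{twi} together with closure of $\tT$ under multiplication and inverses.
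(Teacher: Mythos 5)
Your proof is correct and takes essentially the same approach as the paper, whose entire proof is just the sentence exhibiting the inverses $(a,\zero)^{-1}=(a^{-1},\zero)$ and $(\zero,a)^{-1}=(\zero,a^{-1})$; you add the (reasonable but routine) verifications of closure and associativity that the paper leaves implicit. One small slip: the two-sidedness of the inverses does not rely on any commutativity of $\tT$ or ``symmetry in the factors'' — it follows directly from $a^{-1}a=\one=aa^{-1}$ in the group $\tT$ applied coordinatewise, so that remark can be dropped.
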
 \begin{proof} In
$\tT_{\widehat{\mathcal A}} $, $(a,\zero)^{-1} = (a^{-1},\zero)$ and
$(\zero,a)^{-1} = (\zero,a^{-1})$.\end{proof}
\begin{defn}\label{symr1}
Let $ \mathcal A$ be a $\tT$-module.
\begin{enumerate}\eroman
    \item
    The \textbf{symmetrized pseudo-triple} $(\widehat {\mathcal A}, {\tT}_{\widehat{\mathcal A} },(-))$ is a $\tT$-pseudo-triple,
where $\widehat {\mathcal A} = \mathcal A
    \times \mathcal A$ with componentwise addition, and  with
    multiplication $\widehat {\tT}_{\widehat{\mathcal A} } \times \widehat {\mathcal A}\to
    \widehat {\mathcal A}$ given by the twist action. Here we take $(-)$ to be the switch
    map.
    \item
The \textbf{symmetrized pseudo-system} is the default $\circ$-system
of the symmetrized pseudo-triple. Here
\[
\widehat {\mathcal A}_\Null = \widehat {\mathcal A}^\circ = \{(b,b)
: b \in \mathcal A \}.
\]
\end{enumerate}
\end{defn}

We identify $\mathcal{A}$ inside $\widehat{\mathcal{A}}$ via the
injection $a \mapsto (a,\zero).$


 \begin{lem} $(\widehat{\mathcal A},
\tT_{\widehat{\mathcal A}} , (-))$  is a triple with unique negation
and $\widehat{\tT} ^\circ = \{ (a,a): a \in \tT \setminus \{\zero
\}\}.$
\end{lem}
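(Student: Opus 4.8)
The plan is to verify the three assertions in turn: (a) $(\widehat{\mathcal A},\tT_{\widehat{\mathcal A}},(-))$ is a triple (in the sense of Definition~\ref{modu21}(iii)); (b) it has unique negation (Definition~\ref{metadef}(i)); and (c) the tangible quasi-zeros are exactly $\{(a,a):a\in\tT\setminus\{\zero\}\}$. From the earlier lemmas we already know $\widehat{\mathcal A}$ is a $\widehat\tT$-module and (when $\tT$ is a group) $\tT_{\widehat{\mathcal A}}$ is a group under the twist action, and Definition~\ref{symr1} records $\widehat{\mathcal A}^\circ=\{(b,b):b\in\mathcal A\}$. So most ingredients are in place; the proof is mostly bookkeeping with the twist action.

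First I would check the triple axioms. The negation map $(-)$ is the switch map $(b_0,b_1)\mapsto(b_1,b_0)$, which is visibly an additive automorphism of order two and restricts to $\tT_{\widehat{\mathcal A}}$ (interchanging $(\tT\times\{\zero\})$ and $(\{\zero\}\times\tT)$), and it respects the twist action because in \eqref{twi} swapping the components of either factor swaps the components of the product. For $\tT_{\widehat{\mathcal A}}\cap\widehat{\mathcal A}^\circ=\emptyset$: an element of $\tT_{\widehat{\mathcal A}}$ has the form $(a,\zero)$ or $(\zero,a)$ with $a\in\tT$, while an element of $\widehat{\mathcal A}^\circ$ has equal components $(b,b)$; these coincide only if $a=\zero$, but $\zero\notin\tT$ (or, if $\mathcal A$ has a zero, we have excised it), so the intersection is empty — this is where the case distinction about $\zero$ enters, and it is the one genuinely fussy point. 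That $\tT_{\widehat{\mathcal A}}$ generates $(\widehat{\mathcal A},+)$ (away from $(\zero,\zero)$ if present) follows because any $(b_0,b_1)=(b_0,\zero)+(\zero,b_1)$ and each $b_i$ is a sum of elements of $\tT$ by the hypothesis that $\tT$ generates $\mathcal A$; hence $(b_0,\zero)$ and $(\zero,b_1)$ are sums of elements of $\tT\times\{\zero\}$ and $\{\zero\}\times\tT$ respectively.

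Next, unique negation. Suppose $(a_0,a_1)+(a_0',a_1')\in\widehat{\mathcal A}^\circ$ with both summands in $\tT_{\widehat{\mathcal A}}$; I must show $(a_0',a_1')=(a_1,a_0)$. The sum having equal components means $a_0+a_0'=a_1+a_1'$ in $\mathcal A$. Enumerating the four cases according to which slot of each tangible element is zero, two cases (both in $\tT\times\{\zero\}$, or both in $\{\zero\}\times\tT$) force $a=a'=\zero$, impossible; the remaining two mixed cases give $(a,\zero)+(\zero,a')$ with $a=a'$, i.e.\ the two elements are switch-images of each other, exactly as required. (If $\mathcal A$ has a zero one should note that $(\zero,\zero)\in\widehat{\mathcal A}^\circ$ and the argument is unchanged.) Finally, for $\widehat\tT^\circ$: by definition $\widehat\tT^\circ=\{t(-)t:t\in\tT_{\widehat{\mathcal A}}\}$, and $(a,\zero)(-)(a,\zero)=(a,\zero)+(\zero,a)=(a,a)$ while $(\zero,a)(-)(\zero,a)=(a,a)$ likewise, so $\widehat\tT^\circ=\{(a,a):a\in\tT\}$, minus $(\zero,\zero)$ in the pointed case, i.e.\ $\{(a,a):a\in\tT\setminus\{\zero\}\}$. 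The only real obstacle is being careful and uniform about whether $\mathcal A$ (hence $\tT$) contains $\zero$; everything else is a direct unwinding of the twist action and the definitions already established.
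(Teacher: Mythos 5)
Your proof is correct and follows the same route as the paper, which simply declares the first assertion ``clear'' and then writes out the computation $(a,a)=(a,\zero)+(\zero,a)=(a,\zero)(-)(a,\zero)$; you fill in the bookkeeping that the paper omits. One point worth flagging, though: in the case analysis for unique negation, when both summands lie in $\tT\times\{\zero\}$ you conclude from $a+a'=\zero$ that $a=a'=\zero$. That inference requires $(\mathcal A,+)$ to be zero-sum-free, which is the standard ambient assumption when symmetrizing (e.g.\ idempotent or max-plus semirings), but it is not a formal consequence of $\mathcal A$ being merely a $\tT$-module; indeed, if $\mathcal A=\mathbb Z$ then $(1,\zero)+(-1,\zero)=(\zero,\zero)\in\widehat{\mathcal A}^\circ$ yet $(-1,\zero)\ne(\zero,1)$, so unique negation fails. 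The paper's ``clear'' quietly presumes the same thing, so this is not an error you introduced, but the step should be justified by invoking that hypothesis rather than asserted as forced.
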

\begin{proof}
The first assertion is clear, and $\widehat{\tT} ^\circ = \{ (a,a):
a \in \tT \setminus \{\zero\}\}$ since for any $(a,\zero)$ (resp.~
$(\zero,a)$), $(-)(a,\zero)=(\zero,a)$
(resp.~$(-)(\zero,a)=(a,\zero)$) and hence $(a,a) = (a,\zero)+
(\zero, a) = (a,\zero)(-)(a,\zero).$
\end{proof}

 The
triple $\widehat{\mathcal A}$   is not meta-tangible. This can be
rectified by redefining addition on ${\tT}_{\widehat{\mathcal A}}$
to make it meta-tangible as done in \cite{Ga} or
\cite[Example~3.9]{Row16}, but here we find it convenient to use the
natural (componentwise) addition on $\widehat{\mathcal A}$, to make
it applicable for congruences.


Here is an important application of the twist, whose role in
tropical geometry is featured in \cite{JoM}.

\begin{defn}\label{symr}
The \textbf{symmetrized \semiring0} $\widehat{\mathcal{A}}: =
\mathcal A \times \mathcal A$ of a semiring $\mathcal{A}$ is
$\widehat{\mathcal{A}}$ viewed as a symmetrized
${\tT}_{\widehat{\mathcal A} }$-module, and made into a semiring via
the ``twisted'' multiplication of \eqref{twi} for $a_i, b_i \in
\mathcal{A}$.
\end{defn}



%


%


\subsection{Ground systems and module
systems}\label{tripsys1}$ $

%

Representation theory often is described in terms of an abelian
category, such as the class of modules over a given ring.
Analogously, there are two main ways of utilizing pseudo-systems.
%

\subsubsection{Ground systems}\label{tripsys11}$ $

We call a triple (resp.~system) $(\mathcal A, \tT, (-))$ a
\textbf{ground triple (resp.~ground  system)} when we study it as a
small category with a single object in its own right, often a
commutative semidomain. In short, our overall strategy is to fix a
ground
 $\tT$-triple $(\mathcal A, \tT, (-), \preceq),$  often $(-)$-bipotent,
  and then
consider its ``prime'' homomorphic images, as well as the module
systems over this ground
 $\tT$-system, to be defined presently.

\subsubsection{Module systems}\label{tripsys117}$ $

One defines a  \textbf{module} (called \textbf{semimodule} in
\cite{golan92}) over a semiring $\mathcal A$, in analogy to
 modules over rings.

\begin{defn} $ $
\begin{enumerate}\eroman
\item
Let $\mathcal A=(\mathcal A,\tT_\mathcal A,(-))$ be a
$\tT$-\semiring0 triple. A $\tT_{\mathcal M}$-triple $\mathcal M :=
(\mathcal M, \tT_{\mathcal M} , (-))$ is said to be a \textbf{left}
$\mathcal A$-\textbf{module triple} over a monoid triple $\mathcal A
= (\mathcal A, \tT_\mathcal A, (-))$ if $\mathcal M$ is an $\mathcal
A$-module such that $ \tT_\mathcal A$ acts on $\tT_\mathcal M$ and
satisfies the following condition:
\[
((-)a)b = a((-) b) = (-)(ab), \quad \forall a \in \tT_\mathcal A, \
b \in \tT_{\mathcal M}.
\]
Analogously, we define a \textbf{right} $\mathcal A$-\textbf{module}
and \textbf{right} $\mathcal A$-\textbf{module
 triple} from the other side, and an $(\mathcal A, \mathcal A')$-\textbf{bimodule
 triple} when  $\mathcal M$ is an $\mathcal A, \mathcal A'$-bimodule.
\item
 An  $\mathcal A$-\textbf{module system} over a ground
$\tT$-system $\mathcal A = (\mathcal A, \tT, (-),\preceq)$, is an $
\mathcal A $-module triple $(\mathcal M, \tT, (-))$ with a
surpassing relation  satisfying $ a_1 b_1 \preceq a_2b_2$ whenever
$a_1 \preceq a_2$ in $\tT $ and $b_1 \preceq
 b_2$ in $\mathcal M$. Analogously, one can define right module systems and bimodule
 systems.

\item
A \textbf{\GM\  system} is an $\mathcal A$-module system for which
$\tT_\mathcal A$ is a group.
\end{enumerate}
\end{defn}

So we study module systems over a fixed ground triple.
 We use $\mathcal M$ instead of $\mathcal A$ to denote a module
system.
\begin{rem}\label{morph79}The twist action (Definition~\ref{twistact}) on the module
$\widehat{ \mathcal M}$ over $\widehat{ \mathcal A}$ extends the
$\tT_{\mathcal A}$-module
      action on $\widehat{ \mathcal M}.$  Indeed, suppose $(x_0,x_1) \in \widehat{\mathcal M}$ and $(a_0,\zero) \in \widehat{\tT_{\mathcal A}}$.
Then $(a_0,\zero) \ctw (x_0,x_1)= (a_0 x_0,a_0 x_1)\in
\widehat{\mathcal M} $. \end{rem}

 \begin{prop}\label{congmult} If   $\mathcal N$ is a sub-triple of an   $\mathcal A $-module triple
 $\mathcal M$  with negation map
 $(-)$, then $\mathcal N$ becomes  a $\widehat{\tT_{\mathcal A}}$-submodule of $\mathcal
 M$ under the action $(a_0,a_1)x = a_0x (-) a_1 x.$\end{prop}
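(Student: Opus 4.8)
The plan is to verify directly that the formula $(a_0,a_1)\cdot x := a_0 x \,(-)\, a_1 x$ defines a $\widehat{\tT_{\mathcal A}}$-action on $\mathcal N$, i.e. that it is well-defined (lands in $\mathcal N$), is additive in $x$, is compatible with the twist-multiplication on $\widehat{\tT_{\mathcal A}}$, and respects the negation map. First I would check closure: for $(a_0,a_1)\in\widehat{\tT_{\mathcal A}}$ and $x\in\mathcal N$, both $a_0 x$ and $a_1 x$ lie in $\mathcal N$ because $\mathcal N$ is an $\mathcal A$-submodule (here $a_0,a_1\in\tT_{\mathcal A}\subseteq\mathcal A$), and $(-)$ preserves $\mathcal N$ since $\mathcal N$ is a sub-triple, so $a_0 x + ((-)a_1 x)\in\mathcal N$. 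Additivity in $x$ is immediate from distributivity of the $\mathcal A$-module action together with the fact that $(-)$ is a semigroup automorphism of $(\mathcal M,+)$, hence of $(\mathcal N,+)$.

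The substantive step is associativity with respect to the twist action. I would compute, for $(a_0,a_1),(b_0,b_1)\in\widehat{\tT_{\mathcal A}}$ and $x\in\mathcal N$, both
$\big((a_0,a_1)\ctw(b_0,b_1)\big)\cdot x$ and $(a_0,a_1)\cdot\big((b_0,b_1)\cdot x\big)$.
By Definition~\ref{twistact} the left side is $(a_0 b_0 + a_1 b_1)x \,(-)\, (a_0 b_1 + a_1 b_0)x$, and expanding via module distributivity this is
$a_0 b_0 x + a_1 b_1 x \,(-)\, a_0 b_1 x \,(-)\, a_1 b_0 x$.
For the right side, $(b_0,b_1)\cdot x = b_0 x \,(-)\, b_1 x$, and then applying $(a_0,a_1)\cdot(-)$ and using the module-triple identity $((-)a)b = a((-)b) = (-)(ab)$ for $a\in\tT_{\mathcal A}$, $b\in\tT_{\mathcal M}$ — extended additively to all of $\mathcal M$ exactly as in the proof of Lemma~\ref{circm}(ii) — together with the associativity of the $\mathcal A$-module action, one gets $a_0 b_0 x + a_1 b_1 x \,(-)\, a_0 b_1 x \,(-)\, a_1 b_0 x$ as well. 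The two agree. I would also note the unit $(\one_{\tT_{\mathcal A}},\zero)$ acts as the identity, and that $(-)\big((a_0,a_1)\cdot x\big) = (a_0,a_1)\cdot((-)x)$ follows from $(-)$ being an automorphism commuting with scalar multiplication; equivalently, the switch $(-)(a_0,a_1)=(a_1,a_0)$ acts compatibly.

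The main obstacle I anticipate is purely bookkeeping: carefully pushing the negation map through sums and products. The identity $((-)a)b=a((-)b)=(-)(ab)$ is stated only for tangible $a,b$, so I need the additive extension (valid because $\tT_{\mathcal A}$ and $\tT_{\mathcal M}$ generate $\mathcal A$ and $\mathcal M$ additively and $(-)$ is additive), and I must be slightly careful that $a_0,a_1$ act as scalars from $\mathcal A$ while $x$ ranges over the module — so the relevant compatibility is $((-)a_0)x = a_0((-)x) = (-)(a_0 x)$ for $a_0\in\tT_{\mathcal A}$, $x\in\mathcal M$, which is exactly the module-triple axiom extended additively in $x$. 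Once that is in hand, the computation of both sides of the associativity identity reduces to commutativity/associativity in $\mathcal A$ and the additive structure of $\mathcal M$, with no further subtlety.
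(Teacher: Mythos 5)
Your proposal is correct and takes the same direct-verification route as the paper's (very terse) proof, which simply checks that $(a(a_0,a_1))x = aa_0x\,(-)\,aa_1x = a((a_0,a_1)x)$ and says ``likewise for addition.'' Your write-up is considerably more thorough — you additionally verify closure in $\mathcal N$, the full twist-associativity $((a_0,a_1)\ctw(b_0,b_1))x = (a_0,a_1)((b_0,b_1)x)$ (of which the paper's displayed identity is the diagonal special case), the unit, and negation-compatibility — but the underlying argument is the same elementary bookkeeping.
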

\begin{proof} $(a(a_0,a_1))x = aa_0x (-) aa_1 x = a((a_0,a_1)x).$
Likewise for addition.
\end{proof}

 We note  a conflict between the switch map on $\widehat {\mathcal A} $
and a given negation map on $\mathcal A $, which do not match;
$(a_1, a_0) = \widehat{(-)}(a_0,a_1) \ne ((-)a_0,(-)a_1)$ unless
$a_1 = (-)a_0$. Fortunately this does not affect
Proposition~\ref{congmult} since
$$(a_1,a_0)x =  a_1x (-) a_0 x = (-)( a_0x (-) a_1 x) =
(-)((a_0,a_1)x)= (((-)a_0,(-)a_1)x).$$

\subsubsection{The characteristic sub-triple}$ $

\begin{defn}
A \textbf{sub-$\tT$-triple} of a  $\tT$-triple $(\mathcal A,
\tT_\mathcal A, (-))$ is a   triple $(\mathcal {A'}, \tT_{ \mathcal
A'}, (-))$ where $\tT_{ \mathcal A'}$ is a subset of $\tT_\mathcal
A$ (with the relevant structure) and $\mathcal {A'}$ is the
sub-$\tT$-module of $\mathcal A$ generated by $\tT_{ \mathcal A'}$.
%

\end{defn}

\begin{example}\label{sub7} Suppose $(\mathcal A, \tT_{\mathcal A},
(-))$ is a triple. The \textbf{characteristic sub-triple} $\mathcal
A_\one$ is the sub-triple generated by $\one,$ which is $\mathcal
A_\one: = \{ \one, (-)\one,\, e:=\one(-)\one,\, \dots\}$ and
$\tT_{\mathcal A_\one} := \{ \one, (-)\one\}.$ If $\mathcal A$ is a
\semiring0 then clearly $(\mathcal A, \tT_{\mathcal A}, (-))$ is a
module triple over $\mathcal A_\one$. This ties in with other
approaches to tropical algebra, and to some fundamental hyperfields,
as follows.

If $e=\one$ then we have the Boolean semifield, so assume that $e
\ne \one.$

If $e + \one =\one$ then   $e$ behaves like $0$, and we wind up with
$\mathcal A_\one$ isomorphic to $\Z$ or $\Z/n\mathbb Z$ for some
$n>1$.

So assume that $e + \one \ne\one$.
 In
height 2, $e + \one \in \{(-) \one, e\}.$ Recall that a negation map
is of the first kind if $(-)a=a$ for any $a \in \tT$, and of the
second kind if $(-)a \neq a$ for any $a \in \tT$ (\cite[Definition
2.22]{Row16})).

  \begin{enumerate}
 \eroman
    \item
    If $(-)$ is of the first kind     (as in the supertropical case), then $e+ \one = e.$
       $\mathcal A_\one$ is $\{ \one,   e\}$. This corresponds to the   ``Krasner hyperfield''  $K = \{ 0, 1 \}$
with the usual operations of Boolean algebra, except that now $ 1
\boxplus  1 = \{ 0, 1 \} ,$  and we can identify $\{ 0, 1 \} $ with
$ 1^\nu.$

 \item  If $(-)$ is of the second kind, the we have two cases.
      \begin{enumerate}
\item In the $(-)$-bipotent
case $\mathcal A_\one = \{ \one, (-)\one, e\}$, with $\one+ \one =
\one,$ which is the symmetrized triple of the trivial idempotent
triple $\{ \one \}.$ This corresponds to the ``hyperfield of signs''
$ S := \{ 0, 1 , -1\}$  with the usual multiplication law and
hyperaddition defined by $1 \boxplus  1 = \{ 1\} ,$ $-1 \boxplus  -1
= \{ -1\} ,$ $ x \boxplus  0 = 0 \boxplus  x = \{ x\} , \forall x,$
and $1 \boxplus  -1 = -1 \boxplus  1 = \{ 0, 1,-1\} = S.$
 \item $(-)$ is of the second kind but
 non-$(-)$-bipotent.
Then $e+ \one = (-)\one,$ which leads to a strange structure of
characteristic 4 since $\mathbf 2 = (e (-)\one)+\one = e + e =
(-)\one(-)\one = (-)\mathbf 2 $, $e + \mathbf 2 = \one (-)\one = e$,
and $e+ e +e = ( e +\one) + e +(-)\one = e$.
    \end{enumerate}
      \end{enumerate}
    In either case one could adjoin $\{ \zero \},$ of course.
    Height $>2$ is more intricate, involving layered structures.
\end{example}

\subsubsection{The  $\Hom$ triple}\label{sysrep}$ $

 $\Hom (\mathcal M,\mathcal N)$ denotes  the set of morphisms from
$\mathcal  M$ to $\mathcal N$ and $\Hom (\mathcal M,\mathcal
N)_{\tT}$ be the subset of $\tT$-morphisms. We use the given module
negation map to define a negation map on $\Hom$, and get a
pseudo-triple.
%
%

\begin{rem}  Suppose $\mathcal M: = (\mathcal M, \tT_{\mathcal M}, (-), \preceq)$ and $\mathcal N : = (N, \tT_{\mathcal N},
  (-)
  \preceq)$ are module systems over a ground $\tT$-system $  (\mathcal A, \tT,
(-), \preceq).$ If $\mathcal  M,\mathcal N$ are $\tT$-monoid modules
then so is $\Hom (\mathcal M,\mathcal N)$, and, for $\mathcal A$
commutative, $\tT$ acts on $\Hom _\tT (\mathcal M,\mathcal  N)$ via
the left multiplication map $\ell_a $ given by $\ell_a(f) = af.$ The
action is elementwise: $(af)(x) : = a f(x).$ We view $\tT_{\Hom
(\mathcal M,\mathcal N)}: = \{f \in \Hom (M,N): f(\tT_{\mathcal M})
\subseteq \tT_N\} $ in $\Hom (M,N)$ via these left multiplication
maps.\end{rem}

  \begin{prop}\label{endo}$ $

\begin{enumerate}  \item  \eroman
  \ $\Hom
(\mathcal M,\mathcal N): = (\Hom (\mathcal  M, \mathcal N),
\tT_{\Hom (\mathcal M,\mathcal N)}, (-))$ is a pseudo-triple, where
 $(-)$   is defined elementwise, i.e., $((-)f)(x) = (-)(f(x))$.
 \item  The pseudo-triple $\Hom (\mathcal M,\mathcal N) $ has unique negation if $(\mathcal N, \tT_{\mathcal N},
  (-)
  \preceq)$ has unique negation.
 \item
  \ $\Hom
(\mathcal M,\mathcal N): = (\Hom (\mathcal  M, \mathcal N),
\tT_{\Hom (\mathcal M,\mathcal N)}, (-), \preceq)$ is a
pseudo-system, where   $f \preceq g$ if and only if $f(x) \preceq
g(x)$ for all $x \in M.$

    \item  If $\mathcal A$ is a $\tT$-semiring system, then
     $ \Hom (\mathcal M,\mathcal M),\tT_{\Hom (\mathcal M,\mathcal M)}, (-), \preceq)$ is a  semiring
    system.

 \end{enumerate}
  \end{prop}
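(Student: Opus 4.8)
The plan is to verify the four claims of Proposition~\ref{endo} in order, each reducing to the corresponding property of $\mathcal N$ (or of $\mathcal A$ in part (iv)) checked pointwise, since every structure on $\Hom(\mathcal M,\mathcal N)$ has been defined elementwise.

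For part (i), I would first recall that $(\Hom(\mathcal M,\mathcal N),+)$ is an additive semigroup, with $(f+g)(x)=f(x)+g(x)$; distributivity of the $\tT$-action over finite sums in $\Hom(\mathcal M,\mathcal N)$ follows from distributivity in $\mathcal N$ applied pointwise. Then I would check that the elementwise $(-)$, namely $((-)f)(x)=(-)(f(x))$, is a semigroup automorphism of $(\Hom(\mathcal M,\mathcal N),+)$ of order at most two (immediate from the corresponding property of $(-)$ on $\mathcal N$), and that it respects the $\tT$-action, i.e.\ $a((-)f)=((-)a)f$, again pointwise using the module-triple axiom $a((-)y)=((-)a)y$ in $\mathcal N$. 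One must also confirm that $(-)$ maps $\tT_{\Hom(\mathcal M,\mathcal N)}=\{f:f(\tT_{\mathcal M})\subseteq \tT_{\mathcal N}\}$ into itself, which holds because $(-)$ preserves $\tT_{\mathcal N}$. This gives a $\tT_{\Hom(\mathcal M,\mathcal N)}$-pseudo-triple.

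For part (ii), suppose $f+f'\in \tT_{\Hom(\mathcal M,\mathcal N),\Null}$ with $f,f'\in \tT_{\Hom(\mathcal M,\mathcal N)}$. Since $\Null$ on $\Hom$ is defined via $\preceq$, which is itself pointwise, evaluating at any tangible $x\in\tT_{\mathcal M}$ yields $f(x)+f'(x)\in\tT_{\mathcal N,\Null}$ with $f(x),f'(x)\in\tT_{\mathcal N}$; by unique negation in $\mathcal N$ this forces $f'(x)=(-)f(x)$ for all such $x$, hence (using that $\tT_{\mathcal M}$ generates $\mathcal M$ and both $f',(-)f$ are additive) $f'=(-)f$. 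For part (iii), I would check that the pointwise relation $f\preceq g \iff f(x)\preceq g(x)\ \forall x$ inherits from $\preceq$ on $\mathcal N$ all five conditions of Definition~\ref{precedeq07}: it is a partial pre-order since $\preceq$ on $\mathcal N$ is; conditions (ii)--(v) all transfer pointwise; condition (i), that $f_1\preceq f$ when $f_1+h^\circ=f$ for $h\in\Hom(\mathcal M,\mathcal N)^\circ$, follows by evaluating and noting $h(x)\in\mathcal N^\circ$. Finally for part (iv), with $\mathcal N=\mathcal M$ and $\mathcal A$ a $\tT$-semiring system, composition gives the multiplication $(fg)(x)=f(g(x))$; I would verify that composition is associative, distributes over addition from both sides (using additivity of the endomorphisms and distributivity in $\mathcal M$), and interacts correctly with $(-)$ and with $\preceq$ (the compatibility $f_1g_1\preceq f_2g_2$ when $f_1\preceq f_2$, $g_1\preceq g_2$ follows by a two-step pointwise argument through $f_1(g_2(x))$).

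I do not expect a serious obstacle here; the content is bookkeeping. The one place requiring a little care is part (ii), where one must pass from "$f'(x)=(-)f(x)$ for tangible $x$" to "$f'=(-)f$ as morphisms"; this is where I invoke that $\tT_{\mathcal M}$ generates $\mathcal M$ additively together with additivity of $f'$ and of $(-)f$, and the uniqueness of the extension of an additive map from a generating set. A secondary point of care is in part (iv), verifying that composition preserves $\preceq$: the naive one-step estimate does not immediately work, so I would interpolate, writing $f_1 g_1 \preceq f_1 g_2$ (from $g_1\preceq g_2$ and applying $f_1$, which preserves $\preceq$ since it is a $\preceq$-morphism) and $f_1 g_2 \preceq f_2 g_2$ (from $f_1\preceq f_2$ evaluated at $g_2(x)$), then using transitivity.
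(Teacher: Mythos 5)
Your proposal takes essentially the same route as the paper's proof: each part is verified elementwise, part (ii) by evaluating at tangible $x$ and invoking unique negation in $\mathcal N$, part (iii) by transferring the conditions of Definition~\ref{precedeq07} pointwise, and part (iv) via composition. You fill in two details the paper's terse proof passes over (the passage from $f'(x)=(-)f(x)$ on $\tT_{\mathcal M}$ to $f'=(-)f$ via additive generation, and the two-step interpolation $f_1g_1\preceq f_1g_2\preceq f_2g_2$ for compatibility of composition with $\preceq$), but these are elaborations rather than a different argument.
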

 \begin{proof} (i)  Check all properties elementwise

 (ii) Suppose $f+g = h^\circ.$ Then $f(x)+g(x) = h(x)^\circ$ for each $x \in
 \tT_{\mathcal M}$,
 implying $g(x) = (-)f(x),$ and thus $g = (-)f.$

 (iii) Check  the
 conditions of Definition~\ref{precedeq07}       .

(iv) $fg(x) = f(g(x)).$

 \end{proof}

\begin{defn}  For a system $\mathcal S = (\mathcal S, \tT_{\mathcal S}, (-),
\preceq)$  write $\mathcal S ^*$ for $\Hom (\mathcal S,\mathcal A)$,
and  $\tT _{\mathcal S} ^*$  for   $\{ f|_{\tT_{\mathcal S}} : f \in
\mathcal S ^*$ with $f(\tT_{\mathcal S}) \subseteq \tT_{\mathcal
A}\}.$\end{defn}

\begin{prop}\label{endosys2}  Let $\mathcal S =
{\mathcal A}^{(I)},$ where  $\mathcal A = (\mathcal A, \tT, (-),
\preceq)$ is a system. For $I$ finite, the \textbf{dual system} $(
\mathcal S^* ,\tT_\mathcal S^*, (-),\preceq)$ also is a system,
where $(-)$ and $\preceq$ are defined elementwise.
\end{prop}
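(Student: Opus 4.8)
The plan is to reduce the statement to the already-established structural facts about $\Hom$-pseudo-systems, namely Proposition~\ref{endo}, and then check the two extra conditions that upgrade a pseudo-system to a system: the surpassing relation must be a genuine surpassing relation (it already is, elementwise, by Proposition~\ref{endo}(iii)), the underlying pseudo-triple must be a triple (i.e., $\tT_{\mathcal S^*}$ must generate $\mathcal S^*$ additively and meet $(\mathcal S^*)^\circ$ trivially), and the whole thing must be uniquely negated. The hypothesis that $\mathcal S = \mathcal A^{(I)}$ with $I$ finite is what makes all of this work, because a morphism $\mathcal A^{(I)} \to \mathcal A$ is determined by its values on the finitely many standard generators $e_i$, and conversely any choice of values $c_i \in \mathcal A$ extends to a morphism; so $\mathcal S^*$ is identified with $\mathcal A^{I} = \mathcal A^{(I)}$ as a $\tT$-module, and $\tT_{\mathcal S^*}$ with the subset of tuples all of whose entries lie in $\tT \cup \{\zero\}$ (the ``tangible coordinate-functionals'' together with their sums).

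The key steps, in order. First I would make the identification $\mathcal S^* \cong \mathcal A^{(I)}$ explicit: send $f$ to $(f(e_i))_{i \in I}$; this is a $\tT$-module isomorphism because $\mathcal A^{(I)}$ is free on the $e_i$ in the relevant sense (a morphism is pinned down by, and freely specified by, its values on the $e_i$, using finiteness of $I$ for the ``freely specified'' direction, i.e.\ that the obvious candidate is additive and respects $(-)$). Second, under this identification the negation map and the surpassing relation defined elementwise on $\Hom$ correspond to the componentwise negation map and componentwise $\preceq$ on $\mathcal A^{(I)}$; this is immediate from the definitions in Proposition~\ref{endo}. Third, I would verify that $(\mathcal A^{(I)}, \tT_{\mathcal A^{(I)}}, (-), \preceq)$ is a system whenever $\mathcal A$ is: $\tT_{\mathcal A^{(I)}}$ generates $\mathcal A^{(I)}$ additively since $I$ is finite (every tuple is a finite sum of tuples supported in one coordinate), $\tT_{\mathcal A^{(I)}} \cap (\mathcal A^{(I)})^\circ = \emptyset$ because a quasi-zero has every coordinate in $\mathcal A^\circ$ while a nonzero tangible tuple has some coordinate in $\tT_{\mathcal A}$, and unique negation of $\mathcal A^{(I)}$ follows coordinatewise from unique negation of $\mathcal A$: if $(a_i) + (a_i') \in \tT_\Null$ componentwise then $a_i + a_i' \in \tT_\Null$ for each $i$, forcing $a_i' = (-)a_i$. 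Finally, transporting this back along the isomorphism gives that $(\mathcal S^*, \tT_{\mathcal S}^*, (-), \preceq)$ is a system, which is the claim. (Strictly, $\tT_{\mathcal S}^*$ as defined is the set of restrictions $f|_{\tT_{\mathcal S}}$ of morphisms taking $\tT_{\mathcal S}$ into $\tT_{\mathcal A}$; one notes this matches $\tT_{\Hom(\mathcal S, \mathcal A)}$ from the preceding remark, so no discrepancy arises.)

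The main obstacle I anticipate is not any single deep step but rather the bookkeeping in the identification $\mathcal S^* \cong \mathcal A^{(I)}$: one must be careful that the ``dual'' of a free module on a finite index set is again free on that set \emph{as a $\tT$-module with negation}, i.e.\ that the evaluation-at-generators map is not merely a bijection of underlying sets but respects $+$, the $\tT$-action, and $(-)$, and that $\tT_{\mathcal S}^*$ corresponds exactly to the intended tangible set on the other side. Finiteness of $I$ is essential here (for infinite $I$ the coproduct $\mathcal A^{(I)}$ and product $\mathcal A^{I}$ diverge, and a functional need not be tangible-valued on a generating set), which is presumably why the hypothesis is imposed. Once that identification is nailed down, the remaining verifications are the routine coordinatewise checks sketched above, each inherited directly from the corresponding property of $\mathcal A$.
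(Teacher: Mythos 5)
Your approach is essentially a fleshed-out version of the paper's sketch: the paper decomposes any $f\in\mathcal S^*$ as a sum of functionals supported on a single coordinate, implicitly using that a morphism out of $\mathcal A^{(I)}$ is determined by its values on the $e_i$, and remarks that finiteness of $I$ is what makes $\tT_{\mathcal S}^*$ generate. You reorganize this as an explicit $\tT$-module isomorphism $\mathcal S^*\cong\mathcal A^{(I)}$, $f\mapsto (f(e_i))_i$, which is really the content of the following Proposition~\ref{endosys3}; that is a legitimate packaging and makes the dependence on finiteness more visible.

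There is, however, one concrete slip you should close. Under your identification, the paper's $\tT_{\mathcal S}^*$ (which you quote correctly at the start: functionals $f$ with $f(\tT_{\mathcal S})\subseteq\tT_{\mathcal A}$) corresponds to the tuples all of whose nonzero coordinates are tangible, not to $\tT_{\mathcal A^{(I)}}$, which is the much smaller set of tuples supported on a single coordinate. Your third step verifies the system axioms for $(\mathcal A^{(I)},\tT_{\mathcal A^{(I)}},(-),\preceq)$, i.e.\ for the free module system of Example~\ref{modlift}, which is not the triple you obtain by transport. You flag the worry yourself (``that $\tT_{\mathcal S}^*$ corresponds exactly to the intended tangible set on the other side'') but never resolve it, so as written the argument proves a slightly different statement. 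The fix is short: the three checks (additive generation, $\tT\cap(\cdot)^\circ=\emptyset$, unique negation) go through for the larger tangible set by the same coordinatewise reasoning, and this is precisely where finiteness of $I$ earns its keep (one needs a common bound on the heights of the coordinates $f(e_i)$ to write $f$ as a finite sum of tangible functionals). By contrast, your parenthetical justification ``$\tT_{\mathcal A^{(I)}}$ generates $\mathcal A^{(I)}$ additively since $I$ is finite'' is misplaced: that particular fact holds for any $I$, since elements of $\mathcal A^{(I)}$ are finitely supported by definition; it is not the point at which finiteness is used.
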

\begin{proof}  Let $\one_i$ denote the vector whose only component $\ne \zero$
is $\one$ in the $i$-th position. Given a morphism $f: \mathcal S
\to \mathcal A ,$ we define $f_{i }\in \tT_{\Hom (\tT_{\mathcal M},
\tT_{\mathcal N})}$ to be the map sending $\one_i$ to  $f(\one_i),$
and  $\tT_{i'}$ to $\zero$ for all other components $i' \ne i.$ It
is easy to see that $f$ is generated by the $f_{i }.$
\end{proof}

  (We need the hypothesis
that $I$ is finite in order for  $\tT _{\mathcal S} ^*$   to
generate $\mathcal S^*$.)

 As in usual linear algebra, when  $\mathcal A$ is commutative as well as associative, we can embed  $\mathcal S  $ into  $\mathcal S
 ^*$. Write $\bold a $ for $ (a_i)\in \tT^{(I)},$ and define  $\bold a^* \in  \mathcal S ^*$
 by $\bold a^*  ((b_i)) = \bold a \cdot (b_i) = \sum a_ib_i.$
Let $e_i$ denote the vector with $\one$ in the $i$ position and
$\zero$ elsewhere.

\begin{prop}\label{endosys3} Suppose  $\bold a = (a_i)\in \mathcal S =
{\mathcal A}^{(I)}$.
  Then $\bold a^* = \sum _i a_ie_i^*\in  \mathcal S^*$ is spanned over $\tT$  by the
$e_i^*$. There is an injection $(\mathcal S, \tT_{\mathcal S},(-))
\to (\mathcal S^*, \tT_{\mathcal S}^*,(-))$ given by $\bold a \to
\bold a^*,$ which is onto when $I$ is finite.
\end{prop}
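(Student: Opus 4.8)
The plan is to realize $\mathcal S^*$ through coordinate functionals and then read everything off of evaluation. For each $i\in I$ let $e_i^*$ be the functional $e_i^*\big((b_j)_j\big)=b_i$; it is clearly additive and commutes with the $\tT$-action, hence $e_i^*\in\Hom(\mathcal S,\mathcal A)=\mathcal S^*$, and since it sends a tangible vector $t e_k$ (with $t\in\tT_{\mathcal A}$) to $t$ or to $\zero$, its restriction lies in $\tT_{\mathcal S}^*$. Evaluating on an arbitrary $(b_j)_j\in\mathcal S$ gives $\big(\sum_i a_i e_i^*\big)\big((b_j)_j\big)=\sum_i a_i b_i=\bold a\cdot(b_j)$, which is exactly $\bold a^*\big((b_j)_j\big)$; this proves the identity $\bold a^*=\sum_i a_i e_i^*$, a finite sum because $\bold a$ has finite support. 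Since $\tT_{\mathcal A}$ generates $(\mathcal A,+)$ by the triple axioms, writing each $a_i$ as a finite sum of tangibles rewrites $\bold a^*$ as a $\tT$-linear combination of the $e_i^*$, so $\bold a^*$ lies in the $\tT$-span of $\{e_i^*\}$ (and, when $I$ is finite, $\mathcal S^*$ equals this $\tT$-span, as in the remark following Proposition~\ref{endosys2}).

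Next I would check that $\bold a\mapsto\bold a^*$ is a morphism of triples: the identities $(\bold a+\bold b)^*=\bold a^*+\bold b^*$ and $((-)\bold a)^*=(-)(\bold a^*)$ both follow by evaluating each side at an arbitrary $(b_j)_j$ and invoking additivity and the negation map of $\mathcal A$, while the single-summand case of the previous computation shows that $\bold a\in\tT_{\mathcal S}$ is carried into $\tT_{\mathcal S}^*$ (up to the appearance of $\zero$, which is permitted for morphisms). For injectivity, suppose $\bold a^*=\bold b^*$; evaluating at the vector $e_j$, which has $\one$ in slot $j$ and $\zero$ elsewhere, yields $a_j=\bold a^*(e_j)=\bold b^*(e_j)=b_j$ for every $j\in I$, so $\bold a=\bold b$. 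Note that this step uses nothing about the cardinality of $I$.

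For surjectivity when $I$ is finite, given $f\in\mathcal S^*$ set $\bold a=\big(f(e_i)\big)_{i\in I}$, which lies in $\mathcal A^{(I)}=\mathcal S$ precisely because finiteness of $I$ forces this tuple to have finite support. Any $v=(b_j)_j\in\mathcal S$ is the finite sum $\sum_j b_j e_j$, so additivity, $\tT$-linearity of $f$, and distributivity in $\mathcal A$ (decomposing each $b_j$ into tangibles if $f$ is only $\tT$-linear) give $f(v)=\sum_j b_j f(e_j)=\sum_j a_j b_j=\bold a^*(v)$, i.e.\ $f=\bold a^*$.

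The only genuine obstacle is isolating exactly where finiteness of $I$ is used: without it the tuple $\big(f(e_i)\big)_i$ need not have finite support — concretely, the functional $\sum_{i\in I}e_i^*$ is a legitimate element of $\mathcal S^*$ with no finite-support preimage — so surjectivity really does fail, and likewise $\tT_{\mathcal S}^*$ need not generate $\mathcal S^*$. Everything else is routine elementwise bookkeeping, the only delicate point being the (benign) need to allow $\zero$ as a value when a tangible vector is sent to $\zero$.
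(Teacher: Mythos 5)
Your proof is correct and is exactly the argument the paper has in mind: the paper's own proof simply says "Just as in the classical case, noting that negation is not used in its proof," i.e., it invokes the standard dual-basis computation, which you carry out explicitly (coordinate functionals $e_i^*$, evaluation at the basis vectors $e_j$ for injectivity, and reconstruction of $f$ from $(f(e_i))_i$ for surjectivity when $I$ is finite). You also correctly flag the two small subtleties the paper leaves implicit — that one must decompose coefficients into tangibles to pass from $\tT$-linearity to the desired $\mathcal A$-bilinear identity, and that tangible vectors may be sent to $\zero$, which the paper's Definition~\ref{homodef} and its footnote explicitly permit.
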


\begin{proof} Just as in the classical case, noting that negation is
not used in its proof.
\end{proof}

 \subsection{Morphisms of systems}\label{extsys}$ $

We have two kinds of morphisms.

    \begin{defn}\label{mor} A \textbf{$\preceq$-morphism}  of   pseudo-systems $$\varphi:
(\mathcal A, \tT, (-), \preceq)\to (\mathcal A', \tT', (-)',
\preceq')$$ is a map $\varphi: \mathcal A \to \mathcal A'$ together
with $\varphi: \tT \to \tT '$ satisfying the following
properties  for $a  \in \tT$ and   $b\preceq b'$, $b_i$  in
$\mathcal A$:
\begin{enumerate}\eroman  \item $ \varphi((-)b_1)\preceq    (-)
\varphi(b_1);$
\item $ \varphi(b_1 + b_2) \preceq ' \varphi(b_1) + \varphi( b_2) ;$
\item  $ \varphi(a  b)\preceq  \varphi(a)   \varphi( b) $.
\item $ \varphi(b) \preceq ' \varphi(b').$
\item
$\varphi ( {\mathcal A}_{\Null} )\subseteq  \mathcal {A'}_{\Null}.$
\item When $\zero_\mathcal A \in \mathcal A,$ we also require that
$\varphi (\zero_\mathcal A ) = \zero_\mathcal A' .$
 \end{enumerate}

A \textbf{homomorphism}  of   pseudo-systems $\varphi: (\mathcal A,
\tT, (-), \preceq)\to (\mathcal A', \tT', (-)', \preceq')$ is
defined in the same way, but with equality holding in (i),(ii) and
(iii).
  \end{defn}

These will be cast in terms of universal algebra in
\S\ref{surpre202}.

\begin{rem}\label{morph71}
Let $\varphi: (\mathcal A, \tT, (-), \preceq)\to (\mathcal A', \tT',
(-)', \preceq')$ be a $\preceq$-morphism of pseudo-systems.
Conditions (ii) and (vi) imply (v) when $\zero \in \mathcal A$.
 \end{rem}

Even if $(\mathcal A, \tT, (-))$ is a system, $\varphi(\mathcal
A)\cap \tT_ {\mathcal {A'}}$ need not generate $ \mathcal {A'}$, so
we add this stipulation for morphisms of systems.

 As in classical module theory, when treating
$\preceq$-morphisms of module systems over a given ground system,
one always assumes that $\varphi$ is the identity on $\tT,$ so (iii)
becomes $ \varphi(a b) \preceq
 a    \varphi( b) $.

\begin{lem}\label{circm2}
 The map $a \mapsto a ^\circ$
 is a $ \preceq_\circ$-morphism of $\tT$-semiring systems.
 \end{lem}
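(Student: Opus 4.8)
The plan is to verify, one by one, the defining properties of a $\preceq_\circ$-morphism (Definition~\ref{mor}) for the map $\sigma: \mathcal A \to \mathcal A$, $\sigma(a) = a^\circ = a(-)a$, between the $\tT$-semiring system $(\mathcal A,\tT,(-),\preceq_\circ)$ and itself. The underlying map on tangibles sends $a \in \tT$ to $a^\circ \in \mathcal A^\circ$; note $\sigma$ is not a homomorphism (it does not land in $\tT$), which is exactly why we need the more flexible $\preceq$-morphism notion here. First I would record the algebraic identity $\sigma(a) + \sigma(b) = a^\circ + b^\circ = (a+b)^\circ = \sigma(a+b)$, using that $(-)$ is a semigroup automorphism of $(\mathcal A,+)$; this gives property (ii) with \emph{equality}, hence certainly $\preceq_\circ$. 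For (i), $\sigma((-)b) = ((-)b)^\circ = (-)b + (-)((-)b) = (-)(b + (-)b) = (-)(b^\circ) = (-)\sigma(b)$, again an equality. For (iii), $\sigma(ab) = (ab)^\circ = ab(-)ab = a(b(-)b) = a\,b^\circ = a\,\sigma(b)$ by the negation-map compatibility $a((-)b) = ((-)a)b$ together with distributivity of $\tT$ over sums; in the module-over-ground-system convention $\varphi$ is the identity on $\tT$, so this is the required $\sigma(ab) \preceq_\circ a\,\sigma(b)$ (in fact equality).

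Property (iv) is the monotonicity $b \preceq_\circ b' \Rightarrow \sigma(b) \preceq_\circ \sigma(b')$: if $b' = b + c^\circ$ for some $c \in \mathcal A$, then $\sigma(b') = (b')^\circ = (b + c^\circ)^\circ = b^\circ + (c^\circ)^\circ = \sigma(b) + (c^\circ)^\circ$, and $(c^\circ)^\circ \in \mathcal A^\circ$, so $\sigma(b) \preceq_\circ \sigma(b')$ by the definition of $\preceq_\circ$. Property (v), $\sigma(\mathcal A_\Null) \subseteq \mathcal A_\Null$: since we are working with $\preceq_\circ$ on a pseudo-triple with (one may assume) $\zero \in \mathcal A$, Remark~\ref{PO1}(ii) identifies $\mathcal A_\Null$ with $\mathcal A^\circ$, and $\sigma(\mathcal A^\circ) = (\mathcal A^\circ)^\circ \subseteq \mathcal A^\circ$; if no zero is present one argues directly from the definition $\mathcal A_\Null = \{b : b + b' \succeq b'\}$, noting $b^\circ + b' = (b + b')^\circ \succeq_\circ b'$... wait, one wants $b^\circ$ itself null, which follows since $b^\circ + b' \succeq_\circ b'$ is immediate as $b^\circ \in \mathcal A^\circ$. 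Property (vi), $\sigma(\zero) = \zero^\circ = \zero$, is trivial when $\zero$ exists. Finally I should check $\sigma$ respects the $\tT$-semiring structure in the sense needed, i.e. multiplicativity up to $\preceq_\circ$: $\sigma(a)\sigma(b) = a^\circ b^\circ = (ab)^{\circ\circ}$, which $\preceq_\circ$-surpasses... actually here the cleanest statement is $\sigma(ab) = (ab)^\circ \preceq_\circ a^\circ b^\circ = \sigma(a)\sigma(b)$, since $a^\circ b^\circ = (ab)^\circ + (ab)^\circ = (ab)^\circ + ((ab)^\circ)^\circ$... let me just say: $a^\circ b^\circ = ab^\circ(-)ab^\circ = (ab)^\circ{}^\circ + (ab)^\circ$ hmm — the honest route is to expand $a^\circ b^\circ = (a(-)a)(b(-)b)$ and observe it equals $(ab)^\circ$ plus a quasi-zero, so $\sigma(ab) \preceq_\circ \sigma(a)\sigma(b)$.

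I do not expect any deep obstacle; every clause reduces to the identity $(x+y)^\circ = x^\circ + y^\circ$ and the interaction $a\,x^\circ = (ax)^\circ$, both consequences of $(-)$ being an additive automorphism compatible with the $\tT$-action. The one point requiring a little care is the multiplicative clause (iii) and the semiring-compatibility: whether one gets genuine equality $\sigma(ab) = a\sigma(b)$ (true, as shown) versus only $\preceq_\circ$ for the product $\sigma(a)\sigma(b)$ of two $\sigma$-images — and making sure one quotes the correct convention (Definition~\ref{mor}, with $\varphi = \mathrm{id}$ on $\tT$, so (iii) reads $\varphi(ab) \preceq a\varphi(b)$). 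The other mild subtlety is (v): one should cite Remark~\ref{PO1}(ii) to get $\mathcal A_\Null = \mathcal A^\circ$ in the $\preceq_\circ$ setting rather than re-deriving it. Everything else is a direct elementwise verification against Definition~\ref{precedeq07} and Definition~\ref{mor}.
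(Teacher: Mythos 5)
Your verification is correct and uses the same core computations the paper records: $(a+b)^\circ = a^\circ + b^\circ$ for the additive clause, and $a^\circ b^\circ = (a(-)a)(b(-)b) = \mathbf 2(ab)^\circ = (ab)^\circ + (ab)^\circ$, whence $\sigma(ab) = (ab)^\circ \preceq_\circ a^\circ b^\circ = \sigma(a)\sigma(b)$, for the multiplicative one (the remaining clauses you check are routine and the paper leaves them tacit). One cleanup: your initial pass at clause (iii) invokes the module-over-ground-system convention $\varphi(ab) \preceq a\,\varphi(b)$ (where $\varphi$ is the identity on $\tT$), which does not apply here since $\sigma(a) = a^\circ \notin \tT$; the version you arrive at afterward, $\sigma(ab) \preceq_\circ \sigma(a)\sigma(b)$, is the one Definition~\ref{mor}(iii) actually demands, so that intermediate detour should be deleted rather than retained.
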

\begin{proof}
 $a^\circ  b^\circ =   \mathbf 2 (ab)^\circ  = (ab)^\circ + c^\circ$
where $c = ab ,$ so $(ab)^\circ \preceq_\circ a^\circ  b^\circ .$
Also $(a+ b)^\circ  = a ^\circ + b ^\circ.$
\end{proof}

Although condition (ii) works well for hypersystems, (iii) does not
fit in so well intuitively, but fortunately
 the following
easy result will provide  equality for  (iii) in
Proposition~\ref{endo5}. We say that a $\tT$-module homomorphism
$\phi : \mathcal A \to \mathcal A $ is \textbf{invertible} if there
is some $\tT$-module homomorphism $\psi : \mathcal A  \to \mathcal A
$ such that $\psi\phi = 1_{\mathcal A } = \phi\psi$.

\begin{prop}\label{endo1}
Suppose that $\phi  :  \mathcal A     \to \mathcal A $ is an
invertible homomorphism, and $\preceq$ is a surpassing~PO. Then
 $f(\phi
 (b)) =
\phi  (f (b))$, for any $\preceq$-morphism $f: \mathcal A \to
\mathcal A' $ satisfying  $f(\phi
 (b)) \preceq
\phi  (f (b))  \ \forall b \in \mathcal A$.
 \end{prop}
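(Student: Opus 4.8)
The plan is to use the invertibility of $\phi$ together with the hypothesis $f(\phi(b)) \preceq \phi(f(b))$ to obtain the reverse inequality $\phi(f(b)) \preceq f(\phi(b))$, and then invoke antisymmetry of the surpassing PO to conclude equality. The key observation is that the hypothesis is meant to hold for \emph{all} $b \in \mathcal A$, so we may substitute $\psi(b)$ (where $\psi = \phi^{-1}$) in place of $b$. First I would write, for arbitrary $b \in \mathcal A$, the instance of the hypothesis at the element $\psi(b)$:
\[
f(\phi(\psi(b))) \preceq \phi(f(\psi(b))),
\]
which, since $\phi\psi = 1_{\mathcal A}$, simplifies the left side to $f(b)$. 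Thus $f(b) \preceq \phi(f(\psi(b)))$.

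Next I would apply $\psi$ to both sides. Here is the first place one must be careful: $\psi$ is a $\tT$-module homomorphism, and a homomorphism of systems is in particular a $\preceq$-morphism, so it is order-preserving by condition~(iv) of Definition~\ref{mor}; hence $\psi(f(b)) \preceq \psi(\phi(f(\psi(b)))) = f(\psi(b))$, using $\psi\phi = 1_{\mathcal A}$ again. So we have $\psi(f(b)) \preceq f(\psi(b))$ for every $b$. Now substitute $\phi(b)$ for $b$ in this last inequality: $\psi(f(\phi(b))) \preceq f(\psi(\phi(b))) = f(b)$. Applying $\phi$ (again order-preserving) gives $f(\phi(b)) \preceq \phi(f(b))$ — wait, this just recovers the hypothesis; the trick must instead be run the other direction. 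Let me restate cleanly: from $\psi(f(b)) \preceq f(\psi(b))$, replace $b$ by $\phi(b)$ to get $\psi(f(\phi(b))) \preceq f(b)$; then apply $\phi$ to obtain $f(\phi(b)) \preceq \phi(f(b))$, which is the hypothesis again, so the genuinely useful chain is the one producing $\phi(f(b)) \preceq f(\phi(b))$: start from $f(b) \preceq \phi(f(\psi(b)))$ (derived above), replace $b$ by $\phi(b)$ to get $f(\phi(b)) \preceq \phi(f(b))$ — hypothesis once more. The correct route is: apply $\psi$ to the hypothesis $f(\phi(b)) \preceq \phi(f(b))$ to get $\psi(f(\phi(b))) \preceq f(b)$; now this holds for all $b$, so replace $b$ by $\psi(b)$, yielding $\psi(f(b)) \preceq f(\psi(b))$; finally apply $\phi$ and substitute $b \mapsto \psi(b)$ once more to land on $\phi(f(b)) \preceq f(\phi(b))$.

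With both $f(\phi(b)) \preceq \phi(f(b))$ (hypothesis) and $\phi(f(b)) \preceq f(\phi(b))$ (derived) in hand, antisymmetry of the surpassing PO $\preceq$ forces $f(\phi(b)) = \phi(f(b))$ for every $b \in \mathcal A$, which is the claim. The main obstacle is bookkeeping: one must track which maps are genuine homomorphisms (namely $\phi$ and $\psi$, hence order-preserving and compatible with composition on the nose) versus the mere $\preceq$-morphism $f$, and make sure each substitution $b \mapsto \psi(b)$ or $b \mapsto \phi(b)$ is legitimate because the relevant inequality was established for all elements of $\mathcal A$. I would also note explicitly at the outset that $f$ need only be a $\preceq$-morphism between the (possibly different) systems $\mathcal A$ and $\mathcal A'$, and that $\phi, \psi$ act on $\mathcal A$ while the composite $\phi \circ f$ only makes sense if we are in the endomorphism setting $\mathcal A' = \mathcal A$; if $\mathcal A' \ne \mathcal A$ one reads $\phi$ on the target side, and the argument is symmetric. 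The use of the PO hypothesis is essential and unavoidable here: without antisymmetry one gets only the two-sided $\preceq$ relation, not equality.
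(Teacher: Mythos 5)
Your high-level plan -- derive the reverse inequality $\phi(f(b)) \preceq f(\phi(b))$ and then invoke antisymmetry of the surpassing PO -- matches what the paper's proof is doing, and the observation that $\phi$, $\psi=\phi^{-1}$ are themselves order-preserving homomorphisms is used correctly in the early steps. But the chain you finally settle on as ``the correct route'' is circular, and this is a genuine gap. From $\psi(f(b)) \preceq f(\psi(b))$ you claim to ``apply $\phi$ and substitute $b \mapsto \psi(b)$ once more to land on $\phi(f(b)) \preceq f(\phi(b))$.'' Work it out: applying $\phi$ gives $f(b) \preceq \phi(f(\psi(b)))$, and substituting $b\mapsto\psi(b)$ then gives $f(\psi(b)) \preceq \phi(f(\psi^2(b)))$, not the target; substituting $b\mapsto\phi(b)$ first gives $\psi(f(\phi(b))) \preceq f(b)$, and applying $\phi$ just reproduces the hypothesis $f(\phi(b)) \preceq \phi(f(b))$. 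In fact every inequality you can reach by applying $\phi$ or $\psi$ and substituting $b\mapsto\phi(b)$ or $b\mapsto\psi(b)$ is an order-automorphism conjugate of the hypothesis, and $\phi(f(b)) \preceq f(\phi(b))$ is never among them -- your own ``wait, this just recovers the hypothesis'' digressions were already detecting this. A purely order-theoretic example makes the obstruction concrete: take $\mathbb Z$ with the usual order, $\phi(n)=n+1$, $f(n)=\lfloor n/2\rfloor$; then $f(\phi(n)) \le \phi(f(n))$ for all $n$ but equality fails.

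What closes the gap -- and what the paper's terse (and apparently garbled) one-line proof is implicitly using -- is the \emph{same} inequality with $\psi$ in place of $\phi$: assume $f(\psi(c)) \preceq \psi(f(c))$ as well. Then $\phi(f(b)) = \phi(f(\psi(\phi(b)))) \preceq \phi(\psi(f(\phi(b)))) = f(\phi(b))$, and antisymmetry of the PO, together with the stated hypothesis for $\phi$, gives $f(\phi(b)) = \phi(f(b))$. This $\psi$-hypothesis is genuinely additional (from the $\phi$-hypothesis you only derived $\psi(f(b)) \preceq f(\psi(b))$, the \emph{reverse} of it), but it does hold in both places the proposition is used: in Proposition~\ref{endo3}, $\phi = (-)$ is an involution so $\psi = \phi$ and there is nothing to check; in Proposition~\ref{endo5}, $\phi = \ell_a$ with $a\in\tT$ a group element, and condition~(iii) of Definition~\ref{mor} gives $f(cb)\preceq cf(b)$ for every $c\in\tT$, including $c=a^{-1}$. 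So your argument is repairable by adding the $\psi$-hypothesis (or specializing to the involution case) -- but as written, the key step fails. Separately, I'd clean up the exploratory false starts before submitting; your final text should present only the route you stand behind.
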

 \begin{proof} $\phi  (f ( b) )=
\phi  (f (\psi \phi (b))) \preceq \phi \psi  (\phi  (f(b))) =  \phi
(f(b)),$ so we get equality.
  \end{proof}

We have the following consequences  at our disposal,   unifying
several ad hoc observations in~\cite{Row16}.


\begin{prop}\label{endo3} Any $\preceq$-morphism $f$ satisfies
 $f ((-)b) =
(-) f(b).$
  \end{prop}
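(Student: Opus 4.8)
The plan is to exploit the fact that $(-)$ has order at most two, together with the defining conditions of a $\preceq$-morphism from Definition~\ref{mor}. First I would apply condition (i) of Definition~\ref{mor} to the element $b$, giving $\varphi((-)b) \preceq (-)\varphi(b)$; the only remaining task is to prove the reverse inequality so that the surpassing relation's antisymmetry-type behavior (or, in the tangible case, condition (v) of Definition~\ref{precedeq07}) forces equality. To get the reverse inequality, substitute $(-)b$ for $b$ in condition (i): this yields $\varphi((-)((-)b)) \preceq (-)\varphi((-)b)$, i.e.\ $\varphi(b) \preceq (-)\varphi((-)b)$, since $(-)$ is an involution on $\mathcal A$.

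Next I would apply condition (ii) of the surpassing relation (Definition~\ref{precedeq07}), which says that $\preceq$ is preserved under $(-)$. Applying $(-)$ to both sides of $\varphi(b) \preceq (-)\varphi((-)b)$ and using that $(-)$ is an involution gives $(-)\varphi(b) \preceq \varphi((-)b)$. Combining this with $\varphi((-)b) \preceq (-)\varphi(b)$ from the first step, and using that $\preceq$ is a partial pre-order (so from $x \preceq y$ and $y \preceq x$ we would like $x = y$), we would conclude $\varphi((-)b) = (-)\varphi(b)$.

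The one subtlety — and the main obstacle — is that a surpassing relation is only required to be a partial \emph{pre-order}, not necessarily antisymmetric on all of $\mathcal A$; antisymmetry on $\tT$ is guaranteed by condition (v) of Definition~\ref{precedeq07}, but not automatically on $\mathcal A$. So I expect the cleanest route is either to invoke that in the relevant systems $\preceq$ is a surpassing PO (hence antisymmetric), or to argue via the explicit form of $\preceq$: if $x \preceq y$ and $y \preceq x$ both hold through additions of quasi-zeros (or $\mathcal A_\Null$ elements), one checks directly that the ``error terms'' cancel appropriately. Alternatively, one can restrict attention to $b \in \tT_{\mathcal M}$, where condition (v) applies directly and the equality is immediate; this is likely what the intended proof does, given that homomorphisms of triples are defined with $\varphi((-)a) = (-)\varphi(a)$ only for $a \in \tT_{\mathcal A}$. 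I would present the argument at the level of generality the antisymmetry permits, flagging the PO hypothesis where it is needed.
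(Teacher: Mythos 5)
Your argument is correct and is in essence the same as the paper's, just unrolled: the paper routes through Proposition~\ref{endo1} (an abstract statement that an invertible homomorphism $\phi$ with $f(\phi(b)) \preceq \phi(f(b))$ in fact commutes with $f$, assuming $\preceq$ is a surpassing PO) and then specializes to $\phi = (-)$; your proof is exactly that specialization written out directly, using that $(-)$ is an involution and that $(-)$ preserves $\preceq$ by Definition~\ref{precedeq07}(ii). The subtlety you flag is real and worth flagging: the statement of Proposition~\ref{endo3} does not explicitly carry the surpassing-PO hypothesis, but its proof invokes Proposition~\ref{endo1}, which does require it (antisymmetry is exactly what lets you turn the two inequalities $\varphi((-)b) \preceq (-)\varphi(b)$ and $(-)\varphi(b) \preceq \varphi((-)b)$ into an equality). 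Your direct presentation is arguably cleaner, since it makes visible both the involution and the $\preceq$-preservation that Proposition~\ref{endo1}'s terse proof uses implicitly.
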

 \begin{proof}  $(-)$ is an invertible homomorphism of additive semigroups, so Proposition~\ref{endo1} is applicable.
 \end{proof}


 \begin{prop}\label{endo5} Any $\preceq$-morphism $f$ of $\tT$-\GM s
 satisfies $f(ab) = af(b)$ for all $a\in \tT$ and $b \in \mathcal
 {A}.$
  \end{prop}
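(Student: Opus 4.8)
The plan is to reduce this immediately to Proposition~\ref{endo1}, mirroring the proof of Proposition~\ref{endo3}. Fix $a\in\tT$ and let $\ell_a$ denote left multiplication by $a$, i.e.\ $\ell_a(b)=ab$ (this makes sense both on the domain and on the codomain of $f$, since $\tT$ acts on each). First I would record that $\ell_a$ is an invertible homomorphism of additive semigroups: additivity is nothing but distributivity of the $\tT$-action (Definition~\ref{modu2}), and this is the only place the group hypothesis is used---since $\tT$ is a group, $a^{-1}\in\tT$ exists and $\ell_{a^{-1}}$ is a two-sided inverse of $\ell_a$, because $\ell_{a^{-1}}(\ell_a(b))=a^{-1}(ab)=(a^{-1}a)b=\one_\tT b=b$ by the monoid-module axioms (associativity and unitality of the action), and symmetrically $\ell_a(\ell_{a^{-1}}(b))=b$. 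For a general monoid $\tT$ this step would fail, which is exactly why the statement is restricted to \GM s.

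Next I would check that the compatibility condition required to invoke Proposition~\ref{endo1} is automatic. Since $f$ is a $\preceq$-morphism of module systems over the fixed ground system, one takes $f$ to be the identity on $\tT$, so clause (iii) of Definition~\ref{mor} reads $f(ab)\preceq a\,f(b)$ for every $b$; equivalently $f(\ell_a(b))\preceq \ell_a(f(b))$ for all $b$. This is precisely the hypothesis of Proposition~\ref{endo1}, which (as $\preceq$ is a surpassing PO) therefore upgrades this $\preceq$ to an equality $f(\ell_a(b))=\ell_a(f(b))$, that is, $f(ab)=af(b)$. Since $a\in\tT$ was arbitrary, this finishes the argument.

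The one genuine point to get right is the invertibility of $\ell_a$ as a self-map; this is what forces the appeal to the group structure of $\tT$ together with the monoid-module axioms, and it is the reason the hypothesis cannot be weakened from \GM s to monoid modules. Everything else is routine or a direct citation: additivity of $\ell_a$ is distributivity over $\tT$, and the inequality $f(\ell_a(b))\preceq \ell_a(f(b))$ needed to run Proposition~\ref{endo1} is supplied verbatim by clause (iii) of the definition of a $\preceq$-morphism in its form valid over a fixed ground system.
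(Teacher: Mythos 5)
Your argument is correct and matches the paper's proof, which likewise observes that $\ell_a$ is invertible (with inverse $\ell_{a^{-1}}$) and then appeals, as in Proposition~\ref{endo3}, to Proposition~\ref{endo1}. You merely spell out the details the paper leaves implicit: that additivity of $\ell_a$ comes from distributivity, that invertibility uses the group hypothesis on $\tT$, and that the needed inequality $f(ab)\preceq af(b)$ is clause (iii) of Definition~\ref{mor} specialized to a fixed ground system.
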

 \begin{proof} The left multiplication map $\ell_a$ by $a\in \tT$ is
 invertible on $\tT$,
 having the inverse $\ell_{a^{-1}},$ and thus is
 invertible on $\mathcal
 {A}.$
 \end{proof}

\begin{lem}
Any  $\preceq$-morphism $f$ with respect to a surpassing PO
  satisfies the following convexity condition:
 \medskip
 If $f(b_0)=f(b_1) $ and $b_0 \preceq b \preceq b_1,$ then
 $f(b_0)=f(b)$.
\end{lem}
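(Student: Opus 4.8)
The statement to prove is the convexity lemma: if $f$ is a $\preceq$-morphism with respect to a surpassing PO, and $f(b_0) = f(b_1)$ with $b_0 \preceq b \preceq b_1$, then $f(b_0) = f(b)$. The natural approach is to apply the $\preceq$-morphism axiom (iv) of Definition~\ref{mor}, which says that $b \preceq b'$ implies $\varphi(b) \preceq' \varphi(b')$, to both of the given relations. From $b_0 \preceq b$ we get $f(b_0) \preceq f(b)$, and from $b \preceq b_1$ we get $f(b) \preceq f(b_1)$. Since $f(b_0) = f(b_1)$ by hypothesis, we now have $f(b_0) \preceq f(b)$ and $f(b) \preceq f(b_0)$.

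The second and final step is to invoke antisymmetry of $\preceq$. The hypothesis that $\preceq$ is a surpassing PO (not merely a surpassing pre-order) is exactly what supplies this: a PO is by definition antisymmetric, so $f(b_0) \preceq f(b)$ together with $f(b) \preceq f(b_0)$ forces $f(b_0) = f(b)$. This completes the argument.

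I do not anticipate a genuine obstacle here; the lemma is essentially a one-line consequence of functoriality of $\preceq$-morphisms plus antisymmetry. The only point that requires care — and the reason the hypothesis "with respect to a surpassing PO" is stated explicitly — is that without the partial-order assumption one would only have a pre-order, and the two inequalities $f(b_0) \preceq f(b) \preceq f(b_0)$ would not be enough to conclude equality. So the proof should simply record: apply Definition~\ref{mor}(iv) twice, substitute $f(b_0) = f(b_1)$, and close using antisymmetry of the PO.

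For completeness I would also note that this lemma does not need the additive, multiplicative, or negation-compatibility conditions of a $\preceq$-morphism at all — only the order-preservation property (iv) — so the statement in fact holds for any order-preserving map between posets, but it is convenient to phrase it for $\preceq$-morphisms since that is how it will be applied in the sequel.
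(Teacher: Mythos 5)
Your argument is correct and is exactly the paper's: apply order-preservation (Definition~\ref{mor}(iv)) to $b_0 \preceq b$ and $b \preceq b_1$ to get the chain $f(b_0) \preceq f(b) \preceq f(b_1) = f(b_0)$, then close by antisymmetry of the PO. Your added remark that only property (iv) is used is accurate but not needed.
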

\begin{proof}
$f(b_1) = f(b_0)  \preceq  f(b) \preceq f(b_1),$ so equality holds
at each stage.
\end{proof}

It follows that every $\preceq$-morphism ``collapses'' intervals, so
triples, systems, etc., do not provide varieties (since they are not
closed under arbitrary homomorphic images).

At times we need to decide  whether to take homomorphisms or
$\preceq$-morphisms.  Here is a   compromise.

\begin{defn}\label{idealdef2}
Let $\mathcal M$ and $\mathcal N$ be $\mathcal A$-module systems. A
$\preceq$-morphism $f:\mathcal M \to \mathcal N$ is
\textbf{$\tT$-admissible} when it satisfies the condition that if
$\sum_{i=1}^t  a_i  = \sum_{j=1}^u a'_j $ for $a_i,a_j' \in
\tT_{\mathcal M},$ then $\sum_{i=1}^t f(a_i) = \sum_{j=1}^u f(a'_j
)$.
\end{defn}

%

\begin{lem}\label{bil21}
Every  homomorphism is $\tT$-admissible.
\end{lem}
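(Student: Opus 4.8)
The plan is to reduce the statement to the additivity of $f$. By definition a homomorphism of (pseudo-)systems is in particular a $\tT$-module homomorphism satisfying \emph{equality} in condition (ii) of Definition~\ref{mor}, so $f(b_1+b_2) = f(b_1)+f(b_2)$ for all $b_1,b_2 \in \mathcal M$. First I would upgrade this to arbitrary finite sums by an easy induction on the number of summands, obtaining
\[
f\Bigl(\sum_{k=1}^{m} b_k\Bigr) = \sum_{k=1}^{m} f(b_k)
\]
for any $b_1,\dots,b_m \in \mathcal M$; the base case is trivial and the inductive step is a single application of binary additivity together with associativity and commutativity of $+$ in $(\mathcal N,+)$.

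Next, given $a_1,\dots,a_t$ and $a'_1,\dots,a'_u$ in $\tT_{\mathcal M}$ with $\sum_{i=1}^t a_i = \sum_{j=1}^u a'_j$ in $\mathcal M$, I would simply apply the map $f$ to both sides: since $f$ is a well-defined function, equal inputs have equal images, so $f\bigl(\sum_{i=1}^t a_i\bigr) = f\bigl(\sum_{j=1}^u a'_j\bigr)$. Combining this with the finite additivity from the previous step yields $\sum_{i=1}^t f(a_i) = \sum_{j=1}^u f(a'_j)$, which is exactly the $\tT$-admissibility condition of Definition~\ref{idealdef2}.

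There is essentially no obstacle here: the only point requiring minimal care is that the admissibility condition involves the values $f(a_i)$ with $a_i$ tangible, and by the definition of homomorphism of pseudo-triples each such $f(a_i)$ again lies in $\tT_{\mathcal N}\cup\{\zero\}$, so all sums in question make sense in $\mathcal N$; and that the passage from binary to finite additivity is legitimate because $(\mathcal N,+)$ is an associative (abelian) semigroup. The whole argument is thus a two-line consequence of the observation that a homomorphism is precisely a $\preceq$-morphism with equality in condition (ii).
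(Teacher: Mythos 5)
Your proof is correct and is essentially the same as the paper's: both apply $f$ to the common value $\sum_i a_i = \sum_j a'_j$ and use (finite) additivity of the homomorphism to distribute $f$ over each sum, yielding $\sum_i f(a_i) = f(\sum_i a_i) = f(\sum_j a'_j) = \sum_j f(a'_j)$. The only difference is that you spell out the routine induction from binary to finite additivity and the minor point that the values $f(a_i)$ land in $\tT_{\mathcal N}\cup\{\zero\}$, both of which the paper leaves implicit.
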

\begin{proof}
If $\sum_{i=1}^t  a_i  = \sum_{j=1}^u  a'_j $, then  $\sum f(a_i) =
f\left( \sum_i a_i \right) = f\left( \sum_j  a_j'\right) = \sum
f(a'_j ).$ \end{proof}



%

%



\subsection{Function triples}\label{convcat}$ $

 Here is a wide-ranging example needed for geometry and linear algebra,
 unifying polynomials and Laurent polynomials, cf.~\cite[Example~2.19]{Bak}, \cite{golan92}, \cite[\S 3.5]{IR}. It is convenient to assume that $\mathcal A$ is with $\zero$.

\begin{defn}\label{Func} For a   $\tT$-module  $( \mathcal A,+)$ over   $ \tT$ and a given set $S$, we  define $\mathcal A^S$ to
be
  the set of functions from $S$ to $\mathcal A$ \footnote{For an $\tT$-\module0 $( \mathcal
  A,+)$ without $\zero$, one would take
  $\mathcal A^S$ to
be
  the set of partial functions from $S$ to $\mathcal A$, and define  $\supp(f)$ to be those $s$
  on which $f$ is defined.}, also written as $\Fun (S,\mathcal A)$.
\end{defn}



%
%

For $c \in \mathcal A,$ the \textbf{constant function} $\tilde c $
is given by $\tilde c  (s) = c$ for all $s \in S.$ In particular,
  the \textbf{zero function}~$\tilde \zero $ is given
by $\tilde \zero(s) = \zero$ for all $s\in S.$

\begin{defn}  Given $f\in  \mathcal A ^S$ we define its \textbf{support } $\supp (f): =\{ s \in S: f(s )
\ne \zero\},$ and $ \supp ( \mathcal A ^S)$ for~$\{ \supp (f): f \in
 \mathcal A ^S\}$.
 \end{defn}

\begin{lem}  \label{canonicalmap1} For any
$f,g \in  \mathcal A ^S,$ we have the following:
\begin{enumerate}\eroman
  \item  $\supp (f+g) \subseteq \supp (f) \cup \supp (g)  $.
   \item (Under componentwise multiplication) $\supp (fg) \subseteq \supp(f) \cap \supp (g)  $.\end{enumerate}
\end{lem}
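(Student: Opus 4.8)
The statement is Lemma~\ref{canonicalmap1}, asserting two elementary containments for supports of sums and componentwise products of functions in $\mathcal A^S$. Both parts are pointwise verifications, so the plan is simply to unwind the definition of $\supp$ and argue by contrapositive at each point $s\in S$. No negation map, surpassing relation, or triple structure is needed; only the $\tT$-module axioms (or rather the semigroup and componentwise-multiplication structure on $\mathcal A^S$) and the convention $\tilde\zero(s)=\zero$ play a role.

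For part (i), I would fix $s\in S$ and suppose $s\notin \supp(f)\cup\supp(g)$, i.e.\ $f(s)=\zero$ and $g(s)=\zero$. Then $(f+g)(s)=f(s)+g(s)=\zero+\zero=\zero$, using that addition on $\mathcal A^S$ is defined componentwise and that $\zero$ is the additive identity of $(\mathcal A,+)$ (available since we assumed $\mathcal A$ has $\zero$). Hence $s\notin\supp(f+g)$, which gives $\supp(f+g)\subseteq\supp(f)\cup\supp(g)$ by contraposition. For part (ii), fix $s\in S$ and suppose $s\notin\supp(f)\cap\supp(g)$, so $f(s)=\zero$ or $g(s)=\zero$; in either case $(fg)(s)=f(s)g(s)=\zero$ because $\mathcal A$ is a semiring (so $\zero$ is absorbing: $\zero\cdot c = c\cdot\zero = \zero$ for all $c\in\mathcal A$), and multiplication on $\mathcal A^S$ is componentwise. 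Therefore $s\notin\supp(fg)$, giving $\supp(fg)\subseteq\supp(f)\cap\supp(g)$.

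There is no real obstacle here; the only point worth a moment's care is that the absorbing property $\zero\cdot c=\zero$ for part (ii) requires $\mathcal A$ to be a semiring rather than merely a \semiring0 with componentwise multiplication, which is exactly why the paragraph preceding Definition~\ref{Func} remarks that it is convenient to assume $\mathcal A$ has $\zero$ (and, implicitly, is a semiring when multiplication is in play). If one only had a \semiring0, one would instead restrict to the convolution/constant-function setting of the footnote and define $\supp$ on partial functions, but under the stated hypothesis the displayed one-line computations suffice. The whole argument is a couple of sentences; I would present it inline without introducing extra notation.
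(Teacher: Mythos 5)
Your proof is correct and matches the paper's own argument, which likewise verifies both inclusions pointwise via the contrapositive observation that $f(s)=\zero=g(s)$ forces $(f+g)(s)=\zero$, and that $f(s)=\zero$ or $g(s)=\zero$ forces $(fg)(s)=\zero$. The extra remarks on why $\zero$ must be present and absorbing are reasonable context but not part of the paper's (terser) proof.
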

\begin{proof}
For the first statement, one can see that $f(s) = \zero = g(s)$
implies $f(s) + g(s) = \zero.$ The second statement is clear; $f(s)
= \zero$ or $ g(s)  = \zero  $ implies $f(s)  g(s) = \zero.$
\end{proof}

We must cope with a delicate issue. We have required for
$\tT$-triples that $\tT_{\mathcal A}$ generates $(\mathcal A,+)$.
Thus, $\mathcal A ^S$ a priori is only a pseudo-triple.  $\mathcal A
^S$ is a triple when $\mathcal A$ is a triple of finite height.
Alternatively, those maps having finite support play a special role.

 \begin{defn}
We introduce the following notations:
 \begin{itemize}
 \item
$ \mathcal A^{(S)}:=\{f\in \mathcal A ^S: \supp (f) $ is finite$\}$.
\item A
\textbf{monomial} is an element $f\in \mathcal A ^{(S)}$ for which
$|\supp (f)| = 1. $
\item  $  \tT_{ \mathcal A^{S}} =\{f\in \tT^{(S)}:
|\supp (f)|=1 \}$.
 \end{itemize}
 \end{defn}

 Explicitly, we
get polynomials when $S = \Net,$
 and Laurent polynomials when $S = \Z.$

\begin{lem}  \label{canonicalmap}
If $(\mathcal A, \tT, (-))$  is a pseudo-triple, then $(\mathcal
A^{(S)}, \tT_{ \mathcal A^{S}},(-))$ also is a   pseudo-triple,
where $ \tT_{ \mathcal A^{S}}:=\{f\in \tT^{(S)}: |\supp (f)|=1 \},$
and $(-)f(s) = (-)f(s)$ for $f\in \mathcal A^{S}$ and $s \in S$.
\end{lem}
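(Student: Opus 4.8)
The plan is to verify directly that each component of the definition of a pseudo-triple (Definition~\ref{modu21}(i)) carries over from $(\mathcal A, \tT, (-))$ to $(\mathcal A^{(S)}, \tT_{\mathcal A^S}, (-))$, working pointwise in $s \in S$. First I would observe that $\mathcal A^{(S)}$ is an additive semigroup under pointwise addition, being a sub-semigroup of $\mathcal A^S$; it is closed under addition by Lemma~\ref{canonicalmap1}(i), since the union of two finite sets is finite. Next I would check that $\tT_{\mathcal A^S}$ acts on $\mathcal A^{(S)}$: a monomial $f \in \tT_{\mathcal A^S}$ has $\supp(f) = \{s_0\}$ with $f(s_0) \in \tT$, and for $g \in \mathcal A^{(S)}$ the product $fg$ (componentwise) satisfies $\supp(fg) \subseteq \{s_0\}$ by Lemma~\ref{canonicalmap1}(ii), with $(fg)(s_0) = f(s_0) g(s_0) \in \mathcal A$ by the $\tT$-action on $\mathcal A$; distributivity over sums of elements of $\mathcal A^{(S)}$ follows pointwise from distributivity of the $\tT$-action on $\mathcal A$.

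Then I would verify that $(-)$, defined pointwise by $((-)f)(s) = (-)(f(s))$, is a semigroup automorphism of $(\mathcal A^{(S)}, +)$: it is additive pointwise, it has order at most two since $(-)$ does on $\mathcal A$, it preserves finiteness of support because $(-)$ is an automorphism of $(\mathcal A,+)$ and hence $(-)(f(s)) = \zero$ iff $f(s) = \zero$ (using that $(-)\zero = \zero$), and it is a bijection with inverse itself (or $(-)$ again). I would also note that $(-)$ restricts to a map $\tT_{\mathcal A^S} \to \tT_{\mathcal A^S}$: if $\supp(f) = \{s_0\}$ with $f(s_0) \in \tT$, then $(-)f$ has support $\{s_0\}$ with value $(-)(f(s_0)) \in \tT$, using that $(-)$ maps $\tT$ to $\tT$. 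Finally, the compatibility $a((-)g) = ((-)a)g$ for $a \in \tT_{\mathcal A^S}$, $g \in \mathcal A^{(S)}$ is checked pointwise from the corresponding identity on $\mathcal A$.

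I do not anticipate a serious obstacle: the proof is an entirely routine ``check everything pointwise'' verification, and the essential support bookkeeping has already been isolated in Lemma~\ref{canonicalmap1}. The one subtlety worth flagging explicitly — which is really the point of the paragraph preceding the lemma in the text — is that one gets only a \emph{pseudo}-triple and not a triple, because the generation condition ``$\tT_{\mathcal A^S}$ generates $(\mathcal A^{(S)}, +)$'' can fail when $\mathcal A$ has infinite height (an element of $\mathcal A$ not expressible as a finite sum of tangibles yields a monomial in $\mathcal A^{(S)}$ not expressible as a finite sum of elements of $\tT_{\mathcal A^S}$). So I would be careful to claim only what Definition~\ref{modu21}(i) requires and not inadvertently assert the triple axioms. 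Concretely, the write-up would be: ``The additive structure and $(-)$ are inherited pointwise from $\mathcal A$; closure of $\mathcal A^{(S)}$ under $+$ and of $\tT_{\mathcal A^S}$ under the action on $\mathcal A^{(S)}$ follow from Lemma~\ref{canonicalmap1}; the remaining axioms of Definition~\ref{modu21}(i) hold because they hold in each coordinate.''
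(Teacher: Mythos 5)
Your proof is correct, and it takes essentially the same pointwise route that the paper intends — the paper's own proof is a one-liner (``this is clear, noting that any element of $\mathcal A^{(S)}$ is a finite sum of monomials''), which your detailed verification fleshes out.

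One small imprecision in your final flag, though: you attribute the downgrade to ``pseudo-triple'' to $\mathcal A$ having \emph{infinite height}, but in the paper's terminology infinite height only means the heights are unbounded — every individual element of a triple is still a finite sum of tangibles. For $\mathcal A^{(S)}$ (finite support) this is enough: as the paper notes in the sentence immediately \emph{after} the lemma, if $\mathcal A$ is a triple then so is $\mathcal A^{(S)}$, regardless of whether the height of $\mathcal A$ is bounded, since each monomial's value is a finite sum of tangibles and each element has finite support. The finite-height caveat in the preceding paragraph of the paper pertains to $\mathcal A^S$ (unrestricted support), not $\mathcal A^{(S)}$. The actual reason the lemma claims only ``pseudo-triple'' is simply that $\mathcal A$ itself is assumed to be only a pseudo-triple, so there is no generation hypothesis on $\tT$ to inherit — which is what your parenthetical correctly describes, just under the wrong label. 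This is a wording issue only; the verification itself is sound.
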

\begin{proof}
This is clear, noting that any element of  $\mathcal A^{(S)}$ is a
finite sum of monomials.
\end{proof}

 If $\mathcal A$  is a
triple, then $\mathcal A^{(S)}$ also is a   triple since the
monomials span $\mathcal A^{(S)}$.

For systems, we define $ \preceq$  componentwise   on $\mathcal A^S$
by putting $f \preceq g$ when $f(s) \preceq g(s)$ for each $s$
in~$S$.

\subsubsection{Direct sums and powers}$ $

 \begin{defn}\label{cop} The \textbf{direct sum} $\oplus _{i \in I}(\mathcal A _i , \tT_{ \mathcal A _i}, (-))$ of pseudo-triples
  is defined as  $(\oplus \mathcal A _i, \tT_{\oplus \mathcal A _i},
  (-))$ where $\tT_{\oplus \mathcal A _i} = \cup   \tT_{ \mathcal A _i}$, viewed in $\oplus   \tT_{ \mathcal A _i}$  via  $\nu_i : \mathcal A _i \to \oplus \mathcal A
  _i$ being the canonical homomorphism.
  \end{defn}

Another natural possibility for $\tT_{\oplus \mathcal A _i}$ would
be $ \sum \tT_{ \mathcal A _i}$, viewed in $\oplus \mathcal A _i$,
but this essentially is the same, since $ \sum   _i
 \tT_{ \mathcal A _i}$ is generated by $\cup _i  \tT_{ \mathcal A _i}$. Definition~\ref{cop}   works out better for systems.

  \begin{prop}\label{cop1} The direct sum $\oplus (\mathcal A _i, \tT_i,
  (-))$ of $\tT_i$-triples is a $\tT_{\oplus \mathcal A _i}$-triple.
  \end{prop}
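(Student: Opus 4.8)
The plan is to verify each of the three defining conditions for a $\tT_{\oplus\mathcal A_i}$-triple directly from Definition~\ref{modu21}(iii), working from the fact that each summand $(\mathcal A_i,\tT_i,(-))$ is a $\tT_i$-triple and that the negation map on $\oplus\mathcal A_i$ and the scalar action are defined componentwise. First I would establish that $\oplus\mathcal A_i$ is a $\tT_{\oplus\mathcal A_i}$-module with negation map: the module structure is the obvious componentwise one, an element of $\tT_{\oplus\mathcal A_i}=\cup_i\tT_i$ sitting in the $i$-th slot acts on a tuple by having its $i$-th component act on the $i$-th component (and killing nothing else, or acting as the relevant $\tT_i$-action), and $(-)$ acts slotwise; the distributivity axiom of Definition~\ref{modu2} and the compatibility of $(-)$ with the action in Definition~\ref{negmap} then follow slotwise from the corresponding facts in each $\mathcal A_i$.

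Next I would check the two substantive clauses of being a triple. For the disjointness condition $\tT_{\oplus\mathcal A_i}\cap(\oplus\mathcal A_i)^\circ=\emptyset$ (or its $\zero$-variant), note that a quasi-zero $c^\circ$ in $\oplus\mathcal A_i$ has each component a quasi-zero $c_j^\circ\in\mathcal A_j^\circ$, whereas an element of $\tT_{\oplus\mathcal A_i}$, being the image under some $\nu_i$ of an element of $\tT_i$, is supported in a single slot $i$ with that component lying in $\tT_i$; since $\tT_i\cap\mathcal A_i^\circ=\emptyset$ in the triple $\mathcal A_i$, the only way the two could coincide is the $\zero$ tuple, which is excluded exactly as in the individual triples. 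For the generation condition, observe that any element $(b_i)_{i\in I}\in\oplus\mathcal A_i$ has only finitely many nonzero components, each $b_i$ is a finite sum of elements of $\tT_i$ because $\mathcal A_i$ is a triple, and pushing these forward along $\nu_i$ exhibits $(b_i)$ as a finite sum of elements of $\cup_i\tT_i=\tT_{\oplus\mathcal A_i}$ (with the $\zero$-caveat handled as in Remark~\ref{ps1}).

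I expect the only real subtlety — the ``main obstacle'' — to be bookkeeping about what $\tT_{\oplus\mathcal A_i}$ actually is and how it is viewed inside $\oplus\mathcal A_i$: Definition~\ref{cop} takes $\cup_i\tT_i$ rather than $\sum_i\tT_i$, so one must be careful that this still generates (which the remark preceding the proposition already addresses) and that the single-slot elements do not accidentally create ghosts across slots. Once the identification $\tT_{\oplus\mathcal A_i}=\coprod_i\nu_i(\tT_i)$ is pinned down, every clause reduces to the componentwise statement, and the proof is essentially a one-line appeal to the corresponding property in each $\mathcal A_i$ together with the finiteness of supports in a direct sum. I would close by remarking that the same componentwise argument will later give the systemic refinement (the surpassing relation being defined componentwise), which is why Definition~\ref{cop} was set up this way.
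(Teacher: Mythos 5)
Your proof is correct and takes essentially the same route as the paper: the paper's own proof is the one-liner that ``$\tT_{\oplus\mathcal A_i}$ generates $\oplus\mathcal A_i$, and unique negation is obtained componentwise,'' and you are simply spelling out what ``componentwise'' means for each clause of Definition~\ref{modu21}(iii). Your version is in fact slightly more careful in two respects: you verify the condition actually required for a triple (disjointness $\tT_{\oplus\mathcal A_i}\cap(\oplus\mathcal A_i)^\circ=\emptyset$) rather than the stronger ``unique negation'' the paper names, and you make explicit that the generation step needs finiteness of supports in the direct sum — so no changes are needed.
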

 \begin{proof}   $\tT_{\oplus \mathcal A _i}$ generates $\oplus \mathcal A
 _i$, and unique negation is obtained componentwise.
 \end{proof}

\begin{example}\label{modlift}
 When all $\mathcal A_i = \mathcal A,$ this takes us back to $\mathcal A
 ^{(S)}$, now viewed as a $\tT_{\mathcal A ^{(S)}}$-triple via the componentwise negation map.
 We often write $I = S$ and call $ ( \mathcal
A^{(I)},\tT_{\mathcal A}^{(I)},(-))$ the \textbf{free $\mathcal
A$-module triple}, to stress the role of $I$ as an index set and the
analogy with the free module.

This  provides the  \textbf{free module system} $(\mathcal A^{(I)},
\tT^{(I)}, (-), \preceq)$, cf.~\cite[Definition~2.6]{Row16}.
\end{example}

\begin{rem}\label{Funtri20}  When $(\mathcal M , \tT_\mathcal M , (-))$ is an  $\mathcal
A$-module triple, then  $( \mathcal M ^{(S)},  \tT_{\mathcal M
^{(S)}},(-))$  is an
 $\mathcal
A  $-module triple, under the action $(a f)(s) = a f(s).$
\end{rem}

\begin{rem}\label{Funtri21}  $\mathcal A^{(I)}$ is not meta-tangible when $|I| > 1,$  and the
theory of module triples is quite different from that of
meta-tangible triples, much as module theory differs from the
structure theory of rings.
\end{rem}

\subsubsection{The convolution product: Polynomials and Laurent
polynomials}\label{convol}$ $

If   $(S,+)$ is a monoid, we can define   the \textbf{convolution
product} $f*g$ for $f: S \to \tT $ and $g: S \to \mathcal A$ by
$$(f*g)(s) = \sum _{u+v = s} f(u)g(v),$$ which makes sense in $\mathcal A^{(S)} $
since there are only finitely many $u \in \supp (f)$ and $v \in
\supp (g) $ with $u+v = s.$

\begin{lem}\label{surpp} Under convolution,  $\supp (f*g) \subseteq \supp(f)+ \supp (g)  $.\end{lem}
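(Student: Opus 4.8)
**Lemma \ref{surpp} ($\supp(f*g) \subseteq \supp(f) + \supp(g)$).**

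The plan is to argue by contrapositive on the individual elements of $S$: I will show that if $s \in \supp(f*g)$, then $s \in \supp(f) + \supp(g)$. So suppose $s \notin \supp(f) + \supp(g)$; I want to conclude $(f*g)(s) = \zero$, which forces $s \notin \supp(f*g)$.

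First I would unwind the definition of the convolution product: $(f*g)(s) = \sum_{u+v=s} f(u)g(v)$, a finite sum by the finiteness of $\supp(f)$ and $\supp(g)$ established in the discussion preceding Definition~\ref{Func}. Next, the key observation: for each pair $(u,v)$ with $u+v=s$ contributing to this sum, I claim $f(u)g(v) = \zero$. Indeed, if $f(u) \ne \zero$ and $g(v) \ne \zero$, then $u \in \supp(f)$ and $v \in \supp(g)$, so $s = u+v \in \supp(f) + \supp(g)$, contradicting our assumption. Hence $f(u) = \zero$ or $g(v) = \zero$, and in either case $f(u)g(v) = \zero$ since $\mathcal A$ is a $\tT$-module (the second clause of Definition~\ref{modu2}, giving $a\zero_\mathcal A = \zero_\mathcal A$, together with $\zero \cdot b = \zero$ in the semiring$^\dagger$ structure) — this is exactly the reasoning already used in Lemma~\ref{canonicalmap}(ii). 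Therefore $(f*g)(s)$ is a finite sum of copies of $\zero$, which is $\zero$, so $s \notin \supp(f*g)$.

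There is really no main obstacle here; the only mild subtlety is making sure that $f(u)g(v) = \zero$ whenever one factor vanishes, but this is immediate from the module/semiring$^\dagger$ axioms in the setup (and is the direct analogue of Lemma~\ref{canonicalmap1}(ii) for componentwise multiplication). The finiteness needed to make the sum well-defined and to pass from "each term is $\zero$" to "the sum is $\zero$" is precisely the finiteness of supports guaranteeing the convolution lands in $\mathcal A^{(S)}$, which was noted when the convolution product was defined. Taking the union over all $s \in \supp(f*g)$ gives the claimed inclusion $\supp(f*g) \subseteq \supp(f) + \supp(g)$.
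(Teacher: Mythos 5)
Your proof is correct and is essentially the same argument as the paper's, just phrased contrapositively: the paper observes directly that $f(u)g(v)\ne\zero$ forces $u\in\supp f$ and $v\in\supp g$, and hence any $s$ contributing a nonzero term lies in $\supp f + \supp g$, whereas you run the equivalent deduction that $s\notin\supp f + \supp g$ annihilates every term of $(f*g)(s)$. The extra care you take in justifying that a sum of zeros is zero and that $f(u)g(v)=\zero$ when either factor vanishes is implicit in the paper's one-line proof but does not change the substance.
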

\begin{proof} $f(u)g(v)\ne \zero$ requires $u \in \supp f$ and  $v \in \supp
g$, which is necessary for $u+v \in \supp (f*g) $.
\end{proof}

\begin{lem}
If $\tT$ is a monoid then $ \tT_{ \mathcal A^{(S)}}$
 also is a monoid under the convolution product.   When $\tT$ is a group,  $
\tT_{\mathcal A ^{(I)}}$ is   a group.
    \end{lem}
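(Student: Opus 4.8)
The plan is to make the elements of $\tT_{\mathcal A^{(S)}}$ completely explicit and then reduce everything to a single computation with monomials. For $c\in\tT$ and $s\in S$ write $f_{c,s}$ for the function in $\tT^{(S)}$ with $\supp(f_{c,s})=\{s\}$ and $f_{c,s}(s)=c$; by definition $\tT_{\mathcal A^{(S)}}=\{f_{c,s}: c\in\tT,\ s\in S\}$. The core step is the formula $f_{c_1,s_1}*f_{c_2,s_2}=f_{c_1c_2,\,s_1+s_2}$. Indeed, by \lemref{surpp} the support of the left-hand side is contained in $\{s_1\}+\{s_2\}=\{s_1+s_2\}$, and evaluating at $s_1+s_2$ the sum $\sum_{u+v=s_1+s_2}f_{c_1,s_1}(u)f_{c_2,s_2}(v)$ has all summands equal to $\zero$ except the one with $u=s_1,\ v=s_2$, giving the value $c_1c_2$. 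Since $\tT$ is a monoid it is closed under multiplication and $\zero\notin\tT$, so $c_1c_2\in\tT\setminus\{\zero\}$; hence the support is exactly $\{s_1+s_2\}$ and the product again lies in $\tT_{\mathcal A^{(S)}}$.

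From this formula the monoid structure reads off immediately. Let $0_S$ denote the identity of $(S,+)$ and put $e:=f_{\one_\tT,0_S}$; the product formula gives $e*f_{c,s}=f_{\one_\tT c,\,s}=f_{c,s}=f_{c\one_\tT,\,s}=f_{c,s}*e$, so $e$ is a two-sided identity. Associativity likewise follows, since by the formula (applied twice) both $(f_{c_1,s_1}*f_{c_2,s_2})*f_{c_3,s_3}$ and $f_{c_1,s_1}*(f_{c_2,s_2}*f_{c_3,s_3})$ equal $f_{c_1c_2c_3,\,s_1+s_2+s_3}$, using associativity of $\cdot$ in $\tT$ and of $+$ in $S$; alternatively one may invoke the standard associativity of the convolution product. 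Thus $\tT_{\mathcal A^{(S)}}$ is a monoid.

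For the last assertion I would work in the intended setting in which $(S,+)$ is a group (for instance $S=\Z$, the Laurent polynomial case). If moreover $\tT$ is a group, then each $f_{c,s}$ is invertible with two-sided inverse $f_{c^{-1},\,-s}$: the product formula yields $f_{c,s}*f_{c^{-1},-s}=f_{cc^{-1},\,s+(-s)}=f_{\one_\tT,0_S}=e$ and symmetrically on the other side. Hence $\tT_{\mathcal A^{(I)}}$ is a group. I do not expect a serious obstacle here; the whole argument rests on the one monomial-product identity, and the only points that need care are checking that a product of two monomials has support exactly a singleton (which is where $\zero\notin\tT$ enters) and observing that the group statement tacitly presupposes that $(S,+)$ itself is a group, since otherwise the exponent $s$ cannot be inverted.
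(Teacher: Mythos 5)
Your proof is correct and follows essentially the same route as the paper's, which is just the one-line observation that "the product of monomials is a monomial"; you have simply spelled out the computation $f_{c_1,s_1}*f_{c_2,s_2}=f_{c_1c_2,\,s_1+s_2}$ that underlies it. Two of your side remarks are genuinely useful additions beyond the paper's terse argument: (1) you point out that closure really needs $c_1c_2\ne\zero$, which is implicit in the paper's convention that $\tT$ consists of tangible (nonzero) elements of a semidomain-like $\mathcal A$; and (2) you correctly flag that the group statement tacitly requires $(S,+)$ itself to be a group — the paper's one-line proof addresses only closure and says nothing about inverses, and indeed the switch in the paper's statement from $S$ to $I$ (the Laurent case $S=\Z$) signals exactly this. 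So you haven't merely reproduced the proof; you have filled a small gap in the second assertion that the paper's proof glosses over.
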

\begin{proof}
The product of monomials is a monomial.
\end{proof}

\begin{prop}\label{conv1} If $\{\mathcal A^ , \tT , (-) \}$ is  a semiring-group
triple, then
    $\{\mathcal A^{(S)}, \tT_{\mathcal A^{(S)}}, (-) \}$ is also a semiring-group triple (with the convolution product).
 \end{prop}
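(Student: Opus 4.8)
The plan is to verify, one at a time, the three defining ingredients of a semiring-group triple for $(\mathcal A^{(S)}, \tT_{\mathcal A^{(S)}}, (-))$: that $\mathcal A^{(S)}$ is a \semiring0 under componentwise addition and convolution; that $\tT_{\mathcal A^{(S)}}$ is a group generating $(\mathcal A^{(S)}\setminus\{\tilde\zero\},+)$ with $\tT_{\mathcal A^{(S)}}\cap (\mathcal A^{(S)})^\circ=\emptyset$; and that $(-)$, defined componentwise, is a negation map compatible with the convolution action. First I would establish the \semiring0 axioms for convolution: associativity of $*$ follows from the usual reindexing $\sum_{u+v+w=s}f(u)g(v)h(w)$, which is a finite sum by Lemma~\ref{surpp}; distributivity over componentwise $+$ is immediate; and the multiplicative identity is the monomial $\tilde e$ supported at $\zero_S$ with value $\one_{\mathcal A}$ (using that $(S,+)$ is a monoid and $\mathcal A$ is a \semiring0). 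Since the ingredient triple is semiring-group, $\mathcal A$ has a $\zero$, so $\mathcal A^{(S)}$ has $\tilde\zero$; one checks $\tilde\zero * f = \tilde\zero = f*\tilde\zero$ directly.

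Next I would treat the tangible part. By the lemma just above the proposition, $\tT_{\mathcal A^{(S)}}$ is a group under convolution whenever $\tT$ is, since the convolution product of two monomials is a monomial (with value the product of the two values in $\tT$ and support the sum of the two singleton supports in $S$), and the inverse of the monomial at $s$ with value $a$ is the monomial at $-s$ with value $a^{-1}$. That $\tT_{\mathcal A^{(S)}}$ generates $(\mathcal A^{(S)}\setminus\{\tilde\zero\},+)$ is exactly the observation in Lemma~\ref{canonicalmap}: every $f\in\mathcal A^{(S)}$ is a finite sum of monomials, and each monomial is a finite sum of elements of $\tT_{\mathcal A^{(S)}}$ because $\tT$ generates $(\mathcal A\setminus\{\zero\},+)$. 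For disjointness from $(\mathcal A^{(S)})^\circ$: a quasi-zero $f^\circ$ has every nonzero value in $\mathcal A^\circ$, so if $f^\circ$ were a monomial its single value would lie in $\tT\cap\mathcal A^\circ=\emptyset$, a contradiction; hence $\tT_{\mathcal A^{(S)}}\cap(\mathcal A^{(S)})^\circ=\emptyset$.

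Finally, the negation map. Componentwise $(-)$ is a semigroup automorphism of $(\mathcal A^{(S)},+)$ of order $\le 2$ (inherited pointwise from $\mathcal A$), and it restricts correctly to $\tT_{\mathcal A^{(S)}}$ since $(-)$ of a monomial with value $a$ is the monomial with value $(-)a\in\tT$. Compatibility with the convolution action, $((-)f)*g = (-)(f*g) = f*((-)g)$, follows termwise from $((-)f(u))g(v) = (-)(f(u)g(v)) = f(u)((-)g(v))$, which is available on all of $\mathcal A$ by Lemma~\ref{circm}(ii) since $\tT$ generates $(\mathcal A,+)$. I do not anticipate a genuine obstacle here; the only mildly delicate point is bookkeeping the finiteness of the supports appearing in the triple-product reindexing for associativity (guaranteed by Lemma~\ref{surpp} applied twice) and remembering that, because $\mathcal A$ carries a $\zero$, the generation requirement is for $(\mathcal A^{(S)}\setminus\{\tilde\zero\},+)$ rather than all of $\mathcal A^{(S)}$, so one should phrase the monomial decomposition accordingly. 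Everything else is a routine pointwise verification, and the group structure on $\tT_{\mathcal A^{(S)}}$ is already recorded in the preceding lemma.
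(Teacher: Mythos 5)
Your proof is correct and follows the same route as the paper, which simply cites Lemma~\ref{surpp} and the preceding lemma on the monoid/group structure of $\tT_{\mathcal A^{(S)}}$; you have just filled in the routine verifications the paper leaves implicit. One small point worth noting (shared by the paper's own lemma, which only proves closure): your inverse argument for $\tT_{\mathcal A^{(S)}}$ uses the monomial supported at $-s$, so it tacitly assumes $(S,+)$ is a group rather than a mere monoid, which is indeed the setting in which the ``group'' conclusion holds (Laurent polynomials $S=\Z$, not $S=\N$).
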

\begin{proof}
This  follows directly from the previous two lemmas.
\end{proof}

 The convolution product unifies
polynomial-type constructions.

 \begin{defn}\label{poly1} The \textbf{polynomial system}  $(\mathcal A ^{(\mathbb
 N)},  \tT_{ \mathcal A^{(\mathbb N)}}, (-), \preceq)$ over a system
  $(\mathcal A,  \tT , (-), \preceq)$
 is taken with  $\mathcal A ^{(\mathbb N)}$ endowed with
 the convolution product,    $  \tT_{ \mathcal A^{(\mathbb N)}}$ the set of
 monomials with tangible coefficients, and $(-)$ and $\preceq$ defined componentwise (i.e., according
 to the corresponding monomials). \end{defn}

 One  can iterate this construction to define  $\mathcal A [\la_1, \dots
 \la_n]$ (and then take direct limits to handle an infinite number
 of indeterminates).

 There is a subtlety here which should be addressed. When defining
 the module of monomials $\mathcal A\la$, we could view $\la$ either as a formal indeterminate,
 or as a placemark for a function $f:\la \to \mathcal A$ given by
 choosing $a$ and defining $f: \la \to a.$ These are not the same,
 since different formal polynomials could agree as functions. Our point of
 view is the functional one.

%

Another intriguing example of functor categories (not pursued here)
is the \textbf{exterior product} where the functions in
$\tT^{(\mathbb N)}$ satisfy $f(u)g(v) = (-1)^{uv}g(v)f(u)$ for
$u,v,\in \mathbb N.$

\subsection{The role of universal algebra}\label{unalg}$ $

 As in
\cite[\S 5]{Row16}, universal algebra provides a guide for the
definitions, especially with regard to the roles of possible
multiplication on $\tT$.
 The notions of
universal algebra are particularly appropriate here since we have a
simultaneous double structure, of~$\mathcal A$ and its designated
subset $\tT$  of tangible elements, together with a negation map
$(-)$.

We recall briefly that a \textbf{carrier}, called a
\textbf{universe} in  \cite{MMT}, is a $t$-tuple of sets $\{\mathcal
A_1, \mathcal A_2, \dots, \mathcal A_t\}$ for some given $t$.  A set
of \textbf{operators} is a set $\Omega : = \cup _{m \in \Net} \,
\Omega(m)$, where  each $\Omega(m)$ in turn is a set of formal
$m$-ary symbols $\{ \omega _{m,j} = \omega_{m,j}(x_{1,j}, \dots,
x_{m,j}):   j \in J_{\Omega(m)}$\},
 which are maps
$\omega_{m,j}: \mathcal A_{i_{j,1}} \times \dots \times  \mathcal
A_{i_{j,m}} \to \mathcal A_{i_{j}}$.  
 The 0-ary operators are just
distinguished elements  called \textbf{constants}.

(Here $\mathcal A_1 = \mathcal A$, and $\mathcal A_2 = \tT, $ and
the $\omega_{m,j}$ include the various operations needed to define
$\tT$-modules.  In particular $\zero$ is assumed to be a constant of
$\mathcal A_j$ when $\zero \in \mathcal A_j$.)


 The algebraic structure has \textbf{universal
relations} (otherwise called \textbf{identities} in the literature),
such as associativity, distributivity, which are expressed in terms
of the operators, and this package of the carriers, operators and
universal relations is called the \textbf{signature} of the carrier.
For example, in classical algebra one might take the signature to be
a ring or  a module, endowed with various operations, together with
 identities written as universal relations.

\subsubsection{The category of a variety in universal
algebra}\label{surpre201}$ $

The class of carriers of a given structure is called an
\textbf{algebraic variety}. These are well known to be characterized
by being closed under sub-algebras, homomorphic images, and direct
products.
 To view universal algebra more categorically, we work with a given
 variety (of a given signature). The objects of our category
$\mathcal C$ are the carriers $\mathcal A$ of that signature (which
clearly are sets), and its morphisms are the homomorphisms, which
are maps $f : \mathcal A \to \mathcal A'$ satisfying, for all
operators $\omega_{m,j}: \mathcal A_{i_{j,1}} \times \dots \times
\mathcal A_{i_{j,m}} \to \mathcal A_{i_{m,j}}$.

$$f(\omega_{m,j} (a_{1 }, \dots, a_{m }))= \omega_{m,j} (f(a_{1 }),
\dots, f(a_{m })), \quad \forall a_k \in \mathcal A_{i_{j,k}}.$$

For a system, the signature includes $\mathcal A_2 = \tT,$ usually a
multiplicative monoid, and $\mathcal A_1 = \mathcal A$, a
$\tT$-module. In this philosophy, any homomorphism preserves
constants and tangible elements. For example, if we fix $\tT$ then
the class of $\tT$-modules is an algebraic variety, and there are
many other general examples, cf.~\cite[\S 5.6.1]{Row16}, although as
explained in \cite[\S 5.6.2]{Row16} some major tropical-oriented
axioms do not define varieties.
%
%
%

We can extend the convolution product to morphisms:

 \begin{defn} Given a semiring $T$ and maps $h_1: \mathcal A_1 \to  T
 $ and  $h_2: \mathcal B_1 \to  T
 $, define the \textbf{convolution product}
 $h_1 * h_2 : \mathcal A_1^{(S)}\times \mathcal B_1^{(S)}\to  T $
 by $$((h_1 * h_2)(f_1,f_2))(s) = \sum _{u+v = s} h_1 (f_1(u))h_2
 (f_2(v)).$$
 \end{defn}

\begin{example}\label{trivconv}(The trivial case) When $S$ is the trivial category
consisting of one object $\zero$, and writing $c_i = h_i(\zero),$
then $\mathcal A_i^{(S)} = \mathcal A_i$, and the convolution
product is just given by $h_1 * h_2 (c_1,c_2)= h_1(c_1)h_2(c_2).$
 This
viewpoint will be  useful when we consider tensor
products.\end{example}

\subsubsection{Triples $(-)$-layered by a semiring $L$}$ $

Here is an example paralleling graded algebras, which both relates
to the symmetrized structure and is needed in differential calculus
of $\tT$-systems.

\begin{defn}\label{Func1}
 A pseudo-triple $\mathcal A$ is \textbf{$(-)$-layered} by a monoid $(L,\cdot)$  if
 $$\mathcal A = \cup_{\ell \in L} \mathcal A _\ell; \qquad (-) \mathcal A _\ell =  \mathcal A _{(-)\ell}$$
  where the union of the $A _\ell$ is disjoint, and $\mathcal A _{\ell_1} \mathcal A
  _{\ell_2} \subseteq \mathcal A _{\ell_1 \ell_2}.$

 \end{defn}

\begin{example}\label{lay1}  $(-)$-layering also provides ``layered'' structures, as  described in
\cite[\S2.1]{AGR}. Namely,   a $\tT$-monoid triple layered by a
\semiring0
  $L$ is viewed as a special case of Definition~\ref{Func1}, where all the $\mathcal A _\ell$ are the
  same (taking $(-)$ on $\mathcal A _\ell$ to be the identity map).
This can be viewed as $ \mathcal A_1 ^L$, viewing a monoid $L$ as a
small category, with $\tT_{ \mathcal A} =  \mathcal A_1$. Addition
in the layered triple $\mathcal A' $ of $\mathcal A $ by $L$  is
given in \cite[Example 3.7]{Row16}. This is treated in
\cite[\S2.1]{AGR}, which also considers other subtleties
  concerning layering of triples in general. We also note that there is a $\tT$-homomorphism  $ \mathcal A_1 ^{(L)}\to \mathcal A' $
  given by $(a_\ell) \mapsto (\ell, \sum a_\ell).$
  \end{example}

\section{The structure theory of ground  triples via congruences}\label{homim}$ $

We are ready to embark on the structure theory of ground  triples,
in analogy to the structure theory of rings and integral domains.
Our objective in this section is to modify the ideal theory  (which
does not work for homomorphisms over semirings) and the
corresponding factor-module theory to an analog which is robust
enough to support the structure theory of ground $\tT$-systems. To
do this, we first ignore the issue of negatives, and then, as in
\cite{Row16}, use ``symmetry'' (which is formal negation) instead of
negatives.

\subsection{The role of congruences}\label{congrole}$ $

Unfortunately everything starts to unravel at once. For starters,
\cite[\S 1.6.2]{grandis2013homological} is too restrictive for our
purposes. Factor $\tT_{\mathcal A}$-modules (or factor \semirings0)
are a serious obstacle, since cosets need not be disjoint (this fact
relying on additive cancellation, which fails in the max-plus
algebra, since $1+3 = 2+3 = 3$).

We also have the  following problematic homomorphism, if we want to use
the preimage of $\zero$ in the theory.

\begin{example}
Define the homomorphism $f: \mathcal A \times  \mathcal A   \to
\mathcal A \times \mathcal A$ by $f(a_0,a_1) = (a_0+a_1,a_0+a_1).$
Then $f$ is not 1:1 over the max-plus algebra, but
$f^{-1}(\zero,\zero) = (\zero,\zero).$ This example also blocks the
naive definitions of kernels and cokernels. (In what follows, the
null elements will take the place of the element $\zero$ in
$\mathcal A$.)
\end{example}
This is solved in universal algebra via the use of congruences,
which are equivalence relations respecting the given algebraic
structure.  Congruences can also be viewed as subalgebras of
$\mathcal{A} \times \mathcal{A}$ which are also equivalence
relations.%

\begin{defn} 
Viewing a congruence of  $\mathcal A$ as a substructure of
$\widehat{\mathcal A} $, we define the \textbf{trivial} congruence as follows:
$$\Diag_\mathcal A  := \{ (a,a): a \in \mathcal A \}.$$
\end{defn}

Clearly every congruence contains $\Diag_\mathcal A$. Furthermore,
since we are viewing $(-)$ in the signature, we require that $((-)
a_1 , (-) a_2)\in \Cong$ whenever $(  a_1 ,  a_2)\in \Cong.$




 Since we are incorporating the
switch morphism into the signature, we require for an $\mathcal
A$-module $\mathcal M$ that if $(a,b)\in \mathcal M$ then $(b,a) =
(-)(a,b)\in \mathcal M.$ This is a very mild condition; for example, if
$\zero \in \mathcal A$ then $(b,a) = (\zero,\one)(a,b)
 \in \mathcal M.$

\begin{lem}\label{twistcl}$ $
  Any congruence is   closed under the
twist action.
 \end{lem}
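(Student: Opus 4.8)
The plan is to unwind the definitions. A congruence $\Cong$ on $\mathcal A$ is a subalgebra of $\widehat{\mathcal A} = \mathcal A \times \mathcal A$ in the given signature, and as noted just above the statement, the signature now includes the negation (switch) map $(-)$, so $\Cong$ is closed under applying $(-)$ componentwise: $(a_1,a_2)\in\Cong \implies ((-)a_1,(-)a_2)\in\Cong$. The twist action of an element $(c_0,c_1)\in\widehat{\tT_{\mathcal A}}$ on a pair is, by \eqref{twi}, $(c_0,c_1)\ctw(a_1,a_2) = (c_0 a_1 + c_1 a_2,\ c_0 a_2 + c_1 a_1)$. So what must be shown is: if $(a_1,a_2)\in\Cong$ then $(c_0 a_1 + c_1 a_2,\ c_0 a_2 + c_1 a_1)\in\Cong$ for every $(c_0,c_1)$ with $c_0,c_1\in\tT_{\mathcal A}$ (or more generally in $\mathcal A$, depending on which twist one intends — I would state it for the tangible twist since that is the action used in Remark~\ref{morph79} and Proposition~\ref{congmult}).

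First I would record that $\Cong$, being a subalgebra, is in particular a sub-$\tT$-module of $\widehat{\mathcal A}$ under the componentwise action: for $c\in\tT_{\mathcal A}$, $(a_1,a_2)\in\Cong \implies (c a_1, c a_2)\in\Cong$, and $\Cong$ is closed under the componentwise addition of $\widehat{\mathcal A}$. Combined with closure under the switch map, this is everything needed. Given $(a_1,a_2)\in\Cong$: applying scalar multiplication by $c_0$ gives $(c_0 a_1, c_0 a_2)\in\Cong$; applying the switch and then scalar multiplication by $c_1$ gives $(c_1 a_2, c_1 a_1)\in\Cong$; adding these two elements of $\Cong$ componentwise yields $(c_0 a_1 + c_1 a_2,\ c_0 a_2 + c_1 a_1) = (c_0,c_1)\ctw(a_1,a_2)\in\Cong$, as desired.

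So the proof is a three-line combination of (i) closure under componentwise $\tT$-action, (ii) closure under the switch map, and (iii) closure under componentwise addition — all of which hold because a congruence is a subalgebra of $\widehat{\mathcal A}$ in a signature that includes both the module operations and $(-)$. There is no real obstacle; the only thing to be careful about is that "closed under the twist action" implicitly refers to the twist action by tangible pairs $(c_0,c_1)\in\widehat{\tT_{\mathcal A}}$ (and the argument is identical, by distributivity, if one allows $c_0,c_1\in\mathcal A$ by first writing them as sums of tangibles), and that one is using the \emph{componentwise} module structure on $\widehat{\mathcal A}$, not the twisted multiplication, to get closure under scalar multiplication and addition — the whole point of Lemma~\ref{twistcl} being precisely that componentwise closure plus switch-closure already forces twist-closure.
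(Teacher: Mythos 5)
Your proof is correct and is exactly the paper's argument, written more cleanly: split $(c_0,c_1)\ctw(a_1,a_2)$ as the componentwise sum $(c_0a_1,c_0a_2)+(c_1a_2,c_1a_1)$, put the first term in $\Cong$ by the componentwise scalar action, the second by first switching $(a_1,a_2)\mapsto(a_2,a_1)$ (the paper invokes this as ``$\Cong$ is \dots symmetric'') and then scaling, and finish with closure under componentwise addition. One small inconsistency in your write-up: in your opening paragraph you quote closure under componentwise negation, $(a_1,a_2)\mapsto((-)a_1,(-)a_2)$, as the relevant fact, but the closure your argument actually uses (and names correctly in your closing paragraph) is closure under the \emph{switch} $(a_1,a_2)\mapsto(a_2,a_1)$ --- a different map, which holds simply because a congruence is a symmetric relation.
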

\begin{proof}
Let $\Cong$ be a congruence. By applying (i) again, we obtain that
$$(\zero,a_1) \ctw (x_0,x_1) = (a_1 x_0,a_1 x_1)\in \Cong.$$
But, since $\Cong$ is a congruence (in particular symmetric), we
have that $(a_1 x_1,a_1 x_0)\in \Cong$ and hence the sum $(a_0
x_0+a_1 x_1,a_0 x_1+a_1 x_0) \in \Cong$, yielding the assertion.
\end{proof}

\subsubsection{The twist action on congruences}$ $

In universal algebra, in particular semirings, congruences are more
important to us than ideals. But $\tT_{\mathcal A}$-module
congruences are difficult to work with, since they are
$\tT_{\mathcal A}$-submodules of $\widehat{ \mathcal M} = \mathcal M
\times \mathcal M$ over $\mathcal A.$ The next concept eases this
difficulty by bringing in the twist action as a negation map on
\semirings0.

 We write $\Cong_1\ctw \Cong_2$ for the congruence generated by $
 \{(a_0,a_1)\ctw (b_0,b_1): a_i \in \Cong_1, \ b_i \in \Cong_2\},$
 cf. \eqref{twi}.
From  associativity, it makes sense to write $\Cong^n$ for the twist
product of $n$ copies of $\Cong.$

\subsubsection{$\tT$-congruences}$ $

We restrict our congruences somewhat, to give $\tT$ its proper role.

\begin{defn}  For any congruence $\Cong$, we
  define $\CongT = \{ (a ,a' )\in \Cong : a,a' \in \tTz\}$. We say
that a congruence $\Cong$ is a  $\tT$-\textbf{congruence} if and only if
$\Cong$ is
   additively generated by elements of the form $(a,b)$ where $a,b \in
   \tT$.
\end{defn}


%
%


Clearly  $\Diag_\mathcal A$ is a $\tT$-congruence.

As with rings and modules, there are two notions of factoring out
$\tT$-congruences. If $\Cong$ is a $\tT$-congruence on a triple
$\mathcal A,$ we can form the \textbf{factor triple} $\mathcal
A/\Cong,$ generated by
$$  \tT/\CongT: = \{([a_1],[a_2]): (a_1,a_2)\in \tT \times \tT\},$$ where the equivalence
classes are taken with respect to $\Cong.$

\begin{lem}\label{tcon1}
If $ \CongT$ is a $\tT$-congruence on a triple  $(\mathcal A, \tT,
 (-))$, then $(\mathcal A/\CongT, \tT/\CongT,(-))$ is a triple,
 where one defines
 $(-)[a] = [(-)a] $, and there is a  morphism $\mathcal A \to  \mathcal A/\Cong$ (as $\mathcal A$-module triples)
 given by $a \mapsto [a]$.
\end{lem}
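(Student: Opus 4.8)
The plan is to verify the three assertions in turn: that $(\mathcal A/\CongT, \tT/\CongT, (-))$ is a well-defined pseudo-triple, that it satisfies the two extra conditions making it a \emph{triple} (namely $\tT/\CongT \cap (\mathcal A/\CongT)^\circ = \emptyset$ and $\tT/\CongT$ generates $\mathcal A/\CongT$ additively), and finally that $a \mapsto [a]$ is a morphism of $\mathcal A$-module triples. First I would observe that since $\Cong$ is a congruence on $\mathcal A$ in the signature that includes $(-)$, the operation $(-)[a] := [(-)a]$ is well-defined on $\mathcal A/\Cong$: if $(a_1,a_2)\in\Cong$ then $((-)a_1,(-)a_2)\in\Cong$ by the standing requirement on congruences noted just before Lemma~\ref{twistcl}, so $[(-)a_1]=[(-)a_2]$. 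Because $(-)$ has order at most two on $\mathcal A$ and respects the $\tT$-action, the induced map inherits these properties on the quotient, and $\mathcal A/\Cong$ is a $\tT$-module via $a[b]:=[ab]$ (well-defined since $\Cong$ is a $\tT_{\mathcal A}$-submodule of $\widehat{\mathcal A}$, in particular stable under scaling). So $(\mathcal A/\Cong, \tT/\CongT, (-))$ is at least a pseudo-triple.

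Next I would check the triple axioms from Definition~\ref{modu21}(iii). Generation is immediate: if $c=\sum_i a_i$ with $a_i\in\tT_{\mathcal A}$ in $\mathcal A$, then $[c]=\sum_i[a_i]$ with $[a_i]\in\tT/\CongT$, and the map $a\mapsto[a]$ is surjective, so $\tT/\CongT$ generates $(\mathcal A/\Cong,+)$ (or $(\mathcal A/\Cong\setminus\{[\zero]\},+)$ when $\zero\in\mathcal A$). The emptiness condition $\tT/\CongT \cap (\mathcal A/\Cong)^\circ = \emptyset$ is where I expect the real content to lie: one needs that no class $[a]$ with $a\in\tT$ equals a class $[c^\circ]$ for $c\in\mathcal A$ — equivalently, that $(a,c^\circ)\notin\Cong$ for $a\in\tT$, $c\in\mathcal A$. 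This is precisely the kind of condition that fails for arbitrary congruences; it holds because $\CongT$ is a \emph{$\tT$-congruence}, hence (together with the defining hypothesis that $\mathcal A$ itself is a triple, so $\tT_{\mathcal A}\cap\mathcal A^\circ=\emptyset$) the congruence is generated by tangible pairs $(a,b)$ with $a,b\in\tT$, and one argues that such a congruence cannot identify a tangible element with a quasi-zero. I would spell this out by tracing through how an element is forced into the same class: any relation $[a]=[c^\circ]$ would have to arise from the additive/transitive closure of generating tangible pairs, and one shows this keeps tangibles among tangibles.

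Finally, for the morphism claim, the map $\pi:\mathcal A\to\mathcal A/\Cong$, $a\mapsto[a]$, is a $\tT$-module homomorphism by construction of the quotient $\tT$-module structure; it sends $\tT_{\mathcal A}$ into $\tT/\CongT\cup\{[\zero]\}$ (in fact into $\tT/\CongT$) and satisfies $\pi((-)a)=[(-)a]=(-)[a]=(-)\pi(a)$, so it is a homomorphism of pseudo-triples in the sense of Definition~\ref{homodef}, and since it is the identity on the indexing data it is a morphism of $\mathcal A$-module triples as required. The main obstacle is the disjointness condition $\tT/\CongT\cap(\mathcal A/\Cong)^\circ=\emptyset$; everything else is routine transport of structure along the canonical surjection, and I would keep the writeup of those parts brief, citing Definition~\ref{modu21} and the congruence conventions established above.
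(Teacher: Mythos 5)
You correctly isolate the disjointness condition $\tT/\CongT \cap (\mathcal A/\CongT)^\circ = \emptyset$ as the one place where ``pseudo-triple'' might fail to upgrade to ``triple,'' but the plan you sketch for it cannot succeed. A $\tT$-congruence need only be additively generated by pairs $(a,b)$ with $a,b\in\tT$, and nothing in that requirement prevents a tangible class from coinciding with a quasi-zero class in the quotient. The sharpest counterexample is $\Cong = \widehat{\mathcal A}$ itself: since $\tT$ generates $\mathcal A$ additively, $\widehat{\mathcal A}$ is additively generated by tangible pairs and hence is a $\tT$-congruence, yet $\mathcal A/\Cong$ is a single point and the disjointness fails. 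Even for proper $\tT$-congruences there is no general reason the additive-transitive closure of tangible generating pairs should avoid relating some $a\in\tT$ to some $c^\circ$. So your remark that ``one shows this keeps tangibles among tangibles'' is a hope, not a proof, and this is a genuine gap.

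It is worth knowing that the paper's own proof is far terser than your plan: it verifies only that $(-)$ is well-defined on $\mathcal A/\CongT$ (that $(\sum a_{1i}, \sum a_{2i}) \in \CongT$ implies $(\sum(-)a_{1i}, \sum(-)a_{2i}) \in \CongT$), calls that ``patent,'' and says nothing about generation, disjointness, or the morphism claim. Your verification of the pseudo-triple structure, the generation of the quotient by $\tT/\CongT$, and the morphism $a\mapsto [a]$ are all correct and more careful than the source. The problematic part is the triple condition itself, which the paper silently passes over; either an additional hypothesis is needed to ensure no tangible class is a quasi-zero class, or the conclusion should be weakened to ``pseudo-triple.'' You should flag this explicitly rather than leave it as an unexecuted plan.
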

\begin{proof} We need $(-)$ to be well-defined on $\mathcal
A/\CongT,$ which means that if $(\sum a_{1i},\sum a_{2i}) \in
\CongT,$ then $(\sum (-)a_{1i},\sum (-) a_{2i}) \in \CongT.$ But
this is patent.
\end{proof}

 On the other hand, if $\Cong_2\subseteq \Cong_1$ are $\tT$-congruences, we can
define
 the \textbf{factor $\tT$-congruence}
$ \Cong_1/\Cong_2$ generated by $$\{ ([a_1],[a_2]): (a_1,a_2)\in
\Cong_1\}$$ where the equivalence classes are taken with respect to
$\Cong_2.$
 $ \Cong/\Diag_\mathcal A$ is just $ \Cong$. If $\Cong_1$ is a
 $\tT$-congruence then so is $\Cong_1 / \Cong_2$.


\subsection{Prime $\tT$-systems and prime $\tT$-congruences}\label{Pri}$ $

Since classical algebra focuses on algebras over integral domains
(i.e., prime commutative rings), we look for their systemic
generalization.
%
The idea is taken from Jo\'{o} and Mincheva \cite{JoM} as well as
Berkovich~\cite{Ber}. In \cite{Ber}, Berkovich defined a notion of
the prime spectrum $\Spec (\mathcal{A})$ when $\mathcal{A}$ is a
commutative monoid. Berkovich's definition of a prime ideal of a
commutative monoid $\mathcal{A}$ is a congruence relation $\sim$
on~$\mathcal{A}$ such that the monoid $A/\sim$ is nontrivial and
cancellative.

\begin{rem}\label{qc0} For a semiring $\mathcal{A}$, Jo\'{o} and
Mincheva defined a prime congruence for $\mathcal{A}$ as a
congruence~$P$ of $\mathcal{A}$ such that if $a\ctw b \in P$ then $a
\in P$ or $b \in P$ for $a,b \in \widehat{\mathcal{A}}$. This
definition implies the definition of Berkovich in the following
sense: Let $\mathcal{A}$ be an idempotent commutative $\tT$-semiring
triple. Then, for any prime $\tT$-congruence~$P$ (defined as in
\cite{JoM}), the $\tT/P$-semiring triple $\mathcal{A}/P$ is
cancellative. The case when $\mathcal A$ is an idempotent semiring
is proved in \cite[Proposition 2.8]{JoM} although the converse is
not true in general (cf. \cite[Theorem 2.12]{JoM}).
\end{rem}

Let us modify this, to get both directions. We drop the assumption
of commutativity whenever the proofs are essentially the same.





\begin{defn}\label{Pricon}
Let $(\mathcal A, \tT, (-))$ be a $\tT$-\semiring0 triple.
\begin{enumerate} \eroman
\item
A  congruence $\Cong\ne \widehat {\mathcal A}$ of a \semiring0
triple $\mathcal A$ is  \textbf{prime}
 if $\Cong'\ctw \Cong'' \subseteq \Cong$ for congruences implies $\Cong' \subseteq \Cong$
 or $\Cong'' \subseteq \Cong.$
\item
A $\tT$-congruence $\Cong\ne \widehat {\mathcal A}$ of a \semiring0
triple $\mathcal A$ is $\tT$-\textbf{prime}
 if $\Cong'\ctw \Cong'' \subseteq \Cong$ for $\tT$-congruences implies $\Cong' \subseteq \Cong$
 or $\Cong'' \subseteq \Cong.$
A $\tT$-congruence $\Cong$  is  $\tT$-\textbf{semiprime} if
$(\Cong')^2 \subseteq \Cong$ implies $\Cong' \subseteq \Cong$.
Semiprime $\tT$-congruences are called \textbf{radical} when
$\mathcal A$ is commutative.
\item The triple $(\mathcal A, \tT, (-))$ is  \textbf{prime} (resp.~\textbf{semiprime}  if the
 trivial $\tT$-congruence $\Diag_\mathcal A$ is a prime  (resp.~ -semiprime)
 $\tT$-congruence.

\item The triple $(\mathcal A, \tT, (-))$ is $\tT$-\textbf{prime} (resp.~\textbf{semiprime}  if the
 trivial $\tT$-congruence $\Diag_\mathcal A$ is a $\tT$-prime  (resp.~$\tT$-semiprime) $\tT$-congruence.
A commutative semiprime triple is called \textbf{reduced}, in
analogy with the classical theory.

\item The triple $(\mathcal A, \tT, (-))$ is $\tT$-\textbf{irreducible}  if the
 intersection of nontrivial $\tT$-congruences is nontrivial. \end{enumerate}
\end{defn}

For   notational convenience, we  write congruences (instead of
$\tT$-congruences) if the context is clear. Prime congruences arise
naturally as follows (which could be formulated much more
generally):

\begin{prop}\label{primemax} \begin{enumerate} \eroman
\item Given any $\ctw$-multiplicative subset $S$ of $\widehat{\mathcal
A},$ there is a $\tT$-congruence $\Cong$ maximal with respect to
$\Cong \cap S = \emptyset,$ and the $\tT$-congruence $\Cong$ is
$\tT$-prime.

\item Given any $\ctw$-multiplicative subset $S$ of $\widehat{\mathcal
A},$ there is a  congruence $\Cong$ maximal with respect to $\Cong
\cap S = \emptyset,$ and the  congruence $\Cong$ is $\tT$-prime.
\end{enumerate}
\end{prop}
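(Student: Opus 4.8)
The plan is to imitate the classical Zorn's-lemma argument that a congruence maximal with respect to avoiding a multiplicatively closed set is prime. First I would verify that the collection of $\tT$-congruences (resp.\ congruences) $\Cong$ with $\Cong \cap S = \emptyset$ is nonempty and closed under unions of chains: the trivial congruence $\Diag_{\mathcal A}$ works as a base point provided $\Diag_{\mathcal A}\cap S=\emptyset$, which holds because $S$ being $\ctw$-multiplicative does not force it to meet the diagonal (and if it did, no congruence could avoid it and the statement would be vacuous); and an ascending union of $\tT$-congruences is again a $\tT$-congruence since the generating tangible pairs accumulate, while the property of missing $S$ is preserved in a union because $S$ meeting the union would mean $S$ meets some member of the chain. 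Zorn's lemma then yields a maximal such $\Cong$ in each case.

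Next I would prove $\tT$-primeness of this maximal $\Cong$. Suppose $\Cong'\ctw\Cong''\subseteq\Cong$ with $\Cong',\Cong''$ $\tT$-congruences but neither contained in $\Cong$. Form $\Cong+\Cong'$ (the $\tT$-congruence generated by $\Cong\cup\Cong'$) and $\Cong+\Cong''$; each strictly contains $\Cong$, so by maximality each meets $S$, say $s'\in(\Cong+\Cong')\cap S$ and $s''\in(\Cong+\Cong'')\cap S$. Since $S$ is $\ctw$-multiplicative, $s'\ctw s''\in S$. The goal is to show $s'\ctw s''\in\Cong$, contradicting $\Cong\cap S=\emptyset$. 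Writing $s'$ and $s''$ as (sums of) twist-combinations coming from the generators of $\Cong$ and of $\Cong'$, resp.\ $\Cong''$, and expanding $s'\ctw s''$ bilinearly using the distributivity of $\ctw$ over the componentwise addition, every resulting term either has a factor in $\Cong$ (hence lies in $\Cong$, since $\Cong$ is closed under the twist action by Lemma~\ref{twistcl} and is a congruence) or is a twist product of a generator from $\Cong'$ with one from $\Cong''$, hence lies in $\Cong'\ctw\Cong''\subseteq\Cong$. So $s'\ctw s''\in\Cong$, the desired contradiction. For part (ii) the same computation applies verbatim; the only change is that the ambient maximal object is allowed to be an arbitrary congruence, but the two test congruences $\Cong'\ctw\Cong''\subseteq\Cong$ in the definition of $\tT$-prime are still $\tT$-congruences, so the conclusion ``$\tT$-prime'' is what one gets, not ``prime''.

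The main obstacle I anticipate is the bookkeeping in the bilinear expansion of $s'\ctw s''$: one must be careful that ``$\Cong+\Cong'$'' denotes the $\tT$-congruence generated by the union (not a naive setwise sum), that the elements $s',s''$ are genuinely finite sums of twist-monomials in the generators, and that closure of $\Cong$ under $\ctw$ and under addition really does absorb every cross term that involves a generator of $\Cong$. A secondary subtlety is checking that the join of two $\tT$-congruences is again additively generated by tangible pairs — this is where the definition of $\tT$-congruence as being additively generated by pairs $(a,b)$ with $a,b\in\tT$ is used — so that staying within the class of $\tT$-congruences throughout the Zorn argument is legitimate. Everything else (nonemptiness, chain-closure, the final contradiction) is routine once these structural points are nailed down.
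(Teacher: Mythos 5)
Your proposal follows the same Zorn's-lemma-plus-multiplicative-avoidance template as the paper's proof, so in that sense it is the same approach. But it is worth noting that your version actually repairs a real gap in the paper's argument: the paper passes directly from ``$\Cong'\not\subseteq\Cong$'' to the claim that $S\cap\Cong'$ contains an element, which does not follow ($\Cong'$ could be disjoint from $S$ without lying inside $\Cong$). You correctly interpose the joins $\Cong+\Cong'$ and $\Cong+\Cong''$, invoke maximality to find $s'$ and $s''$ there, and then carry out the bilinear expansion showing $(\Cong+\Cong')\ctw(\Cong+\Cong'')\subseteq\Cong$ (absorbing the $\Cong$-cross terms via Lemma~\ref{twistcl} and the $\Cong'\ctw\Cong''$ term by hypothesis), which is exactly the step the paper leaves unstated. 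The one remaining soft spot in both accounts — which you rightly flag — is the implicit identification of the congruence join with the additive span of the two sub-semimodules of $\widehat{\mathcal A}$, so that a bilinear expansion of $s'\ctw s''$ into the four cross terms is legitimate; making that identification precise (or routing around it) is where the real technical work lies.
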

 \begin{proof} (i) The union of a chain of $\tT$-congruences is a
 $\tT$-congruence, so the existence of a maximal such $\tT$-congruence is
 by Zorn's lemma. For the second assertion, suppose that $\Cong'\ctw \Cong'' \subseteq \Cong$, but $\Cong' \not \subseteq \Cong$ and $\Cong'' \not \subseteq \Cong$. Then $S
 \cap \Cong'$ (resp.~$S
 \cap \Cong'$) contains an element $s'$ (resp.~$s''$). It follows that $S
 \cap \Cong' \ctw \Cong''$ contains $s \ctw s'' \in S$. This contradicts to our assumption that $\Cong$ is disjoint from $S$ and hence $\Cong$ is prime.

(ii) Analogous to (i).
 \end{proof}

 \begin{cor}\label{primemax1} If $S$ is a submonoid of $\tT$, then
  there is a congruence $\Cong$ maximal with respect to $$\Cong
\cap ((S\times \{\zero\}) \cup (\{\zero\} \times S) = \emptyset ,$$
and it is prime.
\end{cor}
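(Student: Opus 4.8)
The statement is an immediate specialization of Proposition~\ref{primemax}(ii), so the work is entirely in checking that the hypotheses of that proposition apply to the set
\[
S^\dagger := (S \times \{\zero\}) \cup (\{\zero\} \times S) \subseteq \widehat{\mathcal A}.
\]
First I would observe that $\widehat{\mathcal A}$ carries the twist product $\ctw$ of \eqref{twi}, and verify that $S^\dagger$ is a $\ctw$-multiplicative subset. This is a short direct computation using the formula $(a_0,a_1)\ctw(b_0,b_1) = (a_0b_0 + a_1b_1,\, a_0b_1+a_1b_0)$: if $s,t \in S$ then
\[
(s,\zero)\ctw(t,\zero) = (st,\zero),\qquad (\zero,s)\ctw(\zero,t) = (st,\zero),\qquad (s,\zero)\ctw(\zero,t)=(\zero,st),
\]
and each of these lies in $S^\dagger$ because $S$ is a submonoid of $\tT$, hence closed under the ambient multiplication; the unit $(\one,\zero)$ also lies in $S^\dagger$. (Here one uses that $\tT$ acts on $\mathcal A$ multiplicatively, so these products make sense in $\widehat{\mathcal A}$.)

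Next I would apply Proposition~\ref{primemax}(ii) with this $S^\dagger$: it produces a congruence $\Cong$ of $\mathcal A$ maximal with respect to $\Cong \cap S^\dagger = \emptyset$, and asserts that such a $\Cong$ is $\tT$-prime. That is exactly the content of the corollary, so the proof is essentially ``apply Proposition~\ref{primemax}(ii) to $S^\dagger$.'' The only point needing a word of care is nontriviality: since $S$ is a submonoid of $\tT$, it contains $\one$, so $(\one,\zero) \in S^\dagger$ and $(\one,\zero) \notin \Diag_{\mathcal A}$ unless $\mathcal A$ is trivial; this guarantees $\Cong \neq \widehat{\mathcal A}$ is consistent with the disjointness requirement and that the hypothesis ``$S^\dagger$ multiplicative'' is non-vacuous (the empty set would make the statement degenerate).

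The main obstacle, such as it is, is purely bookkeeping: one must confirm that the three mixed products above genuinely exhaust the possibilities (they do, by symmetry of the two summands of $S^\dagger$) and that no product can accidentally land in $\Diag_{\mathcal A}\setminus S^\dagger$ in a way that would interfere — but the maximality statement of Proposition~\ref{primemax}(ii) already handles this, since it only requires $S^\dagger$ to be a $\ctw$-multiplicative set, with no further conditions. So I expect the proof to be two or three lines: note $S^\dagger$ is $\ctw$-multiplicative by the displayed identities, then invoke Proposition~\ref{primemax}(ii).
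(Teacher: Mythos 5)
Your proof is correct and follows the same route as the paper's: the paper's one-line proof is simply ``$(S\times \{\zero\}) \cup (\{\zero\} \times S)$ is a monoid under $\ctw$,'' after which it invokes Proposition~\ref{primemax}, and you have filled in exactly the computations (with the twist formula~\eqref{twi}) that justify that closure claim, and correctly identified part~(ii) as the part being used.
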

\begin{proof}
 $(S\times \{\zero\}) \cup (\{\zero\} \times S)$ is a monoid
under $\ctw.$
\end{proof}

We get all prime $\tT$-congruences in this way, because of the
following observation:

\begin{rem} For any  prime  $\tT$-congruence  $\Cong$ of $\mathcal A$, $S: = \mathcal A \setminus \Cong$   is a $\ctw$-submonoid
of $\mathcal A \times \mathcal A,$ and obviously  $\Cong$ is maximal
with respect to $\Cong \cap S = \emptyset .$ \end{rem}

 We say that a
$\tT$-congruence  $\Cong$ is \textbf{maximal} if there is no
$\tT$-congruence  $\Cong'$ with $\CongT \subset \Cong'|_\tT \subset
\tT_{\widehat{\mathcal A }}.$

\begin{lem} The $\tT$-congruence $\Cong$ is maximal if and only if
it is maximal with respect to the condition that $(\one, \zero) \notin \Cong$.
\end{lem}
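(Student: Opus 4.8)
The plan is to reduce the statement to a single observation: a $\tT$-congruence $\Cong'$ on $\mathcal A$ contains $(\one,\zero)$ precisely when it is the improper congruence $\widehat{\mathcal A}$, equivalently precisely when $\Cong'|_\tT=\tT_{\widehat{\mathcal A}}$. Granting this, the three conditions ``$(\one,\zero)\notin\Cong'$'', ``$\Cong'$ is a proper $\tT$-congruence'', and ``$\Cong'|_\tT\subsetneq\tT_{\widehat{\mathcal A}}$'' coincide, and the two formulations of maximality become literally the same demand on $\Cong$.

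To prove the observation, suppose $(\one,\zero)\in\Cong'$. A congruence respects multiplication, hence is closed under the coordinatewise product in $\mathcal A\times\mathcal A$; pairing the (necessarily present) diagonal element $(a,a)$ with $(\one,\zero)$ and using $a\zero=\zero$ gives $(a,\zero)\in\Cong'$ for every $a\in\mathcal A$. Since $\Cong'$ is symmetric we also have $(\zero,a)\in\Cong'$, and transitivity through $\zero$ yields $(c,d)\in\Cong'$ for all $c,d\in\mathcal A$; hence $\Cong'=\widehat{\mathcal A}$. In particular $\Cong'$ contains $(\tT\times\{\zero\})\cup(\{\zero\}\times\tT)=\tT_{\widehat{\mathcal A}}$, so $\Cong'|_\tT=\tT_{\widehat{\mathcal A}}$. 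The converse implications are trivial, as $\widehat{\mathcal A}\ni(\one,\zero)$ and $(\one,\zero)\in\tT_{\widehat{\mathcal A}}$. Applying the observation to $\Cong$ itself, $(\one,\zero)\notin\Cong$ is equivalent to $\CongT\subsetneq\tT_{\widehat{\mathcal A}}$.

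Now the lemma follows by substitution. By definition $\Cong$ is maximal iff no $\tT$-congruence $\Cong'$ satisfies $\CongT\subsetneq\Cong'|_\tT\subsetneq\tT_{\widehat{\mathcal A}}$; replacing the outer strict inclusions by their equivalents this says that $(\one,\zero)\notin\Cong$ and that no $\tT$-congruence $\Cong'$ with $(\one,\zero)\notin\Cong'$ has $\Cong'|_\tT$ strictly containing $\CongT$, i.e.\ $\Cong$ is maximal with respect to the condition $(\one,\zero)\notin\Cong$; and conversely. The only delicate point is the degenerate congruence $\widehat{\mathcal A}$, which is vacuously ``maximal'' in the literal reading of the definition yet still contains $(\one,\zero)$; as with maximal ideals one reads ``maximal $\tT$-congruence'' as maximal \emph{proper} $\tT$-congruence (equivalently, the definition tacitly assumes $\CongT\subsetneq\tT_{\widehat{\mathcal A}}$), and then the two notions agree on the nose. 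Everything else — that a congruence is closed under the coordinatewise action and is symmetric — is built into the definition of congruence, so no computation is required.
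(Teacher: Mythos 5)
Your proof is correct, and it hinges on the same key observation as the paper's: that $(\one,\zero)\in\Cong'$ forces $\Cong'=\widehat{\mathcal A}$. Where the two arguments diverge is in how that observation is established. The paper does it in one line via the twist action, writing $(b_0,b_1)=(b_0,b_1)\ctw(\one,\zero)\in\Cong$ for arbitrary $(b_0,b_1)$ and invoking the earlier lemma that every congruence absorbs the twist action; you instead use ordinary coordinatewise multiplication in $\mathcal A\times\mathcal A$, $(a,a)\cdot(\one,\zero)=(a,\zero)$, followed by symmetry and transitivity of $\Cong'$ through $\zero$ to get $(c,d)\in\Cong'$ for all $c,d$. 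Both are valid congruence facts; the twist version reaches the conclusion in one stroke once the twist-closure lemma is on hand, while yours is a slightly longer but self-contained alternative that does not rely on that lemma. You also spell out the final substitution step and flag the degenerate case $\Cong=\widehat{\mathcal A}$, both of which the paper leaves implicit. One minor imprecision worth noting: the identification ``$\Cong'|_\tT=\tT_{\widehat{\mathcal A}}$'' is not literally correct, since $\Cong'|_\tT$ always contains the diagonal pairs $(a,a)$ with $a\in\tT$, and these are not elements of $\tT_{\widehat{\mathcal A}}=(\tT\times\{\zero\})\cup(\{\zero\}\times\tT)$. The cleaner statement of your observation is simply $\Cong'=\widehat{\mathcal A}$; the attempted translation into the restricted-to-$\tT$ form inherits a looseness already present in the paper's definition of maximal $\tT$-congruence, so it does not affect the substance of your argument, but it is better left out.
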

\begin{proof} If $(\one, \zero) \in \Cong$ then $(b_0,b_1) =
(b_0,b_1)\ctw (\one, \zero) \in \Cong$.
\end{proof}

 \begin{cor}\label{maxisprime} Every maximal $\tT$-congruence $\Cong$ is prime. \end{cor}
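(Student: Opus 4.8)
The plan is to read the statement off \propref{primemax}(i) by way of the preceding Lemma. First I would invoke that Lemma: a $\tT$-congruence $\Cong$ is maximal exactly when it is maximal among $\tT$-congruences satisfying $(\one,\zero)\notin\Cong$. So, given such a maximal $\tT$-congruence $\Cong$, I set $S:=\{(\one,\zero)\}\subseteq\widehat{\mathcal A}$.

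The only computation needed is that $S$ is a $\ctw$-multiplicative subset of $\widehat{\mathcal A}$, and this is immediate from the twist formula~\eqref{twi}:
\[
(\one,\zero)\ctw(\one,\zero)=\bigl(\one\cdot\one+\zero\cdot\zero,\ \one\cdot\zero+\zero\cdot\one\bigr)=(\one,\zero),
\]
so $(\one,\zero)$ is $\ctw$-idempotent. Thus ``$(\one,\zero)\notin\Cong$'' is literally ``$\Cong\cap S=\emptyset$'', so $\Cong$ is a $\tT$-congruence maximal with respect to being disjoint from the $\ctw$-multiplicative set $S$; now \propref{primemax}(i) applies directly and gives that $\Cong$ is prime, as asserted. (Alternatively one can cite Corollary~\ref{primemax1} with the submonoid $\{\one\}\subseteq\tT$, using that $(\zero,\one)\in\Cong$ iff $(\one,\zero)\in\Cong$ since congruences are symmetric.)

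There is no real obstacle once this reduction is in place; the content is entirely in the observation that $(\one,\zero)$ is $\ctw$-idempotent, which is what lets the preceding Lemma place $\Cong$ inside the hypotheses of \propref{primemax}. I would note, though, that a direct argument avoiding \propref{primemax}---take $\tT$-congruences $\Cong',\Cong''$ with $\Cong'\ctw\Cong''\subseteq\Cong$ and neither inside $\Cong$, enlarge each by $\Cong$ to obtain $\tT$-congruences strictly above $\Cong$, hence both containing $(\one,\zero)$ by maximality, and conclude $(\one,\zero)=(\one,\zero)\ctw(\one,\zero)\in\Cong$---would require verifying $(\Cong\vee\Cong')\ctw(\Cong\vee\Cong'')\subseteq\Cong$, and this is the delicate point: the join of congruences is the transitive closure of the union rather than a ``sumset'', so the cross-terms cannot be expanded as in the ideal-theoretic proof, and there is no additive cancellation (e.g.\ $1+3=2+3=3$ in max-plus) available to control them. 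Hence the reduction to \propref{primemax} is the clean route.
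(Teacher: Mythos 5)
Your proof is correct and is precisely the argument the paper leaves implicit: the corollary is positioned immediately after the lemma characterizing maximality via $(\one,\zero)\notin\Cong$ and after Proposition~\ref{primemax}/Corollary~\ref{primemax1}, and your observation that $(\one,\zero)$ is $\ctw$-idempotent is exactly what makes those results applicable. Your closing remark about why the direct argument is delicate (the join of congruences being a transitive closure rather than a sumset) is a genuinely useful caveat, and in fact touches on a point that the paper's own proof of Proposition~\ref{primemax}(i) glosses over.
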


\begin{lem}\label{primecrit1}$ $
A triple is prime if and only if it is  semiprime and
$\tT$-irreducible.
\end{lem}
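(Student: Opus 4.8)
The statement is the systemic counterpart of the classical fact that an ideal of a commutative ring is prime precisely when it is radical (semiprime) and meet-irreducible, here applied to the trivial congruence $\Diag_\mathcal A$; throughout I read ``prime'', ``semiprime'' and irreducibility relative to $\tT$-congruences, per the convention adopted after Definition~\ref{Pricon}. The plan is to run both directions through two elementary inclusions. First, for any congruences $\Cong_1,\Cong_2$ one has $\Cong_1\ctw\Cong_2\subseteq\Cong_1\cap\Cong_2$: by \lemref{twistcl} every congruence absorbs the twist action, so each generator $(a_0,a_1)\ctw(b_0,b_1)$ of $\Cong_1\ctw\Cong_2$ (with $(a_0,a_1)\in\Cong_1$, $(b_0,b_1)\in\Cong_2$) already lies in $\Cong_1$, and, by symmetry of $\ctw$, also in $\Cong_2$, whence the congruence these generators generate lies in $\Cong_1\cap\Cong_2$. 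Second, trivially $(\Cong_1\cap\Cong_2)^2\subseteq\Cong_1\ctw\Cong_2$, since any twist of two elements of $\Cong_1\cap\Cong_2$ is a generator of $\Cong_1\ctw\Cong_2$.

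For the forward direction, assume the triple is prime. Specializing $\Cong''=\Cong'$ in the definition of prime gives semiprimeness at once. For $\tT$-irreducibility, suppose $\Diag_\mathcal A=\Cong_1\cap\Cong_2$ with $\Cong_1,\Cong_2$ $\tT$-congruences. Then $\Cong_1\ctw\Cong_2\subseteq\Cong_1\cap\Cong_2=\Diag_\mathcal A$, so primeness of $\Diag_\mathcal A$ forces $\Cong_1\subseteq\Diag_\mathcal A$ or $\Cong_2\subseteq\Diag_\mathcal A$, i.e. one of them is trivial. Hence no intersection of two nontrivial $\tT$-congruences equals $\Diag_\mathcal A$, which is exactly $\tT$-irreducibility.

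For the converse, assume the triple is semiprime and $\tT$-irreducible, and let $\Cong_1,\Cong_2$ be $\tT$-congruences with $\Cong_1\ctw\Cong_2\subseteq\Diag_\mathcal A$. Put $\Cong:=\Cong_1\cap\Cong_2$. Then $\Cong^2\subseteq\Cong_1\ctw\Cong_2\subseteq\Diag_\mathcal A$, so semiprimeness of $\Diag_\mathcal A$ yields $\Cong\subseteq\Diag_\mathcal A$, that is $\Cong_1\cap\Cong_2=\Diag_\mathcal A$. Now $\tT$-irreducibility, in contrapositive form, forces $\Cong_1=\Diag_\mathcal A$ or $\Cong_2=\Diag_\mathcal A$, so $\Diag_\mathcal A$ is prime and the triple is prime.

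The hard part will be keeping the auxiliary congruences inside the class to which the hypotheses apply. For the twist product this is routine: twisting two tangible pairs gives $(a_0,a_1)\ctw(b_0,b_1)=(a_0b_0,a_0b_1)+(a_1b_1,a_1b_0)$, a sum of tangible pairs since $\tT$ is a monoid, and $\ctw$ distributes over addition, so $\Cong_1\ctw\Cong_2$ is again a $\tT$-congruence and the $\tT$-prime hypothesis is legitimately invoked. The genuine subtlety is the intersection $\Cong_1\cap\Cong_2$, which need not be additively generated by tangible pairs; to make the semiprime hypothesis bite, I would work throughout with the restrictions of the $\Cong_i$ to pairs of tangible elements and the $\tT$-congruences they generate, or equivalently apply semiprimeness to the $\tT$-congruence generated by $(\Cong_1\cap\Cong_2)\cap(\tTz\times\tTz)$, whose square is still contained in $\Diag_\mathcal A$. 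Apart from this bookkeeping and the appeal to \lemref{twistcl}, the argument is the purely formal manipulation above.
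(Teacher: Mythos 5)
Your proof is correct and follows essentially the same route as the paper's: both directions run through the pair of inclusions $\Cong_1\ctw\Cong_2\subseteq\Cong_1\cap\Cong_2$ and $(\Cong_1\cap\Cong_2)^2\subseteq\Cong_1\ctw\Cong_2$. The paper states these inclusions without justification, so your expansion via \lemref{twistcl} and the symmetry of $\ctw$, as well as your closing remark about keeping the intermediate congruences within the $\tT$-congruence class, fills in details the paper leaves implicit rather than departing from its argument.
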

\begin{proof}
$(\Rightarrow)$ Semiprime is a fortiori. But if $\Cong
\cap \Cong' $ is trivial then $\Cong\ctw\Cong' \subseteq \Cong \cap
\Cong' $ is trivial, implying $\Cong$ or $ \Cong'$ is trivial.

$(\Leftarrow)$ If $\Cong\ctw \Cong'  $ is trivial, then $(\Cong \cap
\Cong' )^2$ is trivial, implying $\Cong \cap \Cong' $ is trivial, so
$\Cong$ or $ \Cong'$ is trivial.
\end{proof}
\medskip

 The following assertions are straightforward.
\begin{lem}$ $
\begin{enumerate} \eroman
\item For a $\tT$-congruence $\Cong'$,  $\CongT' \subseteq \Cong$ if and only if $\Cong' \subseteq \Cong$.

\item The intersection of  prime  $\tT$-congruences is  semiprime.

\end{enumerate}
\end{lem}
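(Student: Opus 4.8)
The plan is to treat the two parts separately; each reduces to a short manipulation of the defining properties of $\tT$-congruences and of the twist product $\ctw$.

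For (i), only the forward implication ``$\CongT'\subseteq\Cong\Rightarrow\Cong'\subseteq\Cong$'' has content. By definition a $\tT$-congruence $\Cong'$ is \emph{additively} generated by pairs $(a,b)$ with $a,b\in\tT$, and each such generator lies in $\CongT'$, hence in $\Cong$ by hypothesis. Since $\Cong$ is a congruence it is closed under componentwise addition in $\widehat{\mathcal A}$, so it contains every finite sum of those generators, i.e.\ all of $\Cong'$. The reverse implication is immediate from $\CongT'\subseteq\Cong'$. I would record here, for use in (ii), that this argument uses nothing about $\Cong$ beyond its being a congruence containing $\Diag_{\mathcal A}$.

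For (ii), write $\Cong=\bigcap_i\Cong_i$ with each $\Cong_i$ a prime $\tT$-congruence. First one notes that $\Cong$ is again a $\tT$-congruence: its $\tT$-part is the intersection of the $\Cong_i|_\tT$, and, using part (i) to compare a $\tT$-congruence with its $\tT$-part, one checks that $\Cong$ coincides with the $\tT$-congruence additively generated by that $\tT$-part. Now suppose $(\Cong')^2=\Cong'\ctw\Cong'\subseteq\Cong$ for a $\tT$-congruence $\Cong'$. Then $\Cong'\ctw\Cong'\subseteq\Cong_i$ for every $i$, and primeness of $\Cong_i$, applied with both factors equal to $\Cong'$, gives $\Cong'\subseteq\Cong_i$ for every $i$; hence $\Cong'\subseteq\bigcap_i\Cong_i=\Cong$. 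This is precisely the assertion that $\Cong$ is semiprime. Associativity of $\ctw$, already noted in the text, is what makes $(\Cong')^2$ meaningful, and monotonicity of $\ctw$ under inclusion is used implicitly.

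The one step that is not completely mechanical — and the reason part (i) is placed immediately before — is verifying that an intersection of $\tT$-congruences is again a $\tT$-congruence, so that ``semiprime'' applies to $\Cong$ at all; I would dispatch this via the $\tT$-part comparison furnished by part (i), and everything else is the routine book-keeping signalled by ``the following assertions are straightforward.''
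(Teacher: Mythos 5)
The paper gives no proof of this lemma (it is introduced with ``The following assertions are straightforward''), so I can only assess your argument on its own terms. Your treatment of (i) is correct, and so is the semiprime implication in (ii): $(\Cong')^2\subseteq\bigcap\Cong_i$ forces $(\Cong')^2\subseteq\Cong_i$ for each $i$, primeness with both twist factors equal gives $\Cong'\subseteq\Cong_i$, and intersecting yields $\Cong'\subseteq\bigcap\Cong_i$.

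Where you overreach is the preliminary claim that $\Cong=\bigcap\Cong_i$ is itself a $\tT$-congruence, ``using part (i).'' That application is circular. Let $\Cong''$ be the $\tT$-congruence additively generated by $\Cong|_\tT$; part (i), with the $\tT$-congruence $\Cong''$ on the \emph{left} of the inclusion, does give $\Cong''\subseteq\Cong$, but you need the reverse inclusion $\Cong\subseteq\Cong''$, and to extract that from (i) you would have to put $\Cong$ in the $\tT$-congruence slot --- the very property in question. Concretely, a pair $(x,y)\in\Cong$ of height greater than one decomposes into $\tT$-pairs inside each $\Cong_i$ separately, but those decompositions need not coincide across $i$, so nothing forces the individual summands into the intersection. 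Since the paper's definition of ``$\tT$-semiprime'' applies to $\tT$-congruences, you should either supply a direct argument that intersections of $\tT$-congruences are $\tT$-congruences, or note explicitly that what you have established is only the defining semiprime implication for $\bigcap\Cong_i$, not the additional assertion that it is additively generated by its $\tT$-part.
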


Jo\'{o} and Mincheva \cite{JoM} showed that any irreducible,
cancellative commutative $\mathbb B$-algebra $\mathcal A$ is prime,
and it follows that the polynomial system $ \mathcal A[\Lambda]$ is
prime.

We say that $\mathcal A$ satisfies the
 ACC on $\tT$-congruences if for every ascending chain  $\{\Cong_i : i \in I\}$  there is some $i$ such that ${\Cong}_i = {\Cong}_{i'}$ for all $i'>i.$

\begin{prop}\label{prime1}
 If $\mathcal A$ satisfies the
ACC on $\tT$-congruences, then every $\tT$-congruence $\Cong$
contains a finite product of prime $\tT$-congruences, and in
particular there is a finite set of prime $\tT$-congruences whose
product is trivial.
 \end{prop}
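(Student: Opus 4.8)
The plan is to mimic the classical proof for commutative Noetherian rings that every ideal contains a product of prime ideals, transposed to the setting of $\tT$-congruences under the twist product $\ctw$. First I would argue by Noetherian induction (equivalently, by a minimal-counterexample argument using the ACC on $\tT$-congruences). Suppose the set $\Sigma$ of $\tT$-congruences that do \emph{not} contain a finite product of prime $\tT$-congruences is nonempty; by the ACC, $\Sigma$ has a maximal element $\Cong$. Then $\Cong$ itself is not prime (a prime $\tT$-congruence trivially contains a product of one prime, namely itself), so by Definition~\ref{Pricon}(ii) there exist $\tT$-congruences $\Cong', \Cong''$ with $\Cong' \ctw \Cong'' \subseteq \Cong$ but $\Cong' \not\subseteq \Cong$ and $\Cong'' \not\subseteq \Cong$.

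Next I would replace $\Cong'$ by $\Cong' \vee \Cong$ and $\Cong''$ by $\Cong'' \vee \Cong$ (the $\tT$-congruence generated by the two; this is again a $\tT$-congruence since joins of $\tT$-congruences are additively generated by tangible pairs). Both of these strictly contain $\Cong$, so by maximality of $\Cong$ in $\Sigma$ neither lies in $\Sigma$; hence each contains a finite product of prime $\tT$-congruences, say $\Prim_1 \ctw \cdots \ctw \Prim_r \subseteq \Cong' \vee \Cong$ and $\mathcal Q_1 \ctw \cdots \ctw \mathcal Q_s \subseteq \Cong'' \vee \Cong$. Then $\Prim_1 \ctw \cdots \ctw \Prim_r \ctw \mathcal Q_1 \ctw \cdots \ctw \mathcal Q_s \subseteq (\Cong' \vee \Cong)\ctw(\Cong'' \vee \Cong)$, and one checks using bilinearity/monotonicity of $\ctw$ that $(\Cong' \vee \Cong)\ctw(\Cong'' \vee \Cong) \subseteq (\Cong'\ctw\Cong'') \vee \Cong \subseteq \Cong$. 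This exhibits $\Cong$ as containing a finite product of primes, contradicting $\Cong \in \Sigma$. Applying the conclusion to $\Cong = \Diag_\mathcal A$ gives the final clause, since the trivial $\tT$-congruence is the smallest one and hence contains (indeed equals the intersection with) a finite product of prime $\tT$-congruences; that product, being contained in $\Diag_\mathcal A$, is trivial.

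The main obstacle I anticipate is the bookkeeping around the twist product and joins: one must verify that $\ctw$ distributes appropriately over the join operation on $\tT$-congruences — i.e.\ that $(\Cong_1 \vee \Cong)\ctw(\Cong_2 \vee \Cong) \subseteq (\Cong_1\ctw\Cong_2)\vee\Cong$ — and that the join of two $\tT$-congruences is again a $\tT$-congruence (so that we stay within the class to which the ACC and the definition of $\tT$-prime apply). The first inclusion follows by expanding generators $(a_0,a_1)\ctw(b_0,b_1)$ with $a_i \in \Cong_1 \vee \Cong$, $b_i \in \Cong_2 \vee \Cong$ and using that $\Cong$ is closed under $\ctw$ by $\widehat{\mathcal A}$ on either side (a variant of Lemma~\ref{twistcl}); the associativity of $\ctw$ already noted in the excerpt lets us write iterated products unambiguously. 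Aside from this, the argument is the standard Noetherian-induction template.
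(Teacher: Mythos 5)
Your proposal follows the same classical Noetherian-induction template as the paper: pick a maximal counterexample by ACC, use non-primeness to produce $\Cong', \Cong''$ with $\Cong'\ctw\Cong''\subseteq\Cong$ but neither contained in $\Cong$, extract finite products of primes from congruences strictly above $\Cong$, and multiply them. The paper states this step by ``passing to quotients'' $\mathcal A/\Cong'$ and $\mathcal A/\Cong''$, while you work directly with the joins $\Cong'\vee\Cong$ and $\Cong''\vee\Cong$; these are two phrasings of the same underlying idea (the congruences of $\mathcal A/\Cong'$ correspond to the congruences of $\mathcal A$ above $\Cong'$), so the overall strategy matches.

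However, the justification you offer for the key inclusion $(\Cong_1\vee\Cong)\ctw(\Cong_2\vee\Cong)\subseteq(\Cong_1\ctw\Cong_2)\vee\Cong$ does not actually close the argument. The twist product $\Cong_a\ctw\Cong_b$ is the congruence generated by $\{\bold a\ctw\bold b : \bold a\in\Cong_a,\,\bold b\in\Cong_b\}$, where the inputs range over \emph{all} pairs in $\Cong_a$ and $\Cong_b$, not merely over a generating set. The join $\Cong_1\vee\Cong$ is the congruence generated by $\Cong_1\cup\Cong$, and passing to the full congruence requires a transitive closure; the case split you suggest (one factor in $\Cong_1$, the other in $\Cong_2$, or one factor in $\Cong$ so Lemma~\ref{twistcl} applies) only handles twist products of generators. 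To extend to elements obtained by transitivity one would need, e.g., that $(a_0,a_1)\ctw(b_0,b_1)\in\mathcal D$ and $(a_1,a_2)\ctw(b_0,b_1)\in\mathcal D$ together force $(a_0,a_2)\ctw(b_0,b_1)\in\mathcal D$. Adding the two relations yields
\[
a_0b_0+a_2b_1+(a_1b_0+a_1b_1)\ \equiv\ a_0b_1+a_2b_0+(a_1b_0+a_1b_1)\pmod{\mathcal D},
\]
and one cannot cancel the extra term $a_1b_0+a_1b_1$ in the absence of additive negatives, which is precisely the situation here. So the ``expanding generators'' argument has a real gap: the set $\{(a_0,a_1):(a_0,a_1)\ctw(b_0,b_1)\in\mathcal D\}$ is not obviously transitive, hence not obviously a congruence, and the extension to $\Cong_1\vee\Cong$ does not follow. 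To be fair, the paper's own proof is terse on exactly this point and does not supply a verification either (its claim that ``the respective products are in $\Cong'$ and $\Cong''$'' does not parse cleanly, since $\Cong'\not\subseteq\Cong$ makes $\Cong/\Cong'$ undefined as written), so you are not behind the paper here -- but you should not treat the distributivity inclusion as merely a bookkeeping exercise; it is the genuine mathematical content of the step, and it needs an argument that does not rely on additive cancellation.
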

 \begin{proof} A standard argument on Noetherian induction: We take a maximal
 counterexample $\Cong$. If $\Cong$ is not
 already prime, then there are two
 $\tT$-congruences $\Cong',$ $\Cong''$ whose intersection with $\tT$ is not in  $\Cong$,
  but whose product is in  $\Cong$. By
 Noetherian induction applied to $\Cong/\Cong'$ in $\mathcal
 A/\Cong'$, and to $\Cong/\Cong''$ in $\mathcal
 A/\Cong''$, we get finite sets of  prime
$\tT$-congruences whose respective products are in $\Cong'$ and
$\Cong''$, so the product of all of them is in $\Cong$.
\end{proof}

%

 For $\mathcal A$ commutative with a $\tT$-congruence $\Cong$, define $\sqrt{\Cong}$ to be the $\tT$-congruence   generated by the set
 \[ \{(a_0, a_1)\in \widehat{\tT}:
  (a_0, a_1)^n \in \Cong \textrm{ for some }n\}.\]

\begin{lem}\label{primecrit}$ $ In a commutative $\tT$-semiring system,
\begin{enumerate} \eroman
\item  $\Cong$
is prime if and only if it satisfies the condition that $(a_0, a_1)\ctw (b_0,
b_1) \in \Cong$ implies $(a_0, a_1) \in \Cong$ or $(b_0, b_1)\in
\Cong$ for $(a_0, a_1), (b_0, b_1)\in \widehat{\tT}$.
\item  $\Cong$
is $\tT$-radical if and only if $(a_0, a_1)^2 \in \Cong$ implies $(a_0, a_1)
\in \Cong$ for $(a_0, a_1)\in \widehat{\tT}$.

\item If $\Cong$ is a  $\tT$-congruence, then $\sqrt{\Cong}$ is a radical
 $\tT$-congruence.
\end{enumerate}
\end{lem}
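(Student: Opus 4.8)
The plan is to derive each part from its element-level counterpart, using the $\tT$-congruence generated by a single tangible pair as a bridge, together with Lemma~\ref{twistcl} (every congruence is $\ctw$-closed), commutativity of $\mathcal A$, and the defining feature of a $\tT$-congruence --- that it is additively generated by tangible pairs.

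For the forward direction of (i), assume $\Cong$ is prime and $(a_0,a_1)\ctw(b_0,b_1)\in\Cong$ with $(a_0,a_1),(b_0,b_1)\in\widehat{\tT}$. Let $\Cong'$ and $\Cong''$ be the $\tT$-congruences generated by $(a_0,a_1)$ and $(b_0,b_1)$ respectively. One reduces $\Cong'\ctw\Cong''$ to twist products of tangible generators, each of which has the form $(c,d)\ctw(a_0,a_1)\ctw(c',d')\ctw(b_0,b_1)=(c,d)\ctw(c',d')\ctw\big((a_0,a_1)\ctw(b_0,b_1)\big)$ by commutativity; since $(a_0,a_1)\ctw(b_0,b_1)\in\Cong$ and $\Cong$ is a $\ctw$-closed congruence, this lies in $\Cong$, so $\Cong'\ctw\Cong''\subseteq\Cong$. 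Primeness of $\Cong$ now gives $\Cong'\subseteq\Cong$ or $\Cong''\subseteq\Cong$, i.e.\ $(a_0,a_1)\in\Cong$ or $(b_0,b_1)\in\Cong$. Conversely, given the element-level condition, suppose $\Cong'\ctw\Cong''\subseteq\Cong$ for $\tT$-congruences $\Cong',\Cong''$ with $\Cong'\not\subseteq\Cong$, $\Cong''\not\subseteq\Cong$. Since $\Cong'$ is additively generated by tangible pairs and $\Cong$ is closed under addition, some tangible generator $(a_0,a_1)$ of $\Cong'$ lies outside $\Cong$; similarly pick $(b_0,b_1)\in\Cong''\setminus\Cong$. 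Then $(a_0,a_1)\ctw(b_0,b_1)\in\Cong'\ctw\Cong''\subseteq\Cong$ forces $(a_0,a_1)\in\Cong$ or $(b_0,b_1)\in\Cong$, a contradiction.

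Part (ii) runs the same argument with squares in place of products: the twist square of the $\tT$-congruence generated by a tangible pair $(a_0,a_1)$ is contained in $\Cong$ whenever $(a_0,a_1)^2\in\Cong$, and any $\tT$-congruence $\Cong'\not\subseteq\Cong$ with $(\Cong')^2\subseteq\Cong$ supplies a tangible generator $(a_0,a_1)\in\Cong'\setminus\Cong$ with $(a_0,a_1)^2\in(\Cong')^2\subseteq\Cong$. For (iii), $\sqrt\Cong$ is a $\tT$-congruence directly from its definition (it is generated by tangible pairs), so by (ii) it suffices to prove that $(a_0,a_1)^2\in\sqrt\Cong$ implies $(a_0,a_1)\in\sqrt\Cong$ for $(a_0,a_1)\in\widehat{\tT}$; I expect this to be the main obstacle. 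The route I would take is to show that the generating set $G=\{(b_0,b_1)\in\widehat{\tT}:(b_0,b_1)^n\in\Cong\text{ for some }n\}$ is saturated, i.e.\ $\sqrt\Cong\cap\widehat{\tT}=G$: one unwinds the statement that $(a_0,a_1)^2$ lies in the $\tT$-congruence generated by $G$ through the usual chain-of-elementary-replacements description valid in a commutative semiring, multiplies the chain through by a sufficiently high power of $(a_0,a_1)$, and uses that every generator from $G$ encountered along the chain has a power in $\Cong$, concluding $(a_0,a_1)^N\in\Cong$ for some $N$ and hence $(a_0,a_1)\in\sqrt\Cong$. The delicate point --- where the hypotheses of a commutative $\tT$-semiring system, Lemma~\ref{twistcl}, and unique negation are really used --- is controlling how finite sums of twist-multiples of elements of $G$ can collapse onto a single power of a tangible pair.
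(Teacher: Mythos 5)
The paper itself offers no proof of this lemma, so there is nothing to compare against; evaluating your argument on its own terms, the reverse directions of (i) and (ii) are sound (pick a tangible additive generator of $\Cong'$ outside $\Cong$, which exists because $\Cong$ is additively closed), but the forward directions and all of (iii) have a real gap that you partly acknowledge but do not close.

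The problem in the forward direction of (i) is your claim that the $\tT$-congruence $\Cong'$ generated by $(a_0,a_1)$ has all of its tangible additive generators of the form $(c,d)\ctw(a_0,a_1)$. A $\tT$-congruence must also be an \emph{equivalence relation}, and transitive closure produces tangible pairs that are not twist-multiples of the original generator. Concretely, if $(x,a_0)\in\Cong$ with $x\in\tT$, then $(x,a_1)$ lies in the $\tT$-congruence generated by $\Cong$ and $(a_0,a_1)$ by transitivity, yet $(x,a_1)\ctw(b_0,b_1)=(xb_0+a_1b_1,\ xb_1+a_1b_0)$ is not forced into $\Cong$ by $(x,a_0)\ctw(b_0,b_1)\in\Cong$ and $(a_0,a_1)\ctw(b_0,b_1)\in\Cong$, since matching the middle coordinates to invoke transitivity of $\Cong$, or canceling the extra summand $a_0(b_0+b_1)$ after adding the two relations, requires additive cancellation that a semiring does not have. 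The alternative route of declaring $\Cong^\ast:=\{x:\ x\ctw(b_0,b_1)\in\Cong\}$ and hoping it is a congruence fails for the same reason: one checks easily that $\Cong^\ast$ is reflexive, symmetric, additively closed and $\ctw$-closed, but transitivity breaks on exactly the computation above. So the key containment $\Cong'\ctw\Cong''\subseteq\Cong$ is not established; the same issue infects the forward direction of (ii). For (iii), the heart of the matter is the saturation claim $\sqrt{\Cong}\cap\widehat{\tT}=G$, which you explicitly flag as ``the main obstacle,'' and the proposed fix --- unwind a chain of elementary replacements and multiply through by a power of $(a_0,a_1)$ --- is not carried out; in a semiring the replacement steps and the added diagonal and $\ctw$-multiple summands do not collapse under powering the way they do with subtraction available, so the argument as sketched does not conclude $(a_0,a_1)^N\in\Cong$. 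You would need either an explicit normal form for elements of the $\tT$-congruence generated by a set (controlling the transitivity steps) or a Zorn-type argument via maximal $\tT$-congruences avoiding the multiplicative set $\{(a_0,a_1)^n\}$ in the style of Propositions~\ref{primemax} and \ref{prime2}, neither of which the present sketch supplies.
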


%
%


 \begin{prop}\label{prime2}
 For every $\tT$-congruence $\Cong$ on a commutative $\tT$-semiring system,  $\sqrt{\Cong}$
  is an intersection  of prime $\tT$-congruences.
\end{prop}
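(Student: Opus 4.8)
The plan is to mimic the classical proof that the radical of an ideal equals the intersection of the primes containing it, translated into the language of $\tT$-congruences via the twist product. Throughout, I work in a commutative $\tT$-semiring system, so that the criterion of Lemma~\ref{primecrit}(i) is available: a $\tT$-congruence $\Cong$ is prime exactly when $(a_0,a_1)\ctw(b_0,b_1)\in\Cong$ forces $(a_0,a_1)\in\Cong$ or $(b_0,b_1)\in\Cong$ for tangible pairs.

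First I would establish the easy inclusion. Suppose $\Cong\subseteq\mathcal Q$ with $\mathcal Q$ a prime $\tT$-congruence. If $(a_0,a_1)\in\widehat\tT$ satisfies $(a_0,a_1)^n\in\Cong\subseteq\mathcal Q$, then by repeated application of Lemma~\ref{primecrit}(i) (primeness), $(a_0,a_1)\in\mathcal Q$. Since $\sqrt{\Cong}$ is the $\tT$-congruence \emph{generated} by such tangible pairs, and $\mathcal Q$ is a $\tT$-congruence containing all of them, we get $\sqrt{\Cong}\subseteq\mathcal Q$. Hence $\sqrt{\Cong}$ is contained in the intersection of all prime $\tT$-congruences containing $\Cong$ — note there is at least one such prime, e.g. by applying Proposition~\ref{primemax}(i) to the multiplicative set $S=\{(\one,\zero)\}$, or more simply since $\sqrt{\Cong}$ itself will turn out to be prime-like; in any case the intersection is over a nonempty family (if no prime contains $\Cong$, one must separately check $\sqrt{\Cong}=\widehat{\mathcal A}$, which would already be degenerate).

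Next comes the reverse inclusion, which is the crux. I must show that if a tangible pair $(b_0,b_1)\notin\sqrt{\Cong}$, then some prime $\tT$-congruence $\mathcal Q\supseteq\Cong$ also omits $(b_0,b_1)$. The standard device is to localize away from the powers of $(b_0,b_1)$: set $S:=\{(b_0,b_1)^n:n\ge 0\}\cup\{(\one,\zero)\}$, which is a $\ctw$-multiplicative subset of $\widehat{\mathcal A}$ since the twist product of monomials is a monomial (using $(\one,\zero)$ as the twist-identity). The key point is that $S\cap\Cong=\emptyset$: indeed, if $(b_0,b_1)^n\in\Cong$, then $(b_0,b_1)\in\sqrt{\Cong}$ by definition, contradicting our hypothesis. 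Then invoke Proposition~\ref{primemax}(i): there is a $\tT$-congruence $\mathcal Q$ maximal with respect to $\mathcal Q\cap S=\emptyset$, and $\mathcal Q$ is $\tT$-prime. Since $\Cong\cap S=\emptyset$ and $\Cong$ is a $\tT$-congruence disjoint from $S$, by maximality (or by directly noting $\mathcal Q$ can be chosen to contain $\Cong$ via Zorn applied to $\tT$-congruences containing $\Cong$ and disjoint from $S$) we may take $\mathcal Q\supseteq\Cong$. Finally $(b_0,b_1)\in S$ forces $(b_0,b_1)\notin\mathcal Q$. Running over all tangible pairs not in $\sqrt{\Cong}$ yields $\bigcap\{\mathcal Q\text{ prime}:\Cong\subseteq\mathcal Q\}\subseteq\sqrt{\Cong}$: any tangible pair in the intersection lies in every such prime, hence cannot be one of the omitted pairs, hence lies in $\sqrt{\Cong}$; and since $\sqrt{\Cong}$ is additively generated by its tangible pairs (it is a $\tT$-congruence by Lemma~\ref{primecrit}(iii)), the inclusion of generating sets upgrades to inclusion of congruences.

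The main obstacle I anticipate is the bookkeeping at the "tangible pairs versus generated congruence" interface: both $\sqrt{\Cong}$ and the intersection of primes are $\tT$-congruences, so comparing them reduces to comparing their tangible parts (using Lemma on $\CongT'\subseteq\Cong\iff\Cong'\subseteq\Cong$), but one must be careful that the Zorn/maximality argument producing $\mathcal Q$ genuinely yields a $\tT$-prime $\tT$-congruence \emph{containing $\Cong$} and not merely a prime congruence — this is exactly what Proposition~\ref{primemax}(i) delivers, applied after first intersecting the poset of $\tT$-congruences disjoint from $S$ with the subfamily of those containing $\Cong$ (which is still closed under unions of chains, so Zorn applies). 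A secondary subtlety is confirming $S$ is genuinely $\ctw$-closed including the mixed case — but since $(b_0,b_1)^m\ctw(b_0,b_1)^n=(b_0,b_1)^{m+n}$ and $(\one,\zero)$ is the twist unit, this is immediate.
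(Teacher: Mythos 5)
Your proof is correct and follows essentially the same approach as the paper's: for each tangible pair $\bold a\notin\sqrt{\Cong}$, apply Zorn / Proposition~\ref{primemax} to the $\ctw$-multiplicative set $S_{\bold a}=\{\bold a^n:n\in\Net\}$ to obtain a prime $\tT$-congruence containing $\Cong$ (equivalently $\sqrt{\Cong}$) and disjoint from $S_{\bold a}$, then intersect over all such $\bold a$. Your write-up separates out the easy inclusion $\sqrt{\Cong}\subseteq\mathcal Q$ and is somewhat more explicit about the tangible-pair versus generated-congruence bookkeeping, but the core construction is identical to the paper's.
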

 \begin{proof}
 For any given $\bold a = (a_0, a_1)\in \widehat{\tT},$
 let $S_\bold a = \{ (a_0, a_1)^n: n \in \Net\}.$ If $S_\bold a \cap \Cong
=\emptyset,$   Zorn's lemma
 gives us a   $\tT$-congruence containing  $\sqrt{\Cong}$, maximal with respect to being disjoint
 from $S_\bold a $, and easily seen to be prime, so their intersection is
 disjoint from all such $\bold a $, which is precisely
 $\sqrt{\Cong}$.
\end{proof}


\subsection{Annihilators,  maximal $\tT$-congruences, and simple $\tT$-systems}$ $

\begin{defn} Suppose $\mathcal M$ is  a $\tT$-module system  over a $\tT$-\semiring0 system $\mathcal
A$. For any $S \subseteq \mathcal M$,   the \textbf{annihilator}
$\Ann _{\tT} (S)$ is the $\tT$-congruence generated by $\{ (a_0,
a_1) \in \widehat{\tTz} : a_0s = a_1 s, \ \forall s_i \in S \}.$

Likewise, suppose $\Cong$ is a $\tT$-congruence on the $\tT$-module
system $\mathcal M$. For  $S \subseteq \Cong$, define the
\textbf{annihilator} $\Ann _{\tT}
 (S)$ to be the
$\tT$-congruence generated by $\{ (a_0, a_1) \in \widehat{\tT} :
a_0s_0 +a_1 s_1 = a_0s_1 +a_1 s_0, \ \forall s \in S \}.$
\end{defn}

%
In other words, $\Ann _{\tT} (S) S$ is trivial, under the
twist multiplication. 

\begin{defn}\label{simplemod} An $\mathcal A$-module pseudo-system $\mathcal M$  is  \textbf{simple}
 if  proper sub-pseudo-systems are all in $\mathcal M_\Null$.
\end{defn}

As in classical algebra, these are the building blocks in the
structure theory of triples  that are  monoid modules.

\begin{rem}  For any $s \in \tT_{\mathcal M},$ the map $b \mapsto bs$ induces an isomorphism
$\mathcal A/\Ann _{\tT}
 (s) \cong \mathcal A s$. Consequently, $\Ann _{\tT} (s)$ is a maximal
 congruence iff the pseudo-system $(\mathcal A s, \tT s, (-), \preceq)$ is simple.
 \end{rem}

\begin{prop} If $\mathcal M$ is  a simple $\tT$-module pseudo-triple,
then $\Ann _{\tT} (\mathcal M)$ is a $\tT$-prime $\tT$-congruence
of~$\mathcal A$.
\end{prop}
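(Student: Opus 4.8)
The plan is to show that $\Ann_{\tT}(\mathcal M)$ satisfies the criterion in Definition~\ref{Pricon}(ii): if $\Cong' \ctw \Cong'' \subseteq \Ann_{\tT}(\mathcal M)$ for $\tT$-congruences $\Cong', \Cong''$ of $\mathcal A$, then $\Cong' \subseteq \Ann_{\tT}(\mathcal M)$ or $\Cong'' \subseteq \Ann_{\tT}(\mathcal M)$. First I would unwind the definitions: saying $\Cong \subseteq \Ann_{\tT}(\mathcal M)$ means (by the ``annihilator'' description and the remark ``$\Ann_\tT(S)S$ is trivial under the twist multiplication'') that $\Cong \ctw x$ is trivial for every $x \in \tT_{\mathcal M}$, i.e., $a_0 x = a_1 x$ for all $(a_0,a_1)$ additively generating $\Cong$ and all $x \in \tT_{\mathcal M}$. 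Equivalently, $\Cong \ctw \Diag$-trivializes $\mathcal M$ via the action $(a_0,a_1)x = a_0 x (-) a_1 x$ of Proposition~\ref{congmult}, so the submodule $\Cong \mathcal M := \{(a_0,a_1)x : \dots\}$ lies in $\mathcal M_\Null$.

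Next I would use simplicity. Suppose $\Cong'' \not\subseteq \Ann_{\tT}(\mathcal M)$. Then there is a tangible generator $(b_0, b_1)$ of $\Cong''$ and some $x \in \tT_{\mathcal M}$ with $b_0 x \ne b_1 x$, so the sub-pseudo-system $\Cong'' \mathcal M$ generated by the elements $(b_0,b_1)x = b_0x(-)b_1x$ is \emph{not} contained in $\mathcal M_\Null$. Since $\mathcal M$ is simple, any sub-pseudo-system not inside $\mathcal M_\Null$ must be all of $\mathcal M$ (more precisely, it generates $\mathcal M$ as a $\tT$-module, so $\tT_{\mathcal M} \subseteq \Cong''\mathcal M$ up to the surpassing relation / $\Null$-part). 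The point is that $\Cong''\mathcal M$ is an $\mathcal A$-submodule — here one invokes Proposition~\ref{congmult} (that $\mathcal N \mapsto \mathcal N$ is closed under the $\widehat{\tT_\mathcal A}$-action) together with Lemma~\ref{twistcl} (congruences are closed under the twist action) to see that $\Cong'(\Cong''\mathcal M) = (\Cong'\ctw\Cong'')\mathcal M$ is trivial, i.e., lands in $\mathcal M_\Null$. Then since $\Cong''\mathcal M$ surjects onto (or generates) $\mathcal M$ and $\Cong'$ kills it, $\Cong'$ kills $\mathcal M$, giving $\Cong' \subseteq \Ann_{\tT}(\mathcal M)$.

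I would also need to confirm $\Ann_{\tT}(\mathcal M) \ne \widehat{\mathcal A}$, which holds because $\mathcal M$ being simple forces $\mathcal M \not\subseteq \mathcal M_\Null$ (a simple module is not null), so $(\one,\zero) \notin \Ann_\tT(\mathcal M)$; otherwise $\one \cdot x = \zero \cdot x$ for all $x$ would put all of $\mathcal M$ in $\mathcal M_\Null$. Finally, since the hypothesis only produces a $\tT$-congruence and we want $\tT$-primeness, it suffices to check the twist-product criterion for $\tT$-congruences, which is exactly what the above does; one should note that $\Ann_\tT(\mathcal M)$ is itself a $\tT$-congruence by construction (it is additively generated by tangible pairs).

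The main obstacle I expect is the bookkeeping around simplicity and the surpassing relation: Definition~\ref{simplemod} says proper sub-pseudo-systems lie in $\mathcal M_\Null$, so to conclude ``$\Cong''\mathcal M$ not contained in $\mathcal M_\Null$ implies $\Cong''\mathcal M = \mathcal M$'' I must verify that $\Cong''\mathcal M$ is genuinely a sub-pseudo-system (closed under addition, the $\tT$-action, negation, and containing enough tangible elements to be a sub-\emph{triple}), which relies on Propositions~\ref{congmult} and on $\Cong''$ being a $\tT$-congruence so that its tangible generators produce tangible-scaled images $(b_0,b_1)x$ whose support behaves well. The delicate point is that ``$\Cong'$ trivializes $\Cong''\mathcal M$'' needs to be upgraded to ``$\Cong'$ trivializes $\mathcal M$'' — this is immediate if $\Cong''\mathcal M = \mathcal M$ on the nose, but if simplicity only gives that $\Cong''\mathcal M$ generates $\mathcal M$ modulo $\mathcal M_\Null$, one uses that $\Cong'$ already trivializes $\mathcal M_\Null$ (everything in $\mathcal M_\Null$ surpasses $\zero$) together with additivity of the action to cover all of $\tT_\mathcal M$, and hence all of $\mathcal M$.
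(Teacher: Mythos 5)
Your proposal is correct and follows essentially the same route as the paper's proof: both use the observation that $\Cong' \ctw \Cong'' \subseteq \Ann_{\tT}(\mathcal M)$ means $\Cong'(\Cong''\mathcal M)$ is trivial, and then apply simplicity to dichotomize on whether $\Cong''\mathcal M$ is null (giving $\Cong''\subseteq \Ann$) or equals $\mathcal M$ (giving $\Cong'\subseteq \Ann$). You are somewhat more careful than the paper's terse proof about verifying that $\Cong''\mathcal M$ is a genuine sub-pseudo-system, about handling the $\mathcal M_\Null$ part of the simplicity definition, and about checking $\Ann_{\tT}(\mathcal M)\ne\widehat{\mathcal A}$ — all worthwhile elaborations of the same argument.
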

\begin{proof} If $\Cong \Cong' \subseteq \Ann _{\tT} (\mathcal
M)$ for $\tT$-congruences $\Cong, \Cong' ,$ then $\Cong (\Cong'
\mathcal M)$ is trivial. Thus either $\Cong' \subseteq \Ann
_{\mathcal A} (\mathcal M)$ or $\Cong' \mathcal M = \mathcal M,$ in
which case $\Cong  \mathcal M$ is trivial and hence $\Cong \subseteq
\Ann _{\mathcal A} (\mathcal M)$.
\end{proof}

\begin{prop} For ${\mathcal A} $ commutative, $\mathcal M$ is  a simple $\tT$-module system if and only if
$\Ann _{\tT} (\mathcal M)$ is a maximal $\tT$-congruence of
$\mathcal A$.
\end{prop}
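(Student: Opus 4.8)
The plan is to prove the biconditional by relating simplicity of $\mathcal M$ as an $\mathcal A$-module system to the maximality of $\Ann_{\tT}(\mathcal M)$ among $\tT$-congruences of $\mathcal A$, using the correspondence between sub-pseudo-systems of $\mathcal M$ and $\tT$-congruences of $\mathcal A$ containing $\Ann_{\tT}(\mathcal M)$. The key tool is the remark just above: for $s \in \tT_{\mathcal M}$ the map $b \mapsto bs$ induces an isomorphism $\mathcal A/\Ann_{\tT}(s) \cong \mathcal A s$, so that $\Ann_{\tT}(s)$ is a maximal $\tT$-congruence exactly when the pseudo-system $(\mathcal A s, \tT s, (-), \preceq)$ is simple. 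The commutativity hypothesis on $\mathcal A$ is what lets us pass freely between annihilators of the whole module and annihilators of cyclic submodules.

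First I would treat the forward direction. Assume $\mathcal M$ is simple. Pick any $s \in \tT_{\mathcal M}$ with $s \notin \mathcal M_\Null$ (such $s$ exists since otherwise $\mathcal M$ would be contained in $\mathcal M_\Null$ and the statement is vacuous or degenerate); then $\mathcal A s$ is a sub-pseudo-system of $\mathcal M$ not contained in $\mathcal M_\Null$, so by simplicity $\mathcal A s = \mathcal M$ (up to the null part), and moreover every cyclic $\mathcal A s$ for tangible $s$ is itself simple. Hence $\Ann_{\tT}(s)$ is a maximal $\tT$-congruence by the remark, and since $\mathcal A$ is commutative, $\Ann_{\tT}(\mathcal M) = \Ann_{\tT}(s)$ for any such generator $s$ (the annihilator of a cyclic module equals the annihilator of a generator, and $\mathcal M$ is cyclic modulo its null part). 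Therefore $\Ann_{\tT}(\mathcal M)$ is maximal.

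For the converse, assume $\Ann_{\tT}(\mathcal M)$ is a maximal $\tT$-congruence. Suppose $\mathcal N \subseteq \mathcal M$ is a sub-pseudo-system not contained in $\mathcal M_\Null$; choose $s \in \tT_{\mathcal N} \setminus \mathcal M_\Null$. Then $\Ann_{\tT}(s) \supseteq \Ann_{\tT}(\mathcal M)$, and by maximality either $\Ann_{\tT}(s) = \Ann_{\tT}(\mathcal M)$ or $\Ann_{\tT}(s) = \tT_{\widehat{\mathcal A}}$; the latter would force $s \in \mathcal M_\Null$ (since $(\one,\zero) \in \Ann_{\tT}(s)$ gives $s = \one\, s$ null-related to $\zero$), contradiction, so $\Ann_{\tT}(s) = \Ann_{\tT}(\mathcal M)$. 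Then $\mathcal A s \cong \mathcal A / \Ann_{\tT}(s) = \mathcal A/\Ann_{\tT}(\mathcal M) \cong \mathcal M$ (again using commutativity and that $\mathcal M$ is cyclic modulo null), and since $\mathcal A s \subseteq \mathcal N \subseteq \mathcal M$, we get $\mathcal N = \mathcal M$ up to the null part. Hence $\mathcal M$ is simple.

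The main obstacle I anticipate is the careful bookkeeping of the phrase ``proper sub-pseudo-systems are all in $\mathcal M_\Null$'': one must check that the isomorphism $\mathcal A/\Ann_{\tT}(s) \cong \mathcal A s$ is an isomorphism of pseudo-systems (respecting $\tT$, $(-)$, and $\preceq$) and that $\mathcal A s$ really exhausts $\mathcal M$ rather than just a submodule, which uses that $\tT_{\mathcal M}$ generates $\mathcal M$ and that the only way simplicity can fail is through a genuinely proper sub-pseudo-system not living inside $\mathcal M_\Null$. The commutativity of $\mathcal A$ must be invoked precisely to ensure $\Ann_{\tT}(\mathcal M)$ is itself a $\tT$-congruence of $\mathcal A$ (not merely a one-sided object) and to identify it with the annihilator of a single tangible generator; without it the two-sided structure would break the reduction to the cyclic case.
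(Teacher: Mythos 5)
Your proof follows exactly the same route as the paper's: reduce to the cyclic case via the preceding remark that $\Ann_{\tT}(s)$ is maximal iff $(\mathcal A s, \tT s, (-), \preceq)$ is simple, and use commutativity to identify $\Ann_{\tT}(\mathcal M)$ with $\Ann_{\tT}(s)$ by observing $\mathcal A s = \mathcal M$. The paper's own proof is the single sentence that $\Ann_{\tT}(\mathcal M) = \Ann_{\tT}(\{s\})$ for any nonzero $s$ since $\mathcal A s = \mathcal M$; your more careful two-direction expansion --- including flagging that the converse tacitly needs $\mathcal M$ to be cyclic modulo its null part, an assumption the paper's terse "$\mathcal A s = \mathcal M$" also silently carries --- is a faithful working-out of the same argument.
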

\begin{proof} As in the usual commutative theory,  $\Ann _{\tT} (\mathcal M) =  \Ann _{\tT} (\{ s
\})$ for any nonzero $s \in \mathcal M$, since $\mathcal A s
=\mathcal M.$
\end{proof}

 In the noncommutative case, one could go on to define primitive
$\tT$-congruences of a ground $\tT$-system to be the annihilators of
simple module systems, but that is outside the scope of this work.

\section{The geometry  of    prime systems}\label{geoprim}

As in classical algebra, the ``prime'' ground systems  lend
themselves to affine geometry.

\subsection{Primeness of  the polynomial system $ \mathcal A[\la]$}$ $

Let us consider when  $ \mathcal A^S$ is prime. We need a slightly
different definition of ``trivial.''

\begin{defn}\label{ft}
Let \[
 \mathcal{A}^{(S)}_\Cong:=\{(f,g)\in \mathcal A^{(S)} \times \mathcal
A^{(S)} \mid (f(s),g(s))\in \Cong,~\forall s \in S\}.
\]
 $\mathcal{A}^{ S}_\Cong$ is defined analogously, using
 $\mathcal{A}^{ S }$ instead of $\mathcal{A}^{(S)}.$
\item
An element $(f,g) \in \mathcal{A}^S_\Cong$ is \textbf{functionally
trivial} if $(f (s),g(s)) \in \Diag_{\mathcal A}$, $\forall s \in
S$.
\item
A $\tT$-congruence $\Cong$ is \textbf{functionally trivial} if any
pair $(f,g) \in \mathcal{A}^S_\Cong$ is functionally trivial.
\end{defn}

\begin{prop}
Let $\Cong$ be a $\tT$-congruence on $\mathcal{A}$. If $\Cong$ is
radical then so are the congruences $\mathcal A^{(S)}_\Cong$
on~$\mathcal A^{(S)}$, and $\mathcal{A}^{S}_\Cong$ on
$\mathcal{A}^{S}$, for any small category $S$, and in particular so
is $\mathcal{A}[\la_1, \dots, \la_n]_\Cong$.
\end{prop}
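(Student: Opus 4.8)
The plan is to verify the radical property coordinate‑by‑coordinate, reducing everything to a single invocation of the hypothesis on $\Cong$. By Lemma~\ref{primecrit}(ii) a $\tT$-congruence on a commutative $\tT$-semiring system is radical precisely when $(\phi_0,\phi_1)^{2}$ belonging to it forces $(\phi_0,\phi_1)$ to belong to it, for \emph{tangible} pairs $(\phi_0,\phi_1)\in\widehat{\tT}$; and $\mathcal A^{(S)}$ (and $\mathcal A^{S}$, when it is a triple) is a commutative $\tT$-semiring system --- by Proposition~\ref{conv1} when the product is a convolution, as for $S=\Net$ or $S=\Net^{n}$, and trivially for a componentwise product. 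So it suffices, for $\mathcal A^{(S)}_\Cong$ (resp.\ $\mathcal A^{S}_\Cong$), to verify this tangible criterion. (Equivalently one may observe that $\mathcal A^{(S)}/\mathcal A^{(S)}_\Cong\cong(\mathcal A/\Cong)^{(S)}$ and reduce to the assertion that a function system over a reduced system is reduced; the computation is the same.)

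First I would take a tangible pair; up to the switch map it is $(\phi_0,\tilde\zero)$ with $\phi_0\in\tT_{\mathcal A^{(S)}}$ a monomial, say $\phi_0=c\la^{u}$ with $c\in\tT$, $u\in S$. The one fact needed is that a product of monomials is a monomial whose coefficient is the product of the coefficients --- immediate for the componentwise product and, for convolution, contained in Lemma~\ref{surpp} --- so by the twist multiplication \eqref{twi}, $(\phi_0,\tilde\zero)^{2}=(\phi_0^{2},\tilde\zero)$, where $\phi_0^{2}$ is a monomial with value $c^{2}$ at a single point $s_0$ ($s_0=u$ for the componentwise product, $s_0=u+u$ for convolution) and value $\zero$ elsewhere; moreover $(c^{2},\zero)=(c,\zero)\ctw(c,\zero)$ in $\widehat{\mathcal A}$. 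Now assume $(\phi_0,\tilde\zero)^{2}\in\mathcal A^{(S)}_\Cong$. By Definition~\ref{ft} each coordinate pair lies in $\Cong$; at $s_0$ this reads $(c,\zero)^{2}\in\Cong$, so $(c,\zero)\in\Cong$ since $\Cong$ is radical (the tangible criterion, this time applied to $\Cong$), while at every other point the coordinate pair is $(\zero,\zero)\in\Diag_{\mathcal A}\subseteq\Cong$. Hence $(\phi_0,\tilde\zero)$, whose coordinate pair is $(c,\zero)\in\Cong$ at $u$ and $(\zero,\zero)\in\Cong$ elsewhere, lies in $\mathcal A^{(S)}_\Cong$. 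The switch case $(\tilde\zero,\phi_1)$ is identical, the argument for $\mathcal A^{S}_\Cong$ is verbatim the same, and iterating (or taking $S=\Net^{n}$) gives the statement for $\mathcal A[\la_1,\dots,\la_n]_\Cong$.

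The substantive move --- and the only place where anything could go wrong --- is the reduction to tangible pairs via Lemma~\ref{primecrit}(ii): it replaces what would otherwise be a delicate argument about leading coefficients and supports of arbitrary pairs of polynomials by the trivial monomial bookkeeping above. The remaining loose ends are routine and I would simply check them: that $\mathcal A^{(S)}_\Cong$ (resp.\ $\mathcal A^{S}_\Cong$) is genuinely a $\tT$-congruence, i.e.\ additively generated by monomial pairs --- which follows by splitting a member $(f,g)$ coordinate by coordinate and using that $\Cong$ is $\tT$-generated to write each $(f(s),g(s))\in\Cong$ as a sum of tangible pairs lying in $\Cong$ --- and, for a general small category $S$, that the intended product still carries monomials supported at one object to monomials. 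Neither is an obstacle.
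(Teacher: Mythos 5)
Your proposal takes a related but more refined route than the paper. The paper applies the radical hypothesis at each coordinate pair $(f(s),g(s))$ of an arbitrary $(f,g)$ directly; you first invoke Lemma~\ref{primecrit}(ii) to reduce to tangible pairs of $\widehat{\tT_{\mathcal A^{(S)}}}$. That refinement is in fact sensible, since Lemma~\ref{primecrit}(ii) is formulated only for pairs in $\widehat{\tT}$, while the coordinate values $(f(s),g(s))$ need not be tangible.

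However, your case analysis has a genuine gap. You assert that, up to the switch map, a tangible pair is of the form $(\phi_0,\tilde\zero)$. That is not so: in Lemma~\ref{primecrit}(ii), $\widehat{\tT}$ is $\tT\times\tT$, so a tangible pair is any $(\phi_0,\phi_1)$ with \emph{both} entries monomials, say $\phi_i=c_i\la^{u_i}$ with $c_i\in\tT$; the switch map merely exchanges entries and cannot annihilate one. When $u_0=u_1=u$, the twist square is $((c_0^2+c_1^2)\la^{2u},\,2c_0c_1\la^{2u})$, whose single nonzero coordinate pair at $2u$ is $(c_0,c_1)\ctw(c_0,c_1)$, and you must apply radicality of $\Cong$ to the full tangible pair $(c_0,c_1)$ rather than to $(c,\zero)$. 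When $u_0\ne u_1$, under the convolution product the twist square has nonzero coordinate pairs at $2u_0$, $u_0+u_1$ and $2u_1$ (possibly colliding if $S$ is not free), and these too must be accounted for. The omitted cases do go through by the same bookkeeping — at $2u_i$ one finds $(c_i,\zero)\ctw(c_i,\zero)=(c_i^2,\zero)\in\Cong$, whence $(c_i,\zero)\in\Cong$; at $2u$ one uses $(c_0,c_1)$ directly — but as written the proof checks only a strict subfamily of the tangible pairs that Lemma~\ref{primecrit}(ii) requires.
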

\begin{proof}
Let $(f,g) \in \mathcal A^{(S)}_\Cong$ and suppose that
$(f,g)^2=(f^2,g^2) \in \mathcal A^{(S)}_\Cong$ , i.e., one has
\[
(f^2 (s),g^2(s)) \in \Cong^2,~\forall s \in S.
\]
It follows that $(f(s),g(s)) \in \Cong$ since $\Cong$ is radical,
and hence $(f,g) \in \mathcal A^{(S)}_\Cong$. This proves that
$\mathcal A^{(S)}_\Cong$ is radical. The case of
$\mathcal{A}^{S}_\Cong$ is analogous.
\end{proof}

On the other hand, the other ingredient, irreducibility, is harder
to attain. Given a $\tT$-congruence $\Cong$ on $\mathcal A^{(S)}$
(in particular, $\Cong$ is a subset of $\mathcal A^{(S)} \times
\mathcal A^{(S)}$), we define
\[
S_\Cong := \{ s \in S : (f(s),g(s)) \not \in \textrm{Diag}_\mathcal
A,~\forall (f,g) \in \Cong\}.
\]
This
leads to a kind of consideration of density.

\begin{lem}\label{Zartyp}
Suppose that $\mathcal{A}$ has  $\tT$-congruences $\Cong, \Cong '$
with $\Cong \cap \Cong '$ functionally trivial. If $\mathcal{A}$ is
irreducible, then $S_{\mathcal A^{(S)}_\Cong} \cap S_{\mathcal
A^{(S)}_{\Cong'}} = \emptyset.$
\end{lem}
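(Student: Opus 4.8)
The plan is to prove the contrapositive at the level of $\mathcal{A}$ itself: assuming there is a point $s_0 \in S_{\mathcal A^{(S)}_\Cong} \cap S_{\mathcal A^{(S)}_{\Cong'}}$, I would manufacture two nontrivial $\tT$-congruences on $\mathcal{A}$ whose intersection is trivial, contradicting irreducibility. First I would unwind the definition of $S_\Cong$: the condition $s_0 \in S_{\mathcal A^{(S)}_\Cong}$ says that for every pair $(F,G) \in \mathcal A^{(S)}_\Cong$ we have $(F(s_0),G(s_0)) \notin \Diag_{\mathcal A}$, i.e., $F(s_0) \neq G(s_0)$. I want instead to extract from $\Cong$ (on $\mathcal A$) a congruence on $\mathcal A$ witnessed at $s_0$, so the key construction is the ``evaluation at $s_0$'' map. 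Given any pair $(a,b) \in \Cong$ with $a,b \in \tT$, form the monomials $F, G \in \mathcal A^{(S)}$ supported at $s_0$ with $F(s_0) = a$, $G(s_0) = b$; these lie in $\tT_{\mathcal A^{(S)}}$ and the pair $(F,G)$ lies in $\mathcal A^{(S)}_\Cong$ (since at $s_0$ the values are $(a,b) \in \Cong$ and off $s_0$ both are $\zero$, so $(\zero,\zero) \in \Diag_\mathcal A \subseteq \Cong$). Hence $(F,G) \in \mathcal A^{(S)}_\Cong$, so by hypothesis $a = F(s_0) \neq G(s_0) = b$. This shows that $\Cong$ on $\mathcal A$ is itself nontrivial — there is a pair of distinct tangible elements related by $\Cong$ — and likewise $\Cong'$ is nontrivial.

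Next I would analyze $\Cong \cap \Cong'$, which by hypothesis is functionally trivial. By Definition \ref{ft}, functional triviality of $\Cong \cap \Cong'$ means every pair $(F,G) \in \mathcal{A}^S_{\Cong \cap \Cong'}$ has $F(s) = G(s)$ for all $s$; specializing again to monomials supported at a single point, this forces $\Cong \cap \Cong'$, viewed as a congruence on $\mathcal A$, to be contained in $\Diag_{\mathcal A}$, hence trivial. (Here one must be slightly careful: $\Cong \cap \Cong'$ on $\mathcal A$ corresponds to the $\tT$-congruence $\mathcal A^{(S)}_{\Cong} \cap \mathcal A^{(S)}_{\Cong'} = \mathcal{A}^{(S)}_{\Cong \cap \Cong'}$ on $\mathcal A^{(S)}$ — this identification of the diagonal intersection is the routine compatibility one checks from the definitions.) So now we have two nontrivial $\tT$-congruences $\Cong$, $\Cong'$ on $\mathcal A$ with trivial intersection, which contradicts the assumption that $\mathcal A$ is $\tT$-irreducible (Definition \ref{Pricon}(v): the intersection of nontrivial $\tT$-congruences is nontrivial). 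This contradiction yields $S_{\mathcal A^{(S)}_\Cong} \cap S_{\mathcal A^{(S)}_{\Cong'}} = \emptyset$.

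The main obstacle I expect is bookkeeping around the two levels of ``trivial'': the statement mixes $\Diag_\mathcal A$-triviality of congruences on $\mathcal A$, the pointwise-diagonal condition defining $S_\Cong$, and ``functional triviality'' of $\Cong \cap \Cong'$ as a subset of $\mathcal A^S \times \mathcal A^S$. The crux is to check that passing to monomials supported at a single point $s \in S$ is faithful enough to transport each of these notions back and forth between $\mathcal A$ and $\mathcal A^{(S)}$ — in particular that $\Cong$ being nontrivial on $\mathcal A$ exactly matches $s \in S_{\mathcal A^{(S)}_\Cong}$ failing, and that $\mathcal A^{(S)}_{\Cong} \cap \mathcal A^{(S)}_{\Cong'}$ is functionally trivial iff $\Cong \cap \Cong'$ is trivial on $\mathcal A$. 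Once these translations are pinned down, the irreducibility hypothesis closes the argument immediately; everything else is formal.
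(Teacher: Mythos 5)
Your overall strategy (contrapositive; show $\Cong$ and $\Cong'$ are nontrivial with trivial intersection; appeal to irreducibility) is the right one, and it actually spells out the role of irreducibility more explicitly than the paper's terse one-sentence proof does. But there is a real gap at the step where you conclude ``$\Cong$ is nontrivial.'' You derive ``for every tangible pair $(a,b) \in \Cong$, $a \neq b$,'' and then jump to ``there is a pair of distinct tangible elements related by $\Cong$'' --- a non-sequitur, since a universal claim does not yield existence. Worse, the universal claim you derive is itself inconsistent: $\Diag_{\mathcal A} \subseteq \Cong$ always, so $(a,a)\in \Cong$ for every $a \in \tT$, and your argument would then force $a \neq a$. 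The root of the trouble is that you took the $\forall$ in the paper's displayed definition of $S_\Cong$ literally; as written that condition can never hold, because for any $s_0$ and any $F \in \mathcal A^{(S)}$ the diagonal pair $(F,F)$ lies in $\mathcal A^{(S)}_\Cong$ yet $(F(s_0),F(s_0))\in \Diag_{\mathcal A}$, so $S_\Cong$ would be empty for every congruence and the lemma (and the irreducibility hypothesis) would be vacuous.

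The definition is surely meant existentially: $S_\Cong$ is the set of $s$ at which \emph{some} pair of $\mathcal A^{(S)}_\Cong$ disagrees. With that reading, the nontriviality step is immediate and needs no monomial construction: $s_0 \in S_{\mathcal A^{(S)}_\Cong}$ hands you a witness $(F,G) \in \mathcal A^{(S)}_\Cong$ with $F(s_0) \neq G(s_0)$, and then $(F(s_0),G(s_0))$ is a non-diagonal element of $\Cong$. The monomial construction is still needed, but in the \emph{other} direction --- to show that functional triviality of $\Cong \cap \Cong'$ forces $\Cong \cap \Cong'$ to be contained in $\Diag_{\mathcal A}$ --- and that part of your argument is fine. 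Once both points are fixed, your proof is correct and fills in the steps that the paper's proof leaves implicit.
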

 \begin{proof}
For each $s \in S$, since $\Cong \cap \Cong'$ is functionally
trivial, either $s \not \in S_{\mathcal A^{(S)}_\Cong}$ or $s \not
\in S_{\mathcal A^{(S)}_{\Cong'}}$. This implies that $S_{\mathcal
A^{(S)}_\Cong} \cap S_{\mathcal A^{(S)}_{\Cong'}} = \emptyset.$
\end{proof}

It is well-known by means of a Vandermonde determinant argument that
over an integral domain, any nonzero polynomial of degree~$n$ cannot
have $n+1$ distinct zeros. The analog for semirings also holds for
triples, using ideas from \cite{AGG1}. Namely, we recall
\cite[Definition~6.20]{Row16}:

 \begin{defn}\label{signeddet} Suppose $\mathcal A$ has a negation map
 $(-)$.
 For a permutation $\pi$, write
 $$(-)^\pi a = \begin{cases} a: \pi \text{ even}; \\ (-)a:   \pi \text{ odd}.\end{cases}$$

\begin{enumerate} \eroman
\item The \textbf{$(-)$-determinant} $\absl A $ of a matrix $ A $ is
 \begin{equation*}\label{eq:tropicalDetsign}
  \sum_{\pi \in  S_n}  (-)^{\pi} \left( \prod_i a_{i,\pi(i)}\right).
 \end{equation*}

\item   The $n \times n$ \textbf{Vandermonde matrix} $V(a_1, \dots, a_n)$
  is defined to be $\(\begin{matrix}
               \one & a_1 & a_1^2 & \dots & a_1^{n-1}\\

               \one & a_2 & a_2^2 & \dots & a_2^{n-1}\\

               \vdots & \vdots & \vdots & \ddots & \vdots\\

               \one & a_n & a_n^2 & \dots & a_n^{n-1}
               \end{matrix}\).$

\item Write $a_{i,j}'$ for the $(-)$-determinant of the $j,i$ minor of
a matrix $A$. The \textbf{$(-)$-adjoint} matrix $\adj A$ is
$(a_{i,j}')$.
\end{enumerate}
\end{defn}

We have the following adjoint formula from \cite[Theorem~1.57]{AGR}.
\begin{lem}\label{Vand}$ $
 \begin{enumerate} \eroman \item  $|A| = \sum
_{j=1}^n (-)^{i+j} \, a'_{i,j}a_{i,j},$ for any given $i$.
\item
$\absl{V(a_1, \dots, a_n)}  = \prod _{i>j}(a_j (-)a_i).$
\end{enumerate}\end{lem}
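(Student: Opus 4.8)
The plan is to treat both parts as the standard determinant computations, carried out with the $(-)$-determinant $\absl{\cdot}$ of Definition~\ref{signeddet} in place of the classical one; this is essentially \cite[Theorem~1.57]{AGR}, and the only point demanding attention beyond the classical arguments is the bookkeeping of the negation map. That bookkeeping is harmless because $(-)$ has order at most two and, by \eqref{neg0} (Lemma~\ref{circm}), commutes with multiplication by tangible elements, so a factor $(-)$ may be slid freely through any product of elements of $\tT$; in particular $(-)^\pi$, depending only on the parity of $\pi$, is a well-defined operation on $\mathcal A$ compatible with the defining sum.

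For part (i) I would expand the defining sum $\absl A = \sum_{\pi \in S_n}(-)^\pi\prod_k a_{k,\pi(k)}$ by partitioning $S_n$ according to the value $\pi(i)=j$. For fixed $j$, pull $a_{i,j}$ out of each term; as $\pi$ ranges over the permutations with $\pi(i)=j$, the remaining factor ranges over exactly the products defining the $(-)$-determinant of the minor of $A$ obtained by deleting row $i$ and column $j$, and such a $\pi$ has parity differing from that of its induced permutation on the complementary indices by $(-1)^{i+j}$. Hence the sub-sum equals $(-)^{i+j}a'_{i,j}a_{i,j}$, and summing over $j$ gives the formula. The key point is that this is a pure reindexing of a sum of products --- no additive cancellation is invoked --- so it is valid over an arbitrary semiring; the only subtlety is the sign splitting $(-)^\pi = (-)^{i+j}(-)^{(\text{induced})}$, which is exactly where \eqref{neg0} is used.

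For part (ii) I would induct on $n$, the case $n=1$ being $\absl{V}=\one$. Writing $V=V(a_1,\dots,a_n)$, perform the column operations $C_k \mapsto C_k (-) a_1 C_{k-1}$ for $k=n,n-1,\dots,2$: by the multilinearity and alternating properties of the $(-)$-determinant recorded in \cite{AGR}, each such operation alters $\absl V$ only by a term of $\mathcal A^\circ$ (the ``repeated column'' contribution, a quasi-zero rather than $\zero$); afterwards the first column is all $\one$, the first row is $(\one,(a_1)^\circ,\dots,(a_1^{n-1})^\circ)$, and for $i,k\ge2$ the $(i,k)$ entry is $a_i^{k-2}(a_i(-)a_1)$. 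Expanding along the first row by part (i), the entries $(a_1^{k-1})^\circ$ contribute only elements of $\mathcal A^\circ$ (which absorbs multiplication), while the $\one$ term is the $(-)$-determinant of the submatrix on rows and columns $2,\dots,n$; each of its rows $i$ carries the common factor $a_i(-)a_1$, and pulling these out by multilinearity leaves $V(a_2,\dots,a_n)$. The induction hypothesis then yields $\prod_{i\ge2}(a_i(-)a_1)\,\prod_{n\ge i>j\ge2}(a_j(-)a_i)$, up to a quasi-zero, which telescopes to the asserted product up to the overall sign $(-)^{\binom n2}$ and an element of $\mathcal A^\circ$.

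The main obstacle is precisely this last discrepancy: over a semiring with negation, elementary column operations preserve the $(-)$-determinant only modulo $\mathcal A^\circ$, because a matrix with two equal columns has $(-)$-determinant a quasi-zero, not $\zero$. Thus the honest form of (ii) is a $\preceq$-statement --- the product $\preceq$-surpasses $\absl V$, up to sign --- and the displayed equality should be read in the default $\circ$-system; it becomes an exact equality whenever $\mathcal A^\circ=\{\zero\}$, as in the classical case. Checking, as in \cite{AGR}, that these quasi-zero errors do not interfere with the Vandermonde-based count of $\circ$-roots (Theorem~\ref{polyroot}) is the one place where real care, rather than transcription of a classical proof, is needed.
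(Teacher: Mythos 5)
Your treatment of part (i) is right, and for the right reason: the Laplace expansion is a bijective reindexing of the $n!$ summands of the defining formula and invokes no additive cancellation, so it holds over any semiring equipped with a negation map. This is exactly the content of the paper's parenthetical remark that one ``could view the $a_i$ as indeterminates,'' and your version is more explicit than the bare citation of \cite{AGG1}.

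Your central observation for part (ii) is also correct and is worth spelling out: over a general triple the two sides cannot be literally equal, since $|V|$ expands into $n!$ signed monomials while $\prod_{i>j}(a_j(-)a_i)$ expands into $2^{\binom{n}{2}}$ of them, and the extra terms survive as elements of $\mathcal A^\circ$ (already for $n=3$ one computes $(a_2(-)a_1)(a_3(-)a_1)(a_3(-)a_2) = |V| + (a_1a_2a_3)^\circ$). The honest statement is thus $|V|\preceq_\circ \prod$, up to the overall sign $(-)^{\binom{n}{2}}$ coming from the paper's ordering of the factors. However, the column-operation induction you sketch does not by itself deliver that \emph{direction}. Each operation gives $|V|\preceq_\circ |V'|$, and the cofactor expansion of the transformed matrix along its first row gives $a'_{1,1}\preceq_\circ |V'|$; these two relations point toward the same upper bound $|V'|$ and cannot be chained, so what you extract is only the undirected balance $|V|+c_1^\circ=\prod+c_2^\circ$. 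That is not enough for Theorem~\ref{polyroot}: from $|V|\in\mathcal A^\circ$ together with a balance one cannot conclude $\prod\in\mathcal A^\circ$. The directional refinement $\prod=|V|+c^\circ$, which is what the theorem actually needs (so that $|V|\in\mathcal A^\circ$ forces $\prod\in\mathcal A^\circ$ by absorption of $\mathcal A^\circ$ under addition), comes from the classical combinatorial proof rather than from column operations: the $n!$ ``total-order tournament'' monomials of the product are exactly the permutation monomials of $|V|$ with matching signs, and the remaining $2^{\binom{n}{2}}-n!$ monomials pair off into quasi-zeros. The same caveat applies to the paper's own appeal to the weak transfer principle, which also yields only the balance; the extra combinatorial step is genuinely needed.
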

 \begin{proof} This is well-known for rings, so is an application of
 the transfer principle in \cite{AGG1}. (Put another way, one could
 view the $a_i$ as indeterminates, so the assertion holds formally.)
 \end{proof}

We say $b$ is a \textbf{$\circ$-root} of $f \in \tT[\la]$ if
$f(b)\in \mathcal A^\circ,$ i.e., $f(b) \succeq_\circ \zero.$ This
definition  was the underlying approach to supertropical affine
varieties in \cite{IR}.

 \begin{thm}\label{polyroot} Over a commutative prime triple
$(\mathcal A, \tT, (-))$ with unique negation, any nonzero
polynomial $f \in \tT[\la]$ of degree~$n$ cannot have $n+1$ distinct
$\circ$-roots in $\tT$.
\end{thm}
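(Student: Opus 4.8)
The plan is to mimic the classical Vandermonde argument, replacing equalities with the surpassing relation $\preceq_\circ$ and exploiting that the triple is prime (so that $\Diag_{\mathcal A}$ is a $\tT$-prime congruence, which will let us deduce that a product of non-null tangibles is not null). Suppose for contradiction that a nonzero $f = \sum_{j=0}^n c_j \la^j \in \tT[\la]$ of degree $n$ has $n+1$ distinct $\circ$-roots $b_0, b_1, \dots, b_n$ in $\tT$, so $f(b_i) \in \mathcal A^\circ$ for each $i$. First I would record this as the statement that the column vector $w = (c_0, c_1, \dots, c_n)^{t}$ (with entries in $\tT$, not all zero) satisfies $V(b_0, \dots, b_n)\, w \in (\mathcal A^\circ)^{n+1}$, i.e., each coordinate of the matrix-vector product is a quasi-zero, where $V = V(b_0,\dots,b_n)$ is the $(n+1)\times(n+1)$ Vandermonde matrix of Definition~\ref{signeddet}(ii).

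Next I would hit this relation with the $(-)$-adjoint $\adj V$. Using the adjoint/Laplace formula of Lemma~\ref{Vand}(i), the product $(\adj V) V$ is (up to the surpassing relation, because the off-diagonal ``false expansions'' are themselves quasi-zeros, as in the transfer principle of \cite{AGG1}) equal to $\absl V \cdot I$; so multiplying the null vector $V w$ on the left by $\adj V$ yields that $\absl{V}\, c_j \in \mathcal A^\circ$ for each $j$, i.e., $\absl V \cdot c_j \succeq_\circ \zero$ for all $j$. Meanwhile Lemma~\ref{Vand}(ii) computes $\absl V = \prod_{i>j}(b_j (-) b_i)$. Since the $b_i$ are distinct and the triple has unique negation, each factor $b_j (-) b_i$ is tangible (it is not a quasi-zero: $b_i \ne (-)b_j$ would be needed for $b_j(-)b_i$ to be null, but we'd need to rule out $b_i = (-)b_j$ too — see below), hence lies in $\tT \setminus \mathcal A^\circ$. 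Because the triple is prime, the twist-product of finitely many congruences corresponding to these tangible factors is again not contained in $\Diag_{\mathcal A}$; concretely, a product of tangible elements none of which is null is itself not null, so $\absl V \in \tT$, i.e. $\absl V \notin \mathcal A^\circ$.

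Finally I would derive the contradiction: pick $j$ with $c_j \ne \zero$ (exists since $f \ne \zero$; in fact $c_n \ne \zero$ since $\deg f = n$). Then $c_j$ is tangible and $\absl V$ is tangible, so $\absl V \cdot c_j$ is a product of non-null tangibles over a prime triple, hence not null — contradicting $\absl V \cdot c_j \in \mathcal A^\circ$ established above. (Here one uses that $\tT_{\mathcal A} \cap \mathcal A^\circ = \emptyset$, part of the definition of a triple, together with primeness to see that the twist-multiplicative set generated by non-null tangibles avoids $\Diag_{\mathcal A}$.) This completes the argument.

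The main obstacle I anticipate is the bookkeeping in the middle step: making precise that $(\adj V)\,(Vw) \succeq_\circ \absl V \cdot w$ coordinatewise, i.e. that the ``wrong'' Laplace expansions $\sum_j (-)^{i+j} a'_{k,j} a_{i,j}$ for $k \ne i$ are quasi-zeros and that applying $\adj V$ to a vector all of whose entries are quasi-zeros again produces quasi-zeros (this needs distributivity of $\tT$ over $\mathcal A$ and the surpassing-relation axioms (i)--(iv) of Definition~\ref{precedeq07}). A secondary subtlety is verifying that each Vandermonde factor $b_j(-)b_i$ is genuinely tangible: for a commutative prime triple with unique negation, $b_j (-) b_i \in \mathcal A^\circ$ would force $b_i = (-)b_j$ by unique negation, and one should check this case cannot produce $n+1$ distinct $\circ$-roots either — or simply note that if some $b_i = (-)b_j$ we may still run the argument with the remaining factors, or invoke that in the commutative prime setting $b_i=(-)b_j$ with $i\ne j$ can be excluded up to replacing $f$ by a factor. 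I would handle this by first reducing, if necessary, to the case where no two of the roots are negatives of one another, or by appealing directly to the transfer principle so that the Vandermonde identity holds formally and the conclusion $\absl V \notin \mathcal A^\circ$ follows from primeness.
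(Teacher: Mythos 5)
Your approach is exactly the paper's Vandermonde argument: form $V = V(b_0,\dots,b_n)$, observe that $Vw$ is a vector of quasi-zeros (with $w$ the coefficient vector), apply $\adj{V}$ together with Lemma~\ref{Vand} to conclude $\prod_{i>j}(b_j(-)b_i)\,w$ is quasi-zero, and contradict distinctness of the roots. Your ``secondary subtlety,'' however, rests on a misapplication of unique negation: you assert $b_j(-)b_i \in \mathcal A^\circ$ would force $b_i=(-)b_j$, but the axiom ($a_0+a_1\in\mathcal A^\circ$ with $a_0,a_1\in\tT$ forces $a_1=(-)a_0$), applied with $a_0=b_j$ and $a_1=(-)b_i$, gives $(-)b_i=(-)b_j$, hence $b_i=b_j$ since $(-)$ is an involution. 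So a factor is a quasi-zero precisely when two of the roots coincide --- which distinctness already excludes --- and the case $b_i=(-)b_j$ that you try to handle separately is a non-issue; no reduction or appeal to the transfer principle is needed for that point. Once that is straightened out, the rest of your bookkeeping (the off-diagonal Laplace expansions being quasi-zeros, $\adj{V}$ carrying $(\mathcal A^\circ)^{n+1}$ into itself because $\mathcal A\cdot\mathcal A^\circ\subseteq\mathcal A^\circ$) spells out what the paper compresses into its single display, and the closing contradiction from a nonzero tangible coefficient of $f$ is as in the paper.
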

 \begin{proof} Write $f = \sum _{i=0}^n b _i \la^i$ for $b_i \in
 \tT$. Suppose on the contrary that $a_1, \dots, a_{n+1}$ are distinct $\circ$-roots.
 Write $v $ for the column vector $(a_0, \dots, a_{n}).$ Then $Av$ is
 the column vector $(f(a_1), \dots, f(a_n))$ which is a quasi-zero, so
 $$\prod _{i>j}(a_j (-)a_i)v  = \absl A v = \adj A A v \in \adj A \mathcal A^\circ \preceq \mathcal
 A^\circ,$$
implying $\prod _{i>j}(a_j (-)a_i)\in \mathcal
 A^\circ,$ contrary to $a_1, \dots, a_{n+1}$ being distinct.
 \end{proof}

 \begin{cor}\label{poly11}
 If $(\mathcal A,\tT,(-))$ is a  prime commutative
 triple with $\tT$ infinite, then so is  $(\mathcal A[\la],\tT_{\mathcal A[\la]},(-)).$
 \end{cor}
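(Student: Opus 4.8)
The plan is to handle the two soft assertions immediately and then reduce primeness of $\mathcal A[\la]$ to a statement about pairs of monomials, where Theorem~\ref{polyroot} does the work. Infiniteness of $\tT_{\mathcal A[\la]}$ is clear, since $\tT_{\mathcal A[\la]}$ already contains the monomials $\la,\la^2,\la^3,\dots$; commutativity of $\mathcal A[\la]$ follows from that of $\mathcal A$ together with the fact that a convolution product of two monomials is again a monomial (cf.\ Proposition~\ref{conv1}). So the content is that $\mathcal A[\la]$ is prime, i.e.\ that $\Diag_{\mathcal A[\la]}$ is a prime $\tT$-congruence. Since $\mathcal A[\la]$ is a commutative $\tT$-semiring system, Lemma~\ref{primecrit}(i) reduces this to the tangible criterion: writing the tangible monomials as $f_i=a_i\la^{p_i}$ and $g_j=b_j\la^{q_j}$ with $a_i,b_j\in\tT$, the relation $(f_0,f_1)\ctw(g_0,g_1)\in\Diag_{\mathcal A[\la]}$ --- equivalently $f_0g_0+f_1g_1=f_0g_1+f_1g_0$ in $\mathcal A[\la]$ --- must entail $f_0=f_1$ or $g_0=g_1$.

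First I would rewrite this relation: distributivity gives $(f_0(-)f_1)(g_0(-)g_1)=(f_0g_0+f_1g_1)(-)(f_0g_1+f_1g_0)$, so under the hypothesis $(f_0(-)f_1)(g_0(-)g_1)=c^\circ\in\mathcal A[\la]^\circ$ with $c=f_0g_1+f_1g_0$. Evaluating at an arbitrary $b\in\tT$ --- evaluation being a semiring homomorphism that commutes with $(-)$ --- yields $\bigl(f_0(b)(-)f_1(b)\bigr)\bigl(g_0(b)(-)g_1(b)\bigr)\in\mathcal A^\circ$ for every $b\in\tT$. Invoking that a product lying in $\mathcal A^\circ$ has one of its factors in $\mathcal A^\circ$ --- the prime-ideal-like property of $\mathcal A^\circ$ already used in the proof of Theorem~\ref{polyroot} --- each $b\in\tT$ is a $\circ$-root of $f_0(-)f_1$ or of $g_0(-)g_1$, so $\tT=R_1\cup R_2$ where $R_1,R_2$ denote the respective $\circ$-root sets inside $\tT$. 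Since $\tT$ is infinite, one of them, say $R_1$, is infinite.

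Finally I would run a short case analysis to derive $f_0=f_1$ (the case $R_2$ infinite being symmetric, giving $g_0=g_1$). If $p_0\neq p_1$ then $f_0(-)f_1=a_0\la^{p_0}+(-)a_1\,\la^{p_1}$ has both nonzero coefficients in $\tT$, hence is a nonzero element of $\tT[\la]$ of finite degree with infinitely many distinct $\circ$-roots --- contradicting Theorem~\ref{polyroot}. So $p_0=p_1$ and $f_0(-)f_1=(a_0(-)a_1)\la^{p_0}$; picking $b\in R_1$ gives $(a_0(-)a_1)b^{p_0}\in\mathcal A^\circ$, whence $a_0(-)a_1\in\mathcal A^\circ$ (again because $b^{p_0}$ is tangible, hence not in $\mathcal A^\circ$), and unique negation forces $a_1=a_0$, i.e.\ $f_0=f_1$. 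This establishes the tangible criterion, so $\mathcal A[\la]$ is prime; iterating the construction (and passing to a direct limit for infinitely many indeterminates) gives the same for $\mathcal A[\la_1,\dots,\la_n]$. The step I expect to be the main obstacle is the bookkeeping around $\mathcal A^\circ$: one must be sure the property ``a product in $\mathcal A^\circ$ has a factor in $\mathcal A^\circ$'' is available --- it is the ingredient underlying Theorem~\ref{polyroot}, so it should either follow from primeness of $\mathcal A$ or be carried along, just as ``unique negation'' is --- and one should note that the fully degenerate case $p_0=p_1$, $q_0=q_1$ can alternatively be settled directly by Lemma~\ref{primecrit}(i) applied to $\mathcal A$, since the relation then collapses to $a_0b_0+a_1b_1=a_0b_1+a_1b_0$ in $\mathcal A$, which is where primeness of $\mathcal A$ itself visibly enters.
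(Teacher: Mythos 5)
Your proof is correct in substance but takes a genuinely different route from the paper's. The paper derives primeness of $\mathcal A[\la]$ via the decomposition of Lemma~\ref{primecrit1} into semiprimeness plus $\tT$-irreducibility, invoking Lemma~\ref{Zartyp} (the density/disjointness lemma for $S_\Cong$) together with the informal observation that finitely many polynomials cannot share infinitely many $\circ$-roots; the latter observation is really Theorem~\ref{polyroot} again, but the appeal to Lemma~\ref{Zartyp} is left quite telegraphic. You instead verify primeness directly through the tangible criterion of Lemma~\ref{primecrit}(i): the tangibles of $\mathcal A[\la]$ are monomials, so the twist condition reduces to an explicit identity on two pairs of monomials, which you recast as $(f_0(-)f_1)(g_0(-)g_1)\in\mathcal A[\la]^\circ$, evaluate pointwise, and then finish with Theorem~\ref{polyroot} after a clean case split on whether the exponents $p_0,p_1$ coincide. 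This is more explicit and more self-contained than the paper's version, and it makes the role of the infinitude of $\tT$ completely transparent (one of the two root sets $R_1,R_2$ is forced to be infinite).

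Two remarks on the bookkeeping you yourself flag. First, the inference ``$xy\in\mathcal A^\circ$ and $y\notin\mathcal A^\circ$ implies $x\in\mathcal A^\circ$'' --- needed both in your pointwise-evaluation step and in the degenerate case $p_0=p_1$ --- is indeed the same unstated prime-ideal-like property of $\mathcal A^\circ$ that the paper's proof of Theorem~\ref{polyroot} silently uses at the line ``implying $\prod_{i>j}(a_j(-)a_i)\in\mathcal A^\circ$''; so you are not introducing a new gap, only inheriting one. It would be worth noting that this property is automatic when $\tT$ is a group (divide by the tangible factor), and that Theorem~\ref{polyroot} as stated also carries the hypothesis of unique negation, which you use implicitly to pass from $a_0(-)a_1\in\mathcal A^\circ$ to $a_0=a_1$. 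Second, the fully degenerate case $p_0=p_1,\ q_0=q_1$ is exactly where primeness of $\mathcal A$ (via Lemma~\ref{primecrit}(i) applied to $\Diag_{\mathcal A}$) does the work, as you observe; including it explicitly makes the proof cleaner than relying only on the $\mathcal A^\circ$ reasoning in that branch.
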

\begin{proof}
Follows from Lemma~\ref{Zartyp}, since any finite set of functions cannot
have infinitely many common roots.
\end{proof}

\begin{example}\label{symadj} We will need the congruence version of Definition~\ref{signeddet}, for
which we turn to symmetrization, as treated in \cite{AGR}. Namely,
according to Definition~\ref{symr1}, we embed  $( \mathcal A, \tT)$
into the symmetrized system $(\widehat {\mathcal
        A},\widehat {\tT},(-))$    with
    multiplication $\widehat {\tT} \times \widehat {\mathcal A}\to
    \widehat {\mathcal A}$ given by the twist action, $(-)$ is the switch
    map, and $\preceq$ is~$\precpr.$

Given $(f(\la),g(\la))
        \in \widehat{\tT}[\la]$, we define  $$(f ,g)(b_0,b_1) =
        (f(b_0) +g(b_1), f(b_1)+g(b_0)),$$
        for any element $(b_0,b_1)$ of $\widehat {\mathcal
        A}$. The element $(b_0,b_1)$
        is a \textbf{symmetrized root} of $(f(\la),g(\la))
        \in \widehat{\tT}[\la]$ if
$(f ,g)(b_0,b_1)\in \widehat {\mathcal A}^\circ,$ evaluated under
twist multiplication. (In particular, for $b_1 = \zero$, this means
$f(b_0) = g(b_0).$)

Now one can define the \textbf{symmetrized determinant} to be the
$(-)$-determinant in this sense, and of the $j,i$ minor of a matrix
$A$. The \textbf{symmetrized adjoint matrix} is  the $(-)$-adjoint
matrix.

 In this context, Theorem~\ref{polyroot} says that over a commutative
 prime triple
$\mathcal A$, any pair of polynomials $(f,g)\in  \widehat{\tT[\la]}$
of degree~$n$ cannot have $n+1$ distinct symmetrized roots in
$\widehat{\tT}$.
\end{example}


\subsection{Localization}\label{loc}$ $

 We refer to \cite[\S6.8]{Row16}, where we  used Bourbaki's  standard technique of localization~\cite{Bo}, to pass
 from commutative
metatangible  $\tT$-\semiring0 systems (resp.~$\tT$-monoid module
triples) to  metatangible $\tT$-systems (resp.~$\tT$- triples) over
groups. We assume that $\tT$ is a monoid and  $S$ is a central
submonoid of $\mathcal A\setminus \mathcal A_{\Null} $ (i.e., $sa =
as$ for all $a\in \mathcal A, \ s \in S$). Often $S \subseteq \tT.$
Recall that one defines the equivalence $(s_1,b_1) \equiv (s_2,b_2)$
when $s(s_1b_2) = s(s_2 b_1)$ for some $s\in S$, and we write
$s^{-1}b$ or $\frac bs$ for the equivalence class of $(s,b)$. We
might as well assume that $\one \in S$ since $\frac b \one = \frac
{bs}s.$ We localize a $\tT$-semiring triple $( \mathcal A, \tT, (-)
)$ with respect to $S$ by imposing multiplication:
\[
(s_1^{-1}b_0) (s_2^{-1}b_0') = (s_1s_2)^{-1}b_0b_1,
\]
and  addition:
\[
(s_1^{-1} b_0) + (s_2^{-1}b_0') = (s_1s_2)^{-1}( s_2b_0+
s_1b_0').
\]

%

The standard ring-theoretic facts are mirrored in the systemic
situation.

\begin{rem}\label{5.13}
\item Any finite set of fractions $\frac {a_1}{s_1}, \frac
{a_2}{s_2}, \dots ,\frac {a_n}{s_n}$ has a common denominator $s =
s_1\cdots s_n,$ since
$$\frac
{a_i}{s_i} = \frac {s_1\cdots s_{i-1}a_is_{i+1}\cdots s_n}s.$$
 \end{rem}

  We say that $a\in \tT$ is $\preceq$-\textbf{regular} if
$ab_1 \preceq ab_2$ implies $b_1 \preceq b_2$. We say that $a\in S$
is \textbf{regular} if $ab_1 = ab_2$ implies $b_1 = b_2$, and $S  $
is \textbf{regular} if each of its elements is regular.

\begin{lem} When $\preceq$ is a PO, then $\preceq$-regular implies
regular.
 \end{lem}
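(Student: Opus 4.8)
The plan is to unwind the two definitions and exploit that a surpassing relation, being a partial pre-order, is reflexive, together with the extra assumption that $\preceq$ is a partial order (hence antisymmetric). Suppose $a \in \tT$ is $\preceq$-regular, and suppose $ab_1 = ab_2$ for some $b_1, b_2$; we must show $b_1 = b_2$.

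First I would observe that from $ab_1 = ab_2$ one gets $ab_1 \preceq ab_2$: indeed $\preceq$ is reflexive on $\mathcal A$, so $ab_2 \preceq ab_2$, and substituting the equal element $ab_1$ for $ab_2$ on the left gives $ab_1 \preceq ab_2$. Applying $\preceq$-regularity of $a$ yields $b_1 \preceq b_2$. By the symmetric argument, starting from $ab_2 = ab_1$, I get $b_2 \preceq b_1$. Since $\preceq$ is assumed to be a PO, it is antisymmetric, so $b_1 \preceq b_2$ and $b_2 \preceq b_1$ together force $b_1 = b_2$. Hence $a$ is regular, and since this holds for every element of $S$ that is $\preceq$-regular, the statement follows.

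There is essentially no obstacle here; the only point requiring care is that the implication ``equality $\Rightarrow$ $\preceq$'' relies on reflexivity of the pre-order, and that the final step genuinely needs antisymmetry, i.e. the PO hypothesis rather than just the pre-order structure — without it one could only conclude $b_1 \preceq b_2 \preceq b_1$, which need not give equality. I would make both of those dependencies explicit in the writeup so the role of the hypothesis ``$\preceq$ is a PO'' is transparent.
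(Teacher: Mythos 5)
Your proof is correct and is essentially the paper's argument: from $ab_1 = ab_2$ deduce $ab_1 \preceq ab_2$ and $ab_1 \succeq ab_2$ by reflexivity, apply $\preceq$-regularity to get $b_1 \preceq b_2$ and $b_2 \preceq b_1$, and then invoke antisymmetry of the PO. The only difference is that you spell out explicitly where reflexivity and antisymmetry enter, which the paper leaves implicit.
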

\begin{proof} $ab_1 = ab_2$ implies $ab_1 \preceq ab_2$ and $ab_1 \succeq
ab_2$, so $ b_1 \preceq  b_2$ and $ b_1 \succeq  b_2$, implying $
b_1=  b_2$

\end{proof}

\begin{prop}\label{local} Let   $ (\mathcal A, \tT, (-))$ be a  pseudo-triple  with unique quasi-negatives, and
$S$ be a  multiplicative
submonoid
 of~$\mathcal A$. Then the following hold.
 \begin{enumerate}\eroman
    \item
    $(S^{-1}\mathcal A,\, S^{-1}\tT
    , (-))$ is a pseudo-triple with unique quasi-negatives.
\item
If   $ (\mathcal A, \tT, (-))$ is a  $\tT$-triple  with unique
quasi-negatives, then $(S^{-1}\mathcal A,\, S^{-1}\tT , (-))$ is
also a $\tT$-triple which has unique quasi-negatives.
 \item
 There is a canonical homomorphism
 \[
 S^{-1}:(\mathcal
 A, \tT, (-))\longrightarrow (S^{-1}\mathcal A,\, S^{-1}\tT , (-)), \quad b\mapsto \frac b\one,
 \]
 whose congruence kernel is the following $\tT$-congruence $$\Cong = \{
 (b_0,b_1): sb_0 = sb_1 \text{ for some }s\in S\}.$$
 \item
 The map $S^{-1}$ induces
an isomorphism $S^{-1}\mathcal A/ S^{-1}\Cong  \cong  \frac{S}{
\one}^{-1}(\mathcal A/\Cong).$
 \item If $S$ is regular then the map of (iii) is an injection.
 \item If $\tT$ is regular then  $ (\mathcal A, \tT, (-))$ injects into the triple $(\tT^{-1}\mathcal A,\, \tT^{-1}\tT , (-))$
 over the group $ \tT^{-1}\tT$.
 \end{enumerate}
 \end{prop}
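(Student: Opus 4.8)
The plan is to follow the classical construction of the ring (or module) of fractions, using the formulas for $S^{-1}\mathcal{A}$ displayed just before the proposition, and to check at each stage that the negation map is carried along and that the (pseudo-)triple axioms survive. First I would extend $(-)$ to $S^{-1}\mathcal{A}$ and to $S^{-1}\tT$ by $(-)\frac{b}{s} = \frac{(-)b}{s}$. This is well-defined: $(-)$ is an additive automorphism that commutes with the $\tT$-action, hence with multiplication by any element of $S$ (immediately when $S \subseteq \tT$, and otherwise by Lemma~\ref{circm}(ii), since $\tT$ generates $(\mathcal{A},+)$), so a relation $s(s_1 b_2) = s(s_2 b_1)$ witnessing $\frac{b_1}{s_1} = \frac{b_2}{s_2}$ maps to $s(s_1 (-)b_2) = s(s_2 (-)b_1)$. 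That the resulting $(-)$ is a negation map of order $\le 2$ respecting the $S^{-1}\tT$-action, and that the displayed fraction formulas make $S^{-1}\mathcal{A}$ into a $\tT$-module (indeed a monoid module when $\mathcal{A}$ is one), is routine. For part (ii) I would additionally check the two extra triple axioms: that $S^{-1}\tT$ generates $(S^{-1}\mathcal{A}, +)$ — write a numerator as a sum of tangibles — and that $S^{-1}\tT$ is disjoint from the quasi-zeros of $S^{-1}\mathcal{A}$ — clear denominators and use $S \cap \mathcal{A}^\circ = \emptyset$ together with the corresponding property of $\mathcal{A}$.

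The one step that is not purely formal is the persistence of unique quasi-negatives, which gives (i) and completes (ii). I would first note that the quasi-zeros of $S^{-1}\mathcal{A}$ are precisely the fractions $\frac{b^\circ}{s}$, since $(\frac{b}{s})^\circ = \frac{b^\circ}{s}$. Hence if $\frac{b_0}{s_0} + \frac{b_1}{s_1}$ is a quasi-zero, putting the left side over the common denominator $s_0 s_1$ and clearing denominators yields $v \in S$ with $(v s_1 b_0) + (v s_0 b_1) \in \mathcal{A}^\circ$ (again sliding $(-)$ past multiplication via Lemma~\ref{circm}). Unique quasi-negatives in $\mathcal{A}$ then forces $v s_0 b_1 = (-)(v s_1 b_0) = v s_1 ((-)b_0)$, and this equality is exactly a defining relation for $\frac{b_1}{s_1} = (-)\frac{b_0}{s_0}$.

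Parts (iii)--(vi) are then formal. For (iii), $S^{-1}\colon b \mapsto \frac{b}{\one}$ is a homomorphism of triples by the fraction formulas (it sends $\tT$ into $S^{-1}\tT$ and commutes with $(-)$), and $\frac{b_0}{\one} = \frac{b_1}{\one}$ holds iff $s b_0 = s b_1$ for some $s \in S$ by the definition of the localization equivalence, so the congruence kernel is exactly the stated $\Cong$, which one checks is a $\tT$-congruence. For (iv), I would construct mutually inverse homomorphisms between $S^{-1}\mathcal{A}/S^{-1}\Cong$ and $\frac{S}{\one}^{-1}(\mathcal{A}/\Cong)$ by $\frac{b}{s} \mapsto \frac{[b]}{[s]}$ and $\frac{[b]}{[s]} \mapsto \frac{b}{s}$; the only content is well-definedness in each direction, which is the usual clearing-of-denominators bookkeeping, and both maps respect $(-)$ and the tangible parts because $(-)$ acts on numerators and $S^{-1}\tT$ corresponds to the tangible part of $\frac{S}{\one}^{-1}(\mathcal{A}/\Cong)$. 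For (v), if $S$ is regular then $s b_0 = s b_1$ forces $b_0 = b_1$, so $\Cong = \Diag_{\mathcal{A}}$, the congruence kernel of $S^{-1}$ is trivial, and a homomorphism with trivial congruence kernel is injective. For (vi), apply (v) with $S = \tT$: by hypothesis $\tT$ is regular, so $S^{-1}$ embeds $(\mathcal{A}, \tT, (-))$ into $(\tT^{-1}\mathcal{A}, \tT^{-1}\tT, (-))$, and $\tT^{-1}\tT$ is a group, with $\frac{a}{s}$ having inverse $\frac{s}{a}$ and unit $\frac{\one}{\one}$ (using that $\tT$ is a central monoid).

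I expect the main obstacle to be the persistence of unique quasi-negatives in the second paragraph: it is the single place where a relation holding in the localization must be transported back to a relation in $\mathcal{A}$ and the hypothesis genuinely invoked, and the surrounding bookkeeping must consistently slide $(-)$ past denominators, which is legitimate only because $\tT$ generates $(\mathcal{A}, +)$ (Lemma~\ref{circm}). A secondary nuisance will be verifying that both composites in part (iv) reduce to the identity once the denominators are cleared.
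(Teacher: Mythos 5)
Your proof is correct and follows essentially the same route as the paper: for (i)--(ii) you put the sum over a common denominator, clear denominators, invoke unique negation in $\mathcal A$, and read off the defining relation for equality of fractions; (iii)--(vi) are the same bookkeeping (congruence kernel from the definition of the localization equivalence, $\Cong = \Diag_{\mathcal A}$ under regularity, and $\tT^{-1}\tT$ a group). In the unique-negation step you are in fact slightly more careful than the paper, explicitly carrying the scalar $v \in S$ produced by clearing denominators rather than asserting $sa_1 + s_1a \in \mathcal A^\circ$ outright; the only loose phrasing is in (ii), where what the argument actually uses is $\tT\cap\mathcal A^\circ=\emptyset$ (the triple axiom) together with $S$ mapping $\tT$ into $\tT$, rather than $S\cap\mathcal A^\circ=\emptyset$ per se.
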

\begin{proof}
(i): One can easily check that $(S^{-1}\mathcal A,\, S^{-1}\tT ,
(-))$ is a pseudo-triple. This is standard (where $(-)(s^{-1}a ): =
s^{-1}( (-) a), \quad s\in S)$). For the assertion about unique
quasi-negatives, suppose $s_1^{-1}a_1 $ is a quasi-negative of
$s^{-1}a.$ Then $$ (s s_1)^{-1}(sa_1 + s_1a) = s_1^{-1}a_1 +
 s^{-1}a \in \mathcal (S^{-1}A)^\circ,$$ implying $sa_1 + s_1a \in \mathcal
 A^\circ,$ and thus $sa_1 = (-) s_1a = s_1((-)a),$ and $s_1^{-1}a_1
 = s^{-1}((-)a).$ 

(ii): The proof is essentially same as $(i)$. We only have to check
that $S^{-1}\tT \cap (S^{-1}\mathcal A)^\circ =\emptyset$. Suppose
that $\frac{b}{s} \in S^{-1}\tT \cap (S^{-1}\mathcal A)^\circ$. In
particular, $\frac{b}{s}=\frac{a}{s_1}(-) \frac{a}{s_1}=\frac{s_1a
(-) s_1a}{s_1^2}$ for some $\frac{a}{s_1} \in S^{-1}\mathcal A$. It
follows that $s's_1a (-) s's_1a \in \tT$ for some $s' \in S$.
However, this implies that $\tT \cap \mathcal A^\circ \neq
\emptyset$, which contradicts to the assumption that $(\mathcal
A,\tT,(-))$ is a $\tT$-triple.

 (iii): Clearly $S^{-1}$
is a homomorphism and $\Cong $ is the congruence kernel of $S^{-1}$.

(iv): The kernel of the map $S^{-1}$ is $S^{-1}\Cong .$

(v):  $\Cong $ is trivial, by (iii) and the definition of regular.

(vi): $\tT^{-1}\tT $ is a group.
\end{proof}

\begin{lem} Any surpassing relation $\preceq$ on a monoid system $( \mathcal A,  \tT, (-),\preceq)$
extends  to  $(S^{-1}\mathcal A, S^{-1}\tT, (-))$, by putting
$s^{-1}b \preceq (s')^{-1}b'$ whenever $s'b\preceq sb'.$
 \end{lem}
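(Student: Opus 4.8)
The plan is to pin down the relation on $S^{-1}\mathcal A$ in a manifestly well-defined way and then reduce each clause of Definition~\ref{precedeq07} to the corresponding clause on $\mathcal A$ by clearing denominators. Explicitly, I would declare, for $p,q\in S^{-1}\mathcal A$, that $p\preceq q$ means: there are representatives $p=s^{-1}b$, $q=(s')^{-1}b'$ and some $u\in S$ with $us'b\preceq usb'$ in $\mathcal A$. This is just the displayed prescription ``$s^{-1}b\preceq(s')^{-1}b'$ whenever $s'b\preceq sb'$'' rendered insensitive to the choice of representatives: the auxiliary factor $u$ absorbs the ambiguity in the localization equivalence $(s_1,b_1)\equiv(s_2,b_2)$, and a short computation (multiplying the given inequality by the products of the denominators occurring in the two changes of representative) shows the two formulations coincide. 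The same recipe with $b,b'\in\tT$ and $s,s',u\in S$ defines $\preceq$ on $S^{-1}\tT$, and it is then immediate that the relation on $S^{-1}\tT$ is the restriction of the one on $S^{-1}\mathcal A$, so the extension is a partial pre-order on both once reflexivity and transitivity are checked.

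A single observation does most of the bookkeeping: since $\tT$ generates $(\mathcal A,+)$, every $c\in\mathcal A$ is a finite sum $\sum_k\tau_k$ with $\tau_k\in\tT$, so from $b_1\preceq b_2$ clause~(iii) gives $\tau_kb_1\preceq\tau_kb_2$ and clause~(iv) gives $cb_1=\sum_k\tau_kb_1\preceq\sum_k\tau_kb_2=cb_2$; hence multiplication by any element of $\mathcal A$, in particular by any $s\in S$, is monotone for $\preceq$. With this in hand, reflexivity of the localized relation is reflexivity of $\preceq$ on $\mathcal A$, and transitivity follows by clearing the two denominators against one another and chaining the resulting $\mathcal A$-inequalities after scaling each by the appropriate $S$-element. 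For the surpassing clauses one uses $(-)(s^{-1}b)=s^{-1}((-)b)$ and \eqref{neg0} (so that multiplication by any $s\in S$ preserves $\mathcal A^\circ$), giving $(S^{-1}\mathcal A)^\circ=\{s^{-1}(c^\circ):s\in S,\ c\in\mathcal A\}$: clause~(i) reduces, after clearing denominators, to the instance $x\preceq x+e^\circ$ of clause~(i) on $\mathcal A$, and clauses~(ii), (iii), (iv) reduce termwise to the corresponding clauses on $\mathcal A$ together with the monotonicity observation. If $\preceq$ is moreover a $\tT$-surpassing relation, the extra condition extends the same way, since a witness $q^\circ\preceq p$ with $p\in S^{-1}\tT$ would, after clearing denominators, produce a forbidden relation $e^\circ\preceq\alpha$ in $\mathcal A$.

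The step I expect to be the genuine obstacle is clause~(v), that $\preceq$ is trivial on the tangible part $S^{-1}\tT$. Unwinding the definition, $s_1^{-1}\alpha_1\preceq s_2^{-1}\alpha_2$ with $\alpha_i\in\tT$ means $us_2\alpha_1\preceq us_1\alpha_2$ in $\mathcal A$ for some $u\in S$; to invoke clause~(v) on $\mathcal A$ and conclude $us_2\alpha_1=us_1\alpha_2$, hence $s_1^{-1}\alpha_1=s_2^{-1}\alpha_2$ in $S^{-1}\mathcal A$, one needs these scaled elements to be tangible, i.e.\ $S\cdot\tT\subseteq\tT$. This holds when $S\subseteq\tT$, which is the case of principal interest and the one to which the general situation is reduced in Proposition~\ref{local}(vi) by first localizing at $\tT$. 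Accordingly I would either impose $S\subseteq\tT$ in the statement, or more generally assume $S\tT\subseteq\tT$; with that in place clause~(v) goes through, and the remaining points are the routine verifications indicated above.
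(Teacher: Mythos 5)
Your proof is correct and follows the same basic strategy as the paper's: verify each clause of Definition~\ref{precedeq07} for the localized relation by clearing denominators. You are more careful than the paper in two respects, both worth recording. First, you make the relation manifestly well-defined on equivalence classes by allowing an auxiliary scaling factor $u\in S$, whereas the paper's formulation ``$s^{-1}b\preceq(s')^{-1}b'$ whenever $s'b\preceq sb'$'' is stated only at the level of chosen representatives and its compatibility with the localization equivalence is left implicit. Second, you correctly flag that the verification of clauses (iii)--(v) needs $S\tT\subseteq\tT$ (in practice $S\subseteq\tT$); the paper's own proof of clauses (iv) and (v) does invoke $S\subseteq\tT$, even though the surrounding localization setup only says this hypothesis holds ``often.'' Your preliminary observation that multiplication by an arbitrary element of $\mathcal A$ (not merely of $\tT$) is $\preceq$-monotone, because $\tT$ generates $(\mathcal A,+)$, is a clean way to organize the denominator-clearing computations that the paper carries out somewhat ad hoc.
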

\begin{proof}  We verify the conditions of Definition~\ref{precedeq07}.

$(i)$: Suppose that $\frac{b}{s},$ and $\frac c{s'} \in S^{-1}\tT$
 Then  $s'b \preceq s'b+ sc^\circ $, implying
$$\frac {b}{s} = \frac {s'b}{ss'}\preceq \frac {s'b+ sc^\circ }{ss'}= \frac{b}{s} + \frac c{s'}^\circ
.$$ 
%

$(ii)$: Suppose that $\frac{b}{s} \preceq \frac{b'}{s'}$. Then we have $s'b \preceq sb'$ and hence $(-)s'b \preceq (-)sb'$, showing that
\[
\frac{(-)b}{s}=(-)\frac{b}{s} \preceq (-)\frac{b'}{s'}=\frac{(-)b'}{s'}.
\]

$(iii)$: Suppose that $\frac{b_1}{s_1} \preceq \frac{b_1'}{s_1'}$
and $\frac{b_2}{s_2} \preceq \frac{b_2'}{s_2'}$. Then $ b_1s_1'
\preceq s_1b_1'$ and $ b_2s_2'\preceq s_2b_2'. $ It follows that
\[
s_1's_2' (b_1s_2+s_1b_2)=s_2'(s_1'b_1)s_2+s_1'(s_2'b_2)s_1 \preceq s_2's_1b_1's_2 +s_1's_2b_2's_1 = (s_1s_2)(s_2'b_1'+s_1'b_2'),
\]
which shows that
\[
\frac{b_1}{s_1}+\frac{b_2}{s_2}=\frac{s_2b_1+s_1b_2}{s_1s_2} \preceq \frac{s_2'b_1'+s_1'b_2'}{s_1's_2'}=\frac{b_1'}{s_1'}+\frac{b_2'}{s_2'}.
\]

$(iv)$: Suppose that $\frac{b }{s } \preceq \frac{b'}{s'}$. Then we
have that $bs' \preceq sb'$. In particular, since $S \subseteq \tT$,
\[
a s_2(s_1'b_1) \preceq a s_2(s_1b_1').
\]
for any $a \in \tT$ and $s_2 \in S$. It follows that for any
$\frac{a}{s_2} \in S^{-1}\tT$, we have
\[
\frac{a}{s_2}\frac{b_1}{s_1}=\frac{ab_1}{s_2s_1} \preceq
\frac{a}{s_2}\frac{b_1'}{s_1'}=\frac{ab_1'}{s_2s_1'}.
\]

$(v)$: Suppose that $\frac{a_1}{s_1} \preceq \frac{a_2}{s_2}$ for $\frac{a_i}{s_i} \in S^{-1}\tT$. This implies that $s_2a_1 \preceq s_1a_2$, however $s_2a_1, s_1a_2 \in \tT$ and hence $s_2a_1=s_1a_2$, showing that $\frac{a_1}{s_1}=\frac{a_2}{s_2}$.

\end{proof}

\subsubsection{Localization of $\tT$-congruences}$ $

Next, we introduce localization for $\tT$-congruences. Again we take
$S $ a submonoid of $\tT$, but the flavor is different. Let
$(b_0,b_1)$ and $(b_0',b_1')$ be elements of a $\tT$-congruence
$\Cong$. One defines the following equivalence
\[
(b_0,b_1) \equiv (b_0',b_1') \textrm{ if and only if } s(b_0,b_1) =
s'(b_0',b_1') \textrm{ for some }s,s'\in S.
\]


\begin{defn}   $S^{-1}\Cong = \{ (\frac {b_0}s , \frac {b_1}s ): (b_0 , b_1)\in \Cong\}.$
 \end{defn}
\begin{rem} If $\Cong$ is a
 $\tT$-congruence of $( \mathcal A, \tT, (-)),$ then $S^{-1}\Cong$ is a
 $\tT$-congruence of  $(S^{-1}\mathcal A,
S^{-1}\tT, (-))$.
 \end{rem}

\begin{lem} Any $\tT$-congruence of $(S^{-1}\mathcal A,
S^{-1}\tT, (-))$ has the form $S^{-1}\Cong$, where $\Cong$ is a
 $\tT$-congruence of $( \mathcal A, \tT, (-)).$
 \end{lem}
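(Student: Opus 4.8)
The plan is to construct $\Cong$ from the given $\tT$-congruence via the canonical homomorphism $S^{-1}\colon \mathcal A \to S^{-1}\mathcal A$, $b\mapsto \frac b\one$, of Proposition~\ref{local}(iii), and then verify that localizing $\Cong$ recovers the original. Precisely, given a $\tT$-congruence $\Cong'$ of $(S^{-1}\mathcal A, S^{-1}\tT, (-))$, I would take $\Cong$ to be the $\tT$-congruence of $(\mathcal A, \tT, (-))$ generated by the set of tangible pairs
\[
G := \Bigl\{ (c,d) \in \tT \times \tT : \Bigl(\frac c\one, \frac d\one\Bigr) \in \Cong' \Bigr\}.
\]
By construction $\Cong$ is a $\tT$-congruence, and by the Remark immediately preceding the statement $S^{-1}\Cong$ is a $\tT$-congruence of $(S^{-1}\mathcal A, S^{-1}\tT, (-))$. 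So the whole content is the equality $S^{-1}\Cong = \Cong'$.

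For the inclusion $S^{-1}\Cong \subseteq \Cong'$, I would argue that since $S^{-1}$ is a homomorphism of the ambient $\tT$-modules respecting the switch map, and $\Cong'$ is closed under all the operations that define a $\tT$-congruence (the diagonal, the switch, the $S^{-1}\tT$-action, addition, transitivity), a routine induction on the generation of $\Cong$ from $G$ shows that the image of $\Cong$ under $b\mapsto\frac b\one$ is contained in $\Cong'$; that is, $(\frac{b_0}\one,\frac{b_1}\one)\in\Cong'$ whenever $(b_0,b_1)\in\Cong$. Since $S\subseteq\tT$ and $\tT$ is a monoid, $\frac\one s\in S^{-1}\tT$ for each $s\in S$, so applying this scalar gives $(\frac{b_0}s,\frac{b_1}s)=\frac\one s\cdot(\frac{b_0}\one,\frac{b_1}\one)\in\Cong'$, which is exactly $S^{-1}\Cong\subseteq\Cong'$.

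For the reverse inclusion $\Cong'\subseteq S^{-1}\Cong$, the point where I would lean on the hypothesis that $\Cong'$ is a $\tT$-\emph{congruence} is that it is then additively generated by pairs $(\frac as,\frac bt)$ with $a,b\in\tT$ and $s,t\in S$; since $S^{-1}\Cong$ is closed under addition, it suffices to place each such pair in $S^{-1}\Cong$. Because $\tT$ is a monoid, $st\in\tT$, so $\frac{st}\one\in S^{-1}\tT$, and multiplying by this scalar and simplifying fractions yields $(\frac{ta}\one,\frac{sb}\one)=\frac{st}\one\cdot(\frac as,\frac bt)\in\Cong'$, with $ta,sb\in\tT$. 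Hence $(ta,sb)\in G\subseteq\Cong$, so $(\frac{ta}\one,\frac{sb}\one)\in S^{-1}\Cong$ by the definition of $S^{-1}\Cong$. Finally, in $S^{-1}\mathcal A$ one has $\frac as=\frac{ta}{st}$ and $\frac bt=\frac{sb}{st}$, so $(\frac as,\frac bt)=\frac\one{st}\cdot(\frac{ta}\one,\frac{sb}\one)$ lies in the $\tT$-congruence $S^{-1}\Cong$, completing the inclusion.

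The only genuinely delicate part is the arithmetic of denominators: one must invoke $S\subseteq\tT$ twice — once to know $\frac{st}\one$ is a legitimate tangible scalar with which to clear denominators in the generators of $\Cong'$, and once to know $\frac\one{st}$ is one with which to restore them — and one must check that the "clear denominators, then re-divide" maneuver really lands inside $S^{-1}\Cong$, whose elements are by definition required to carry a single common denominator. Everything else (that $S^{-1}$ respects all the operations, that $\tT$-generation passes both to $\Cong$ and to $S^{-1}\Cong$) is immediate from Proposition~\ref{local} and the definitions.
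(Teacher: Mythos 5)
Your proof is correct and follows essentially the same route as the paper's: define $\Cong$ by pulling $\Cong'$ back along $b\mapsto\frac b\one$, and clear denominators to show that any generating pair $\bigl(\frac as,\frac bt\bigr)\in\Cong'$ arises as $\frac\one{st}$ times a pair $(ta,sb)\in\Cong$. You are a bit more careful than the paper in taking $\Cong$ to be the $\tT$-congruence \emph{generated} by the tangible part of the pullback (the paper simply takes the full preimage without checking it is additively generated by tangible pairs) and in spelling out both inclusions, but the key computation — multiplying by $\frac{st}\one$ to land in the image of $\mathcal A$, then dividing back by $st$ — is the same.
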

\begin{proof} Given the $\tT$-congruence $\Cong'$ of $S^{-1}\mathcal A,$
define $$\Cong = \{ (b_0, b_1) \in \mathcal A :  (\frac{b_0}\one,
\frac{b_1}\one)\in \Cong'\}.$$ If $(b_0', b_1') \in \Cong'$ then
writing $b_i' = \frac {b_i}{s_i'}$ for $b_i \in \mathcal A$ we have
$s_0's_1'(b_0', b_1') = (s_1' b_0, s_0' b_1)\in \Cong,$ so $\Cong' =
S^{-1}\Cong$.
\end{proof}

 From now on we
assume for convenience  that $S \subseteq \tT$ is a submonoid of
$\Cong$-\textbf{regular} elements, in the sense that $ (sb_0,sb_1)
\in \Cong$ implies $ ( b_0, b_1) \in \Cong$ for any $s \in S$.

\begin{prop}\label{local1} If $\Cong$ is a prime $\tT$-congruence of $( \mathcal A,  \tT, (-))$,
then $S^{-1}\Cong$ is a  prime $\tT$-congruence of $(S^{-1}\mathcal
A, S^{-1}\tT, (-)).$
\end{prop}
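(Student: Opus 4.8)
The plan is to reduce the primeness of $S^{-1}\Cong$ to that of $\Cong$, using the correspondence between $\tT$-congruences of $(S^{-1}\mathcal A, S^{-1}\tT, (-))$ and of $(\mathcal A, \tT, (-))$ established in the lemma just above. So suppose $\Cong_1' \ctw \Cong_2' \subseteq S^{-1}\Cong$ for $\tT$-congruences $\Cong_1', \Cong_2'$ of $S^{-1}\mathcal A$. By that lemma I may write $\Cong_i' = S^{-1}\Cong_i$, where $\Cong_i := \{(b_0,b_1)\in \widehat{\mathcal A} : (\tfrac{b_0}{\one},\tfrac{b_1}{\one})\in\Cong_i'\}$ is a $\tT$-congruence of $\mathcal A$. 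The strategy is then: show $\Cong_1 \ctw \Cong_2 \subseteq \Cong$; invoke primeness of $\Cong$ to get, say, $\Cong_1 \subseteq \Cong$; and conclude $\Cong_1' = S^{-1}\Cong_1 \subseteq S^{-1}\Cong$ by the obvious monotonicity of localization. Finally I must separately check that $S^{-1}\Cong$ is proper.

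For the inclusion $\Cong_1 \ctw \Cong_2 \subseteq \Cong$ it suffices, since $\Cong$ is a congruence, to check it on the generating twist products $(a_0,a_1)\ctw(b_0,b_1)$ with $(a_0,a_1)\in\Cong_1$ and $(b_0,b_1)\in\Cong_2$. By construction $(\tfrac{a_0}{\one},\tfrac{a_1}{\one})\in\Cong_1'$ and $(\tfrac{b_0}{\one},\tfrac{b_1}{\one})\in\Cong_2'$, so by \eqref{twi} their twist product $(\tfrac{a_0 b_0 + a_1 b_1}{\one}, \tfrac{a_0 b_1 + a_1 b_0}{\one})$ lies in $\Cong_1'\ctw\Cong_2'\subseteq S^{-1}\Cong$. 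Writing $(c_0,c_1):=(a_0,a_1)\ctw(b_0,b_1)$, this says $(\tfrac{c_0}{\one},\tfrac{c_1}{\one}) = (\tfrac{d_0}{s},\tfrac{d_1}{s})$ for some $(d_0,d_1)\in\Cong$ and $s\in S$; clearing denominators yields $t\in S$ with $(t s c_0, t s c_1) = (t d_0, t d_1) = t(d_0,d_1)\in\Cong$ (using that the $\tT$-action on a $\tT$-congruence is the diagonal one). Since $ts\in S$ and, by the standing hypothesis of this subsection, $S$ consists of $\Cong$-regular elements, we get $(c_0,c_1)\in\Cong$, i.e.\ $(a_0,a_1)\ctw(b_0,b_1)\in\Cong$, as desired.

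To see that $S^{-1}\Cong \neq \widehat{S^{-1}\mathcal A}$: if $(\tfrac{\one}{\one},\tfrac{\zero}{\one})$ were in $S^{-1}\Cong$, then the same denominator-clearing argument would produce $ts\in S$ with $(ts,\zero) = ts\cdot(\one,\zero)\in\Cong$, and $\Cong$-regularity of $ts$ would force $(\one,\zero)\in\Cong$; but then $(b_0,b_1) = (b_0,b_1)\ctw(\one,\zero)\in\Cong$ for every $(b_0,b_1)$, contradicting $\Cong\neq\widehat{\mathcal A}$. (Here I use that $\tT$ generates $\mathcal A$, or simply the twist identity as in the earlier lemma on maximal $\tT$-congruences.)

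I expect the only real obstacle to be the bookkeeping in the denominator-clearing step: one must be careful that the diagonal $\tT$-action makes $(tsc_0,tsc_1)$ genuinely an element of $\Cong$, and that the implication ``$(sb_0,sb_1)\in\Cong \Rightarrow (b_0,b_1)\in\Cong$'' is precisely the $\Cong$-regularity hypothesis placed on $S$ in this subsection — this is exactly why that hypothesis was imposed. The remaining ingredients (monotonicity of $S^{-1}(-)$, that twist products of generators generate $\Cong_1\ctw\Cong_2$, and the pullback lemma) are already available above.
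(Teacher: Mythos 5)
Your proof is correct and follows the same route as the paper's: use the preceding lemma to write each $\tT$-congruence of $S^{-1}\mathcal A$ as $S^{-1}\Cong_i$, pull the twist-product inclusion back to $\mathcal A$ via denominator-clearing and $\Cong$-regularity of $S$, apply primeness of $\Cong$, and relocalize. You have merely spelled out the step the paper compresses into ``clearly $\Cong'\ctw\Cong''\subseteq\Cong$ by regularity,'' and added the (implicitly needed but unstated) check that $S^{-1}\Cong$ is proper.
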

\begin{proof}  Suppose  $S^{-1} \Cong' \ctw S^{-1} \Cong '' \subseteq  S^{-1} \Cong.$ Then clearly
$\Cong'\ctw \Cong'' \subseteq \Cong$ by regularity, so $\Cong'
\subseteq \Cong$
 or $\Cong'' \subseteq \Cong,$ implying $S^{-1} \Cong' \subseteq S^{-1} \Cong$
 or $S^{-1} \Cong'' \subseteq S^{-1} \Cong.$
\end{proof}

\begin{example} Take $S = \{ s \in \tT: (s,\zero) \notin \Cong\}.$ If
$ \Cong$ is prime then $S$ is $ \Cong$-regular. Hence $S^{-1}\Cong$
is maximal with respect to being disjoint from $S^{-1}\tT,$ in the
sense of Corollary~\ref{primemax1}.
\end{example}

 \subsection{Extensions of systems}\label{extsys1}$ $

 \begin{defn}\label{ext1} When we have a homomorphism
    $\varphi: (\mathcal A, \tT_{\mathcal A} , (-)) \to  (\mathcal A', \tT_{\mathcal A '}, (-))$
of pseudo-triples   whose congruence kernel is trivial, we say that
$(\mathcal A', \tT_{\mathcal A '}, (-))$ is an \textbf{extension} of
$(\mathcal A, \tT_{\mathcal A }, (-)).$ In other words, $(\mathcal
A, \tT_\mathcal A , (-))$ can be viewed as a sub-triple of
$(\mathcal A', \tT_{\mathcal A '}, (-)).$
\end{defn}

As in the classical theory, for $a_i \in  \tT_{\mathcal A '},$ $i
\in I,$ we write $\mathcal A[a_i: i \in I]$ for the sub-triple of
$\mathcal A '$ generated by the $a_i$ (where $\tT_{\mathcal A[a_i: i
\in I]}$ is the set of monomials in the $a_i$ with coefficients in
$\tT$).

For $\mathcal B,\mathcal B ' \subseteq \mathcal A',$  we write
$\mathcal B \preceq \mathcal B'$, and say $\mathcal B'$
$\preceq$-generates $\mathcal B$, if for each $b\in \mathcal B$
there is $b' \in \mathcal B'$ such that $b \preceq b'.$

\begin{defn}
We say that $\mathcal A '$ is $\preceq$-\textbf{affine over}
    $\mathcal A $ if $\tT '\preceq \mathcal \tT[a_i: i \in I]$
    for $I$ finite. (In other words, taking $I = \{1, \dots, n\},$ we
    write $\mathcal A ' = \mathcal A [a_1 , \dots, a_n ]_\preceq$, where for any
    $a'\in \tT '$ there is $f(\la_1, \dots, \la_n)\in  \mathcal
    \tT[a_1, \dots, a_n]$ such that $ a' \preceq f(a_1, \dots, a_n).)$

    For the symmetrized case, we say that $\mathcal A '$ is $\preceq$-\textbf{affine over}
    $\mathcal A $ if $\mathcal A '\preceq \mathcal \tT[(a_i,\zero): i \in I]$
    for $I$ finite.
\end{defn}

%
\begin{rem}
For any extension $(\mathcal A', \tT_{\mathcal A '}, (-))$ of
$(\mathcal A, \tT_{\mathcal A }, (-)),$ and $a_i \in {\mathcal A},$
There is a natural homomorphism $\mathcal A[\la_i: i \in I] \to
\mathcal A'$ given by $\la _i \mapsto a_i$, whose kernel is prime if
and only if $\mathcal A[a_i: i \in I]$ is prime.
\end{rem}


\begin{rem} Given a prime system $(\mathcal A , \tT, (-), \preceq) $, we form
$\mathcal A [\Lambda]$, where $\Lambda = \{ \lambda_i :i \in I  \}$
and some subset $\Lambda' = \{ \lambda_i :i \in I' \}$, and we take
$S$ to be the submonoid of monomials in $\{\lambda_i :i \in I' \}$,
and $\tT' $ to be the submonoid of monomials in $\{\lambda_i :i \in
I'\}  $. Then $S ^{-1}\tT'$ is a group, and we have the prime system
$S ^{-1}\Lambda = (S ^{-1}\mathcal A  [\Lambda], S
^{-1}\tT,(-),\preceq )$. In this way we ``expand'' $\tT$ to $S
^{-1}\tT'$ and lower the number of indeterminates under
consideration.
\end{rem}

\begin{lem}\label{algaff} If   $\mathcal A ''$ is  $\preceq$-affine over  $\mathcal A' $ and $\mathcal A '$ is  $\preceq$-affine over  $\mathcal A
$, then   $\mathcal A ''$ is  $\preceq$-affine over  $\mathcal A $.
\end{lem}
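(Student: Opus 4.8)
The plan is to mimic the classical proof that a composite of affine algebras is affine, carrying out a single substitution of the intermediate generators into the outer ones. First I would unwind the hypotheses: since $\mathcal A'$ is $\preceq$-affine over $\mathcal A$, fix finitely many $a_1,\dots,a_n\in\tT_{\mathcal A'}$ with $\tT'\preceq\tT_{\mathcal A[a_1,\dots,a_n]}$; since $\mathcal A''$ is $\preceq$-affine over $\mathcal A'$, fix finitely many $b_1,\dots,b_m\in\tT_{\mathcal A''}$ with $\tT''\preceq\tT_{\mathcal A'[b_1,\dots,b_m]}$. As $\mathcal A'$ sits inside $\mathcal A''$ as a sub-triple (Definition~\ref{ext1}), the $a_i$ also lie in $\tT_{\mathcal A''}$, so $a_1,\dots,a_n,b_1,\dots,b_m$ is a finite list of tangible elements of $\mathcal A''$, and the goal is to show $\tT''\preceq\tT_{\mathcal A[a_1,\dots,a_n,b_1,\dots,b_m]}$.

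The substitution step goes as follows. Take $a''\in\tT''$. The second hypothesis gives $c'\in\tT'$ and a monomial $M$ in the $b_j$ with $a''\preceq c'\,M(b_1,\dots,b_m)$. Applying the first hypothesis to $c'\in\tT'$ produces $c\in\tT$ and a monomial $N$ in the $a_i$ with $c'\preceq c\,N(a_1,\dots,a_n)$. Multiplying this last relation by the tangible element $M(b_1,\dots,b_m)$ on the side on which the coefficient convention for $\mathcal A'[b_1,\dots,b_m]$ places it --- using compatibility of $\preceq$ with the semiring multiplication, Definition~\ref{precedeq07}(iii) together with its one-sided analogue --- and chaining with the first relation by transitivity of the pre-order $\preceq$, I obtain
\[
a''\ \preceq\ c\,N(a_1,\dots,a_n)\,M(b_1,\dots,b_m).
\]
Because $\tT$ is a monoid, $c$ is genuinely tangible and $N(a)\,M(b)$ is a single monomial in $a_1,\dots,a_n,b_1,\dots,b_m$, so the right-hand side lies in $\tT_{\mathcal A[a_1,\dots,a_n,b_1,\dots,b_m]}$. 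Since $a''$ was arbitrary and $n+m<\infty$, this is exactly the statement that $\mathcal A''$ is $\preceq$-affine over $\mathcal A$. If one reads the definition of $\preceq$-affine with polynomials rather than monomials, the identical argument is run termwise: writing $a''\preceq\sum_k c_k'\,M_k(b)$ with $c_k'\in\tT'$ and $c_k'\preceq f_k(a)$ for polynomials $f_k$ over $\tT$, substitution gives $a''\preceq\sum_k f_k(a_1,\dots,a_n)\,M_k(b_1,\dots,b_m)$, a polynomial expression over $\tT$ in the pooled generators, now also invoking additivity, Definition~\ref{precedeq07}(iv).

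The mathematics here is routine; the only point demanding a little care is the bookkeeping in the non-commutative case --- keeping the $\tT$-coefficients on the side for which Definition~\ref{precedeq07}(iii) (or its mirror image) applies when multiplying the inner relation by the outer monomial, and checking that a $\tT$-monomial substituted for a $\tT'$-coefficient really concatenates with the outer monomial into an element of the sub-triple generated by the pooled generators. For the commutative systems that drive the geometry in this section both points are immediate, so one may simply work there.
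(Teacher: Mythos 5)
Your argument is correct and rests on the same substitution idea as the paper, but the two proofs differ in which pieces of the outer expression get replaced, and yours is actually the more careful of the two. The paper writes $a'' \preceq f(a_1', \dots, a_m')$ with $f$ having coefficients in $\tT'$, substitutes $a_j' \preceq g_j(a_1,\dots,a_n)$ for the intermediate generators, and stops at $a'' \preceq f(g_1(a), \dots, g_m(a))$. As written, the coefficients of $f$ in that last expression still lie in $\tT'$ rather than $\tT$, so a second round of substitution bounding each $\tT'$-coefficient by a $\tT$-polynomial in the $a_i$ is formally still required. You sidestep this entirely: you keep the $b_j$ in the generating set, pool them with the $a_i$, and substitute only for the $\tT'$-coefficients $c_k'$ of the outer polynomial, landing directly in $\tT[a_1,\dots,a_n,b_1,\dots,b_m]$ with a finite generating set. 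This fills the small gap in the paper's one-line proof. Two minor cosmetic points: your opening reading of the definition of $\preceq$-affine (bounding a tangible $a''$ by a single $\tT$-monomial) is stronger than what the paper actually says, which allows a $\tT$-polynomial; you notice and correct this yourself in the last paragraph, so no harm done. Also, your stated plan (``substitution of the intermediate generators into the outer ones'') does not quite describe what you then execute --- you substitute for the \emph{coefficients} and retain the intermediate generators --- but the argument you actually carry out is sound.
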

\begin{proof} Write $\mathcal A '' =  \mathcal A '[a_1', \dots, a_m']_\preceq$ and
$\mathcal A ' = \mathcal A [a_1 , \dots, a_n ]_\preceq$. Then
writing $a'' \preceq f(a_1', \dots, a_n')$ and $   a_j'\preceq
g_j(a_1 , \dots, a_n )$, we have $ a'' \preceq f(g_1(a_1 , \dots,
a_n ), \dots, g_m(a_1 , \dots, a_n ) ).$
\end{proof}
\subsubsection{Weak nullstellensatz}\label{weakn}$ $

Having introduced affine systems, we would like to develop
techniques to analyze them, and present some results related to the Nullstellensatz. To this end, we  need an observation about spanning sets. \begin{defn}
 Elements $\{v_i: i \in I\}$ of an $\mathcal A $-module $\mathcal M $
  $\preceq$-\textbf{span}
 a submodule $\mathcal N$ if   $\mathcal N \preceq \sum \tT v_i.$

 Elements $\{v_i: i \in I\}$
  are $\preceq$-\textbf{independent over}
 a submodule $\mathcal N$ if $\sum b_i v_i \in \mathcal N_\Null$ implies each $b_i
 \in \mathcal A_\Null$.

 A $\preceq$-\textbf{base} is a  $\preceq$-independent
 $\preceq$-spanning set.

 A  \textbf{symmetric base} is a $\preceq$-base for the symmetrized
 module triple.
 \end{defn}

Even though dependence is not transitive for modules over semirings,
we do have the following result.

\begin{lem}\label{key111} If the
extension  $(\mathcal A'', \tT_\mathcal A '', (-))$ of $(\mathcal
A', \tT_{\mathcal A'} , (-)) $ has symmetric base $b''_1, \dots,
b''_{m''}$ and the extension $(\mathcal A', \tT_\mathcal A ', (-))$
of $(\mathcal A, \tT_\mathcal A , (-)) $  has symmetric base $b'_1,
\dots, b'_{m'}$, then  the extension $(\mathcal A'', \tT_\mathcal A
'', (-))$ over $(\mathcal A, \tT_\mathcal A , (-))  $ has symmetric
base   $\{ b_i' b_j'': 1 \le i \le m', 1\le j \le m''\}$.
\end{lem}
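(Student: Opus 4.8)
The plan is to imitate the classical ``tower law'' for bases, carried out inside the symmetrized triples $\widehat{\mathcal A}$, $\widehat{\mathcal A'}$, $\widehat{\mathcal A''}$ of Definition~\ref{symr1} (and Definition~\ref{symr} for the semiring multiplication), where a symmetric base is by definition a $\preceq$-base, the relevant null sets are the $\circ$-sets, and the scalar action and ring multiplication are given by the twist. First I would record that the composite $\mathcal A \hookrightarrow \mathcal A'\hookrightarrow \mathcal A''$ is again an extension — a composite of homomorphisms with trivial congruence kernel has trivial congruence kernel — so that $\{b_i' b_j''\}$ is a legitimate candidate set in $\widehat{\mathcal A''}$; then one verifies the two halves of ``$\preceq$-base'' separately, in the spirit of Lemma~\ref{algaff}.

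For $\preceq$-spanning: given $c''\in \widehat{\mathcal A''}$, use that $\{b_j''\}$ $\preceq$-spans $\widehat{\mathcal A''}$ over $\mathcal A'$ to write $c''\preceq \sum_j t_j'' b_j''$ with each $t_j''$ tangible in $\mathcal A'$, then use that $\{b_i'\}$ $\preceq$-spans $\widehat{\mathcal A'}$ over $\mathcal A$ to write $t_j''\preceq \sum_i t_{ij}' b_i'$ with each $t_{ij}'$ tangible in $\mathcal A$. Substituting, and using transitivity of $\preceq$ together with its compatibility with $+$ and with multiplication (Definition~\ref{precedeq07}(iii),(iv)) — the latter extended from tangible multipliers to arbitrary ones by writing every element as a finite sum of tangibles, which is legitimate since the tangibles generate additively in a triple — gives $c''\preceq \sum_{i,j} t_{ij}'(b_i' b_j'')$, where associativity of the twist product is used to pass from $(t_{ij}' b_i')b_j''$ to $t_{ij}'(b_i' b_j'')$. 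This is exactly $\preceq$-spanning of $\{b_i' b_j''\}$ over $\mathcal A$.

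For $\preceq$-independence: suppose $\sum_{i,j} c_{ij}(b_i' b_j'')$ lies in $\widehat{\mathcal A''}_\Null$, with scalars $c_{ij}\in \widehat{\mathcal A}$. Regroup (associativity of the twist product again) as $\sum_j d_j b_j''$ with $d_j := \sum_i c_{ij} b_i'$, and observe that each $d_j$ genuinely lies in $\widehat{\mathcal A'}$. Since $\{b_j''\}$ is $\preceq$-independent over $\mathcal A'$, we get $d_j\in \widehat{\mathcal A'}_\Null$ for every $j$, i.e.\ $\sum_i c_{ij} b_i'\in \widehat{\mathcal A'}_\Null$; applying $\preceq$-independence of $\{b_i'\}$ over $\mathcal A$ to each of these yields $c_{ij}\in \widehat{\mathcal A}_\Null$ for all $i,j$. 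Together with the previous paragraph this shows $\{b_i' b_j''\}$ is a symmetric base of $\mathcal A''$ over $\mathcal A$.

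I expect the bookkeeping, rather than any conceptual difficulty, to be the main obstacle: one must check that associativity and distributivity really do hold for the twist product on these composite symmetrized semirings (this uses that each extension is an inclusion of semirings, so the twist products are compatible), and that the coefficients $d_j$ land in $\widehat{\mathcal A'}$ and not merely in $\widehat{\mathcal A''}$, which is what makes ``independence over $\mathcal A'$'' applicable. It is worth emphasizing that the argument survives despite dependence not being transitive over semirings: the independence step is a single algebraic regrouping followed by two direct invocations of the hypotheses, not an iterated chain of ``dependence implies dependence,'' and the spanning step composes only finitely many surpassing relations, which is harmless since $\preceq$ is transitive.
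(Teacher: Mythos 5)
Your proof is correct and takes essentially the same route as the paper: for $\preceq$-spanning, telescope the two surpassing relations $c''\preceq\sum_j t_j''b_j''$ and $t_j''\preceq\sum_i t_{ij}'b_i'$; for $\preceq$-independence, regroup $\sum_{i,j}c_{ij}b_i'b_j''$ as $\sum_j\bigl(\sum_i c_{ij}b_i'\bigr)b_j''$ and apply the two independence hypotheses in turn. The extra care you take — noting that the intermediate coefficients $d_j$ land in $\widehat{\mathcal A'}$ and that the argument avoids iterated dependence chains — is sound bookkeeping that the paper leaves implicit, not a departure from its argument.
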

\begin{proof} Any $b'' \in \mathcal A''$ satisfies $b'' \preceq
\sum_j a_j' b_j'',$ and $a_j' \preceq \sum_i a_{i,j}  b_i' ,$
implying $b'' \preceq \sum_{i,j} a_{i,j}  b_i' b_j'',$ proving
$\preceq$-spanning. For $\preceq$-independence, suppose
$$\sum_{i,j} a_{i,j} b_i' b_j'' =  \sum_j(\sum _i a_{i,j} b_i')
b_j'' \in \mathcal A''_\Null.$$ Then each $(\sum _i a_{i,j} b_i')\in
\mathcal A'_\Null,$ implying each $ a_{i,j}\in  A _\Null.$
\end{proof}

\begin{lem}\label{5.14}  Suppose $\mathcal A'$ is a  module over a
system $\mathcal A $, and has a $\preceq$-base $B = \{ v_i : i \in I
\}$ over $\tT.$ If $(\mathcal H, \tT_{\mathcal H},(-))$ is a
sub-semiring system of $\mathcal A'$ containing $\mathcal A'_\Null,$
over which $B$
 still   $\preceq$-spans $\mathcal A'$ over $\tT$, then $\mathcal A'\preceq \mathcal H.$\end{lem}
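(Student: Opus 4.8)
The plan is to unwind the relevant definitions and then apply transitivity of $\preceq$; no use of the $\preceq$-independence half of ``base'' is needed, that clause being present only because the lemma is phrased for bases, in keeping with its later application in the Artin--Tate argument. First I would fix an arbitrary element $a'\in\mathcal A'$. Since $B=\{v_i:i\in I\}$ $\preceq$-spans $\mathcal A'$ over $\tT$, by definition $\mathcal A'\preceq\sum_i\tT v_i$, so there are finitely many indices $i_1,\dots,i_m\in I$ and tangible coefficients $c_1,\dots,c_m\in\tT$ with $a'\preceq\sum_{k=1}^m c_k v_{i_k}$.

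Next I would invoke the standing hypothesis that $B$ still $\preceq$-spans $\mathcal A'$ over $\tT$ \emph{inside} $\mathcal H$: this says that $B\subseteq\mathcal H$ and that the $\tT$-action restricts to $\mathcal H$, equivalently $\sum_i\tT v_i\subseteq\mathcal H$. Hence each $c_k v_{i_k}\in\mathcal H$, and, $\mathcal H$ being a sub-semiring of $\mathcal A'$ and so closed under addition, the element $h:=\sum_{k=1}^m c_k v_{i_k}$ lies in $\mathcal H$. Thus $a'\preceq h$ with $h\in\mathcal H$. Since the surpassing relation $\preceq$ on $\mathcal A'$ is transitive (Definition~\ref{precedeq07}), and since $a'\in\mathcal A'$ was arbitrary, this shows that every element of $\mathcal A'$ is $\preceq$ some element of $\mathcal H$, i.e., $\mathcal A'\preceq\mathcal H$, which is the assertion.

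The one step needing care --- and hence the (admittedly minor) main obstacle --- is the verification that the $\preceq$-spanning combination $\sum_k c_k v_{i_k}$ genuinely lies in $\mathcal H$; this is precisely the content packaged into the hypothesis ``$\mathcal H\dots$ over which $B$ still $\preceq$-spans $\mathcal A'$ over $\tT$'', namely $B\subseteq\mathcal H$ together with closure of $\mathcal H$ under the $\tT$-action and under addition. Should one instead read that hypothesis more weakly --- as asserting only that each $a'$ is $\preceq$ some element of $\mathcal H$ up to a quasi-zero correction --- the residual quasi-zero term is absorbed using $\mathcal A'_\Null\subseteq\mathcal H$ and the defining properties of $\preceq$ from Definition~\ref{precedeq07}; either way the argument terminates in one step.
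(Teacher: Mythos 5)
Your proof rests on a reading of the hypothesis that is not the intended one, and under the intended reading the argument does not go through. You take ``over which $B$ still $\preceq$-spans $\mathcal A'$'' to mean $B\subseteq\mathcal H$ together with $\sum_i\tT v_i\subseteq\mathcal H$, from which the conclusion is indeed immediate. But that reading makes the lemma a tautology and, tellingly, leaves two of its stated hypotheses doing no work at all: the $\preceq$-\emph{independence} half of ``$\preceq$-base'' is never invoked, and neither is $\mathcal A'_\Null\subseteq\mathcal H$. The paper's proof uses both. What is actually meant --- and what the application inside Theorem~\ref{thma} requires, where $\mathcal H$ plays the role of the finitely generated subalgebra $H=\mathcal A[\a_{ijk},\a_{uk}]$ and the $v_i$ are a $\preceq$-base of $\mathcal A'$ over the larger subsystem $\mathcal K$ --- is that $B$ $\preceq$-spans $\mathcal A'$ \emph{with coefficients drawn from $\mathcal H$}: for each $w\in\mathcal A'$ one has $w v_1\preceq\sum_i h_i v_i$ with $h_i\in\mathcal H$. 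The $v_i$ themselves need not lie in $\mathcal H$, so $\sum_i h_i v_i$ is generally not an element of $\mathcal H$, and you cannot simply ``land in $\mathcal H$ by closure.''

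The actual argument is: from $w v_1\preceq\sum_i h_i v_i$ one rearranges to get $(h_1(-)w)v_1+\sum_{i\ge 2}h_i v_i\in\mathcal A'_\Null$, then uses the $\preceq$-\emph{independence} of the $v_i$ to conclude that each coefficient, in particular $h_1(-)w$, lies in $\mathcal A'_\Null$; this gives $w\preceq w+(h_1(-)w)=h_1+w^\circ$, which does lie in $\mathcal H$ precisely because $h_1\in\mathcal H$ and $w^\circ\in\mathcal A'_\Null\subseteq\mathcal H$. Both hypotheses you dismissed are load-bearing: independence is what lets you extract a \emph{single} coefficient $h_1$ as the surpassing element, and $\mathcal A'_\Null\subseteq\mathcal H$ is what absorbs the unavoidable quasi-zero correction $w^\circ$. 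Your ``fallback'' reading in the final paragraph is likewise not the intended hypothesis; it already presupposes the conclusion. So the proposal, while internally consistent under its own reading, does not prove the lemma as the paper means it, and the assertion that independence is unnecessary is incorrect.
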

\begin{proof}
For any element $w$ of $\mathcal A'$, we can write $wv_1
  \preceq \sum h_iv_i$ for suitable $h_i \in \mathcal H \subseteq \mathcal A'$.
Hence, we have that
$$ (h_1(-)w)v_1 + h_2v_2 + \dots + h_nv_n \in \mathcal A'_{\Null}.$$

It follows from linear dependence of the $v_i$ over $\mathcal A'$
that each coefficient must be  in $\mathcal A'_{\Null};$ in
particular, $w \preceq w +(h_1(-)w) = h_1 + w^\circ,$ proving
$\mathcal A'\preceq \mathcal H.$
\end{proof}
%

\begin{thm} [Artin-Tate lemma, $\preceq$-version]\label{thma}   Suppose
$\mathcal A'=\mathcal A[a_1,a_2,\dots ,a_n]$ is a  $\preceq$-affine
system over~$\mathcal A$, and $\mathcal K$ a subsystem of $\mathcal
A',$ with $\mathcal A'$ having a $\preceq$-base $v_1 =1, \ \dots,
v_d$ of $\mathcal A'$ over $\mathcal K$. Then $\mathcal K$ is
$\preceq$-affine over~$\mathcal A$.\end{thm}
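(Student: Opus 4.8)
The plan is to run the classical Artin--Tate argument, the one genuinely new point being that, since $v_1=1,\dots,v_d$ is a $\preceq$-\emph{base} of $\mathcal A'$ over $\mathcal K$ (not merely a $\preceq$-spanning set), we can bypass any Noetherian hypothesis: the base will transport ``coordinates, unique up to quasi-zeros'' from $\mathcal K$ down to a subsystem $\preceq$-generated over $\mathcal A$ by finitely many structure constants. First I would record those structure constants. Since $\mathcal A'=\mathcal A[a_1,\dots,a_n]_\preceq$ and $\{v_\ell\}$ $\preceq$-spans $\mathcal A'$ over $\mathcal K$, choose $k_{ij}\in\tT_{\mathcal K}$ with $a_i\preceq\sum_j k_{ij}v_j$, and $k_{jj'\ell}\in\tT_{\mathcal K}$ with $v_jv_{j'}\preceq\sum_\ell k_{jj'\ell}v_\ell$, and put $\mathcal K_0:=\mathcal A[\,\{k_{ij}\}\cup\{k_{jj'\ell}\}\,]_\preceq$, a subsystem of $\mathcal K$ which is $\preceq$-affine over $\mathcal A$ by construction.

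Next I would verify that $\{v_\ell\}$ still $\preceq$-spans $\mathcal A'$ over $\mathcal K_0$. Given $w\in\mathcal A'$, pick $f\in\tT[\la_1,\dots,\la_n]$ with $w\preceq f(a_1,\dots,a_n)$; substituting $a_i\preceq\sum_j k_{ij}v_j$ and repeatedly reducing products via $v_jv_{j'}\preceq\sum_\ell k_{jj'\ell}v_\ell$, an induction on total degree --- using conditions (iii) and (iv) of Definition~\ref{precedeq07}, the multiplicative compatibility $a_1b_1\preceq a_2b_2$ of module systems, and transitivity of $\preceq$ --- yields $w\preceq\sum_\ell c_\ell v_\ell$ with each $c_\ell\in\mathcal K_0$. (This is the one step requiring a routine but slightly tedious bookkeeping computation.) Since $\mathcal K_0\subseteq\mathcal K$ and $(\mathcal K_0)_\Null=\mathcal K_0\cap\mathcal K_\Null$, $\{v_\ell\}$ remains $\preceq$-independent over $\mathcal K_0$, so $v_1,\dots,v_d$ is in fact a $\preceq$-base of $\mathcal A'$ over $\mathcal K_0$ as well.

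Finally I would carry out the descent, which is essentially the argument of \lemref{5.14}. Fix $w\in\tT_{\mathcal K}$. Using $v_1=1$ and the previous step, $w=wv_1\preceq\sum_\ell c_\ell v_\ell$ with $c_\ell\in\mathcal K_0$; adding $(-)(wv_1)$ and invoking condition (iv) of Definition~\ref{precedeq07} gives
\[
(c_1(-)w)v_1+c_2v_2+\dots+c_dv_d\in\mathcal A'_\Null ,
\]
where every coefficient $c_1(-)w,c_2,\dots,c_d$ lies in $\mathcal K$ (as $\mathcal K_0\subseteq\mathcal K$ and $\mathcal K$ is closed under $+$ and $(-)$). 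By $\preceq$-independence of $\{v_\ell\}$ over $\mathcal K$, each coefficient lies in $\mathcal K_\Null$; in particular $c_1(-)w\succeq\zero$, so $w\preceq c_1+w^\circ$ with $c_1\in\mathcal K_0$. The remaining point is that $\tT\cap\mathcal A_\Null=\emptyset$ (together with the $\tT$-surpassing condition, and $(-)$-bipotence where it is available) forces $c_1+w^\circ$ to lie in $\mathcal K_0$ already --- $c_1(-)w\succeq\zero$ prevents $w$ from strictly ``dominating'' $c_1$, so $c_1+w^\circ\in\{c_1,c_1^\circ\}\subseteq\mathcal K_0$ --- whence $\mathcal K\preceq\mathcal K_0$; since $\mathcal K_0$ is $\preceq$-affine over $\mathcal A$, transitivity of $\preceq$ (in the spirit of \lemref{algaff}) shows $\mathcal K$ is $\preceq$-affine over $\mathcal A$. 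I expect this last absorption of the quasi-zero $w^\circ$ to be the real obstacle: in a completely general system one must instead apply \lemref{5.14} with an auxiliary subsystem containing $\mathcal A'_\Null$, or read ``$\preceq$-affine'' modulo quasi-zeros, and it is precisely here that the asymmetry of the surpassing relation, rather than plain ring theory, is doing the work.
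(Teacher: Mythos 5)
Your proof follows the same route as the paper's: the structure constants $k_{ij}, k_{jj'\ell}$ are the paper's $\alpha_{uk}, \alpha_{ijk}$, your $\mathcal K_0$ is the paper's $H$, and your second step (that $\{v_\ell\}$ still $\preceq$-spans $\mathcal A'$ over $\mathcal K_0$) is exactly what the paper encodes in the multiplicative closure of $\mathcal A_0 := \{v \in \mathcal A' : v \preceq \sum_i Hv_i\}$. Your third step is the content of Lemma~\ref{5.14}, applied with $\mathcal H = H$.

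Where you are more careful than the paper: the paper's proof ends abruptly with ``implying $\mathcal A \preceq \mathcal A_0$, which is obviously $\preceq$-affine,'' which (besides an apparent typo $\mathcal A$ for $\mathcal A'$) only asserts $\preceq$-affineness of $\mathcal A_0$, not of $\mathcal K$; the step from there to $\mathcal K$ being $\preceq$-affine is not spelled out, and that is precisely the nontrivial point of Artin--Tate. The obstacle you flag --- that the inequality $w \preceq c_1 + w^\circ$ lands in $\mathcal K_0$ only if the quasi-zero $w^\circ$ can be absorbed --- is the real one, and the $(-)$-bipotence workaround you sketch is not available in general (and in any case $(-)$-bipotence as defined concerns $a+b$ with $a,b\in\tT$, whereas $w^\circ\notin\tT$). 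The hypothesis built into Lemma~\ref{5.14} is that the receiving sub-semiring contains $\mathcal A'_\Null$, which is exactly what licenses writing $h_1+w^\circ \in \mathcal H$; your instinct to ``apply \lemref{5.14} with an auxiliary subsystem containing $\mathcal A'_\Null$, or read $\preceq$-affine modulo quasi-zeros'' is therefore the correct fix, and it is what the paper's own lemma is silently assuming. In short: you reconstructed the paper's argument and correctly located the place where the paper's write-up itself is elliptical.
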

\begin{proof}
There are suitable $\a_{ijk}, \a_{iv} \in \mathcal K$ such that
\begin{equation}\label{5.4} v_i v_j \preceq \sum _{k=1}^d \a_{ijk}v_k, \qquad a_u \preceq \sum _{k=1}^d\a_{uk}v_k, \quad 1\le i,j \le d,\
1\le u \le n. \end{equation} Let $H = \mathcal A[\a_{ijk}, \a_{uk} :
1 \le i,j,k \le d, \ 1 \le u \le n]\subseteq \mathcal K$, and
$\mathcal A_0:= \{v \in \mathcal A': v \preceq  \sum _{i=1}^d
Hv_i\}.$ The relations \eqref{5.4} imply that $\mathcal A_0$ is
closed under multiplication, and thus is a subalgebra of $\mathcal
A$ containing $a_1, \dots, a_n,$ implying $\mathcal A\preceq
\mathcal A_0,$ which is obviously $\preceq$-affine.\end{proof}

These results can all be viewed in terms of the symmetrized triple.
To continue, we need a workable definition of ``algebraic.''
Presumably an algebraic element $b$ should be a symmetric root of a
tangible pair $(f,g)$ of polynomials. This is tricky since we need
to identify the tangible polynomials, since examining its
coefficients leads to the difficulty that $(\la +\one)(\la  (-)\one)
= \la ^2 + \one ^\circ \la (-)\one.$ To exclude such examples leads
us to functional considerations, when $\tT$ is infinite.

Suppose we are given an extension $(\mathcal A', \tT_\mathcal A ',
(-))$ of $(\mathcal A, \tT_\mathcal A , (-)) $.

\begin{defn} A polynomial $f \in \mathcal A[\la_1, \dots, a_m]$ is
\textbf{functionally tangible} (with respect to $\tT'$) if $f(a'_1,
\dots, a'_m)\in \tT$ for almost all $a'_1, \dots, a'_m$  in $\tT'.$

In the symmetric situation, a pair of polynomials $(f,g)\in
\widehat{ \mathcal A[\la_1, \dots, \la_m]}$ is \textbf{symmetrically
functionally tangible} if for each $b \in \tT$ there are only
finitely many $ b ' \in \tT$ for which $f(b )= g(b')$.
\end{defn}

\begin{rem} Suppose $\tT$ is  infinite, and $(\mathcal A , \tT , (-))$ is a prime triple. Then
it is enough to check when   $b' \in \tT$ (so that $\tT'$ is
irrelevant), and the set of symmetrically functionally tangible
pairs is a submonoid of $\mathcal A[\la_1, \dots, \la_m]$.\end{rem}

\begin{defn}\begin{enumerate} \eroman
\item
An element $a \in \tT' := \tT_{\mathcal A '}$ is
\textbf{symmetrically algebraic} if there is a symmetrically
functionally tangible pair $(f,g) \in \widehat{ \mathcal A[\la_1]}$
for which $f(a) = g(a)$.

\item Given an   extension $(\mathcal A', \tT_\mathcal A ', (-))$ of
$(\mathcal A, \tT_\mathcal A , (-)) $, and $a \in  \tT_\mathcal A
',$ we define the $a$-\textbf{denominator set} $S_a = \{g(a): g$ is
a symmetrically functionally tangible polynomial   with $g(a) \in
\tT\}.$

\item An $\preceq$-affine extension $(\mathcal A', \tT_\mathcal A ', (-))$ of
$(\mathcal A, \tT_\mathcal A , (-)) $ is \textbf{fractionally
closed} over $a$ if every element of $S_a$ is invertible in
$\tT_\mathcal A '$.
\end{enumerate}\end{defn}

\begin{lem} If $a$ is symmetrically algebraic, i.e., $f(a) = g(a)$,
and $t$ is the largest number such that at least one of the
coefficients of $\la^n$ in $f$ and $g$ is tangible, then $\one, a,
\dots, a^{t-1}$ is a symmetric base of $\mathcal A[a]$.
\end{lem}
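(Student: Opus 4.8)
The plan is to carry out the familiar ``division with remainder'' argument inside the symmetrized module triple $\widehat{\mathcal A[a]}=\mathcal A[a]\times\mathcal A[a]$ over $\widehat{\tT}$, a symmetric base of $\mathcal A[a]$ being by definition a $\preceq$-base of $\widehat{\mathcal A[a]}$. Writing $f=\sum_i b_i\la^i$, $g=\sum_i c_i\la^i$ (which we may take with disjoint supports and coefficients in $\tT\cup\{\zero\}$), the hypothesis $f(a)=g(a)$ says, by Example~\ref{symadj}, that
\[
\sum_i(b_i,c_i)\ctw(a^i,\zero)\ \in\ \widehat{\mathcal A[a]}^\circ .
\]
Since one of $b_t,c_t$ is tangible and the supports are disjoint, the leading symmetrized coefficient $(b_t,c_t)$ lies in $\widehat{\tT}$; applying the switch map $(-)$ (which replaces $(f,g)$ by $(g,f)$ and preserves the displayed relation) we may assume $c_t=\zero$ and $b_t\in\tT$, so $b_t$ is invertible because $\tT$ is a group.

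First I would establish $\preceq$-spanning. Multiplying the displayed relation by $(b_t^{-1},\zero)$ --- which again lands in $\widehat{\mathcal A[a]}^\circ$, since $d\ctw x^\circ=(d\ctw x)^\circ$ --- yields $(a^t,\zero)+P'\in\widehat{\mathcal A[a]}^\circ$, where $P'$ lies in the submodule $M_{<t}$ generated by $(\one,\zero),(a,\zero),\dots,(a^{t-1},\zero)$ (using that $\widehat{\mathcal A}$ is additively generated by $\widehat{\tT}$). The common value $z$ of the two components of this quasi-zero equals $b_t^{-1}g(a)$, hence is a polynomial in $a$ of degree $<t$; so, adding $(-)P'$ to both sides and absorbing the resulting quasi-zero by Definition~\ref{precedeq07}(i), we get $(a^t,\zero)\preceq v$ for some $v\in M_{<t}$. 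Multiplying through by $(a,\zero)$ and re-reducing the $(a^t,\zero)$ term that appears, an induction on $m$ gives $(a^m,\zero)\preceq(\text{an element of }M_{<t})$ for all $m\ge t$; since $\widehat{\mathcal A[a]}$ is generated over $\widehat{\tT}$ by the monomials $(ba^m,\zero),(\zero,ba^m)$, this shows $\widehat{\mathcal A[a]}\preceq\sum_{i=0}^{t-1}\widehat{\tT}(a^i,\zero)$.

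For $\preceq$-independence --- which I expect to be the main obstacle --- suppose $\sum_{i=0}^{t-1}(d_i^+,d_i^-)\ctw(a^i,\zero)\in\widehat{\mathcal A[a]}^\circ$ with the $(d_i^+,d_i^-)\in\widehat{\tT}$ not all $\zero$. This produces a disjoint-support pair $(\tilde p,\tilde q)=(\sum_i d_i^+\la^i,\sum_i d_i^-\la^i)$ in $\widehat{\tT[\la]}$ of degree $\le t-1$ with $\tilde p(a)=\tilde q(a)$ --- a witness that $a$ is symmetrically algebraic whose top tangible degree is strictly below $t$. The crucial point is that such a lower-degree pair is itself symmetrically functionally tangible: for each $b\in\tT$, the polynomial $\tilde q(\la)(-)\tilde p(b)$ has degree $\le t-1$ and, over the commutative prime triple $\mathcal A$ (with $\tT$ infinite), has only finitely many $\circ$-roots by Theorem~\ref{polyroot}, so only finitely many $b'\in\tT$ satisfy $\tilde q(b')=\tilde p(b)$. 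This contradicts the maximality of $t$; hence no nontrivial such relation exists, $\one,a,\dots,a^{t-1}$ is $\preceq$-independent, and together with the previous paragraph it is a symmetric base of $\mathcal A[a]$. The delicate point --- genuinely needing the standing primeness and infiniteness hypotheses --- is exactly this functional-tangibility verification (together with its degenerate sub-case where $\tilde q$ or $\tilde p$ vanishes identically); the spanning step is routine once one observes that the quasi-zero's common value is already witnessed in low degree by $g$.
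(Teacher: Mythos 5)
Your proof follows the same two-step plan (spanning, then independence) as the paper's, but the paper's own proof is a three-sentence sketch and yours supplies essentially all the missing detail. The spanning step is a correct elaboration: you normalize by $(b_t^{-1},\zero)$, read off from the quasi-zero relation that $(a^t,\zero)$ is surpassed by an element of the degree-$<t$ span, and induct --- exactly the ``$a^t$ is dependent \dots, and continuing inductively'' of the paper.

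The genuinely delicate step, as you yourself flag, is independence, and there your argument has a gap. You apply Theorem~\ref{polyroot} to the polynomial $\tilde q(\la)(-)\tilde p(b)$, but this polynomial need not lie in $\tT[\la]$: its constant coefficient is $d_0^-(-)\tilde p(b)$, and $\tilde p(b)=\sum_j d_j^+b^j$ is in general an element of $\mathcal A$, not of $\tT$ (a sum of tangibles need not be tangible). Theorem~\ref{polyroot}, both as stated and as proved (its proof begins ``Write $f=\sum b_i\la^i$ for $b_i\in\tT$''), requires all coefficients to lie in $\tT$, so it does not apply directly; the symmetrized-root variant in Example~\ref{symadj} carries the same restriction to $\widehat{\tT[\la]}$. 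Some extra argument --- for instance separating out the $b$ with $\tilde p(b)\notin\tT$ and treating them differently --- is needed to close this, and you do not supply it. This is not a nitpick: it is precisely the point at which a naive ``lower-degree relation would give a smaller witness'' argument can fail, and is the real content hiding behind the paper's one-clause ``independent, by choice of $t$.''

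Two smaller points. ``Contradicts the maximality of $t$'' is misphrased: $t$ is defined by the \emph{given} pair $(f,g)$, not as a minimum over witnesses; what you would actually be contradicting is minimality of $t$ over all symmetrically functionally tangible witnesses, a reading the lemma's wording (and the paper's phrase) only implicitly adopts. And both the normalization by $(b_t^{-1},\zero)$ and the opening reduction to a disjoint-support pair with coefficients in $\tT\cup\{\zero\}$ rely on unstated hypotheses --- that $\tT$ is a group, and that $(f,g)$ can be so preprocessed --- which deserve at least a sentence.
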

\begin{proof} $\one, a,
\dots, a^{t-1}$ are independent, by choice of $t$. But $a^t$ is
dependent on  $\one, a, \dots, a^{n-1}$, and continuing inductively,
each $a^m$ is dependent on  $\one, a, \dots, a^{t-1}$.
\end{proof}
\begin{rem} $S_a$ is a monoid, so we can localize, and $(S_a^{-1}\mathcal
A', S_a^{-1}\tT_\mathcal A ', (-))$ is fractionally closed over~$a$.
\end{rem}

\begin{lem}\label{key1} Suppose $\tT$ has the property that for any
$a \in \tT,$ the set $\{ a + c: c \in \tT\}$ is infinite, and
$(\mathcal A', \tT', (-))$ is a symmetrized triple, fractionally
closed over~$a$, with $S\supseteq \tT$ a subgroup of $\mathcal A'$,
and $ a'\in \tT'.$ If the   module of fractions $Q_S(\mathcal
A[a'])$ of $\mathcal A[a']$ is $\preceq$-affine over~$\mathcal A$,
then $Q_S(\mathcal A[a'])=\mathcal A[a']$.\end{lem}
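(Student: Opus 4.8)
The statement is the systemic counterpart of the classical fact that the rational function field $F(\lambda)$ is never finitely generated as an $F$-algebra; read contrapositively, it says that if the localization is already $\preceq$-affine then $S$ introduced no essentially new denominators. The plan is to imitate the classical ``common denominator'' argument, with the surpassing relation and Theorem~\ref{polyroot} playing the role that unique factorization plays classically.

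First I would unwind the hypothesis. Writing $Q_S(\mathcal A[a']) = \mathcal A[w_1,\dots,w_r]_\preceq$, and using that $\mathcal A[a']$ is $\preceq$-spanned by the monomials $(a')^k$ while $S$ is the localizing submonoid, I may take each generator to satisfy $w_j \preceq s_j^{-1} h_j$ with $h_j \in \mathcal A[a']$ and $s_j \in S$. By Remark~\ref{5.13} the finitely many $s_j$ admit a common denominator $s := s_1\cdots s_r \in S$. Then, using compatibility of $\preceq$ with $+$ and with multiplication (Definition~\ref{precedeq07}(iii),(iv)) together with the fact that fractions whose denominator is a power of $s$ are closed under the $\mathcal A$-algebra operations, an induction on the length of an expression shows that every $w \in \mathcal A[w_1,\dots,w_r]$ satisfies $w \preceq s^{-N} h$ for a suitable $N$ and $h \in \mathcal A[a']$. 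Thus every element of $Q_S(\mathcal A[a'])$ has its denominator bounded by a fixed power of the single element $s \in S$.

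Next I would show this uniform bound forces $Q_S(\mathcal A[a']) = \mathcal A[a']$, arguing by contradiction: if the inclusion were strict, then $S$ would contain a non-unit denominator $d$, a genuine polynomial expression in $a'$ of positive degree, and since $S$ is a subgroup of $\mathcal A'$ we would have $d^{-1} \in Q_S(\mathcal A[a'])$, hence $d^{-1} \preceq s^{-N} h$, that is, $s^{N} \preceq d\,h$ inside $\mathcal A[a']$. Now the hypothesis that $\{a+c : c\in\tT\}$ is infinite guarantees that $a'$ carries infinitely many distinct attached tangible quantities (the systemic surrogate for the linear polynomials $\lambda - c$), while Theorem~\ref{polyroot} bounds by $\deg s$ how many of them the fixed finite-degree denominator $s$ can absorb; fractional closedness over $a'$ makes the remaining ``coprime'' one an invertible element of $S$. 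Feeding that choice back into $s^{N} \preceq d\,h$ and reading the resulting relation as one among polynomial functions on $\tT$ would force infinitely many elements of $\tT$ to be $\circ$-roots of a single nonzero polynomial of finite degree, contradicting Theorem~\ref{polyroot}; the passage from $\preceq$-inequalities to this conclusion uses the $\tT$-surpassing axiom ($b^\circ \not\preceq a$ for $a\in\tT$) and $\tT_{\mathcal A}\cap\mathcal A^{\circ}=\emptyset$. Hence no such $d$ exists and $Q_S(\mathcal A[a']) = \mathcal A[a']$.

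The step I expect to be the main obstacle is precisely this ``coprime new denominator'' argument. Classically one simply picks an irreducible polynomial not dividing the common denominator; here there is no unique factorization and no notion of irreducible element, so the role of the infinitely many primes $\lambda - c$ must be carried entirely by the hypothesis $|\{a+c:c\in\tT\}|=\infty$, and ``divisibility'' must be replaced by a $\preceq$-relation whose failure is detected through the bounded-$\circ$-roots theorem rather than through an equation of polynomials. One must also check carefully that the putative new denominator genuinely lies in $S$ — this is exactly where ``fractionally closed over~$a'$'' and ``$S\supseteq\tT$ a subgroup of $\mathcal A'$'' are used — and that, since every manipulation only yields $\preceq$-inequalities, the final contradiction is drawn from the surpassing axioms and Theorem~\ref{polyroot}, never from an equality of elements of $\mathcal A[a']$.
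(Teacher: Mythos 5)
Your first paragraph matches the paper's opening move: writing the $\preceq$-affine generators of $Q_S(\mathcal A[a'])$ with a single common denominator (the paper takes it to be $g(a')$ with $g$ symmetrically algebraic, via Remark~\ref{5.13}) and observing that every element of the localization then has denominator a power of that one element.

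The step you flag as the main obstacle is a genuine gap, and the paper does not take the route you propose. You want to exhibit a single ``new coprime denominator'' $d \in S$, derive $s^N \preceq d\,h$, and then parlay that one-sided $\preceq$-relation into ``a single nonzero polynomial with infinitely many $\circ$-roots.'' None of these pieces is in place: there is no systemic notion of coprimality to make $d$ exist, and a family of $\preceq$-inequalities indexed by $c$ does not assemble into the single polynomial that Theorem~\ref{polyroot} requires. The paper's contradiction is drawn differently and more classically. Since $S \supseteq \tT$ and $\mathcal A'$ is fractionally closed over $a'$, for \emph{every} $c \in \tT$ the tangible value $g(a')+c$ lies in $S$ and is therefore invertible in $Q_S(\mathcal A[a'])$; writing $\frac{1}{g(a')+c}$ with denominator $g(a')^m$ gives the genuine equality
\[
f(a')\,(g(a')+c) = g(a')^m,
\]
which exhibits $a'$ as a symmetric root of $\bigl(f(\la)(g(\la)+c),\,g(\la)^m\bigr)$. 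The infinitude hypothesis supplies infinitely many such $c$, so infinitely many translates $g(\la)+c$ would each have to divide a fixed power of $g(\la)$ as functions, which fails by comparing factorizations unless $g$ is constant. The move you are missing is to leave the $\preceq$-layer once the common denominator is fixed and exploit the honest fraction equalities inside the localization, applying the translate-divides-a-power argument to $g(\la)+c$ directly rather than searching for a coprime witness to feed into Theorem~\ref{polyroot}.
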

\begin{proof} We may assume that $\mathcal A' = Q_S(\mathcal A[a'])$. Writing $\mathcal A' =
\mathcal A\left[\frac {f_1(a')}{g_1(a')}, \dots , \frac
{f_n(a')}{g_n(a')}\right]$, for $f_i,g_i \in \mathcal A'[\la ],$
$g_i$ symmetrically algebraic, note by Remark\ref{5.13}(ii) that we
may assume that all the denominators are equal, i.e.,
$$g_1(a') = g_2(a') = \dots = g_n(a'),$$ which we write as
$g(a').$ Any element of $\mathcal A'$ can be written with
denominator a power of $g(a')$, which means in particular, for any
$c \in \tT,$
$$\frac \one{g(a')+c} = \frac {f(a')}{g(a')^m}$$ for suitable $m$ and
suitable $f \in F[\la ].$ Thus, $f(a')(g(a')+c) = g(a')^m.$
Consequently, $a'$ is a symmetric root of $(f(\la)(g(\la)+c) ,
g(\la)^m). $  Hence $f(\la)(g(\la)+c) = g(\la)^m$ as functions, for
infinitely many $c\in \tT$, which is impossible for $g$ nonconstant
(by comparing factorizations). Hence $g$ is a constant, and
$Q_S(\mathcal A[a'])=\mathcal A[a']$.
\end{proof}

The following assertion  sometimes is called the ``weak
Nullstellensatz,'' since in classical mathematics it can be used to
prove Hilbert's Nullstellensatz cf.~\cite[Theorem 10.11]{Row08}.

\begin{thm}\label{thma1} If $(\mathcal A, \tT, (-))$ is a semiring-group system  over   $\tT$ and
$(\mathcal A ' = \mathcal A[a_1, \dots, a_m], \tT',(-))$ is a
$\preceq$-affine semiring-group system, in which $(f,g)(a_i, \zero)$
is invertible for every symmetrically functionally tangible pair
$(f,g)$ of polynomials, then $\widehat{\mathcal A '}$
 has a symmetric base over $\tT$.
\end{thm}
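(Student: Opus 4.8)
The plan is to run the classical Artin--Tate proof of the weak Nullstellensatz, transported into the systemic language and carried out inside the symmetrized system $\widehat{\mathcal A'}$ (so that the localization machinery of Lemma~\ref{key1}, which is stated for symmetrized triples, applies), inducting on the number $m$ of generators. The role played classically by ``$\mathcal A'$ is a field'' is taken over by the standing hypothesis that $(f,g)(a_i,\zero)$ is invertible for every symmetrically functionally tangible pair $(f,g)$; this is precisely what makes $\widehat{\mathcal A'}$ \emph{fractionally closed over each $a_i$} in the sense of the definition preceding Lemma~\ref{key1}.

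For the inductive step (the case $m=0$ being trivial, with base $\{\one\}$), fix $a_1$, let $S_{a_1}$ be its denominator set and $S$ the subgroup of $\widehat{\tT'}$ generated by $\widehat{\tT}$ and $S_{a_1}$, and set $\mathcal K:=Q_{S_{a_1}}(\mathcal A[a_1])$, which lies inside $\widehat{\mathcal A'}$ by fractional closedness. Then $\widehat{\mathcal A'}=\mathcal K[a_2,\dots,a_m]$ is $\preceq$-affine over $\mathcal K$ with one fewer generator, $\mathcal K$ is again a semiring-group system, and (after clearing $a_1$-denominators) the invertibility hypothesis is inherited over $\mathcal K$; so the inductive hypothesis gives that $\widehat{\mathcal A'}$ has a symmetric base over $\mathcal K$ and that $a_2,\dots,a_m$ are symmetrically algebraic over $\tT_{\mathcal K}$. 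Feeding this finite symmetric base into the Artin--Tate lemma (Theorem~\ref{thma}, with $\mathcal K$ as the subsystem) then shows that $\mathcal K$ is $\preceq$-affine over $\mathcal A$.

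Now Lemma~\ref{key1} applies to $\mathcal K=Q_S(\mathcal A[a_1])$ and forces $Q_S(\mathcal A[a_1])=\mathcal A[a_1]$. On the other hand $a_1$, being a functionally tangible monomial, is among the denominators in $S_{a_1}$, and since $\tT'$ is a group $a_1^{-1}\in\mathcal K=\mathcal A[a_1]$; thus $a_1 p(a_1)=\one$ for some polynomial $p$, and the pair $(\la p(\la),\one)$ is symmetrically functionally tangible (when $p$ is nonconstant, $\la p(\la)$ is a nonconstant polynomial, hence equals $\one$ only finitely often; when $p$ is constant, $a_1\in\tT$ already), so $a_1$ is symmetrically algebraic over $\tT$. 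By the lemma just above stating that a symmetrically algebraic element $a$ yields the finite symmetric base $\one,a,\dots,a^{t-1}$ of $\mathcal A[a]$, together with Lemma~\ref{key111} on products of symmetric bases, $\widehat{\mathcal A'}$ has a symmetric base over $\tT$; tracking algebraicity through the finite bases (an element symmetrically algebraic over $\mathcal K$, with $\mathcal K$ carrying a finite symmetric base over $\tT$, is symmetrically algebraic over $\tT$) then yields that $a_1,\dots,a_m$ are all symmetrically algebraic over $\tT$.

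The step I expect to cost the most work is the bookkeeping around $\mathcal K$: one must verify that fractional closedness --- supplied by the invertibility hypothesis on symmetrically functionally tangible pairs, and nowhere else --- genuinely places $Q_{S_{a_1}}(\mathcal A[a_1])$ inside $\widehat{\mathcal A'}$ as a sub-semiring-group system, that passing to the symmetrization preserves ``$\preceq$-affine'' and ``has a symmetric base'', that the residual asymmetry of $\preceq$ does not obstruct the fraction and base manipulations used in Theorem~\ref{thma} and Lemma~\ref{key1} (nor the transitivity step of Lemma~\ref{algaff}), and that the infinite-sums hypothesis on $\tT$ required by Lemma~\ref{key1} is in force --- which is why one takes $\tT$ infinite, or passes to the relevant $\preceq$-closure.
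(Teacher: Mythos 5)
Your proof follows essentially the same Artin--Tate route the paper takes (the paper explicitly defers to the argument of Theorem A in \cite{Row16}): induct on the number of generators, form $\mathcal K$ from $\mathcal A[a_1]$ and the denominators supplied by the invertibility hypothesis, apply Theorem~\ref{thma} to see $\mathcal K$ is $\preceq$-affine over $\mathcal A$, use Lemma~\ref{key1} to collapse the fraction module to $\mathcal A[a_1]$, and assemble the finite symmetric base via Lemma~\ref{key111}. Where you differ is mainly in explicitness: you spell out what the paper leaves implicit, and your reading of $\mathcal K$ as a localization of $\mathcal A[a_1]$ sitting inside $\widehat{\mathcal A'}$ by fractional closedness is the one consistent with Lemma~\ref{key1} and with the classical argument --- taken literally, the paper's $\mathcal K := S^{-1}\widehat{\mathcal A'}$ would already equal $\widehat{\mathcal A'}$ (the elements of $S$ are invertible by hypothesis), which makes the induction vacuous, so this is best read as the localization you intend. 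The paper closes with Lemmas~\ref{key111} and~\ref{5.14}, while you close with Lemma~\ref{key111} plus the unnamed lemma giving the finite base $\one, a, \dots, a^{t-1}$ of $\mathcal A[a]$; these are two packagings of the same finite-base step.

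The caveats you flag at the end (the infinite-$\tT$ hypothesis needed for Lemma~\ref{key1}, placing $\mathcal K$ inside $\widehat{\mathcal A'}$, the asymmetry of $\preceq$) are legitimate, but they are shortcomings of the paper's terse argument as well, not specific to your write-up. The one place you should tighten is the claim that $(\la p(\la),\one)$ is symmetrically functionally tangible: the definition quantifies ``for each $b\in\tT$ there are only finitely many $b'$ with $f(b)=g(b')$,'' and with the constant $g=\one$ this fails catastrophically for even a \emph{single} $b\in\tT$ with $bp(b)=\one$, since then \emph{every} $b'$ works. So you must argue $bp(b)\neq\one$ for \emph{all} $b\in\tT$ (not merely cofinitely many), which is where the prime/meta-tangible structure of the ground triple and the root-counting result (Theorem~\ref{polyroot}) actually enter.
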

\begin{proof}
 We
follow the proof given in \cite[Theorem A]{Row16}, based on the
Artin-Tate lemma. Namely, take~$\mathcal K$ to be
$(S^{-1}\widehat{\mathcal A'},\tT_{S^{-1}\widehat{\mathcal
A'}},(-)),$ where $S = \{ (f,g)(a_1,\zero): (f,g) \text{
symmetrically functionally tangible} \}$. By induction on $n$,
$\mathcal A '$  has a symmetric base over  $\mathcal K$, so is
$\preceq$-affine by Theorem~\ref{thma}, and thus has a symmetric
base.  We conclude with Lemmas~\ref{key111} and~\ref{5.14}.
\end{proof}

\subsection{Classical Krull dimension}$ $

\begin{defn} The \textbf{height} of a chain  $\mathcal P _0 \supseteq  \mathcal P _1 \supseteq \dots \supseteq  \mathcal P _t$ of  prime $\tT$-congruences
is $t$. The \textbf{(Krull) dimension} is the maximal length $n$ of
a chain  of prime $\tT$-congruences of $\mathcal A.$
\end{defn}

$\tT$-Homomorphisms from polynomial triples have an
 especially nice form.

 \begin{defn} The \textbf{transcendence degree} of $\mathcal A [[\la_1, \dots, \la_n]]
 $ over a semiring-group system $\mathcal A $ is $n$.

 A congruence  $\Cong$ on
$\mathcal A [[\la_1, \dots, \la_n]]$ is \textbf{projectively
$\tT$-trivial} if all of the elements of $\CongT$ have the form
$(a_0 h, a_1h)$ where $a_i \in \tT$ and $h$ is a monomial.

 A \textbf{substitution homomorphism} $\varphi: \mathcal A [[\la_1, \dots, \la_n]] \to ( \mathcal A',
\tT',(-))$    is a $\tT$-homomorphism determined by substitution of
$ \la_1, \dots, \la_n $ to elements of $\tT'$.
 \end{defn}

\begin{lem}\label{homim1} If $\Cong$ is a non-projectively trivial prime $\tT$-congruence of the Laurent polynomial triple $(\mathcal A [[\la_1, \dots,
\la_n ]], \tT_{\mathcal A [[\la_1, \dots, \la_n]]},(-))$, and $
\mathcal A' := (\mathcal A [[\la_1, \dots, \la_n ]], \tT_{\mathcal A
[[\la_1, \dots, \la_n]]},(-))/Cong,$ then $\mathcal A'$ is
isomorphic to a  Laurent polynomial triple of transcendence degree
$< n$, and the natural $\tT$-homomorphism
$$(\mathcal A [[\la_1, \dots, \la_n ]], \tT_{\mathcal A [[\la_1,
\dots, \la_n]]},(-))\to \mathcal A'$$ is a substitution
homomorphism.
\end{lem}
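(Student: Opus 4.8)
The plan is to mimic the classical argument that a prime congruence on a Laurent polynomial ring which is not "projectively trivial" must identify one of the indeterminates $\la_i$ (up to a monomial unit) with a Laurent polynomial in the remaining ones, thereby cutting down the transcendence degree. First I would use the hypothesis that $\Cong$ is not projectively $\tT$-trivial: by definition of projective $\tT$-triviality, there is a pair $(a_0 f, a_1 g) \in \CongT$ with $a_i \in \tT$ in which $f,g$ are monomials that are \emph{not} scalar multiples of one another, or more generally a generating pair of $\CongT$ of the form $(u,v)$ with $u,v$ tangible monomials $u = a_0 \la^{\mathbf{i}}$, $v = a_1 \la^{\mathbf{j}}$, $\mathbf{i}\ne\mathbf{j}$ in $\Z^n$. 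Since the $\la_k$ are invertible in the Laurent triple, I can multiply through by a monomial and reduce to a relation of the form $(a_0, a_1 \la^{\mathbf{m}})\in\CongT$ for some nonzero exponent vector $\mathbf{m}\in\Z^n$; after re-indexing and using that $\tT$ is a group (so $a_0, a_1$ are invertible), this says $\la^{\mathbf{m}}$ is congruent to a unit in $\tT$. Picking a coordinate where $\mathbf{m}$ is nonzero, say $\la_n^{m_n}$ appears, and using primeness of $\Cong$ (Lemma~\ref{primecrit}, the $\widehat{\tT}$-criterion) to peel off factors, I would conclude that $\la_n$ itself is congruent to a monomial $c\,\la_1^{e_1}\cdots\la_{n-1}^{e_{n-1}}$ with $c\in\tT$.

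Next I would build the substitution homomorphism $\varphi$. Define $\varphi$ on $\mathcal A[[\la_1,\dots,\la_n]]$ by fixing $\la_1,\dots,\la_{n-1}$ and sending $\la_n \mapsto c\,\la_1^{e_1}\cdots\la_{n-1}^{e_{n-1}}$ inside $\mathcal A[[\la_1,\dots,\la_{n-1}]]$ (a Laurent triple of transcendence degree $n-1$ by the definition of transcendence degree above, combined with Proposition~\ref{conv1} that the Laurent construction stays within semiring-group triples). This is a $\tT$-homomorphism by construction — it is determined by substitution of indeterminates into elements of the target's $\tT'$, so it is a substitution homomorphism in the stated sense. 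Then I would check that $\varphi$ factors through $\mathcal A' = (\mathcal A[[\la_1,\dots,\la_n]])/\Cong$: the generating relation $(\la_n, c\,\la_1^{e_1}\cdots\la_{n-1}^{e_{n-1}})$ of $\Cong|_\tT$ maps to a diagonal pair, and since $\Cong$ is $\tT$-generated (additively generated by tangible pairs, by the definition of $\tT$-congruence) it suffices to verify this on tangible generators, which follows once I know $\CongT$ is generated by that one relation together with projectively trivial relations — the latter being automatically respected by any $\tT$-homomorphism. Conversely, the congruence kernel of $\varphi$ is contained in $\Cong$ because any relation forced by the substitution is a consequence of $\la_n \equiv c\la^{\mathbf e}$; combined with the reverse inclusion just established, the induced map $\mathcal A' \to \mathcal A[[\la_1,\dots,\la_{n-1}]]$ is injective, and it is surjective since $\la_1,\dots,\la_{n-1}$ are in the image, giving the desired isomorphism onto a Laurent triple of transcendence degree $<n$ (possibly strictly less, after iterating if further relations collapse more indeterminates).

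The main obstacle I anticipate is the first step: extracting, from the mere failure of projective $\tT$-triviality, a \emph{clean} relation expressing one indeterminate as a monomial in the others. A priori a tangible generator of $\CongT$ is a pair $(\sum a_i \la^{\mathbf{i}}, \sum a_j' \la^{\mathbf{j}})$ — wait, no: tangible elements are monomials here (by the definition of $\tT_{\mathcal A^{(S)}}$ as monomials with tangible coefficients), so each generating pair really is a pair of monomials, which is what makes the argument go through. Still, one must handle the possibility that several distinct monomial-pair relations are needed, and argue (using primeness and the group structure on $\tT$, via the $\widehat\tT$-cancellation in Lemma~\ref{primecrit}) that they can be triangulated so that exactly one indeterminate gets eliminated per step; this is the analogue of putting the exponent lattice of relations into a normal form, and is where the supertropical subtleties (e.g.\ that $a^\circ$ is not $\zero$) could in principle intrude, though here they do not because all the relevant elements are tangible and $\tT$ is a group. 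I would present this normal-form reduction carefully and treat the rest as routine bookkeeping.
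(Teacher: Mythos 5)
Your proposal follows the same route as the paper's (very terse) proof: reinterpret a non-projectively-trivial tangible pair $(a_0h_0,a_1h_1)\in\CongT$ as a nontrivial Laurent-monomial relation $(a_0a_1)^{-1}h_0/h_1$, conclude that one $\la_i$ can be solved for in terms of the rest, and read off a substitution homomorphism onto a Laurent triple of smaller transcendence degree. Your write-up fleshes out the reduction step (invertibility of the $\la_k$, group structure on $\tT$, the $\widehat\tT$-primeness criterion) and the verification that the substitution factors through $\Cong$, details the paper leaves implicit, but it is the same argument.
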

\begin{proof} We view the images of $\la_1, \dots, \la_n $ as Laurent monomials, in the sense that $(a_0 h_0, a_1 h_1)$ is identified
with $(a_0 a_1)^{-1} \frac {h_0}{h_1}$ where $a_i \in \tT$ and $h_i$
are pure monomials in the $\la _i.$ The hypothesis that $\Cong$ is
non-projectively trivial means that the image of some $\la_i$ can be
solved in terms of the others, so the transcendence degree
decreases.
\end{proof}

Because of our restricted definition of homomorphism (sending
monomials to monomials), the next theorem comes easily.

 \begin{thm}\label{polyzyz}
Both the polynomial $\mathcal A[\la_1, \dots, \la_n]$ and Laurent
polynomial systems $\mathcal A[[\la_1, \dots, \la_n]]$ in $n$
commuting indeterminates over a $\tT$-semiring-group system have
dimension $n$.\end{thm}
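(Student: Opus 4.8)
The plan is to bound the Krull dimension of $\mathcal A[\la_1,\dots,\la_n]$ (and of the Laurent system $\mathcal A[[\la_1,\dots,\la_n]]$) both above and below by $n$. The lower bound I would get from an explicit strictly ascending chain of prime $\tT$-congruences of length $n$; the upper bound from an induction on the number of indeterminates (equivalently, transcendence degree), powered by \lemref{homim1}. I will carry out the Laurent case and indicate the analogous steps for the polynomial case. Three facts from the preceding development will be used throughout: a polynomial or Laurent polynomial system over a prime base system is again prime (by the Jo\'{o}--Mincheva observation recalled above together with \propref{local1}, since the Laurent system is the localization of the polynomial system at the monomial monoid, and localization sends prime $\tT$-congruences to prime $\tT$-congruences); a $\tT$-semiring-group system is itself prime of Krull dimension $0$; and a $\tT$-congruence $\Cong$ is prime exactly when the quotient system $\mathcal A/\Cong$ is prime, the correspondence of congruences being compatible with $\ctw$.

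For the lower bound, for $0\le k\le n$ let $\Cong_k$ be the $\tT$-congruence of $\mathcal A[[\la_1,\dots,\la_n]]$ generated by $(\la_1,\one),\dots,(\la_k,\one)$ (in the polynomial case the same pairs, inside $\mathcal A[\la_1,\dots,\la_n]$). The quotient $\mathcal A/\Cong_k$ is, up to isomorphism, the Laurent (resp.\ polynomial) system $\mathcal A[[\la_{k+1},\dots,\la_n]]$ over the $\tT$-semiring-group system $\mathcal A$, hence prime; so each $\Cong_k$ is a prime $\tT$-congruence by the criterion above. Moreover $\Diag=\Cong_0\subseteq\Cong_1\subseteq\cdots\subseteq\Cong_n$, and the inclusions are strict: in $\mathcal A/\Cong_{k-1}\cong\mathcal A[[\la_k,\dots,\la_n]]$ the pair $(\la_k,\one)$ is nontrivial (these are distinct Laurent monomials), so $\Cong_k\ne\Cong_{k-1}$. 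This gives dimension $\ge n$.

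For the upper bound I would induct on $n$. The case $n=0$ is the assertion that a $\tT$-semiring-group system has no nontrivial prime $\tT$-congruence: any nontrivial $\tT$-congruence identifies two tangibles, hence, $\tT$ being a group, identifies $\one$ with some $\varepsilon\ne\one$ in $\tT$, and then one checks that the quotient is not cancellative — so, a fortiori, not prime — unless the congruence is improper. For $n>0$, let $\Diag=\Cong_0\subsetneq\Cong_1\subsetneq\cdots\subsetneq\Cong_m$ be a chain of prime $\tT$-congruences of $\mathcal A[[\la_1,\dots,\la_n]]$ (we may prepend $\Diag$, which is prime here since $\mathcal A[[\la_1,\dots,\la_n]]$ is, and is contained in every congruence). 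I would first exclude that $\Cong_1$ is projectively $\tT$-trivial: as each $\la_i$ is invertible and $\tT$ is a group, projective triviality would force $\Cong_1|_{\tT}$ to be generated by pairs $(\one,\varepsilon)$ with $\varepsilon\in\tT$, so that $\Cong_1$ is induced from a nontrivial prime $\tT$-congruence of $\mathcal A$, contradicting the case $n=0$. Hence $\Cong_1$ is non-projectively $\tT$-trivial, and \lemref{homim1} identifies $\mathcal A[[\la_1,\dots,\la_n]]/\Cong_1$ with a Laurent polynomial system of transcendence degree $\le n-1$; the chain $\Diag\subsetneq\Cong_2/\Cong_1\subsetneq\cdots\subsetneq\Cong_m/\Cong_1$ of prime $\tT$-congruences of that system has length $m-1$, so by induction $m-1\le n-1$, i.e.\ $m\le n$. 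For the polynomial case I would add the observation that a prime $\tT$-congruence $\Cong_1$ either contains some $(\la_i,\zero)$ — whence $\mathcal A[\la_1,\dots,\la_n]/\Cong_1$ is a polynomial system in the remaining $n-1$ indeterminates — or is disjoint from the monomial monoid $S=\langle\la_1,\dots,\la_n\rangle$ — whence it corresponds, via $S^{-1}\mathcal A[\la_1,\dots,\la_n]=\mathcal A[[\la_1,\dots,\la_n]]$ and \propref{local1}, to a prime of the Laurent system already handled; either way the induction closes with $m\le n$.

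The main obstacle is the upper bound, and within it the two inputs feeding the induction: that a $\tT$-semiring-group system is prime of dimension $0$ (so the base case is genuinely a dead end), and that projectively $\tT$-trivial prime congruences cannot sit strictly above $\Diag$ in this setting. Granting these, \lemref{homim1} does the work, mechanically dropping the transcendence degree at each step, which is why — as the text anticipates — the theorem "comes easily"; the only remaining chore is the routine bookkeeping that ties the polynomial case to the Laurent case through localization and through successively killing indeterminates.
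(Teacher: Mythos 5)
Your proposal takes the same essential route as the paper's one-line proof --- \lemref{homim1} driving down the transcendence degree at each step of the chain --- but you supply considerably more scaffolding than the paper does. The paper's proof consists of a single sentence: the chain of prime $\tT$-congruences ``corresponds to a homomorphic chain'' of Laurent systems of strictly decreasing transcendence degree and ``must stop after $n$ steps.'' You make explicit the induction this gestures at, you explicitly handle the projectively $\tT$-trivial case (which the paper silently assumes away, even though \lemref{homim1} is only stated for non-projectively-trivial congruences), you give the reduction from the polynomial case to the Laurent case via localization and \propref{local1}, and --- crucially --- you supply a lower-bound argument (the chain $\Cong_0 \subsetneq \cdots \subsetneq \Cong_n$ with $\Cong_k$ generated by $(\la_1,\one),\dots,(\la_k,\one)$), which the paper omits entirely and which is needed to claim the dimension \emph{equals} $n$ rather than merely being bounded by $n$. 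So your argument is genuinely more complete.

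That said, two of your auxiliary inputs remain soft and would need tightening. First, your base case asserts that a $\tT$-semiring-group system has Krull dimension $0$ because any nontrivial $\tT$-congruence yields a non-cancellative quotient; the ``one checks'' is doing all the work there and isn't obviously true (identifying $\one$ with $\varepsilon\ne\one$ and propagating by the $\tT$-action need not destroy cancellativity in any visible way, and the paper's own definition of prime is via $\Cong'\ctw\Cong''\subseteq\Cong$, not via cancellativity of the quotient). Second, your lower bound leans on ``a Laurent polynomial system over a prime system is prime,'' citing Corollary~\ref{poly11} and the Jo\'{o}--Mincheva remark; but Corollary~\ref{poly11} is stated for $\tT$ infinite, and the Jo\'{o}--Mincheva result is stated for $\mathbb B$-algebras, neither of which is part of the hypotheses of the theorem. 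These are exactly the kinds of verification the paper's terse proof also skips --- so your exposure here is no worse than the paper's --- but since you have flagged the first item as a ``main obstacle,'' be aware that it is not yet discharged; the paper's proof simply doesn't engage with either issue.
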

\begin{proof} In view of Lemma~\ref{homim1}, the chain of prime $\tT$-congruences of  $\mathcal A[[\la_1, \dots, \la_n]]$
correspond to a homomorphic chain $$\mathcal A[[\la_1, \dots,
\la_n]]\to  \mathcal A[[\la_1, \dots, \la_{n-1}]]\to  \mathcal
A[[\la_1, \dots, \la_{n-2}]]\to \dots$$ (reordering the indices if
necessary) and this must stop after $n$ steps.
\end{proof}

\section{Tensor products}\label{tenpro}$ $

Two of the most important functors in the category theory of modules
are the tensor product $\otimes$ and $\Hom.$   We turn to triples
and systems emerging over a given ground triple $(\mathcal A, \tT ,
(-))$, and the categories ($\otimes$ and $\Hom$) that arise from
them. Both appertain to systems, but each with somewhat unexpected
difficulties.  $\Hom$ was studied in \S\ref{sysrep}, so we focus on
tensor products,  a very well-known process in general category
theory \cite{Gr,Ka1}, as well as over semirings \cite{Ka2,Tak},
which has been studied formally in the context of \textbf{monoidal
categories}, for example in \cite[Chapter 2]{EGNO}.

\subsection{Tensor products of systems}\label{tenpro1}$ $

 Here we need
the tensor product of   systems over a ground $\tT$-system. These
are described (for semirings) in terms of congruences, as given for
example in~\cite[Definition~3]{Ka2} or, in our notation,
\cite[\S3]{Ka3}. This material also is a special case of
\cite[\S~1.4.5]{grandis2013homological}, but we present details
which are specific to systems, to see just how far we can go with
$\preceq$-morphisms and the negation map.

 En route we also hit a
technical glitch in applying universal algebra, which historically
appeared  before tensor categories. The tensor product, which exists
for systems, cannot be described directly in universal algebra,
since the length $t$ of a sum $\sum _{i=1}^t  a_i \otimes b_i$ of
simple tensors need not be bounded. So one needs a ``monoidal''
universal algebra, where the signature contains
 the tensor products of the original structures, which is beyond the
 scope of this paper.

 Let
us work with a right $\mathcal A$-module system $\mathcal M_1$ and
left $\mathcal A$-module system $\mathcal M_2$ over a given ground
$\tT$-system $\mathcal A$. The following observations are well
known.

 \begin{defn}\label{bil17} A  map $\Phi: \mathcal M_1 \times
  \mathcal M_2 \to \mathcal N$ is \textbf{bilinear} if
  \begin{equation}\label{bil177}\Phi\bigg(\sum_j x_{1,j}, \sum_k x_{2,k}\bigg) =  \sum_{j,k}\Phi\big( x_{1,j},  x_{2,k}\big) ,\quad \Phi(  x_1a, x_1' )= \Phi(x_1,a x_1' ),\end{equation}$ \forall
x_{i,j}\in \mathcal M_i,  \,  a \in \mathcal A$.
\end{defn}

One   defines the tensor product $\mathcal M_1 \otimes _{\mathcal A}
\mathcal M_2$ of $\mathcal M_1$ and $\mathcal M_2$ in the usual way,
to be $(\mathcal F_1 \oplus \mathcal F_2)/\Cong,$ where $\mathcal
F_i$ is the free system (respectively right or left) with
base~$\mathcal M_i$ (and $\tT_{\mathcal F_i} = \mathcal M_i$), and
$\Cong$ is the congruence generated by all
 \begin{equation}\label{defcong}\bigg(\big(\sum_j x_{1,j}, \sum_k x_{2,k}\big), \sum_{j,k}\big( x_{1,j},  x_{2,k}\big)\bigg) ,\quad \bigg((  x_1 a, x_2 ), (x_1,a
 x_2
)\bigg)\end{equation} $ \forall x_{i,j},x_{i,k}\in \mathcal M_i,  \,
a \in \mathcal A$.

\begin{rem}\label{bil2} Any bilinear  map $\Psi  : \mathcal M_1 \times
  \mathcal M_2 \to \mathcal N$
induces a map $\bar \Psi : \mathcal M_1 \otimes
  \mathcal M_2 \to \mathcal N$ given by $\bar \Psi (a_1 \otimes a_2)
  = \Psi (a_1 , a_2),$ since $\Psi $ passes through the defining congruence $\Cong$ of
  \eqref{defcong}.
\end{rem}

 To handle negation maps we take a
slightly more technical version emphasizing $\tT_{\mathcal A}$.

 \begin{defn}\label{tens1} The \textbf{$\tT_{\mathcal A}$-tensor product} $\mathcal M_1  \otimes _{\tT_{\mathcal A}} \mathcal M_2$ of a right $\tT_{\mathcal A}$-module system  $\mathcal M_1$
 and  a left $\tT_{\mathcal A}$-module system
$\mathcal M_2$ is $(\mathcal F_1 \oplus \mathcal F_2)/\Cong,$ where
$\mathcal F_i$ is the free system with base $\mathcal M_i$ (and
$\tT_{\mathcal F_i} = \mathcal M_i$), and $\Cong$ is the congruence
generated as in \eqref{defcong}, but now with $a \in \tT_{\mathcal
A}.$

 If $\mathcal M_1,
\mathcal M_2 $ have  negation maps $(-)$, then we define a
\textbf{negated tensor product} by further imposing the extra axiom
$$((-)x) \otimes y = x \otimes ((-)y).$$ Note that this is done by modding
out by the congruence generated by all elements $((-)x \otimes y,\ x
\otimes (-)y)$, $x,y \in \tT_{\mathcal M}$, in the congruence
defining the tensor product in the universal algebra framework. From
now on, the notation $\mathcal M_1 \otimes \mathcal M_2$ includes
this negated tensor product stipulation, and $\mathcal A$ and
$\tT_{\mathcal A}$ are understood.
\end{defn}

We can incorporate the negation map into the tensor product,
defining   $(-)(v \otimes w) := ((-) v) \otimes w.$

\begin{rem}
As in the classical theory, if $\mathcal M_1$ is an $(\mathcal
A,\mathcal A')$-bimodule system, then $\mathcal M_1 \otimes \mathcal
M_2$ is an $\mathcal A$-module system. In particular this happens
when $ \mathcal A $ is commutative and the right and left actions
on~$\mathcal M_1$ are the same; then we take $ \mathcal A' =
\mathcal A .$\footnote{There is some universal algebra lurking
beneath the surface, since one must define an abelian carrier. We
are indebted to D.~Stanovsky and M.~Bonato for 
 pointing out the
references~\cite{FM} and \cite[Definitions~3.5, 3.7]{Ou}, which are
rather intricate; also see \cite[Definition~4.146]{MMT}.}\end{rem}
 Since there are more relations in the defining
congruence, the $\tT_{\mathcal A}$-tensor product maps down onto the
negated tensor product.

%


\begin{defn}\label{bil3}
The \textbf{tensor product of triples} $(\mathcal A, \tT, (-))$ and
$ (\mathcal A', \tT', (-)')$  is the triple $$(\mathcal A \otimes
\mathcal A', \{ a_1 \otimes a_2 : a_1 \in \tT, a_2 \in \tT'\},
(-)\otimes 1_{\mathcal A'}).$$
 \end{defn}

%
%
%

%

In order to be able to apply the theory of monoidal categories, we
need to be able to show that the tensor product is functorial; i.e.,
given morphisms $f_i : \mathcal M_i \to \mathcal N_i$ for $i = 1,2,$
we want a well-defined morphism $f_1 \otimes f_2 : \mathcal M_1
\otimes \mathcal M_2 \to \mathcal N_1 \otimes \mathcal N_2$.

 We   would define the tensor product $f_1  \otimes f_2 $ of
$\preceq$-morphisms by $(f_1\otimes f_2)(a \otimes b) = f(a_1)
\otimes f_2(a_2),$ as a case of $f_1 * f_2$ in
Example~\ref{trivconv}, but   we run into immediate difficulties, even over free modules. 

\begin{example}\label{nonmonoidal} Consider the polynomial triple $\mathcal A[ \la_1,
\la _2]$ and the $\preceq$-morphism $f: \mathcal A[ \la_1, \la _2]
\to \mathcal A[ \la_1, \la _2]$ given by taking $f$ to be the
identity on all monomials and $f(q) = \zero$ whenever $q$ is a sum
of at least two nonconstant monomials.

Then $\la_1 \otimes \la_1 + \la_1 \otimes \la_2 + \la_2 \otimes
\la_2 =  \la_1 \otimes (\la_1 +   \la_2 )+ \la_2 \otimes \la_2 =
\la_1 \otimes \la_1 + (\la_1 + \la_2 )\otimes \la_2 ,$ so $$\la_2
\otimes \la_2 = (f \otimes f)(\la_1 \otimes \la_1 + \la_1 \otimes
\la_2 + \la_2 \otimes \la_2 ) = \la_1 \otimes \la_1,$$ so   $f
\otimes f$ is not well-defined. The same argument shows that even $f
\otimes 1$ is not well-defined.
\end{example}

As is well known, the process does work for homomorphisms, as a
consequence of Remark~\ref{bil2}:

\begin{prop}\label{bil37} Suppose that $f_i: \mathcal M_i \to \mathcal N_i$
are  homomorphisms. Then the map  $$f_1 \otimes f_2 : \mathcal M_1
\otimes \mathcal M_2 \to \mathcal N_1 \otimes \mathcal N_2$$ given
by $(f_1 \otimes f_2)(\sum _i a_{1,i} \otimes a_{2,i}) = \sum
f_1(a_{1,i}) \otimes f_2(a_{2,i})$ is a well-defined homomorphism.
 \end{prop}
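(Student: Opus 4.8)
The plan is to manufacture $f_1 \otimes f_2$ from the universal property of the tensor product recorded in Remark~\ref{bil2}; the essential point — and the reason this works for homomorphisms although it fails for $\preceq$-morphisms (Example~\ref{nonmonoidal}) — is that homomorphisms preserve finite sums and the $\tT_{\mathcal A}$-action \emph{with equality}, which is exactly what bilinearity in Definition~\ref{bil17} demands. First I would set $\Psi : \mathcal M_1 \times \mathcal M_2 \to \mathcal N_1 \otimes \mathcal N_2$, $\Psi(x_1,x_2) = f_1(x_1) \otimes f_2(x_2)$ (this is the instance $f_1 * f_2$ of Example~\ref{trivconv}, with $S$ the one-object category), and verify that $\Psi$ is bilinear. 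Additivity, $\Psi\big(\sum_j x_{1,j}, \sum_k x_{2,k}\big) = f_1\big(\sum_j x_{1,j}\big) \otimes f_2\big(\sum_k x_{2,k}\big) = \big(\sum_j f_1(x_{1,j})\big) \otimes \big(\sum_k f_2(x_{2,k})\big) = \sum_{j,k} f_1(x_{1,j}) \otimes f_2(x_{2,k})$, follows from $f_i$ being homomorphisms together with the bilinearity of $\otimes$ in $\mathcal N_1 \otimes \mathcal N_2$; the balancing relation $\Psi(x_1 a, x_2) = (f_1(x_1)a) \otimes f_2(x_2) = f_1(x_1) \otimes (a f_2(x_2)) = \Psi(x_1, a x_2)$ for $a \in \tT_{\mathcal A}$ follows from $f_1(x_1 a) = f_1(x_1)a$ and $f_2(a x_2) = a f_2(x_2)$, equality again because $f_i$ is a homomorphism of $\mathcal A$-modules. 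By Remark~\ref{bil2}, $\Psi$ factors through the congruence \eqref{defcong}, producing a well-defined additive map $\bar\Psi$ with $\bar\Psi(a_1 \otimes a_2) = f_1(a_1) \otimes f_2(a_2)$.

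Next I would check that $\bar\Psi$ also respects the extra relation $((-)x)\otimes y = x \otimes ((-)y)$ by which the negated tensor product of Definition~\ref{tens1} is formed: since each $f_i$ commutes with $(-)$, one has $\Psi((-)x, y) = ((-)f_1(x)) \otimes f_2(y) = f_1(x) \otimes ((-)f_2(y)) = \Psi(x,(-)y)$, where the middle equality uses that $\mathcal N_1 \otimes \mathcal N_2$ is itself a negated tensor product. Hence $\bar\Psi$ descends to the negated tensor product, and we may define $f_1 \otimes f_2 := \bar\Psi$; by construction $(f_1\otimes f_2)\big(\sum_i a_{1,i} \otimes a_{2,i}\big) = \sum_i f_1(a_{1,i}) \otimes f_2(a_{2,i})$, as asserted.

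Finally I would confirm that $f_1 \otimes f_2$ is a homomorphism of systems in the sense of Definitions~\ref{homodef} and~\ref{mor}: additivity is built in; a tangible generator $a_1 \otimes a_2$ with $a_i \in \tT_{\mathcal M_i}$ is carried to $f_1(a_1) \otimes f_2(a_2)$, which is tangible in $\mathcal N_1 \otimes \mathcal N_2$ or $\zero$ since $f_i(a_i) \in \tT_{\mathcal N_i} \cup \{\zero\}$; compatibility with the negation map is $(f_1\otimes f_2)(((-)v)\otimes w) = ((-)f_1(v))\otimes f_2(w) = (-)\big(f_1(v)\otimes f_2(w)\big)$; and for the surpassing relation, if $u' = u + c$ with $c \in (\mathcal M_1 \otimes \mathcal M_2)_\Null$ then $(f_1\otimes f_2)(u') = (f_1\otimes f_2)(u) + (f_1\otimes f_2)(c)$ with $(f_1\otimes f_2)(c)$ null (additive homomorphisms send quasi-zeros to quasi-zeros and $\mathcal A_\Null$ to $\mathcal A_\Null$), so $(f_1\otimes f_2)(u) \preceq (f_1\otimes f_2)(u')$. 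The one genuine obstacle is the very first step: bilinearity forces \emph{equality} in the additivity identity, and this is precisely what breaks down in Example~\ref{nonmonoidal}; once Remark~\ref{bil2} has been applied, everything else is a routine check performed on simple tensors.
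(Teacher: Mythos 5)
Your argument is exactly the route the paper takes: the paper states the proposition as ``a consequence of Remark~\ref{bil2}'' without writing out the verification, and your proposal simply supplies the details (bilinearity of $\Psi(x_1,x_2)=f_1(x_1)\otimes f_2(x_2)$ with equality because the $f_i$ are homomorphisms, descent to the negated tensor product, and the remaining homomorphism axioms). Correct, same approach, just more explicit than the paper's one-line proof.
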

%
%

%
%
%
\begin{rem}\label{adjmap}
 In view of \cite{Ka2}, also cf.~\cite[Proposition~17.15]{golan92},
one
 has the natural adjoint isomorphism
 $\Hom _{\mathcal A} (\mathcal M_1 \otimes _{\mathcal B} \mathcal M_2, \mathcal M_3)
  \to \Hom _{\mathcal B}(\mathcal M_1, \Hom _{\mathcal A} (M_2, M_3),$
  since the  standard proof given for algebras
does not involve negation. This respects tangible homomorphisms, so
yields an isomorphism of triples.
\end{rem}

\subsubsection{The tensor semialgebra triple}$ $\label{polys}

 Next, as usual, given a bimodule $V$ over  $\tT_{\mathcal A}$, one defines
$V^{\otimes (1)} = V,$ and inductively $$V^{\otimes (k)} = V \otimes
V^{\otimes (k-1)} .$$ From what we just described, if  $V$ has a
negation map $(-)$ then  $V^{\otimes (k)}$
also has a natural negation map, and often is a triple when $V$ is a triple. 
%

\begin{defn}\cite[Remark 6.35]{Row16} Define the \textbf{tensor semialgebra} $T(V) =
\bigoplus _k V^{\otimes (k)}$ (adjoining a copy of $\tT_{\mathcal
A}$ if we want to have a unit element), with the usual
multiplication.

Given a $\tT_{\mathcal A}$-module triple $(\mathcal M, \tT_{\mathcal
M},(-))$, the \textbf{tensor semialgebra triple} $(T(\mathcal M),
\tT_{T(\mathcal M)},(-))$ of $\mathcal M$ is defined by using the
negated tensor products of Definition~\ref{tens1} to define
$T(\mathcal M)$, induced from the negation maps on $\mathcal
M^{\otimes (k)}$; writing $\tilde a_k = a_{k,1} \otimes \dots
\otimes a_{k,k}$ for $a_{k,j}\in \tT_{T(\mathcal M)},$ we put
$$(-) (\tilde a_k) = (-) (a_{k,1} \otimes \dots \otimes a_{k,k}).$$
$\tT_{T(\mathcal M)}$ is $\cup \{ \tilde a_k :\}$, the set of simple
multi-tensors.
\end{defn}
 %
\
We can now view the  polynomial \semiring0 (Definition~\ref{poly1})
in this context.

\begin{example} Suppose $\mathcal A =
(\mathcal A, \tT, (-))$ is  a triple. The polynomial \semiring0
$\mathcal A [\la]$ now is defined as  a special case of the tensor
semialgebra.   $\tT_{A [\la]}$ again is the set of monomials with
coefficients in $\tT.$\end{example}
%





\section{The structure theory of module  systems via congruences}
\label{modsys}$ $

Aiming for a representation theory, we take a category of  module
systems  over our ground triples.  Throughout, we take $\tT$-module
systems over a ground system with a fixed signature.

\subsection{Submodules versus subcongruences}\label{modcpng}
%
%
%

\begin{defn}\label{idealdef}  A \textbf{$\tT$-submodule system}
 of $(\mathcal M, \tT, (-), \preceq)$
 is a submodule $\mathcal N$ of $\mathcal M$,   satisfying the following
 conditions, where  $a\in \tT$:
 \begin{enumerate}\eroman
 \item  Write $\tTNz$ for
$\tTN \cup \{\zero \}$.  $(\mathcal N, \tTN, (-), \preceq)$ is a
subsystem of $(\mathcal M, \tT_{\mathcal M}, (-),
 \preceq)$. (In particular, $\tTNz$ generates $(\mathcal N, +)$.)
  \item  $\tT^\circ \subseteq \mathcal N$ (and thus $\mathcal M^\circ \subseteq \mathcal N$).
\item If $a \preceq b +v,$ for $v \in \mathcal N$, then there is $w \in \tTNz$ for
which $a \preceq b +w.$
\end{enumerate}
\end{defn}

Note that (ii) implies that $\{ \zero\}$ is not a submodule, for we
want submodules to contain the null elements. In what follows,
${\mathcal N}$ always denotes a $\tT$-submodule system of ${\mathcal
M}$.

\begin{rem} The definition implicitly includes the condition  that
$(-)\tTNz = \tTNz,$ since $(-)a = ((-)\one)a.$
\end{rem}
%

\begin{lem} If $a \in \mathcal N$ and $a \preceq_\circ b$,  then $ b \in \mathcal N.$
\end{lem}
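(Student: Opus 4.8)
The statement to prove is essentially immediate from the definition of a $\tT$-submodule system, so the plan is short. First I would unfold the hypothesis $a \preceq_\circ b$: by Definition~\ref{precmain0} (applied in the module $\mathcal M$ rather than in a semiring), there exists $c \in \mathcal M$ with $b = a + c^\circ$. So the only thing to check is that $a + c^\circ$ lands in $\mathcal N$.

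Next I would invoke the two relevant pieces of Definition~\ref{idealdef}. Condition (ii) tells us that $\mathcal M^\circ \subseteq \mathcal N$, hence $c^\circ \in \mathcal N$. Condition (i) says $(\mathcal N, \tTN, (-), \preceq)$ is a subsystem of $(\mathcal M, \tT_{\mathcal M}, (-), \preceq)$; in particular $\mathcal N$ is closed under the addition of $\mathcal M$. Since $a \in \mathcal N$ by hypothesis and $c^\circ \in \mathcal N$, we conclude $b = a + c^\circ \in \mathcal N$, as desired.

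There is really no obstacle here: the content of the lemma is entirely contained in clauses (i) and (ii) of the definition of submodule system (namely that $\mathcal N$ is additively closed and contains all quasi-zeros of $\mathcal M$). If one wanted to be fully self-contained about why $c^\circ \in \mathcal M^\circ \subseteq \mathcal N$ rather than citing the parenthetical "thus" in (ii), I would note that $\tT_{\mathcal M}$ generates $(\mathcal M,+)$, so writing $c = \sum_i a_i$ with $a_i \in \tT_{\mathcal M}$ gives $c^\circ = \sum_i a_i^\circ$ with each $a_i^\circ \in \tT^\circ \subseteq \mathcal N$, whence $c^\circ \in \mathcal N$; but this is exactly the remark already recorded in condition (ii), so no extra work is needed.
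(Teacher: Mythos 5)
Your proof is correct and follows the same approach as the paper's: unfold $a \preceq_\circ b$ as $b = a + c^\circ$, observe that $c^\circ \in \mathcal M^\circ \subseteq \mathcal N$ by condition (ii) of Definition~\ref{idealdef}, and conclude by additive closure of $\mathcal N$. The paper's proof is simply a terser version of the same argument.
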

\begin{proof} Just write $b = a + c^\circ,$ noting that $c^\circ \in \mathcal N$.
\end{proof}

%
%

The first stab at  defining a $\tT$-module of a $\tT$-congruence
$\Cong$ might be to take $\{ a(-)b: (a,b) \in \Cong\},$ which works
in classical algebra. We will modify this slightly, but the real
difficulty  lies in the other direction. The natural candidate for
the $\tT$-congruence of a $\tT$-submodule $\mathcal N$ might be
$\{(a,b): a(-)b \in \mathcal N\}$, but it fails to satisfy
transitivity!

\begin{defn}\label{idealdef1} $ $
\begin{enumerate}
\item
Given a $\tT$-submodule $\mathcal N$ of $\mathcal M$, define the
$\tT$-congruence $\Cong_{\mathcal N}$ on $\mathcal N$ by $a \equiv
b$  if and only if we can write $a = \sum _j a_j$ and $b= \sum _j
b_j$ for $a_j,b_j \in \tTNz$ such that  $a_j \preceq b_j + v_j$ for
$v_j \in \tTNz$, each $j.$
\item
Given a  $\tT$-congruence $\Cong$, define ${\mathcal N}_\Cong$ to be
the additive sub-semigroup of $\mathcal M$ generated by all $c\in
\tTMz$ such that $c =   a(-)b$ for some $a,b \in \tTNz$ such that
$(a,b) \in {\Cong}$.
\end{enumerate}

%

\end{defn}

\begin{example} When the system $\mathcal M$ is meta-tangible,
then in the definition of $\Cong_{\mathcal N}$, either $b_j =
(-)v_j$ in which case $a_j \preceq v_j^\circ \in \tT_{\mathcal
N}^\circ,$ or $a_j = b_j
 $ (yielding the diagonal) or $a_j = v_j \in \tT_{\mathcal N}.$
\end{example}

The results from {\cite{Row16}} pass over, with the same proofs, to
module systems.

\begin{defn}\label{metadef1}
A $\tT$-module  system $\mathcal M = (\mathcal M, \tT_{\mathcal M} ,
(-), \preceq)$ is
 $\tT_{\mathcal M} $-\textbf{reversible} if
 $a_1 \preceq  a_2 +b   $ implies  $a_2 \preceq a_1  (-) b$  for $a_1 ,a_2 \in \tT_{\mathcal M}
 $ and $b \in  \mathcal M $.
\end{defn}

\begin{lem}[{\cite[Proposition~6.12]{Row16}}]\label{revsys}
In a $\tT$-reversible system, $a \equiv b$ (with respect to
$\Cong_{\mathcal N}$) for $a,b\in \tT_{\mathcal N}$,  if and only if
either $a=b$ or $\tT_{\mathcal N}$ contains an element $v$ such that
$v \preceq a(-)b.$
\end{lem}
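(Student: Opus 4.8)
The plan is to extract everything from the axioms with no computation of substance. From Definition~\ref{precedeq07} I will use that $\preceq$ is closed under finite sums (axiom (iv)) and that $a_1\preceq a_2$ forces $a_1=a_2$ when $a_1,a_2$ are tangible (axiom (v)); from Definition~\ref{idealdef} I will use clause (iii), which lets me replace an inequality $a\preceq b+v$ with $v\in\mathcal N$ by one with $v\in\tTNz$; and of course I will use the reversibility hypothesis of Definition~\ref{metadef1}, together with the facts that $(-)$ has order at most two and $(-)\tTNz=\tTNz$.

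For $(\Leftarrow)$: the case $a=b$ is vacuous, so suppose $v\in\tTN$ with $v\preceq a(-)b$, i.e. $v\preceq a+((-)b)$. Applying reversibility to the tangible pair $v,a$ and the element $(-)b\in\mathcal M$ gives $a\preceq v+b$ (using that $(-)$ has order $\le 2$). Since $v\in\tTNz$, the one-term decompositions $a=a$, $b=b$ with $v_1=v$ witness $a\equiv b$ straight from the definition of $\Cong_{\mathcal N}$ in Definition~\ref{idealdef1}.

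For $(\Rightarrow)$: unravel $a\equiv b$ to write $a=\sum_j a_j$, $b=\sum_j b_j$ with $a_j,b_j\in\tTNz$ and $a_j\preceq b_j+v_j$, $v_j\in\tTNz$, for each $j$ (the index set is nonempty, $a$ being a nonzero element). Summing the inequalities via axiom (iv) yields $a\preceq b+v$ where $v:=\sum_j v_j\in\mathcal N$; clause (iii) of Definition~\ref{idealdef} then produces $w\in\tTNz$ with $a\preceq b+w$. If $w=\zero$ then $a\preceq b$, hence $a=b$ by axiom (v). If $w\ne\zero$, then $w\in\tTN$ and reversibility applied to $a\preceq w+b$ (tangible pair $a,w$; element $b$) gives $w\preceq a(-)b$, the second alternative in the lemma.

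The one place I expect to have to be careful is the precise meaning of $\Cong_{\mathcal N}$: the argument above is for the decomposition relation of Definition~\ref{idealdef1} taken literally, so if instead $\Cong_{\mathcal N}$ is read as the equivalence relation generated by those decompositions I would additionally check that the right-hand condition of the lemma is transitive. This goes by the same device: given tangible $v\preceq a(-)b$ and $w\preceq b(-)c$, reversibility upgrades these to $a\preceq v+b$ and $b\preceq w+c$, so $a\preceq(v+w)+c$ with $v+w\in\mathcal N$, and then clause (iii), axiom (v), and one more application of reversibility deliver either $a=c$ or a tangible $u$ with $u\preceq a(-)c$. (Symmetry of the decomposition relation itself is the remark that $a_j\preceq b_j+v_j$ implies $b_j\preceq a_j+((-)v_j)$ with $(-)v_j\in\tTNz$, again by reversibility.) Apart from this bookkeeping, every step is a direct appeal to the surpassing-relation, submodule, and reversibility axioms, which is why the paper can assert that the proof transfers verbatim from \cite{Row16}.
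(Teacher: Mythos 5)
Your proof is correct, and since the paper merely cites \cite[Proposition~6.12]{Row16} without reproving the lemma, your reconstruction is the natural one and uses exactly the right ingredients: sum the per-index inequalities, invoke clause~(iii) of Definition~\ref{idealdef} to trade $v=\sum_j v_j\in\mathcal N$ for a single $w\in\tTNz$, then split on whether $w=\zero$ (axiom~(v) of Definition~\ref{precedeq07} gives $a=b$) or $w\in\tTN$ (reversibility gives $w\preceq a(-)b$); the converse direction just runs reversibility backwards and exhibits the one-term decomposition.

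One small caution in your supplementary paragraph: your symmetry remark invokes reversibility on the pair $a_j,b_j\in\tTNz$, but Definition~\ref{metadef1} only postulates reversibility for elements of $\tT_{\mathcal M}$, which excludes $\zero$. When $a_j$ or $b_j$ is $\zero$ that step needs a separate (trivial) treatment — e.g.\ $\zero\preceq b_j+v_j$ with $b_j,v_j$ tangible forces $b_j=(-)v_j$ by unique negation, and one rewrites accordingly. This does not affect the main argument, which works on $a,b\in\tT_{\mathcal N}$ and $w\in\tTN$, all genuinely tangible.
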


\begin{rem}[{\cite[Remark~8.29]{Row16}}]\label{rever} In a $\tT$-reversible system, Condition (iii) of Definition~\ref{idealdef} yields $w \preceq a(-)b.$
Likewise, in Definition~\ref{idealdef1}, $ a_j \preceq b_j +v_j$
implies $b_j \preceq a_j (-)v_j$.
\end{rem}

\begin{prop}[{\cite[Proposition~8.30]{Row16}}]\label{ideal} In a $\tT$-reversible system, $\Cong_{\mathcal N}$ is a
$\tT$-congruence for any $\tT$-submodule ${\mathcal N}$. For any
$\tT$-congruence~$\Cong,$ ${\mathcal N}_\Cong$ is a $\tT$-submodule.
Furthermore, $\Cong_{{\mathcal N}_\Cong} \supseteq \Cong$ and
${\mathcal N}_{\Cong_{\mathcal N}} = {\mathcal N}.$
 \end{prop}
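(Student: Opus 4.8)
The plan is to prove the four assertions in turn, following the argument of \cite[Propositions~6.12 and 8.30]{Row16}, which rests only on features of module systems already available. The two standing tools are $\tT$-reversibility (Definition~\ref{metadef1}) and \lemref{revsys}: the latter reduces statements about $\Cong_{\mathcal N}$ to the manageable criterion that, for $a,b\in\tT_{\mathcal N}$, one has $a\equiv b$ iff $a=b$ or some $v\in\tT_{\mathcal N}$ satisfies $v\preceq a(-)b$, and reversibility (via Remark~\ref{rever}) is what turns the one-sided relations $a_j\preceq b_j+v_j$ in the definition into the symmetric information a congruence needs.

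First I would verify that $\Cong_{\mathcal N}$ is a $\tT$-congruence. Reflexivity is immediate from $\zero\in\tTNz$ and reflexivity of $\preceq$; compatibility with $+$ comes from concatenating the two sum-decompositions; compatibility with $(-)$ from applying axiom (ii) of Definition~\ref{precedeq07} to each relation $a_j\preceq b_j+v_j$ together with $(-)\tTNz=\tTNz$; compatibility with the $\tT$-action from axiom (iii), the product $av_j$ staying in $\tTNz$ because $\mathcal N$ is a $\tT$-submodule. Symmetry is the first place reversibility is essential: $a_j\preceq b_j+v_j$ gives $b_j\preceq a_j(-)v_j$ with $(-)v_j\in\tTNz$. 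The main obstacle is transitivity — indeed this is precisely the defect that the text points out and that the sum-decomposition form of the definition is designed to repair. Given $a\equiv b\equiv c$ with witnesses $a=\sum_j a_j,\ b=\sum_j b_j$ and $b=\sum_k b_k',\ c=\sum_k c_k$, the work is to reconcile the two tangible decompositions of $b$ (using that $\tTNz$ generates $(\mathcal N,+)$ and invoking reversibility to move auxiliary terms between sides), compose $a_j\preceq b_j+v_j$ with the matched $b_k'\preceq c_k+w_k$ to obtain relations $a_\ell\preceq c_\ell+(v_\ell+w_\ell)$, and then re-expand each $v_\ell+w_\ell$ inside $\tTNz$, splitting the corresponding term so that the outcome is again of the required form; that $\Cong_{\mathcal N}$ is additively generated by tangible pairs is built into the definition.

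Next, ${\mathcal N}_\Cong$ is a $\tT$-submodule: it is an additive sub-semigroup by construction and is $\tT$-stable because $a\bigl(b(-)b'\bigr)=ab(-)ab'$ with $(ab,ab')\in\Cong$; condition (ii) of Definition~\ref{idealdef} holds because $(a,a)\in\Diag_{\mathcal M}\subseteq\Cong$ exhibits $a^\circ=a(-)a$ among the generators, and condition (iii) is obtained by writing the given $v\in{\mathcal N}_\Cong$ as a finite sum of generators $a_i(-)b_i$ and peeling them off one at a time via reversibility to produce a tangible-or-zero $w\in{\mathcal N}_\Cong$ with $a\preceq b+w$; condition (i) then follows from the description of the generators. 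The inclusion $\Cong\subseteq\Cong_{{\mathcal N}_\Cong}$ is checked on a generating tangible pair $(a,b)\in\Cong$: either $a(-)b$ is a quasi-zero and $a=b$, or $a(-)b$ is one of the generators of ${\mathcal N}_\Cong$ and satisfies $a(-)b\preceq a(-)b$, so $a\equiv b$ by \lemref{revsys}. Finally ${\mathcal N}_{\Cong_{\mathcal N}}={\mathcal N}$: the inclusion $\supseteq$ holds because for $c\in\tT_{\mathcal N}$ one has $(c,\zero)\in\Cong_{\mathcal N}$ (one-term decomposition $c\preceq\zero+c$) and $c=c(-)\zero$, so $c\in{\mathcal N}_{\Cong_{\mathcal N}}$, and $\tTNz$ generates $\mathcal N$; the inclusion $\subseteq$ holds because each generator $a(-)b$ with $(a,b)\in\Cong_{\mathcal N}$ is, by \lemref{revsys}, either a quasi-zero — hence in $\mathcal N$ by Definition~\ref{idealdef}(ii) — or $\preceq$-below a tangible element of $\tT_{\mathcal N}$, which by axiom (v) of Definition~\ref{precedeq07} forces it to lie in $\mathcal N$. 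Throughout, the only genuinely delicate step is the transitivity of $\Cong_{\mathcal N}$; the remaining verifications are bookkeeping with the surpassing-relation axioms and with reversibility.
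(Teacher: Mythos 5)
The paper gives no proof of this proposition; it cites \cite[Proposition~8.30]{Row16} and, just before Definition~\ref{metadef1}, asserts that the results from that reference ``pass over, with the same proofs.'' So your attempt must be judged on its own, and as written it does not close. The critical gap is transitivity of $\Cong_{\mathcal N}$: from $a\equiv b$ via $a=\sum_j a_j$, $b=\sum_j b_j$ and $b\equiv c$ via $b=\sum_k b_k'$, $c=\sum_k c_k$, you propose to ``reconcile the two tangible decompositions of $b$,'' but in a $\tT$-module over a semiring there is no additive cancellation, so two decompositions of the same element into $\tTNz$-pieces need not have a common refinement, and ``invoking reversibility to move auxiliary terms between sides'' is not an operation that reversibility (Definition~\ref{metadef1}) actually provides — it only transposes a single inequality with two tangible endpoints. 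What does work, and is the real content, is transitivity on a tangible triple $a,b,c\in\tT_{\mathcal N}$: if $v\preceq a(-)b$ and $w\preceq b(-)c$ with $v,w\in\tT_{\mathcal N}$, reversibility gives $a\preceq v+b$ and $b\preceq w+c$, hence $a\preceq c+(v+w)$ with $v+w\in\mathcal N$; Definition~\ref{idealdef}(iii) together with Remark~\ref{rever} then produces a single $u\in\tTNz$ with $u\preceq a(-)c$, and Lemma~\ref{revsys} (or axiom (v) of Definition~\ref{precedeq07} when $u=\zero$) yields $a\equiv c$. Passing from this tangible-level transitivity to the general decomposition form of the relation is exactly the step your sketch asserts without argument.

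A secondary gap: ``peeling off one at a time via reversibility'' does not establish Definition~\ref{idealdef}(iii) for $\mathcal N_\Cong$. If $a\preceq b+\sum_i c_i$ with the $c_i$ tangible generators of $\mathcal N_\Cong$, reversibility cannot shorten the sum, because it requires both swapped entries to be tangible whereas $b$ is an arbitrary module element; you never say which single $w\in\tT_{\mathcal N_\Cong}\cup\{\zero\}$ is produced. The remaining bookkeeping in your write-up — $\tT$-stability of $\mathcal N_\Cong$, the inclusion $\mathcal M^\circ\subseteq\mathcal N_\Cong$ via the diagonal, $\Cong\subseteq\Cong_{\mathcal N_\Cong}$ on tangible generating pairs, and both inclusions of ${\mathcal N}_{\Cong_{\mathcal N}}={\mathcal N}$ via Lemma~\ref{revsys} and axiom (v) of Definition~\ref{precedeq07} — is handled correctly.
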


\subsection{The N-category of  $\tT$-module  systems}\label{surpre3}$ $

 Since we lack the classical
negative, the trivial subcategory of $\zero$ morphisms and $\zero$
objects is replaced by a more extensive subcategory of quasi-zero
morphisms and quasi-zero objects. The quasi-zero morphisms have been
treated formally in \cite[\S 1.3]{grandis2013homological} under the
name of \textbf{N-category} and \textbf{homological category}, with
the terminology ``null morphisms'' and ``null objects,'' and with
the null subcategory designated as $ \Null $\footnote{\cite[\S
1.3]{grandis2013homological} uses the notation $(E,\mathcal N),$ but
we already have used $\mathcal N$ otherwise.}.  

To view systems $\mathcal A= (\mathcal A, \tT, (-), \preceq)$ in the
context of N-categories, we must identify the null objects $\mathcal
A_{\Null}$. The intuitive choice might be $\mathcal A^\circ,$ but
${\mathcal A}_{\Null}$ often seems to be more inclusive. We  turn to
$\tT$-module systems $(\mathcal M, \tT_{\mathcal M}, (-), \preceq),$
to which we refer merely as $\mathcal M$ for shorthand. From now on
we take $\preceq = \preceq_\circ$ in order to simplify the
exposition.

\begin{defn} \label{t-trivialmorhphism}$ $
\begin{enumerate} \eroman
\item
A \textbf{chain} of $\tT$-morphisms of $\tT$-module systems is a
sequence
\[ \cdots \to  \mathcal K \too {g}  \mathcal M  \too {f}  \mathcal N \to \cdots \]
such that $(fg)(k) \in \mathcal N_{\Null} $ for all $k \in \mathcal
K$.

\item (Compare with \cite{Abu}) The chain is \textbf{exact}  at $ \mathcal M $ if $g(\mathcal K) = \{ b
\in \mathcal M : f(b)\in \mathcal N_{\Null}\}.$
\end{enumerate}\end{defn}

One can easily see from the above definitions that if $\mathcal K $
and $\mathcal N$ are null (i.e., $\mathcal K= \mathcal K_{\Null}$
and $\mathcal N= \mathcal N_{\Null}$) and the chain is exact, then
$\mathcal M $ is null as well.

\begin{defn} \label{t-trivialmorhphism1}$ $
Let $f: \mathcal M \to \mathcal N$ be a morphism of $\tT$-modules.
\begin{itemize}
\item
The \textbf{$\tT$-module kernel} $\tT$-$\ker f$ of $f$ is $\{ a \in
\tT: f(a) \in \mathcal N_{\Null}\}.$
\item
$f$ is \textbf{null}  if $f(a) \in \mathcal N_{\Null}$ for all $a
\in \tT_{\mathcal M}$, i.e. $\tT$-$\ker f = \tT_\mathcal M $.
\item
The \textbf{$\tT$-module image} $f_{\tT_{\mathcal M}}(\mathcal M)$
is the $\tT$-submodule spanned by $\{ f(a): a  \in \tT_{\mathcal
M}\}$.
\item
$f$ is $\Null$-\textbf{monic} if $f(a_0) = f(a_1)$ implies that $a_0
(-) a_1 \in \mathcal M_{\Null}.$
\item
$f$ is $\Null$-\textbf{onto} if $f_{\tT_{\mathcal M}}(\mathcal M) +
\mathcal N_{\Null}=\mathcal N.$
\end{itemize}
\end{defn}

Thus, the null morphisms are closed null in the categorical sense,
and
  take the place of the $\zero$ morphism. Since   $\zero \notin
\tT_{\mathcal M}$, we might take $f(\tT_{\mathcal M})$ to be any
tangible constant $z$, i.e., $z \in \tT_{\mathcal N}$.
%



\subsection{The  category of congruences of  $\tT$-module  systems}\label{surpre31}$ $

We have the analogous results for $\tT$-congruences. The next
definition takes into account that any $\tT$-congruence contains the
diagonal.

\begin{defn}\label{cok8} For any
$\tT$-congruence $\Cong$ and any morphism $f: \mathcal M \to
\mathcal N$, define the  \textbf{congruence image} $f(\mathcal M
)_{\congg}$ to be the $\tT$-congruence \begin{equation}\label{cong1}
\Diag_\mathcal N + \left\{ \left(\sum_i f(a_i), \sum_j f(a'_j)\right):   \
\left(\sum a_i,\sum a'_j\right) \in \Cong \right\}.\end{equation}


The $\tT$-\textbf{congruence kernel} \, $\TkerC (f)$ is the
$\tT$-congruence of $\mathcal M$, generated by $$\{ (a_0,a_1)\in \tT_{\mathcal M} :
f(a_0) = f(a_1) \}.$$
\end{defn}

For $f$, the term in the summation in \eqref{cong1} is $\{
(f(x_0),f(x_1)) : x_i\in \mathcal M, \, (x_0, x_1) \in \Cong \}$.

 In particular,   $ f(\mathcal M)_{\congg} $ is $\Diag
_\mathcal N + \left\{ \left(\sum_i f(a_i), \sum_j f(a'_j)\right):
\sum a_i = \sum a'_j\right\},$ the $\tT$-congruence of $\mathcal N$
generated by $\Diag _\mathcal N$ and $\{(f(a_1),f(a_2)) : a_i\in
\tT_{\zero,\mathcal M} \}$.

The fact that  $ f(\Cong) $ is more complicated for
$\preceq$-morphisms presents serious obstacles later on.

\begin{defn} A  congruence morphism $f : \Cong \to
\Cong'$ is \textbf{trivial} if $f(\zero) = \zero$ and $ f(\Cong)
\subseteq \Diag_{\Cong'}$.

\end{defn}
%

\begin{lem}\label{prescon} For any   morphism $f:  \mathcal M \to \mathcal
N$, $ \TkerC (f) = \{ (x,x') \in  \mathcal M : f(x) = f(x')\}.$
 \end{lem}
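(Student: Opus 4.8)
The claim is that for any morphism $f : \mathcal M \to \mathcal N$ of $\tT$-module systems, the $\tT$-congruence kernel $\TkerC(f)$ — defined as the $\tT$-congruence of $\mathcal M$ generated by $\{(a_0, a_1) \in \tT_{\mathcal M} : f(a_0) = f(a_1)\}$ — coincides with the full relation $\Cong_f := \{(x, x') \in \mathcal M \times \mathcal M : f(x) = f(x')\}$. One inclusion is immediate: every generator $(a_0, a_1)$ with $a_0, a_1 \in \tT_{\mathcal M}$ and $f(a_0) = f(a_1)$ lies in $\Cong_f$, and $\Cong_f$ is itself a congruence (it is the kernel pair of $f$ in the universal-algebra sense, hence closed under all the operations in the signature, including $(-)$ and addition, and it is an equivalence relation), so the $\tT$-congruence it generates is contained in $\Cong_f$. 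Thus $\TkerC(f) \subseteq \Cong_f$.

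For the reverse inclusion $\Cong_f \subseteq \TkerC(f)$, the plan is to take an arbitrary $(x, x') \in \mathcal M \times \mathcal M$ with $f(x) = f(x')$ and express it as an element of the $\tT$-congruence generated by the tangible generators. Since $\mathcal M$ is a $\tT$-module system, $\tT_{\mathcal M}$ (together with $\zero$) generates $(\mathcal M, +)$, so write $x = \sum_{i=1}^t a_i$ and $x' = \sum_{j=1}^u a_j'$ with $a_i, a_j' \in \tT_{\mathcal M}$. First I would observe that because $f$ is additive (a morphism of $\tT$-modules in the sense relevant here — note $\TkerC$ is built from the underlying morphism, and here we should check whether $f$ is a homomorphism or merely a $\preceq$-morphism), $f(x) = \sum f(a_i)$ and $f(x') = \sum f(a_j')$. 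Then $(x, x') = \left(\sum_i a_i,\, \sum_j a_j'\right)$, and since $\sum_i f(a_i) = \sum_j f(a_j')$, this pair is of exactly the form that generates the congruence additively from tangible pairs: it is the additive sum of $(a_i, \zero)$-type and diagonal-type pieces glued through the relation $f(x)=f(x')$. More carefully, I would note that $(x, x') = \left(\sum a_i, \sum a_i\right) + \left(\zero, \left(\sum a_j'\right) (-) \left(\sum a_i\right)\right)$ is not quite the right decomposition; instead the cleanest route is to invoke that $\TkerC(f)$, being additively generated by tangible pairs $(a,b)$ with $f(a) = f(b)$, contains the pair $\left(\sum_i a_i, \sum_j a_j'\right)$ directly once we know it contains enough tangible pairs and is closed under addition of pairs.

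The key technical point — and the main obstacle — is showing that $\left(\sum a_i, \sum a_j'\right)$ genuinely lies in the congruence \emph{generated} by tangible pairs, not merely that it satisfies $f(\cdot) = f(\cdot)$ on the two coordinates. The subtlety is the same one flagged repeatedly in the paper: the naive candidate relation need not a priori be closed under transitivity or be additively generated from tangibles. I would resolve this by exploiting that $\mathcal M$ is a \emph{module system} over a ground triple, so by the structure already developed ($\mathcal M^\circ \subseteq$ every submodule, unique negation, reversibility where available) the relation $f(x)=f(x')$ can be witnessed tangibly: for each tangible decomposition one can pair up summands. Concretely, I would argue that the pair $(x,x')$ equals $\sum_i (a_i, a_i) +$ (a correction term living in the congruence generated by the tangible relations $f(a) = f(b)$), using that $f$ sends $\tT_{\mathcal M}$ into $\tT_{\mathcal N} \cup \{\zero\}$ by the definition of morphism of pseudo-triples. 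If $f$ is merely a $\preceq$-morphism rather than a homomorphism, one additionally invokes Proposition~\ref{endo3} and the $\tT$-admissibility considerations to recover the equalities $f(x) = \sum f(a_i)$; in the homomorphism case this is automatic. Once additivity of $f$ on these representations is in hand, the identity $\Cong_f \subseteq \TkerC(f)$ follows because $\TkerC(f)$ is by definition closed under addition of generating pairs and contains $(a_i, a_j')$ whenever we can match indices with $f(a_i) = f(a_j')$ — and the global relation $\sum f(a_i) = \sum f(a_j')$ lets us produce such a matching after passing, if necessary, to a common refinement using null elements. Combining both inclusions gives $\TkerC(f) = \Cong_f$.
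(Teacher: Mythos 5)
Your proof proposal is not complete, and the gap you flag in your own argument is real — but it is worth noting that the paper's proof is far shorter than yours and addresses only one of the two inclusions.

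The paper's proof is a single computation: it writes $x = \sum x_i$, $x' = \sum x_j'$, $y = \sum y_i$, $y' = \sum y_j'$ in terms of tangibles and observes that $f(x) = f(x')$ and $f(y) = f(y')$ imply $f(x+y) = f(x'+y')$. That is, it verifies that $\Cong_f := \{(x,x') : f(x)=f(x')\}$ is closed under addition of pairs expressed via tangible decompositions. Since a $\tT$-congruence is by definition additively generated by tangible pairs, this observation (together with the other congruence closure properties, which are immediate for a homomorphism) yields $\TkerC(f) \subseteq \Cong_f$: any element of the generated $\tT$-congruence is built additively from tangible pairs on which $f$ agrees, hence lands in $\Cong_f$. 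You reached this inclusion more abstractly by noting that $\Cong_f$ is the kernel pair of $f$ and hence a congruence containing the generators; that is fine and essentially equivalent.

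The real issue is the reverse inclusion $\Cong_f \subseteq \TkerC(f)$, which you correctly identify as the obstacle. Your proposed resolution — that "for each tangible decomposition one can pair up summands" and then pass to "a common refinement using null elements" — does not hold up. Given $x = \sum_i a_i$, $x' = \sum_j a_j'$ with $f(x) = f(x')$, there is in general no way to decompose the pair $(x,x')$ as a sum of tangible pairs $(a,b)$ with $f(a) = f(b)$: the equality $\sum_i f(a_i) = \sum_j f(a_j')$ is a relation among the sums in $\mathcal N$, and need not factor through any individual matching of summands. You gesture at reversibility, unique negation, and $\mathcal M^\circ$-containment, but none of these deliver the needed decomposition, and you do not actually invoke any of them to produce one. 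So this direction remains open in your write-up. (The paper's own proof does not address this direction at all, so your unease is well-placed — but a complete proof would have to supply this step, and your proposal does not.)
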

 \begin{proof} Write $x = \sum x_i$ and $x' = \sum x_j'$ and $y = \sum y_i$ and  $y' = \sum y_j'$. If $f(x) =
 f(x')$ and $f(y) =
 f(y')$, then $ f(x+y)=  \sum f(x_i)+  \sum f(y_i)= \sum f(x_j')+\sum f(y_j') =
 f(x'+y')$.
 \end{proof}

\begin{lem}\label{monep}
For any  $\tT_{\mathcal M}$-morphism $f:  \mathcal M \to \mathcal
N$, the induced morphism $\widehat f:\widehat{\mathcal M} \to
\widehat{\mathcal N}$ is  $\Null$-onto if and only if $f$ is epic.
\end{lem}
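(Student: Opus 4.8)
The plan is first to make the hypothesis ``$\widehat f$ is $\Null$-onto'' completely explicit, and then to match it with epicness by the usual ``cokernel'' manoeuvre, carried out inside the symmetrization. For the first step, recall that $\tT_{\widehat{\mathcal M}}=(\tT_{\mathcal M}\times\{\zero\})\cup(\{\zero\}\times\tT_{\mathcal M})$ and that $\widehat f$ acts componentwise, so $\widehat f(a,\zero)=(f(a),\zero)$ and $\widehat f(\zero,a)=(\zero,f(a))$; hence the $\tT$-submodule of $\widehat{\mathcal N}$ spanned by $\widehat f(\tT_{\widehat{\mathcal M}})$ is exactly $I\times I$, where $I:=f_{\tT_{\mathcal M}}(\mathcal M)$. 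Since, by Definition~\ref{symr1}, $\widehat{\mathcal N}_{\Null}=\widehat{\mathcal N}^{\,\circ}=\{(b,b):b\in\mathcal N\}$, the statement ``$\widehat f$ is $\Null$-onto'' unwinds to: every $(c_0,c_1)\in\mathcal N\times\mathcal N$ has the form $(u+b,\,v+b)$ with $u,v\in I$ and $b\in\mathcal N$. Write $\mathcal N':=(I\times I)+\widehat{\mathcal N}_{\Null}$ for the corresponding candidate $\tT$-submodule system of $\widehat{\mathcal N}$; verifying that it actually satisfies Definition~\ref{idealdef}(ii),(iii) is a preliminary I would settle at the outset.

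For ``$\widehat f$ $\Null$-onto $\Rightarrow$ $f$ epic'', suppose $g_1f=g_2f$ for homomorphisms $g_1,g_2\colon\mathcal N\to\mathcal P$. Then $g_1$ and $g_2$ agree (up to null) on each $f(a)$, $a\in\tT_{\mathcal M}$, hence on the $\tT$-submodule $I$; applying the decomposition above to pairs $(c,\zero)$ and using $g_i(\zero)=\zero$, $g_i(\mathcal N_{\Null})\subseteq\mathcal P_{\Null}$, and the negation map of $\mathcal P$ (so that $g_1(-)g_2$ is a morphism of pseudo-triples detecting null), one shows $g_1(c)(-)g_2(c)\in\mathcal P_{\Null}$ for all $c$, i.e.\ $g_1=g_2$ in the N-category. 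For the converse ``$f$ epic $\Rightarrow$ $\widehat f$ $\Null$-onto'', I would form the quotient $\mathcal C:=\widehat{\mathcal N}/\Cong_{\mathcal N'}$ with its quotient morphism $q$ and the null morphism $z\colon\widehat{\mathcal N}\to\mathcal C$. Since $\widehat f(\tT_{\widehat{\mathcal M}})\subseteq\mathcal N'$, the composites $q\widehat f$ and $z\widehat f$ coincide (both are null); precomposing with the embeddings $\iota\colon\mathcal M\hookrightarrow\widehat{\mathcal M}$, $a\mapsto(a,\zero)$ and $\iota'\colon\mathcal N\hookrightarrow\widehat{\mathcal N}$, $c\mapsto(c,\zero)$, and using that $f$ is epic gives $q\iota'=z\iota'$ on the first-factor copy of $\mathcal N$; the switch symmetry (all maps respect $(-)$, which on $\widehat{(\cdot)}$ is the switch, and $\tT_{\widehat{\mathcal N}}$ is generated by the two copies of $\tT_{\mathcal N}$) then upgrades this to $q=z$. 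Hence $\Cong_{\mathcal N'}$ is the improper congruence, which forces $\mathcal N'=\widehat{\mathcal N}$ via the submodule--congruence correspondence (cf.\ Proposition~\ref{ideal}), i.e.\ $\widehat f$ is $\Null$-onto.

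The main obstacle is the absence of additive cancellation in $\mathcal N$ and $\mathcal P$, which is precisely what defeats the naive ``they agree on the image, so cancel'' argument; passing to the symmetrization is the device that converts the question into one about null submodules, and the explicit description $\widehat{(\cdot)}_{\Null}=\{(b,b)\}$ of Definition~\ref{symr1}, together with Lemma~\ref{prescon} (so that $\TkerC$ and the congruence images of Definition~\ref{cok8} behave as in the classical case), is what lets both implications close. A secondary technical point, to be settled before anything else, is that $\mathcal N'=(I\times I)+\widehat{\mathcal N}_{\Null}$ genuinely satisfies the axioms of a $\tT$-submodule system, so that the quotient $\mathcal C$ is legitimate; and one must fix the precise reading of ``epic'' — here it is epicness in the N-categorical sense, i.e.\ modulo null morphisms.
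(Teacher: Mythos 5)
Your explicit unwinding of ``$\widehat f$ is $\Null$-onto'' as $(I\times I)+\{(b,b):b\in\mathcal N\}=\widehat{\mathcal N}$ is a useful clarification of what the paper leaves implicit, and your forward direction works, though the final step needs filling in: from $c=u+b$, $\zero=v+b$, and $g_1(v)=g_2(v)=:w$ you obtain only $h(b)+w^\circ=\zero$ for $h=g_1(-)g_2$, and you should then record the elementary fact that $p+c^\circ=\zero$ forces $p=(p+c)^\circ\in\mathcal P^\circ$ (a short computation using that $(-)$ is an additive automorphism), which is what actually places $h(b)$, hence $h(c)$, in $\mathcal P_\Null$. The paper's forward direction is shorter: it reads ``epic'' as ``$gf$ null $\Rightarrow g$ null'' and substitutes $\widehat{\mathcal N}=\widehat f_{\tT_{\widehat{\mathcal M}}}(\widehat{\mathcal M})+\widehat{\mathcal N}_\Null$ into $\widehat g$, landing in $\widehat{\mathcal P}_\Null+\widehat{\mathcal P}_\Null\subseteq\widehat{\mathcal P}_\Null$.

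The converse has a genuine gap. You assert that $q\widehat f$ and $z\widehat f$ ``coincide (both are null),'' but in an $N$-category two null morphisms between the same objects need not be equal --- there is no unique zero morphism in this setting --- so nullity of both does not yield $q\iota'f=z\iota'f$, and the appeal to epicness of $f$ to cancel $f$ never gets started. The paper sidesteps this by reading epicness $N$-categorically and quotienting $\mathcal N$ by the congruence image $f(\mathcal M)_{\congg}$: then $gf$ is null by construction, and if $\widehat f$ is not $\Null$-onto then $g$ is not null, so $f$ is not epic. If you want to keep the quotient of $\widehat{\mathcal N}$ by $\Cong_{\mathcal N'}$, argue instead from the nullity of $q\widehat f$ together with $N$-epicness of $\widehat f$ (no auxiliary $z$ needed), and note that recovering $\mathcal N'=\widehat{\mathcal N}$ from nullity of $q$ via Proposition~\ref{ideal} presupposes $\tT$-reversibility, which should be stated. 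There is also a mismatch between your two halves: the forward direction proves ``$g_1f=g_2f\Rightarrow g_1(-)g_2$ null'' while the converse invokes ``$g_1f=g_2f\Rightarrow g_1=g_2$''; these are different readings of ``epic,'' so as written the two halves do not assemble into a single equivalence.
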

 \begin{proof}
If $\widehat f:\widehat{\mathcal M} \to \widehat{\mathcal N}$  is
$\Null$-onto and $\widehat{gf}$ is trivial, then
$\widehat{gf}(\mathcal
 M)
 = \widehat g (\widehat{\mathcal M}),$ implying $\widehat g$ is trivial.\\
Conversely, assume that $\widehat f:\widehat{\mathcal M} \to
\widehat{\mathcal N}$  is not $\Null$-onto.
  Then take the canonical map $g : \mathcal N \to \mathcal N/f(\mathcal M)_{\congg}$ to get $gf$ trivial,
 but $g$ is not trivial, so $f$ is not epic.
\end{proof}

To construct cokernels, we define $\mathcal N \to \mathcal N
/f(\mathcal M)_{\congg},$ which will turn out to be the
categorical  cokernel of $ f.$ 

\begin{lem}\label{kercok0} For any  morphism $f: \mathcal M \to \mathcal N$, there is a $\Null$-monic $\bar f :\mathcal M/\TkerC (f) \overset
{\overline{f}} \to  \mathcal N,$ given by  $\overline{f} ([a]) =
f(a)$, where $[a]$ is the equivalence class of $a \in \tT$ under the
congruence kernel $\TkerC (f)$.
\end{lem}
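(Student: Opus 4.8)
The plan is to obtain $\bar f$ as the factorization of $f$ through the canonical quotient morphism $\pi\colon\mathcal M\to\mathcal M/\TkerC(f)$, $a\mapsto[a]$, supplied by Lemma~\ref{tcon1}, and then to read off the $\Null$-monic property directly from the description of the congruence kernel.

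First I would settle well-definedness. By Lemma~\ref{prescon}, $\TkerC(f)=\{(x,x')\in\mathcal M:f(x)=f(x')\}$, so whenever $a,a'$ represent the same class we have $(a,a')\in\TkerC(f)$, hence $f(a)=f(a')$; thus $[a]\mapsto f(a)$ is independent of the chosen representative, and $\bar f\colon\mathcal M/\TkerC(f)\to\mathcal N$ is a well-defined map with $f=\bar f\circ\pi$. Conversely, and crucially for the last step, the same lemma gives the implication $f(a)=f(a')\Rightarrow[a]=[a']$.

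Next I would check that $\bar f$ is a morphism of the same kind as $f$ (homomorphism or $\preceq$-morphism). Since $\pi$ is a surjective homomorphism of $\tT$-module systems, every element of $\mathcal M/\TkerC(f)$ has the form $[x]$ for some $x\in\mathcal M$, and the axioms transport along $f=\bar f\circ\pi$: for instance $\bar f([x]+[y])=\bar f([x+y])=f(x+y)$, which equals (resp.\ is $\preceq'$) $f(x)+f(y)=\bar f([x])+\bar f([y])$; similarly $\bar f((-)[x])=f((-)x)=(-)f(x)$, the value $\bar f(a[x])=f(ax)$ relates to $a\bar f([x])$ just as for $f$, tangible classes go to $\tT_\mathcal N\cup\{\zero\}$, $\zero$ goes to $\zero$, and --- using that the surpassing relation on the quotient and the submodule $(\mathcal M/\TkerC(f))_\Null$ are induced so that $\pi$ is a morphism --- conditions (iv) and (v) of Definition~\ref{mor} for $\bar f$ reduce to the corresponding conditions for $f$.

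Finally I would verify $\Null$-monicity. Suppose $\bar f([a_0])=\bar f([a_1])$ with $[a_i]\in\tT_{\mathcal M/\TkerC(f)}$, i.e.\ $f(a_0)=f(a_1)$ with $a_i\in\tT_\mathcal M$; then $(a_0,a_1)\in\TkerC(f)$ (it is one of the defining generators, or invoke Lemma~\ref{prescon}), so $[a_0]=[a_1]$ and therefore
$$[a_0](-)[a_1]=[a_0](-)[a_0]=([a_0])^\circ\in(\mathcal M/\TkerC(f))^\circ\subseteq(\mathcal M/\TkerC(f))_\Null,$$
which is precisely the $\Null$-monic condition of Definition~\ref{t-trivialmorhphism1}. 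I do not anticipate a genuine obstacle here: the only point needing care is well-definedness, and that is immediate once Lemma~\ref{prescon} identifies $\TkerC(f)$ with the equal-image relation; the remaining checks are a routine transport of the morphism axioms of $f$ across the surjection $\pi$. (If one wants the statement for arbitrary elements in place of tangible ones, the same computation applies verbatim, again via Lemma~\ref{prescon}.)
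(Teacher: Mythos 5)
Your argument is correct and follows the same route as the paper: well-definedness and injectivity of $\bar f$ are read off from the identification of $\TkerC(f)$ with the equal-image congruence (Lemma~\ref{prescon}, together with the defining generators), and the morphism axioms transport along the factorization $f=\bar f\circ\pi$. You supply rather more detail than the paper's terse proof (which simply asserts $\bar f$ is ``a monic as it is injective''), in particular spelling out the step from $[a_0]=[a_1]$ to $[a_0](-)[a_1]=[a_0]^\circ\in(\mathcal M/\TkerC(f))_\Null$ required by Definition~\ref{t-trivialmorhphism1}; that is a genuine gap in the published one-liner and you are right to close it.
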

\begin{proof} If $(a,a') \in \TkerC (f),$ then $f(a) = f(a')$,
showing that $\overline{f}$ is well-defined. Clearly $\overline{f}$
is a $\tT$-morphism since $\Tker (f)$ is a $\tT$-congruence.
Finally, one can easily check that $\overline{f}$ is a monic as it
is injective.
\end{proof}

\begin{lem}\label{kercok10}
  A  $\tT_{\mathcal M}$-morphism $f:  \mathcal M \to \mathcal N$  is  $\Null$-monic  if and only if $ \TkerC (f)$ is
 diagonal.
\end{lem}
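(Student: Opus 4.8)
The plan is to read the characterization off directly from Lemma~\ref{prescon}. By that lemma, $\TkerC(f) = \{(x,x') \in \mathcal M : f(x) = f(x')\}$; and since this $\tT$-congruence is, by construction, generated by the tangible pairs $(a_0,a_1) \in \tT_{\mathcal M}$ with $f(a_0) = f(a_1)$, the assertion that $\TkerC(f)$ equals the diagonal $\Diag_{\mathcal M}$ is equivalent to the statement that $f(a_0) = f(a_1)$ forces $a_0 = a_1$ for all $a_0, a_1 \in \tT_{\mathcal M}$. So the two conditions to be matched up are: (a) $f(a_0) = f(a_1) \Rightarrow a_0 = a_1$ on $\tT_{\mathcal M}$, and (b) $f(a_0) = f(a_1) \Rightarrow a_0 (-) a_1 \in \mathcal M_{\Null}$, the latter being the definition of $\Null$-monic.

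First I would handle the direction ``$\TkerC(f)$ diagonal $\Rightarrow$ $f$ is $\Null$-monic'': if $f(a_0) = f(a_1)$ then $(a_0,a_1) \in \TkerC(f) = \Diag_{\mathcal M}$, so $a_0 = a_1$, whence $a_0 (-) a_1 = a_0^\circ \in \mathcal M^\circ = \mathcal M_{\Null}$ (using $\preceq = \preceq_\circ$ and Remark~\ref{PO1}). This needs no extra hypothesis. For the converse, suppose $f$ is $\Null$-monic and $(a_0,a_1)$ is a tangible generating pair of $\TkerC(f)$, i.e.\ $a_0, a_1 \in \tT_{\mathcal M}$ with $f(a_0) = f(a_1)$; then $a_0 (-) a_1 = a_0 + ((-)a_1) \in \mathcal M_{\Null} = \mathcal M^\circ$. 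Since $\mathcal M$ is a triple, both $a_0$ and $(-)a_1$ lie in $\tT_{\mathcal M}$, and the unique-negation property forces $(-)a_1 = (-)a_0$, hence $a_1 = a_0$. Thus every generator of $\TkerC(f)$ is diagonal, so $\TkerC(f) = \Diag_{\mathcal M}$; equivalently, by Lemma~\ref{prescon}, $f(x)=f(x')$ implies $x=x'$.

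The only genuinely delicate step is the last one of the converse: passing from $a_0 + ((-)a_1) \in \mathcal M^\circ$ to $a_0 = a_1$. This is precisely where one must invoke that $\mathcal M$ is a triple with unique negation (so that $\tT_{\mathcal M} \cap \mathcal M^\circ = \emptyset$ is strengthened to the cancellation-type conclusion of Definition~\ref{metadef}(i), built into the notion of system); without some such hypothesis, $\Null$-monicity is strictly weaker than triviality of the congruence kernel, as a tangible pair summing into $\mathcal M^\circ$ need not be diagonal. Everything else is routine bookkeeping in the spirit of Lemmas~\ref{prescon} and~\ref{kercok0}.
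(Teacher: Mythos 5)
Your proof is correct and takes a genuinely different route from the paper's. You argue directly from the elementwise Definition~\ref{t-trivialmorhphism1} of $\Null$-monic: the forward direction goes through Lemma~\ref{prescon} to place $(a_0,a_1)$ in $\TkerC(f)$, and the converse closes by applying unique negation to a tangible generating pair, getting $a_0 = a_1$ from $a_0(-)a_1 \in \mathcal M^\circ$. The paper's own proof is cast categorically instead: when $\TkerC(f)$ is nondiagonal it produces an auxiliary morphism out of $\TkerC(f)$ that composes trivially with $f$, and conversely argues that diagonality of $\TkerC(f)$ forces $g$ to be trivial whenever $fg$ is. That argument really certifies the N-categorical condition (that $fg$ null implies $g$ null), which is a priori a different property from the elementwise $\Null$-monicity stated in Definition~\ref{t-trivialmorhphism1}; your approach is the one that matches the definition as written, and it also makes explicit where the unique-negation hypothesis of a system enters, which the paper leaves implicit. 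What the paper's route buys is alignment with the N-categorical machinery of Appendix~B; what yours buys is a strictly elementary argument that tracks the stated definition more tightly.
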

\begin{proof}
Assume first that the $\tT$-congruence $ \TkerC (f)$ is not diagonal, i.e., $b
= \sum \a_i \ne \sum a_j' = b'$ although $\sum f(a_i) = \sum
f(a_j').$ Define the cokernel $\bar f: \TkerC (f) \to \mathcal M$ by
$\bar f (a) = \sum f(a_i)$ where $a = \sum a_i.$   $f \bar f$ is
trivial,  implying $ \TkerC (f)$ is trivial.

On the other hand, if  $ \TkerC (f)$ is   diagonal, and $fg $ is
trivial, then $g( \mathcal M )$ is diagonal, i.e., $g$ is trivial.
\end{proof}
%

%

\begin{lem}\label{kercok}
Any  morphism  $ f: \mathcal M \to \mathcal N $ is composed as  $
\mathcal M  \to \mathcal M/\TkerC (f) \overset {\overline{f}} \to
\mathcal N,$ where  the first map is the canonical  morphism, and
the second map is given in Lemma~\ref{kercok0}.
\end{lem}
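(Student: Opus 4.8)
The plan is to recognize that this statement is simply the canonical ``first isomorphism theorem'' factorization, so essentially nothing new has to be constructed: the mediating map $\overline{f}$ was already produced in Lemma~\ref{kercok0}, and what remains is to identify the first map and check that the composite recovers $f$.

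First I would take the canonical map $\pi : \mathcal M \to \mathcal M/\TkerC(f)$ given by $a \mapsto [a]$, where $[a]$ denotes the equivalence class of $a$ under the $\tT$-congruence $\TkerC(f)$. Just as in Lemma~\ref{tcon1}, this is a $\tT$-morphism of module systems: it carries tangibles to tangibles, it commutes with $(-)$ since $(-)[a] = [(-)a]$, it sends $\mathcal M_{\Null}$ into $(\mathcal M/\TkerC(f))_{\Null}$, and it is monotone for $\preceq$ because the surpassing relation on the quotient is the one induced componentwise from $\mathcal M$. Since $\TkerC(f)$ is a $\tT$-congruence, $\pi$ is well-defined, and its image generates $\mathcal M/\TkerC(f)$ additively.

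Next I would invoke Lemma~\ref{kercok0}, which furnishes the $\Null$-monic $\tT$-morphism $\overline{f} : \mathcal M/\TkerC(f) \to \mathcal N$ with $\overline{f}([a]) = f(a)$. Composing, one has $(\overline{f}\circ\pi)(a) = \overline{f}([a]) = f(a)$ for every $a \in \tT_{\mathcal M}$; since $\tT_{\mathcal M}$ generates $(\mathcal M,+)$ and both $\overline{f}\circ\pi$ and $f$ are additive (being $\tT$-morphisms), they agree on all of $\mathcal M$, which yields the asserted factorization $f = \overline{f}\circ\pi$ with the first map canonical and the second map as in Lemma~\ref{kercok0}.

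There is no serious obstacle here: the only point worth a remark is that $\pi$ genuinely is a morphism in the relevant category, i.e.\ that it respects $\preceq$ and the null submodule, but this is automatic because every piece of structure on $\mathcal M/\TkerC(f)$ is defined to be induced from $\mathcal M$. The substantive content — existence of $\overline{f}$ and its being $\Null$-monic — has already been discharged in Lemma~\ref{kercok0}, so this lemma is essentially a bookkeeping statement packaging the two maps together.
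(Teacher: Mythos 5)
Your proposal is correct and follows the same route as the paper: factor through the canonical projection onto $\mathcal M/\TkerC(f)$ and then apply Lemma~\ref{kercok0} for the induced monic $\overline{f}$. The paper's own proof is a terser version of exactly this — noting $\pi$ is a $\tT$-morphism because $\TkerC(f)$ is a congruence and then citing Lemma~\ref{kercok0} — so your extra verification that the composite agrees with $f$ on $\tT_{\mathcal M}$ and hence on all of $\mathcal M$ simply makes the same argument explicit.
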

\begin{proof}
The first map sends $a \in \mathcal M$ to $\bar{a}$, where $\bar{a}$
is the equivalence class of $a$ under $\TkerC (f)$. This is clearly
a $\tT$-morphism and  since $\TkerC (f)$ is a congruence relation.
The second map is just Lemma~\ref{kercok0}.
\end{proof}

We call $\bar f$   the monic \textbf{associated} to the
$\tT$-congruence kernel.

\section{Functors among semiring ground triples and systems}\label{func1}$ $

In this section we recapitulate the previous connections among the
notions of systems, viewed categorically. We focus on ground
triples, but at the end indicate some of the important functors for
module systems, in prelude to \cite{JMR1}.


\begin{nota}
Let us introduce the following notations:
\begin{itemize}
\item
$\mathfrak{Rings}$= the category of commutative rings.
\item
$\mathfrak{Doms}$= the category of integral domains.
\item
$\mathfrak{SRings}$= the category of commutative semirings.
\item
$\mathfrak{SDoms}$= the category of commutative semidomains.
\item
$\mathfrak{SDM}$= the category whose objects are pairs $(c S,\tT)$
consisting of a semiring $\mathcal S$ and a multiplicative submonoid
$\tT$ of $\mathcal S$ which (additively) generates $\mathcal S$ and
does not contain $0_\mathcal S$. A homomorphism from $(\mathcal
S_1,\tT_1)$ to $(\mathcal S_2,\tT_2)$ is a semiring homomorphism
$f:\mathcal S_1 \to \mathcal S_2$ such that $f(\tT_1) \subseteq
\tT_2$ and $f(\tT_1)$ generates $\mathcal S_2$.
\item
$\mathfrak{SRT}_{\tT}$= the category of $\tT$-triples, with
$\preceq_\circ$-morphisms, whose objects are semirings.
\item
$\mathfrak{HDoms}$= the category of hyperrings without
multiplicative zero-divisors, but with $\preceq$-morphisms.
\item
$\mathfrak{HFields}$= the category of hyperfields.
\item
$\mathfrak{FRings}_w$= the category of fuzzy rings with
$\preceq$-morphisms (cf. \cite{GJL}).
\item
$\mathfrak{FRings}_{str}$= the category of coherent fuzzy rings
(cf. \cite{GJL} and \cite{Row16} for the notion of coherence).
\end{itemize}
\end{nota}


The following diagram illustrates how various categories are related
(note that this is not a commutative diagram, for instance,
$\textbf{e}\circ \textbf{t} \circ \textbf{i} \neq
\textbf{c}\circ\textbf{j}$):

\begin{equation}\label{algdiagram}
\begin{tikzpicture}[baseline=-0.8ex]
\matrix(m)[matrix of math nodes, row sep=7.5em, column sep=7em, text
height=1.5ex, text depth=0.25ex]
{\mathfrak{Doms}&   \mathfrak{SDoms}  & \mathfrak{SDM} & \mathfrak{SRT}_{\tT}\\
\mathfrak{HFields} &\mathfrak{HDoms}& \mathfrak{FRings}_w\\};
\path[->,font=\normalsize] (m-1-1) edge node[auto] {\textbf{j}}
(m-2-2) (m-1-1) edge node[auto] {\textbf{i}} (m-1-2) (m-1-2) edge
node[auto] {\textbf{t}} (m-1-3) (m-2-1) edge node[auto] {\textbf{k}}
(m-2-2) (m-2-1) edge node[auto] {\textbf{a}} (m-1-3)
(m-2-2) edge (m-2-3) (m-2-3) edge node[auto] {\textbf{d}}(m-1-4)
(m-1-3) edge node[auto] {\textbf{e}}(m-1-4) (m-2-2) edge node[auto]
{\textbf{c}} (m-1-4) (m-2-2) edge[dotted] node[auto] {\textbf{g}}
(m-2-3);
\end{tikzpicture}
\end{equation}
\vspace{0.3cm}

(If we are willing to bypass $\mathfrak{SDM}$ in this diagram, then
we could generalize the first two terms of the top row to
$\mathfrak{Rings}$ and $\mathfrak{SRings}$ and accordingly, in the second row, $\mathfrak{HDoms}$ can be generalized to the category of hyperrings.)

In the following propositions, we explain the functors in the above
diagram.  All of these functors are stipulated to preserve the
negation map.
 First, one can easily see that the functors
$\textbf{i},\textbf{j},\textbf{k}$ are simply embeddings. To be
precise:

\begin{prop}(The functors $\textbf{i}$, $\textbf{j}$, and $\textbf{k}$)
\begin{enumerate}
\item
The functor $\textbf{i}:\mathfrak{Doms} \to \mathfrak{SDoms}$,
sending an object $A$ to $A$ and a homomorphism $f \in
\Hom_{\mathfrak{Rings}}(A,B)$ to $f \in
\Hom_{\mathfrak{SRings}}(\textbf{i}(A),\textbf{i}(B))$, is fully
faithful.
\item
The functor $\textbf{j}:\mathfrak{Doms} \to \mathfrak{HDoms}$,
sending an object $A$ to $A$ and a homomorphism $f \in
\Hom_{\mathfrak{Rings}}(A,B)$ to $f \in
\Hom_{\mathfrak{HRings}}(\textbf{j}(A),\textbf{j}(B))$, is fully
faithful.
\item
The functor $\textbf{k}:\mathfrak{Hfields} \to \mathfrak{HDoms}$,
sending an object $A$ to $A$ and a homomorphism $f \in
\Hom_{\mathfrak{Hfields}}(A,B)$ to $f \in
\Hom_{\mathfrak{HRings}}(\textbf{k}(A),\textbf{k}(B))$, is faithful.
\end{enumerate}
\end{prop}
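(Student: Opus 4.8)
The statement to prove is that three functors $\textbf{i}$, $\textbf{j}$, $\textbf{k}$ are fully faithful (resp. faithful), so the plan is essentially bookkeeping: in each case one must check that the assignment on objects and morphisms genuinely defines a functor (preserves identities and composition), and then analyze the induced maps on hom-sets.

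\textbf{The plan.} For $\textbf{i}\colon\mathfrak{Doms}\to\mathfrak{SDoms}$: every integral domain $A$ is in particular a commutative semiring in which the nonzero elements are closed under multiplication, hence a commutative semidomain, and every ring homomorphism is a fortiori a semiring homomorphism; so $\textbf{i}$ is well-defined and clearly functorial (identities and composites of ring maps are the same whether viewed in $\mathfrak{Rings}$ or $\mathfrak{SRings}$). Faithfulness is immediate since $\textbf{i}$ is the identity on underlying maps. For fullness I would argue: given a semiring homomorphism $g\colon\textbf{i}(A)\to\textbf{i}(B)$ between (the semirings underlying) domains, $g$ automatically respects the additive inverses that exist in $A$ and $B$, because $g(-a)+g(a)=g((-a)+a)=g(0)=0$ forces $g(-a)=-g(a)$; hence $g$ is a ring homomorphism, i.e.\ lies in the image of $\textbf{i}$ on hom-sets. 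Thus $\textbf{i}$ induces a bijection $\Hom_{\mathfrak{Rings}}(A,B)\to\Hom_{\mathfrak{SRings}}(\textbf{i}A,\textbf{i}B)$.

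For $\textbf{j}\colon\mathfrak{Doms}\to\mathfrak{HDoms}$: a ring is the special case of a hyperring whose hyperaddition is single-valued, and a domain has no multiplicative zero-divisors, so the object assignment lands in $\mathfrak{HDoms}$; a ring homomorphism is in particular a $\preceq$-morphism of the associated systems, giving functoriality exactly as for $\textbf{i}$. Faithfulness is again trivial. For fullness the key observation is the same as above: a hyperring homomorphism (indeed any $\preceq$-morphism) $g$ between rings satisfies $g((-)a)=(-)g(a)$ by Proposition~\ref{endo3}, and since the target is a ring the hyperaddition is ordinary addition, so the condition $g(a+a')\preceq g(a)+g(a')$ collapses to equality; hence $g$ is an honest ring homomorphism. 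For $\textbf{k}\colon\mathfrak{HFields}\to\mathfrak{HDoms}$: a hyperfield is a hyperring without multiplicative zero-divisors (every nonzero element being invertible), so the object map is well-defined, and a morphism of hyperfields is by definition a hyperring morphism, giving functoriality; faithfulness holds because the underlying map is unchanged. One does \emph{not} claim fullness here, since a $\mathfrak{HDoms}$-morphism between hyperfields need not be a hyperfield morphism (it need not preserve inverses, e.g.\ it could send a nonzero element to $0$), which is exactly why the statement only asserts ``faithful'' for $\textbf{k}$.

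\textbf{Main obstacle.} The only nontrivial point is the fullness claims for $\textbf{i}$ and $\textbf{j}$, and there the crux is the automatic preservation of negation: one must verify that a semiring (resp.\ hyperring, resp.\ $\preceq$-) morphism between structures that happen to be rings is forced to respect the additive inverse. For $\textbf{i}$ this is the elementary cancellation argument $g(-a)=-g(a)$; for $\textbf{j}$ one invokes Proposition~\ref{endo3} together with the fact that $\preceq$ on a ring (with the trivial negation map, so $\mathcal A^\circ=\{0\}$ and $\mathcal A_\Null=\{0\}$) is just equality, whence conditions (i)--(iii) of Definition~\ref{mor} all become the ring-homomorphism identities. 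Everything else is the routine check that the three assignments preserve identities and composition, which is automatic since in all three cases the assignment on morphisms is ``view the same set-map in a larger category.''
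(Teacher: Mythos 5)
Your argument is correct, and it is actually far more explicit than the paper's, whose proof consists of the single sentence ``This is straightforward.'' The key observations you make — that a semiring homomorphism between rings automatically preserves additive inverses by the cancellation argument $g(-a)+g(a)=g(0)=0$, and that a $\preceq$-morphism between ordinary rings viewed as hyperrings collapses to a genuine ring homomorphism because hypersums are singletons so $\subseteq$ degenerates to equality — are exactly the content the paper is waving at. Faithfulness in all three cases is indeed immediate since the underlying set-maps are unchanged, and functoriality is trivial for the same reason.

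One small inaccuracy in an aside that does not affect the proof of the stated claim: your suggested reason that $\textbf{k}$ fails to be full (``it could send a nonzero element to $0$'') is not the right one, since a $\preceq$-morphism in the systemic sense must send $\tT$ to $\tT'$, hence already cannot send a nonzero element to $0$. The actual obstruction — which the paper records in the remark immediately following the proposition — is that $\mathfrak{HDoms}$ allows $\preceq$-morphisms (maps with $f(a\boxplus b)\subseteq f(a)\boxplus f(b)$) which are not genuine hyperring homomorphisms (equality), whereas $\mathfrak{HFields}$ carries only homomorphisms. Since the proposition only asserts faithfulness for $\textbf{k}$, this does not create a gap, but you should correct the parenthetical example if you keep it.
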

\begin{proof}
This is straightforward.
\end{proof}

\begin{rem}
The functor $\textbf{k}:\mathfrak{Hfields} \to \mathfrak{HDoms}$
cannot be full since $\preceq$-morphisms exist which are not
homomorphisms, cf.~\cite{Ju}.
\end{rem}

Now, for a commutative semidomain $A$, we let
$\textbf{t}(A)=(A,A-\{0_A\})$. It is crucial that $A$ is a
semidomain for $A-\{0_A\}$ to be a monoid. For a semiring
homomorphism $f:A_1 \to A_2$, we define
$\textbf{t}(f):(A_1,A_1-\{0_{A_1}\}) \to (A_2,A_2-\{0_{A_2}\})$
which is induced by $f$. Then, clearly $\textbf{t}$ is a functor
from $\mathfrak{SDoms}$ to $\mathfrak{SDM}$. In fact, we have the
following:

\begin{prop}\label{functor t}
The functor $\textbf{t}:\mathfrak{SDoms} \to \mathfrak{SDM}$ is fully
faithful.
\end{prop}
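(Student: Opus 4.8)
The plan is to verify that $\textbf{t}$ is faithful and full directly from the definitions. Faithfulness is immediate: if $f, g \colon A_1 \to A_2$ are semiring homomorphisms with $\textbf{t}(f) = \textbf{t}(g)$, then by construction $\textbf{t}(f)$ and $\textbf{t}(g)$ are the same underlying set maps as $f$ and $g$ respectively (the functor does not alter the map, only records the distinguished monoid alongside the semiring), so $f = g$. Thus the map $\Hom_{\mathfrak{SDoms}}(A_1, A_2) \to \Hom_{\mathfrak{SDM}}(\textbf{t}(A_1), \textbf{t}(A_2))$ is injective.

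For fullness, I would start with an arbitrary morphism $h \colon (A_1, A_1 \setminus \{0_{A_1}\}) \to (A_2, A_2 \setminus \{0_{A_2}\})$ in $\mathfrak{SDM}$. By the definition of $\mathfrak{SDM}$, $h$ is a semiring homomorphism $h \colon A_1 \to A_2$ satisfying $h(A_1 \setminus \{0_{A_1}\}) \subseteq A_2 \setminus \{0_{A_2}\}$ and such that $h(A_1 \setminus \{0_{A_1}\})$ generates $A_2$ additively. Since $h$ is already a semiring homomorphism $A_1 \to A_2$ between commutative semidomains, it is an object-level morphism of $\mathfrak{SDoms}$; I would then check that $\textbf{t}(h) = h$ as a morphism in $\mathfrak{SDM}$, which holds because $\textbf{t}$ applied to this semiring homomorphism produces precisely the pair-morphism induced by $h$, namely $h$ itself viewed with the canonical monoids. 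The only subtlety is confirming that the semiring homomorphism $h$ indeed lies in $\mathfrak{SDoms}$ — but $A_1$ and $A_2$ are semidomains by hypothesis, and $\mathfrak{SDoms}$ imposes no further condition on morphisms beyond being a semiring homomorphism, so there is nothing extra to verify.

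The one place requiring a moment of care — and the step I expect to be the main (mild) obstacle — is matching conventions: one must confirm that the morphism sets in $\mathfrak{SDM}$ between objects of the form $(A, A \setminus \{0_A\})$ are not strictly larger than $\Hom_{\mathfrak{SDoms}}(A_1, A_2)$. Since a $\mathfrak{SDM}$-morphism is by definition a semiring homomorphism with two side conditions (it preserves the distinguished monoid and its image generates the target), and the first of these is automatic for a semiring homomorphism between semidomains sending nonzero elements to nonzero elements — which follows from $A_2$ being a semidomain only if we also know $h$ is nonzero, a point that in fact needs the generation hypothesis — the generation condition $h(A_1 \setminus \{0_{A_1}\})$ generates $A_2$ is genuinely restrictive on $\mathfrak{SDM}$-morphisms but is likewise part of what $\textbf{t}(f)$ satisfies for any semiring homomorphism $f$ whose image generates. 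Here one uses that $f$ being a semiring homomorphism with $f(A_1)$ a subsemiring still might not generate $A_2$; but $\mathfrak{SDoms}$-morphisms are just semiring homomorphisms with no generation requirement. So strictly, $\textbf{t}$ is full only because the generation condition is vacuous in this context: the image of a semiring homomorphism from $A_1$ automatically generates the subsemiring $f(A_1)$, and when we regard $f \colon A_1 \to A_2$ the relevant object in $\mathfrak{SDM}$ uses $A_2 \setminus \{0\}$ as its monoid — so I would conclude by observing that every $\mathfrak{SDM}$-morphism between such objects is the image under $\textbf{t}$ of its underlying semiring homomorphism, giving surjectivity of $\textbf{t}$ on Hom-sets, hence fullness.

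Combining injectivity and surjectivity on Hom-sets yields that $\textbf{t} \colon \mathfrak{SDoms} \to \mathfrak{SDM}$ is fully faithful, as asserted.
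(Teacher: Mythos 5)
Your skeleton agrees with the paper's one-line proof (``clear from the definition''): $\textbf{t}$ acts as the identity on underlying maps, hence is faithful, and any $\mathfrak{SDM}$-morphism $h \colon \textbf{t}(A_1) \to \textbf{t}(A_2)$ is by definition a semiring homomorphism, hence a $\mathfrak{SDoms}$-morphism, with $\textbf{t}(h)=h$, giving fullness. However, the middle paragraph where you elaborate on the ``one place requiring care'' contains two false claims. First, you assert that the condition $h(A_1\setminus\{0_{A_1}\}) \subseteq A_2\setminus\{0_{A_2}\}$ follows from $A_2$ being a semidomain together with $h$ being nonzero; this is not so --- $\mathbb{N}[x,y] \to \mathbb{N}[x]$ with $y \mapsto 0$ is a nonzero semiring homomorphism between commutative semidomains that sends a nonzero element to zero. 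Second, you claim the generation condition is ``vacuous,'' conflating generating the image $h(A_1)$ (automatic) with generating the target $A_2$ (not automatic: $\mathbb{N}\hookrightarrow \mathbb{Q}_{\geq 0}$ fails it). Neither re-derivation is needed for fullness, since the side conditions hold \emph{by hypothesis} on $h$, so the correct move is simply to delete that paragraph. The genuine subtlety you circle around but do not name is that $\textbf{t}$ is not obviously well-defined as a functor in the first place: a generic semiring homomorphism between semidomains need not satisfy either $\mathfrak{SDM}$ side condition, so $\textbf{t}(f)$ may fail to be a morphism of $\mathfrak{SDM}$. The paper glosses over this when asserting that $\textbf{t}$ is a functor; granting that well-definedness, your conclusion and its structure are correct.
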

\begin{proof}
This is clear from the definition of $\textbf{t}$.
\end{proof}

\begin{rem}
One may notice that our construction of $\textbf{t}$ is not canonical since any semiring may have different sets of monoid generators. For instance, the coordinate ring of an affine tropical scheme may have different sets of monoid generators depending on torus embeddings, cf. \cite{GG}.
\end{rem}

The functors $\textbf{a}$ and $\textbf{g}$ are already constructed in \cite{GJL}. For the sake of completeness, we recall the construction.\\

Let $H$ be a hyperring. Then one can define the following set:
\begin{equation}\label{def: S(H) generating}
S(H):=\bigg\{\sum_{i=1}^n h_i \mid h_i \in H, \quad n \in
\mathbb{N}\bigg\}\subseteq \mathcal P(H),
\end{equation}
where $\mathcal{P}(H)$ is the power set of $H$. By
\cite[Theorem~2.5]{Row16}, \cite{GJL},   $S(H)$ is a semiring with
multiplication and addition as follows:
\begin{equation}\label{def: addition and multiplication}
(\sum_{i=1}^n h_i)(\sum_{j=1}^m h_j)=\sum_{i,j} h_ih_j \in S(H), \quad (\sum_{i=1}^n h_i)+(\sum_{i=1j}^m h_j)=\sum_{i,j}(h_i+h_i) \in S(H).
\end{equation}

Now, the functor $\textbf{a}:\mathfrak{Hfields} \to \mathfrak{SDM}$
sends any hyperfield $H$ to $(S(H), H^\times)$, i.e.,
$\textbf{a}(H)=(S(H),H^\times)$. Also, if $f:H_1 \to H_2$ is a
 homomorphism of hyperfields, then $f$ canonically induces a
homomorphism
\[
\textbf{a}(f):(S(H_1),H_1^\times,-) \to (S(H_1),H_1^\times,(-)),
\quad \textbf{a}(f)(\sum_{i=1}^n h_i)=\sum_{i=1}^n f(h_i).
\]
We emphasize that since the subcategory $\mathfrak{Hfields}$ of
$\mathfrak{HDoms}$ only has  homomorphisms, $\textbf{a}$ becomes
a~functor.

 The construction of the functor \textbf{g} is similar to
$\textbf{a}$; we use the powerset $\mathcal P(H)$ of $H$ instead
of $S(H)$ in this case. For details, we refer the readers to
\cite{GJL}. It is proved in \cite{GJL} that when one restricts the
functors to hyperfields, the functors \textbf{a} and \textbf{g} are
faithful, but not full.

\begin{rem}
Since a fuzzy ring assumes weaker axioms than semirings, the functor
\textbf{g} can be defined for all hyperrings, whereas the functor
\textbf{a} can be only defined for hyperfields with homomorphisms.
\end{rem}

Next, we construct the functor $\textbf{e}:\mathfrak{SDM} \to \mathfrak{SYS}_\tT$. To this end, we need to fix a negation map of interest, so a priori the functor $\textbf{e}$ is not canonical. For an object $(S,M)$ of $\mathfrak{SDM}$, we let $\textbf{e}(S,M)$ be the $\tT$-system $(\mathcal{A},\tT,(-),\preceq)$, where $\mathcal{A}=S$, $\tT=M$, and $(-)$ is the identity map 
and $\preceq = \preceq_\circ$. Since, we choose  $(-)$ to be   equality, any homomorphism $f:(S_1,M_1) \to (S_2,M_2)$ induces a homomorphism $\textbf{e}(f)$. \\

The functor $\textbf{c}$ is defined as follows: For a hyperring $R$ without zero-divisors,
we associate $S(R)$ as in \eqref{def: S(H) generating} and also
impose addition and multiplication as in \eqref{def: addition and
multiplication}.
 Now, the $\tT$-system $\textbf{c}(R)=(\mathcal{A},\tT,(-),\preceq)$
  consists of $\mathcal{A}=S(R)$, $\tT=R$, $(-):S(R) \to S(R)$ sending $A$ to $-A:=\{-a \mid a \in A\}$,
  where $-$ is the negation in $R$, and $\preceq$ is   set inclusion~$\subseteq$.
  One    checks easily that $\textbf{c}(R)$ is indeed a $\tT$-system and
  any homomorphism $f:R_1 \to R_2$ of hyperrings induces a morphism $\textbf{c}(f)$ of
  $\tT$-systems.

Finally, we review the functor $\textbf{d}:\mathfrak{FRings}_w \to
\mathfrak{SYS}_{\tT}$, defining $(-)a$ to be $\varepsilon a$.
\cite[Lemma~14.5]{Row16} shows how this can be retracted at times. One can easily see that the definition of coherent fuzzy rings is similar to $\tT$-systems; we only have to specify a negation map $(-)$ and a surpassing relation $\preceq$. To be precise, let $F$ be a fuzzy ring. The system $\textbf{d}(F)=(\mathcal{A},\tT,(-),\preceq)$ consists of $\mathcal{A}=F$, $\tT=\mathcal{A}^\times$, $(-):\mathcal{A} \to \mathcal{A}$ sending $a$ to $\varepsilon\cdot a$, and $\preceq$ is defined to be the equality. One can easily check that any $\preceq$-morphism $f:F_1 \to F_2$ of fuzzy rings induces a homomorphism $\textbf{d}(f)$ of the corresponding $\tT$-systems. \\


\begin{rem}
In the commutative diagram, one can think of forgetful functors in opposite directions. For instance, the functor $\textbf{t}$ has a forgetful functor (forgetting $\tT$) as an adjoint functor.
\end{rem}

\begin{rem}
Although we do not pursue them in this paper, we point out two
possible links to partial fields, first introduced by C.~Semple and
G.~Whittle \cite{SW} (see, also \cite{PZ} to study representability
of matroids). Recall that a commutative partial field
$\mathbb{P}=(R,G)$ is a commutative ring $R$ together with a
subgroup $G \leq R^\times$ of the group of multiplicative units of
$R$ such that $-1 \in G$.
\begin{enumerate}
\item
If one considers the subring $R'$ of $R$ which is generated by $G$,
then the pair  $(R',G,-,=)$ becomes a system.
\item
Any commutative partial field $\mathbb{P}=(R,G)$ gives rise to a
quotient hyperring $R/G$. This defines a functor from the
category of commutative partial fields to the category of
hyperrings.
\end{enumerate}
\end{rem}

\begin{rem}
Since any \semiring0 containing $\tT$ is a $\tT$-module, we have
forgetful functors from $\tT$-semiring systems to $\tT$-module
systems. We also have the tensor functor and Hom functors.
\end{rem}

\subsection{Valuations of semirings via systems}\label{val
7}$ $

We briefly mention one potential application of systems. 
In \cite{Ju2}, the first author introduced the notion of valuations
for semirings by implementing the idea of hyperrings, and this was
put in the context of systems in \cite[Definition 8.8]{Row16}.

\begin{defn}
Let $\mathbb{T}:=\mathbb{R} \cup \{-\infty\}$, where $\mathbb{R}$ is
the set of real numbers. The multiplication $\boxdot$ of
$\mathbb{T}$ is the usual addition of real numbers such that
$a\boxdot (-\infty)=(-\infty)$ for all $a \in \mathbb{T}$.
Hyperaddition is defined as follows:
\[
a\boxplus b =\left\{ \begin{array}{ll}
\max\{a,b\} & \textrm{if $a\neq b$}\\
\left[-\infty,a\right]& \textrm{if $a=b$},
\end{array} \right.
\]
\end{defn}

For a commutative ring $A$, a homomorphism from $A$ to $\mathbb{T}$
is what sometimes is called a ``semivaluation,'' (but in a different
context from which  we have used the prefix ``semi'') i.e., which
could have a non-trivial kernel. Inspired by this observation, in
\cite{Ju2}, the first author proposed the following definition to
study tropical curves by means of valuations; this is analogous to
the classical construction of abstract nonsingular curve via
discrete valuations.

\begin{defn}\label{def: valuation}
Let $S$ be an idempotent semiring and $\mathbb{T}$ be the tropical hyperfield. A valuation on $S$ is a function $\nu:S \to \mathbb{T}$ such that
\[
\nu(a\cdot b)=\nu(a)\boxdot \nu(b), \quad \nu(0_S)=-\infty, \quad \nu(a+b) \in \nu(a)\boxplus \nu(b), \quad \nu(S) \neq \{-\infty\}.
\]
\end{defn}


The implementation of systems, through the aforementioned functors,
allows one to reinterpret Definition \ref{def: valuation} as a
morphism in the category of systems. To be precise, let $S$ be a
semiring which is additively generated by a multiplicative submonoid
$M$. This gives rise to the $\tT$-system
$\textbf{e}(S,M)=(S,M,id,=)$ via the functor $\textbf{e}$. Also, for
the hyperfield $\mathbb{T}$, via the functor $\textbf{e}\circ
\textbf{a}$, we obtain a $\tT$-system, say $\mathcal{A}_\mathbb{T}$.
Then a semiring valuation on $S$ is simply a morphism
$\nu:(S,M,id,=) \to \mathcal{A}_\mathbb{T}$ of $\tT$-systems.

\subsection{Functors among module triples and systems}\label{func11}$
$

We conclude with some important functors needed to study module
triples and systems. Given a $\tT$-system $(\mathcal A,
\tT_{\mathcal A} , (-), \preceq),$ define $\mathcal
A$-$\mathfrak{Mod}$ to be the category of $(\mathcal A,
\tT_{\mathcal A} , (-), \preceq)$-module systems.
\begin{rem} \label{commatch3} The
\textbf{symmetrizing} functor injecting $\mathcal A$-$\mathfrak{Mod}$ into
$(\widehat{\mathcal A}, \tT_{\widehat{\mathcal A}} , (-), \preceq)$-$\mathfrak{Mod}$
is given by $f \mapsto (f,0),$ with the reverse direction  $(\widehat{\mathcal A}, \tT_{\widehat{\mathcal A}} , (-), \preceq)$-$\mathfrak{Mod}$
to $\mathcal A$-$\mathfrak{Mod}$
given by $(f_0, f_1) \mapsto f_0 (-)f_1.$ This functor respects the universal algebra approach since $\tT_{\mathcal M} \mapsto
\tT_{\widehat{\mathcal M}} .$

\end{rem}

Other important functors in this vein are the tensor functor and the
Hom functor.

\section{Appendix A: Interface between systems and tropical mathematics}\label{comp}

We relate our approach to tropical mathematics in view of systems to other approaches taken in tropical mathematics.

\subsection{Tropical versus supertropical}\label{supert1}$ $

\subsubsection{The ``standard'' tropical approach}$ $

Let $\mathbb{R}_{max}$ be the tropical semifield with the maximum convention. One often works in the polynomial semiring $\mathbb
R_{\operatorname{max}}[\Lambda]$, although  here we replace $\mathbb
R_{\operatorname{max}}$ by any  ordered semigroup $(\Gamma,\cdot)$,
with $\Gz : = \Gamma \cup \{ \zero \}$ where $\zero a = \zero$ for
all $a \in \Gamma$.   For $\bold i = (i_1, \dots, i_n)\in \mathbb
N^{(n)}$, we write $\Lambda ^\bold i $ for $ \la_1^{i_1}\cdots
\la_n^{i_n}.$ A tropical hypersurface of a tropical polynomial $f =
\sum _{\bold i } \a_{\bold i} \Lambda ^\bold i  \in \Gamma[\Lambda]$
is defined as the set of points in which two monomials take on the
same dominant value, which is the same thing as the supertropical
value of $f$ being a ghost.

\begin{defn}[{\cite[Definition~5.1.1]{GG}}] Given a   polynomial $f =
\sum \a_{\bold i}\Lambda ^\bold i \in \Gamma[\Lambda],$  define  $\supp( f )$ to be all
the tuples $\bold i =({i_1}\cdots  {i_n})$  for which the monomial
in $f$ has nonzero coefficient $\a_{\bold i}$, and for any such
monomial~$h,$ write $f_{\hat h}$ for the polynomial obtained from
deleting $h$ from the summation.

 The \textbf{bend relation} of $f$ 
 is a congruence relation on $\Gamma[\Lambda]$ which is generated by the following set $$\{f  \equiv_{\operatorname{bend}} f_{\hat
 h=  \a_{\bold i}\Lambda ^\bold i }:\
\bold i\in  \supp( f )\}.$$
\end{defn}

The point of this definition is that the tropical variety $V$ defined by a
tropical polynomial is defined by two monomials (not necessarily the
same throughout) taking equal dominant values at each point of $V$,
and then the bend relation reflects the equality of these values on
$V$, thence the relation.

\subsubsection{Tropicalization and tropical ideals}\label{trop1}$ $

In many cases, one relates tropical algebra to Puiseux series via
the following tropicalization map.

\begin{defn} For  any additive group $F$, one can define the additive group
$F\{\{t\}\} $ of Puiseux series (actually due to Newton) on the
variable $t$, which is the set of formal series of the form $ p =
\sum_{k = \ell}^{\infty} c_k t^{k/N}$ where $N \in \mathbb{N}$,
$\ell \in \mathbb{Z}$, and $c_k \in \mathcal K$.
\end{defn}

\begin{rem} If $F$ is an algebra, then $
F\{\{t\}\} $ is also an algebra under the usual convolution product.
\end{rem}

For $\Gamma = (\mathbb{Q},+)$, one has a canonical semigroup
homomorphism (the \textbf{Puiseux
    valuation}) as follows:
\begin{equation}
\textrm{val} : F\{\{t\}\} \setminus \{0\} \rightarrow \Gamma, \quad
p = \sum_{k =
    \ell}^{\infty} c_k t^{k/N} \mapsto \min_{c_k \neq 0}\{k/N\}.
\end{equation}

This induces the semigroup homomorphism
\begin{equation}
\tropa: F\{\{t\}\}[\Lambda] \to \Gamma[\Lambda], \quad p(\la_1
\cdots \la_n) : = \sum p_{\bold i}\la^{\bold i} \mapsto \sum
\val(p_{\bold i})\la^{\bold i}.
\end{equation}
The map $\tropa$ is called \textbf{tropicalization}.

\begin{defn}\cite{GG}
With the same notations as above, suppose $I$ is an ideal of
$F\{\{t\}\}[\Lambda].$ The bend congruence on $\{ \tropa(f): f \in I
\}$ is called the \textbf{tropicalization congruence} $\tropa(I).$
\end{defn}

 Suppose $F$
is a field. We can \textbf{normalize} a Puiseux series $p= \sum
p_{\bold i}\la_1^{i_1} \cdots \la_n^{i_n}$ at any given index $\bold
i \in \supp( p )$ by dividing through by $p_{\bold i}$; then the
normalized coefficient is $\one$. Given two Puiseux series $p= \sum
p_{\bold i}\la_1^{i_1} \cdots \la_n^{i_n}$, $q = \sum q_{\bold
i}\la_1 ^{i_1} \cdots \la_n^{i_n}$ having a common monomial
$\Lambda^{\bold i} = \la_1 ^{i_1} \cdots \la_n^{i_n}$ in their
support, one can normalize both and assume that $ p_{\bold i} =
q_{\bold i} = \one,$ and
 remove this
monomial from their difference $p-q$, i.e., the coefficient of
$\Lambda^{\bold i}$ in $\val(p-q)$ is $\zero \  ( = - \infty  ).$

Accordingly, a  \textbf{tropical ideal} of $\Gamma[\Lambda]$ is an
ideal $\mathcal I$ such that for any two polynomials $f= \sum
f_{\bold i}\la^{\bold i}$, $g= \sum g_{\bold i}\la^{\bold i} \in
\mathcal I$ having a common monomial $\Lambda^{\bold i}$   there is
$h =  \sum h_{\bold i}\la_1^{i_1} \cdots \la_n^{i_n}\in \mathcal I$
whose coefficient of $\Lambda^{\bold i}$ is $\zero,$ for which $$
h_{\bold i} \ge \min \{a_f f _{\bold i}, b_g g_{\bold i}\} $$ for
suitable $a_f,b_g \in F.$

For any tropical ideal $\mathcal I$, the sets of minimal indices of
supports constitutes the set of circuits of a matroid. This can be
formulated in terms of valuated matroids, defined in
\cite[Definition~1.1]{DW0} as follows:

  A
\textbf{valuated matroid} of \textbf{rank} $m$ is a pair $(E,v)$
where  $E$ is a set and $v: E^{m} \to \Gamma$   is a   map
satisfying the following properties:

\begin{enumerate}\eroman\item    There exist $e_1, \dots, e_m \in E$ with $v(e_1, \dots, e_m) \ne 0.$
 \item
 $v(e_1, \dots, e_m)= v(e_{\pi(1)}, \dots, e_{\pi(m)})$. for each $e_1, \dots, e_m \in E$ and every
permutation $\pi$. Furthermore,  $v(e_1, \dots, e_m)=\zero$ in case
some $e_i = e_j$.  \item For $(e_0, \dots, e_m,  e_2', \dots, e_m'
\in E$ there exists some $i$ with $1 \le i \le m$ and $$v(e_1,
\dots, e_m)v(e_0,  e_2', \dots, e_m')\le v(e_0, \dots,
e_{i-1},e_{i+1}, \dots, e_m)v(e_i,  e_2', \dots, e_m').$$
\end{enumerate}

This information is encapsulated in the following result.

\begin{theorem}
\cite[Theorem 1.1]{MacR} Let $K$ be a field with a valuation $\val : K \to \Gz$, and let
$Y$
be a  closed subvariety defined of  $({K^\times})^{n}$ defined by
an ideal $\mathcal I \triangleleft K[\la _1^{\pm 1}  , \dots,\la
_n^{\pm 1}  ].$ Then any of the following three objects determines
the others:
\begin{enumerate}\eroman
\item
The bend congruence $\Trop(\mathcal I)$ on the semiring
    $\mathcal S := \Gamma[\la _1^{\pm 1}  , \dots,\la _n^{\pm 1}  ]$ of
    tropical Laurent polynomials;
\item
The ideal $\tropa(\mathcal I)$
    in $  S$;
    \item
    The set of valuated matroids of the vector spaces
    $\mathcal I_h ^ d ,$ where $\mathcal I_h ^ d $ is the degree $d$
    part of  the homogenization of the tropical ideal $\mathcal I$.
\end{enumerate}
\end{theorem}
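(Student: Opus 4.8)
The plan is to prove that each of the three objects is interderivable with object (ii), so that all three carry the same information; in each case one direction is a ``classical-to-tropical'' construction and the other a tropical reconstruction. Throughout, write $\mathcal J := \{\tropa(f) : f \in \mathcal I\}$ for the tropical ideal of item (ii), a tropical ideal of $\mathcal S$ in the sense of \S\ref{comp}.

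Passing from (ii) to (i) is nearly a definition: the bend congruence of item (i) is the congruence on $\mathcal S$ generated by the bend relations $g \equiv_{\operatorname{bend}} g_{\hat h}$ as $g$ ranges over $\mathcal J$ and $h$ over the monomials of $g$, and since the bend relation of $g$ involves only $g$ and its single-term deletions it does not matter whether we generate using all of $\mathcal J$ or only a module-generating subset. For the converse, from (i) to (ii), the plan is to characterize $\mathcal J$ intrinsically inside the bend congruence $\tropa(\mathcal I)$ as
\[
\mathcal J \;=\; \{\, g \in \mathcal S : (g,\, g_{\hat h}) \in \tropa(\mathcal I) \text{ for every monomial } h \text{ of } g \,\},
\]
i.e.\ as the ``$\circ$-kernel'' of the congruence in the spirit of \S\ref{homim}. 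The inclusion $\subseteq$ is immediate; the nontrivial point is that the bend relations coming from $\mathcal J$ do not force a term-deletion equivalence for some $g \notin \mathcal J$. This is the content of the Giansiracusa--Giansiracusa description of the bend congruence as the coordinate congruence of the tropicalized scheme, and it is here that one must use that $\mathcal J$ is a \emph{tropical} ideal (closed under the matroidal elimination axiom), not merely a sub-$\mathcal S$-module.

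For the equivalence of (ii) and (iii) the plan is to argue one homogeneous degree at a time. Homogenize $\mathcal I$ and intersect with the degree-$d$ part to obtain a $K$-linear subspace $\mathcal I_h^d \subseteq K^{N_d}$, where $N_d$ is the number of degree-$d$ monomials; its valuated matroid (Plücker data $=$ valuations of maximal minors of a spanning matrix) is the object in (iii). By the Speyer--Sturmfels and Speyer--Katz description of tropicalized linear spaces, $\tropa(\mathcal I_h^d)$ is the tropical linear space (Bergman fan) of that valuated matroid, whose circuit/cocircuit data are read off from the tropically minimal-support elements of $\tropa(\mathcal I_h^d)$; this gives (iii) from (ii). Conversely, the Dress--Wenzel cryptomorphism recovers the tropical linear space $\tropa(\mathcal I_h^d)$ — hence the degree-$d$ slice of $\mathcal J$ — from the valuated matroid via the tropical Cramer relations on Plücker/circuit data. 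One then reassembles $\mathcal J$ from the family of its graded slices by dehomogenizing; the compatibility of the slices across degrees (e.g.\ that multiplication by $\la_i$ carries degree-$d$ data into degree-$(d{+}1)$ data) is automatic because all the $\mathcal I_h^d$ arise from the single ideal $\mathcal I$, but it must be invoked so that the per-degree reconstructions glue to the correct $\mathcal S$-module.

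The main obstacle — the technical heart of the theorem — is the per-degree equivalence (ii) $\Leftrightarrow$ (iii): that the tropicalization of a linear subspace of $K^N$ is exactly the tropical linear space of a valuated matroid, together with the cryptomorphic reconstruction of that tropical linear space from the matroid's circuit data. This is the Dress--Wenzel theory of valuated matroids (equivalently the Murota--Tamura and Speyer circuit axioms), substantial in its own right; by comparison the homogenization bookkeeping, the identification of the bend congruence with the $\circ$-kernel, and the cross-degree gluing are comparatively formal once that correspondence is available. A secondary subtlety is verifying that $\mathcal J$ is genuinely determined by its homogeneous slices, so that no information is lost in the homogenize/dehomogenize round trip; for Laurent ideals this uses that denominators may be cleared and that scaling by units of $\Gamma$ is already built into all three objects.
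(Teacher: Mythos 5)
The paper does not supply a proof of this statement: it is quoted verbatim from Maclagan--Rinc\'{o}n as \cite[Theorem 1.1]{MacR}, as background for the systemic treatment of tropicalization. The only new argument the paper provides in this vicinity is Proposition~\ref{epicspl} (the bend relation implies $\circ$-equivalence), which is then invoked --- together with the cited Maclagan--Rinc\'{o}n theorem --- to obtain the supertropical analogue stated immediately afterward. So there is no ``paper's proof'' to compare your sketch against; the comparison would have to be made against the source \cite{MacR}.

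Evaluating the sketch on its own terms: the skeleton (take (ii) as the pivot; pass from (ii) to (i) by generating the bend congruence; recover (ii) from (i) by characterizing $\tropa(\mathcal I)$ as the set of $g$ whose single-term deletions are all congruent to $g$; handle (ii)$\Leftrightarrow$(iii) degree by degree via the valuated-matroid correspondence) is a plausible reconstruction of the Maclagan--Rinc\'{o}n architecture, and you correctly flag where the tropical-ideal (matroidal elimination) axiom does essential work. However, as written it is a roadmap rather than an argument. The step from (i) back to (ii) asserts, but does not establish, that transitive and $\mathcal S$-linear closure of the generating bend relations produces no spurious elements of your proposed $\circ$-kernel --- this is precisely the hard lemma of \cite{MacR}, not a consequence of it. Likewise the equivalence (ii)$\Leftrightarrow$(iii) is handed off wholesale to the Dress--Wenzel cryptomorphism and the identification of tropicalized linear spaces with tropical linear spaces of valuated matroids; you acknowledge this carries the weight, but naming a theorem is not proving it. Since the paper itself treats the statement as a citation, deferring to \cite{MacR} is appropriate in context, but one should be explicit that the substantive content --- intrinsic recoverability of the tropical ideal from the bend congruence, and the valuated-matroid dictionary --- has been invoked rather than demonstrated.
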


\subsubsection{The supertropical approach}$ $

In supertropical mathematics the definitions run somewhat more
smoothly. A tropical variety $V$ was defined in terms of $\tT^\circ$ so for $f,g \in
\tT [\Lambda]$ we define the \textbf{$\circ$-equivalence} $f
\equiv_\circ g$ on $\tT [\Lambda]$ if and only if $f^\circ = g^\circ
$, i.e. $f(a)^\circ = g(a)^\circ$ for each $a \in \tT .$

 \begin{prop}\label{epicspl}  The bend relation implies   $\circ$-equivalence,
 in the sense that if $f \ \equiv_{\operatorname{bend}}\
 g$ then $f \equiv_\circ g$, for any polynomials in $f,g \in \tT [\Lambda]$.
 \end{prop}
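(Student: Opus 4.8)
The plan is to unwind both equivalence relations to the level of polynomial evaluations and then verify the implication pointwise. Since $\equiv_\circ$ is defined by $f\equiv_\circ g$ iff $f(a)^\circ = g(a)^\circ$ for all $a\in\tT$, and since $\equiv_{\operatorname{bend}}$ is the congruence \emph{generated} by the elementary bend relations $f\equiv_{\operatorname{bend}} f_{\hat h}$ (with $h = \a_{\bold i}\Lambda^{\bold i}$ a monomial of $f$), it suffices to check two things: first, that the generating relations satisfy the $\circ$-equivalence, and second, that $\circ$-equivalence is itself a congruence on $\tT[\Lambda]$ containing these generators, so that the generated congruence is contained in it.

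First I would record that $\equiv_\circ$ is a congruence: it is clearly an equivalence relation, and it respects addition and multiplication because $(f_1+f_2)^\circ = f_1^\circ + f_2^\circ$ and, using Lemma~\ref{circm2} (the map $a\mapsto a^\circ$ is a $\preceq_\circ$-morphism of $\tT$-semiring systems, with $(f_1 f_2)^\circ \preceq_\circ f_1^\circ f_2^\circ$), the product is controlled appropriately; more directly, $f_1\equiv_\circ f_1'$ and $f_2\equiv_\circ f_2'$ give $f_1(a)^\circ = f_1'(a)^\circ$ and $f_2(a)^\circ=f_2'(a)^\circ$ for all $a$, whence $(f_1+f_2)(a)^\circ = (f_1'+f_2')(a)^\circ$ and similarly for products evaluated at $a$. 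So once the generators lie in $\equiv_\circ$, the whole generated congruence does.

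The heart of the matter is the elementary step: showing $f\equiv_\circ f_{\hat h}$ when $h = \a_{\bold i}\Lambda^{\bold i}$ is a monomial appearing in $f$ but evaluated at a point $a$ where the tropical maximum of $f$ is \emph{not} achieved solely by $h$. Here I would evaluate at an arbitrary $a\in\tT$ and compare $f(a)^\circ$ with $f_{\hat h}(a)^\circ = (f(a)(-)h(a))^\circ$ — keeping in mind the supertropical/$(-)$-bipotent setting where $(-)$ is the identity, so $f_{\hat h}(a) = f(a)$ with the $h(a)$-term deleted from the sum. The bend relation is designed precisely so that at each point the dominant value is attained by at least two monomials; so deleting one monomial $h$ cannot change which value is dominant, and cannot change whether that value is ``ghost'' (i.e.\ in $\mathcal A^\circ$). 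Concretely, in the bipotent case $f(a) = \sum_j h_j(a)$ collapses to the maximal summand(s), and $f(a)^\circ$ records that maximal value together with a $^\circ$; removing $h$ either leaves a strictly larger or equal summand untouched (so $f_{\hat h}(a)^\circ = f(a)^\circ$), or — in the degenerate situation where $h$ alone was dominant — the bend relation's generating set is restricted to exactly the indices $\bold i\in\supp(f)$ for which this elementary relation is imposed, and the congruence closure handles the rest.

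The main obstacle I anticipate is being careful about \emph{which} monomials the bend relation actually identifies and making the pointwise argument airtight across all $a\in\tT$ simultaneously, since $\equiv_\circ$ demands equality of $f^\circ$ and $g^\circ$ as functions, not just at a single point. In particular I would need the elementary fact that if $c = c_1 + \dots + c_k$ in a $(-)$-bipotent triple and the maximum is attained at least twice, then $c\in\mathcal A^\circ$ and deleting any one non-unique-maximal $c_j$ preserves both the value and its membership in $\mathcal A^\circ$; conversely if one wants $f_{\hat h}(a)^\circ = f(a)^\circ$ in general one leans on $\zero\preceq c^\circ$ and the additive compatibility of $\preceq_\circ$ (Definition~\ref{precedeq07}(iv)) together with $h(a)^\circ$ being null. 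Assembling these observations, the proof reduces to: (1) $\equiv_\circ$ is a congruence; (2) each generator $f\equiv_{\operatorname{bend}} f_{\hat h}$ of the bend relation satisfies $f^\circ = f_{\hat h}^\circ$ as functions on $\tT$; (3) therefore $\equiv_{\operatorname{bend}}\ \subseteq\ \equiv_\circ$, which is the claim.
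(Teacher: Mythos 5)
Your approach matches the paper's: show each generating relation $f \equiv_{\operatorname{bend}} f_{\hat h}$ is a $\circ$-relation, then use that $\equiv_\circ$ is a congruence on $\tT[\Lambda]$. But the key step --- that $f^\circ = f_{\hat h}^\circ$ as functions on $\tT$ for \emph{every} monomial $h$ of $f$ --- fails, and you flag the failure yourself: at a point $a$ where $h$ is the unique maximizer of $f$, one has $f(a) = h(a)$ strictly $\nu$-larger than $f_{\hat h}(a)$, hence $f(a)^\circ \ne f_{\hat h}(a)^\circ$. The patch you then offer (that ``the congruence closure handles the rest'') is not an argument, and appealing to $\zero \preceq c^\circ$ gives only an inequality, not the needed equality. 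For a concrete counterexample take $f = \la_1 + \la_2$ and $h = \la_2$: then $f^\circ$ and $f_{\hat h}^\circ = \la_1^\circ$ disagree at every point where $\la_2$ dominates $\la_1$.

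It is worth noting that the paper's own one-sentence proof (``each removing or adding on a monomial which takes on the same value of some polynomial $f$'') asserts this same step without justification, and under the definition of the bend relation as stated (which permits deleting \emph{any} index in $\supp(f)$, not merely non-dominant ones) it meets the same difficulty. So your proposal faithfully reproduces the paper's reasoning, but the obstacle you describe as ``the main obstacle I anticipate'' is not merely anticipated --- it is present in both arguments, and closing it requires either restricting attention to points where some monomial other than $h$ also attains the maximum, or restricting the allowed bend moves to deletion of non-dominant monomials; neither your sketch nor the paper's sentence does this.
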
  \begin{proof} The bend relation is obtained from a
 sequence of steps, each removing or adding on a monomial which
 takes on the same value of some polynomial $f$. Thus the
 defining relations of the bend relation are all $\circ$ relations.
 Conversely, given a $\circ$ relation $f^\circ = g^\circ $, where
 $f = \sum f_i$ and $g = \sum g_j$ for monomials $f_i, g_j$,
 we have $$f\ \equiv_{\operatorname{bend}}\  g_1 + f \ \equiv_{\operatorname{bend}}\  g_1 + g_2 + f
 \ \equiv_{\operatorname{bend}}\  \dots \ \equiv_{\operatorname{bend}}\  g  + f \ \equiv_{\operatorname{bend}}\  g
 +  f_{\hat
 h} \ \equiv_{\operatorname{bend}}\  \dots \ \equiv_{\operatorname{bend}}\  g, $$
 implying
 $f \ \equiv_{\operatorname{bend}}\
 g$.
  \end{proof}

In supertropical algebra, given a   polynomial $f = \sum \a_{\bold
i}\Lambda ^\bold i ,$  define  $\vsupp( f )$ to be all the tuples
$\bold i =({i_1}\cdots  {i_n})$  for which the monomial in $f$ has
 coefficient $\a_{\bold i}\in \tT$.

 The supertropical version of tropical
ideal is that if $f,g \in \mathcal I$ and $\bold i \in \vsupp( f )
\cap \vsupp( g ),$ then, by normalizing, there are $a_f, b_g$ such
that  $\bold i \notin \vsupp(a_f f + b_g g)$. This is somewhat
stronger than the claim of the previous paragraph, since it
specifies the desired element.

Supertropical ``$d$-bases'' over a super-semifield are treated in
\cite{IKR}, where vectors are defined to be independent iff no
tangible linear combination is a ghost. If a tropical variety $V$ is defined as the set
 $\{v \in F^{(n)}: f_j(v) \in \nu(F),\ \forall j \in J\}$ for a set $\{ f_j : j \in J\}$ of homogeneous polynomials of  degree
 $m$, then taking $\mathcal I = \{ f: f(V) \in \nu(F)\}$ and  $\mathcal
 I_m$ to be its polynomials of degree $m$, one sees that the
$d$-bases of $I_m$ of cardinality $m$  comprise a matroid (whose
circuits are those polynomials of minimal support), by
\cite[Lemma~4.10]{IKR}. On the other hand, submodules of free
modules can fail to satisfy Steinitz' exchange property
(\cite[Examples~4.18,4.9]{IKR}), so there is room for considerable
further investigation.

``Supertropicalization'' then is the same tropicalization map as
$\tropa$, now taken to the standard supertropical semifield $\strop:
\Gamma \cup \tGz$ (where $\tT = \Gamma$). In view of
Proposition~7.5, the analogous proof of \cite[Theorem 1.1]{MacR}
yields the corresponding result:

\begin{thm} Let $K$ be a field with a valuation $\val : K \to \Gz$, and
let
 $Y$
be a  closed subvariety defined of  $({K^\times})^{n}$ defined by an
ideal $ I \triangleleft K[\la _1^{\pm 1}  , \dots,\la _n^{\pm 1} ].$
Then any of the following  objects determines the others:
\begin{enumerate}\eroman\item The congruence given by $\circ$-equivalence on $\mathcal
I = \strop(I)$ in the supertropical \semiring0 $\mathcal S :=
\Gamma[\la _1^{\pm 1} , \dots,\la _n^{\pm 1}  ]$ of tropical Laurent
polynomials; \item The ideal $\tropa(\mathcal I)$ in $\mathcal  S$;
\item  The set of valuated matroids of the vector spaces $\mathcal
I_h ^ d ,$ where $\mathcal I_h ^ d $ is the degree $d$ part of  the
homogenization of the tropical ideal $\mathcal I$.
 \end{enumerate}
 \end{thm}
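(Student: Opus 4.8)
The plan is to mirror the proof of the preceding theorem, \cite[Theorem 1.1]{MacR}, transporting each step to the supertropical setting while invoking Proposition~\ref{epicspl} to relate the bend congruence to $\circ$-equivalence. The three objects in question are (i) the congruence given by $\circ$-equivalence on $\mathcal I = \strop(I)$ inside $\mathcal S = \Gamma[\la_1^{\pm 1}, \dots, \la_n^{\pm 1}]$, (ii) the ideal $\tropa(\mathcal I)$ in $\mathcal S$, and (iii) the collection of valuated matroids attached to the homogeneous degree-$d$ pieces $\mathcal I_h^d$. First I would recall that by Proposition~\ref{epicspl} the bend relation and $\circ$-equivalence agree on $\mathcal S$ in the sense that they generate the same congruence: the bend-defining relations are $\circ$-relations, and conversely any $\circ$-relation $f^\circ = g^\circ$ is obtained by a chain of bend moves, as shown in the proof of Proposition~\ref{epicspl}. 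Hence the $\circ$-equivalence congruence on $\strop(I)$ coincides with the bend congruence $\Trop(\strop(I))$, and the implications $(\text{i}) \Leftrightarrow (\text{ii})$ follow verbatim from the corresponding implications in \cite[Theorem 1.1]{MacR}, since that argument only uses the combinatorics of supports and the normalization of coefficients, both of which are unaffected by whether one works with $\mathbb B$-coefficients or with the layered supertropical semiring $\Gamma \cup \tGz$.

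Next I would handle the passage between (ii)/(i) and (iii). The key input is the matroidal structure: for a tropical ideal $\mathcal I$, the minimal supports of its elements of each fixed degree form the circuits of a valuated matroid, as in \cite[Definition~1.1]{DW0} and the discussion following it; the supertropical version of this is the content of \cite[Lemma~4.10]{IKR}, where one uses $\vsupp$ in place of $\supp$ and exploits that $d$-independence is detected by tangible linear combinations not being ghost. So I would argue that the $\circ$-equivalence congruence on $\mathcal I$ determines, degree by degree and after homogenization, the vanishing pattern of supports in $\mathcal I_h^d$, which by \cite[Lemma~4.10]{IKR} is exactly the data of the valuated matroid of $\mathcal I_h^d$. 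Conversely, the valuated matroids recover the supports of all elements of $\mathcal I$, hence $\tropa(\mathcal I)$ as an ideal, hence the congruence. The normalization step — removing a common monomial $\Lambda^{\bold i}$ from a difference $p - q$ of normalized Puiseux series, so that the coefficient of $\Lambda^{\bold i}$ in $\val(p-q)$ becomes $\zero = -\infty$ — is precisely where the supertropical version specifies the desired element more sharply than the bend version, as noted in \S\ref{trop1}; I would point to this to justify that the implication back from matroids to the ideal goes through.

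The main obstacle I anticipate is verifying that the supertropical semiring $\Gamma[\la_1^{\pm 1}, \dots, \la_n^{\pm 1}]$ genuinely supports the same matroid-extraction argument as the idempotent semiring $\mathbb B[\la_1^{\pm 1}, \dots, \la_n^{\pm 1}]$ used in \cite{MacR}: one must check that the ghost layer does not interfere, i.e., that a polynomial lies in the $\circ$-equivalence class determined by its tangible support in the way \cite[Lemma~4.10]{IKR} requires, and that the valuated-matroid exchange axiom (iii) of \cite[Definition~1.1]{DW0} is the correct transcription. Here I would lean on the transfer principle of \cite{AGG1} and on the fact, already emphasized in the excerpt, that supertropical definitions "run somewhat more smoothly" than their bend-relation counterparts; the warning in \cite[Examples~4.18,4.9]{IKR} that submodules of free modules can fail the Steinitz exchange property is not an issue here because we restrict to the matroid of minimal-support elements in a fixed degree. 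Modulo these checks, the proof is, as stated in the excerpt, "the analogous proof of \cite[Theorem 1.1]{MacR}."
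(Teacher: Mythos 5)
Your proposal matches the paper's approach exactly: the paper offers no independent proof of this theorem, stating only that ``in view of Proposition~\ref{epicspl}, the analogous proof of \cite[Theorem 1.1]{MacR} yields the corresponding result.'' You have simply unpacked that one-line reduction — identifying $\circ$-equivalence with the bend congruence via Proposition~\ref{epicspl}, transporting the Maclagan--Rinc\'on argument for (i)$\leftrightarrow$(ii), and invoking the supertropical matroid machinery of \cite[Lemma~4.10]{IKR} for (iii) — which is precisely the intended argument, and your flagged concern about the ghost layer is a reasonable thing to verify in a fully detailed version.
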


\subsubsection{The systemic approach}$ $

The supertropical approach can be generalized directly to the
systemic approach, which also includes hyperfields and fuzzy rings.
We assume $(\mathcal A, \tT, (-), \preceq)$ is a system.

\begin{defn}\label{sys1}
The \textbf{$\circ$-equivalence} on $\Fun (S, \mathcal A)$ is
defined by,  $f \equiv_\circ g$ if and only if $f^\circ = g^\circ $,
i.e. $f(s)(b)^\circ = g(s)(b)^\circ$ for each $s \in S$ and $b \in
\mathcal A.$
\end{defn}

This matches the bend congruence.

\begin{defn}\label{sys2}
Given  $f \in \Fun (S, \mathcal A)$ define $\circ$-$\supp( f )= \{ s
\in S: f(s) \in \tT\}$.
\end{defn}

 The systemic version of tropical
ideal is that if $f,g \in \mathcal I$ and $s \in \circ$-$\supp( f
)\, \cap\, \circ$-$\supp( g),$ then there are $a_f, b_g \in \tT$
such that
  $s \notin \circ$-$\supp (a_f  f (-) b_g g)$.

Now one can view tropicalization as a functor as in \cite[\S
10]{Row16}, and define the appropriate valuated matroid. Then one
can address the recent work on matroids and valuated matroids, and
formulate them over systems. Presumably, as in \cite{AGR},  in the
presence of various assumptions, one might be able to carry out the
proofs of these assertions, but we have not yet had the opportunity
to carry out this program.

\section{Appendix B: The categorical approach}\label{catconvol}$ $

In this appendix we present the ideas more categorically, with an
emphasis on universal algebra.

\subsection{Categorical aspects of systemic theory}\label{abstcat}$ $

One can apply categorical concepts to better understand triples and
$\tT$-systems and their categories.

\subsubsection{Categories with a negation functor and categorical
triples}$ $

We introduce another categorical notion, to compensate  for lack of
negatives.

\begin{defn} Let $\mathcal{C}$ be a category. A \textbf{negation functor} is an endofunctor   $(-): \mathcal C \to \mathcal
C $ satisfying $(-)\mathbf A = \mathbf A$ for each object $\mathbf
A,$ and, for all morphisms $f,g$:
\begin{enumerate}\label{negpro}\eroman
   \item  $(-)((-)f)= f$.
    \item $(-)(fg) = ((-)f)g = f((-)g)$, i.e., any composite of morphisms
    (if defined) commutes with $(-)$.
   \end{enumerate}
\end{defn}

We write $f(-)g$ for $f + ((-)g),$ and $f^\circ$ for $f(-)f.$

\begin{rem} In each situation the negation map gives rise to a
negation functor in the category arising from universal algebra,
defining   $(-)f$ for any morphism $f$ to be given by $((-)f)(a)
=(-)(f(a))$. The identity functor obviously is a negation functor,
since these conditions become tautological. But
 categories in general may fail to
have a natural non-identity negation functor. For example, a
nontrivial negation functor for the usual category {\bf  Ring} might
be expected to contain ``negated homomorphisms'' $-f$ where $(-f)(a)
:= f(-a).$ Note then that $(-f)(a_1 a_2) = -(-f)(a_1)(-f)(a_2),$ so
$-f$ is not a homomorphism unless $f = -f.$ In such a situation  we
must expand the set of morphisms to contain ``negated
homomorphisms.'' Also $(-f)\cdot (-g) = f\cdot g,$ where $\cdot$
denotes pointwise multiplication.
\end{rem}

%
%
%
%

\subsubsection{$\preceq$-morphisms in the context of universal algebra}\label{surpre202}$ $

One of our main ideas   is to bring the surpassing relation
$\preceq$ into the picture (although it is not an identity), and in
the universal algebra context we require $\preceq$ to be
 defined on each $\mathcal A_j $.

 Let
$\AA = (\mathcal A _1, \dots \mathcal A_m)$ and $\BA= (\mathcal A'
_1, \dots \mathcal A'_m)$  be  carriers (of $\mathcal S$) with
surpassing POs $\preceq$ and $\preceq'$, respectively. A
$\preceq$-\textbf{morphism} $f: \AA \to \BA$ is a set of maps $f_j:
\mathcal A_j\to \mathcal A_j'$, $1 \le j \le m,$ satisfying the
properties for every operator $\omega$  and all $b_i,  c_i  \in
\mathcal A _{j_i}, $:
\begin{enumerate}\eroman \item $f(\omega (b_{1 }, \dots, b_{m }))\preceq ' \omega
(f_{j_1}(b_{1 }), \dots, f_{j_m}(b_{m })),$

\item  If $b_{i} \preceq c_{i}$, then
$$\omega (f_{j_1}(b_{1}), \dots,f_{j_m}(b_{m}))\preceq '
\omega (f_{j_1}(c_{1}), \dots, f_{j_m}(c_{m})).$$
\item  $f_j(\zero) = \zero$ whenever $\zero \in \mathcal A_j.$ \end{enumerate}

 We call
$f$ an $\omega$-\textbf{homomorphism} when equality holds in (i).
 A \textbf{homomorphism} is an $\omega$- homomorphism for each
operator $\omega$. Any  homomorphism is a $\preceq$-morphism when we
take $\preceq$ to be the identity map. This is a delicate issue,
since although $\preceq$-morphisms play an important structural
role, as indicated in \cite{JMR}, homomorphisms fit in better with
general monoidal category theory, as we shall see.
    Thus, our default terminology is according to the standard
    universal algebra version.

\begin{defn}\label{surprel1}
An \textbf{abstract surpassing relation} on a system $\mathcal A=
(\mathcal A, \tT, (-), \preceq),$ is a relation $\preceq$ satisfying
the following properties: for $a,b\in \mathcal A$,  viewed as a
carrier in universal algebra:
\begin{itemize}
 \eroman
 \item $\zero  \preceq a$.
  \item If $a_i \preceq b_i$ for $1 \le i \le m$ then  $ \omega_{m,j}(a_{1,j}, \dots,
a_{m,j}) \preceq \omega_{m,j}(b_{1,j}, \dots, b_{m,j}) .$
  \end{itemize}
An \textbf{abstract surpassing PO} is an abstract surpassing
  relation that is a PO (partial order).
 \end{defn}

In particular, if $a \preceq \zero$ for an abstract surpassing PO,
then $a = \zero.$ Surpassing relations can play a role in  modifying
universal algebra. This can be formulated categorically, but we
state it for systems.

\begin{defn}\label{surpa} The \textbf{surpassing relation on morphisms}
$\preceq$ for categories of   systems is defined by putting
$f\preceq g$ if $f(b) \preceq g(b)$ for each $b \in \mathcal A$.
\end{defn}

\begin{example} $f\preceq_\circ g$ when $g = f+
h^\circ$ for some morphism $h$, but there could be other instances
where we cannot obtain $h$ from the values of $f$ and
$g$.\end{example}

\subsection{Functor categories}\label{functcat}$ $


\begin{defn}\label{Func19} Let $S$ be a small category, i.e.,
 $\textrm{Obj}(S)$ and $\Hom(S)$ are sets. Let $\mathcal C$ be a category. We  define
$\mathcal C^S$ to be the category  whose objects are the sets of
functors  from $S$ to $\mathcal C$ and morphisms are natural
transformations.
\end{defn}



Let $S$ be a small category.  For any ground  $\tT$-system $\mathcal
A= (\mathcal A, \tT, (-), \preceq),$ considered as a carrier,
 we
   view  the carrier in the  functor
category $\mathcal A^S$ ($\mathcal A$ viewed as a small category) as
the carrier
  $\{\mathcal A_1^S, \mathcal A_2^S, \dots, \mathcal
A_t^S \}$, where $\mathcal A_i^S$ denotes the morphisms from $S$ to
$\mathcal A_i$ and, given an operator $\omega_{m,j}: \mathcal
A_{i_{j,1}} \times \dots \times \mathcal A_{i_{j,m}} \to \mathcal
A_{i_{j}}$, we define the operator
 $\tilde \omega : \mathcal
A_{i_{j,1}} ^S\times \dots \times  \mathcal A_{i_{j,m}}^S \to
\mathcal A_{i_{m,j}}^S$ ``componentwise,''  by
$$\tilde \omega  (f_1, \dots, f_t)(s) = \omega (f_1(s), \dots
f_t(s)), \quad \forall s\in S .$$

Likewise for $\mathcal A^{(S)}$. Universal relations clearly pass
from $\mathcal A$ to $\mathcal A^S$ and $\mathcal A^{(S)}$, verified
componentwise. For convenience, we assume that the signature
includes the operation $+$ together with the distinguished zero
element $\zero$.

%


\subsubsection{N-categories}$ $

Since
 we lack negatives in semirings and their modules, $\Hom(A,B)$  is not a group  under addition, but rather a
semigroup, and the zero morphism loses its special role, to be
supplanted by a more general notion. Grandis in
\cite{grandis2013homological} generalizes the usual categorical
definitions to ``$N$-categories.''

\begin{defn}
Let $\mathcal{C}$ be a category.
\begin{enumerate} \eroman
\item
A \textbf{left absorbing set of morphisms} of $\mathcal C$ is a
collection of sets of morphisms $I$ such that if $f$ belongs to $I$,
then any composite $gf$ (if defined) belongs to $I$.
\item
A   \textbf{right absorbing set of morphisms} of  $\mathcal C$ is a
collection of sets of morphisms $I$ such that if $f$ belongs to $I$,
then any composite $fg$ (if defined) belongs to $I$.
\item
An \textbf{absorbing set of morphisms} is a left and right absorbing
set of morphisms. (\cite[\S~1.3.1]{grandis2013homological} calls
this an ``ideal'' but we prefer to reserve this terminology for
semirings.)
\end{enumerate}
\end{defn}

For any given absorbing set $N$ of morphisms  of a category
$\mathcal{C}$, one can associate the set $O(N)$ of \textbf{null
objects}
 as follows:
\[
O(N):=\{A \in \textrm{Obj}(\mathcal{C})\mid 1_A \in N\}.
\]
Conversely, we  can fix a class $O$ of null objects in
$\mathcal{C}$, and can associate the absorbing   morphisms $N(O)$ as
follows:
\[
N(O):=\{f \in \Hom(\mathcal{C}) \mid \textrm{ $f$ factors through
some object in $O$}\}.
\]
Then one clearly has the following:
\[
O \subseteq ON(O), \quad NO(N) \subseteq N.
\]
For details, we refer the reader to
\cite[\S1.3]{grandis2013homological}.

 \begin{defn}
 Let $\mathcal{C}$ be a category.
An absorbing set $N$ of $\mathcal{C}$ is \textbf{closed} if $N=N(O)$
for some set of objects $O$.
\end{defn}

The null  morphisms $\Null$ (depending on a fixed class of null
objects) are an example of an absorbing set of morphisms; the null
morphisms in $\Hom (A,B)$ are designated as $\Null_{A,B},$ which
will take the role of $ \{ 0 \}.$ Such a category with a designated
class of null objects and of null morphisms is called a
\textbf{closed} N-\textbf{category}; we delete the word ``closed''
for brevity.

\begin{defn}\cite[\S~1.3.1]{grandis2013homological}\label{cake}
Let $\mathcal{C}$ be a closed N-category with a fixed class of null
objects $O$ and the corresponding null morphisms $N$.
\begin{enumerate}\eroman
\item
The \textbf{kernel with respect to $N$} of a morphism $f:A\to B$,
denoted by $\ker f$, is a monic which satisfies the universal
property of a categorical kernel with respect  to  $N$, i.e.,
\begin{enumerate}
\item
$f (\ker f)$ is null.
\item
If $fg$ is null then $g$ uniquely factors through $\ker f$ in the
sense that $g = (\ker f) h$ for some $h$.
\end{enumerate}
\item
The \textbf{cokernel with respect to $N$} of a morphism $f:A\to B$,
denoted by $\coker f$, is defined dually, i.e., $\coker f$ is an
epic satisfying
\begin{enumerate}
\item
$  (\coker f) f$ is null.
\item
If $gf$ is null then $g$ uniquely factors through $\coker f$ in the
sense that $g = h (\coker f) $ for some $h$.
\end{enumerate}
\item
Products and coproducts (direct sums) of morphisms also are  defined
in the usual way, cf.~\cite[Definition~1.2.1(A2)]{EGNO}.
\end{enumerate}
\end{defn}

%
%
%
%

%

\subsubsection{$\tT$-linear categories with a negation functor}\label{semiadd}$ $

Since most of this paper involves $\tT$-modules over semirings, let
us pause
to see in what direction we want to proceed.  

\begin{defn}
Let $\mathcal C$ be a category.
\begin{enumerate} \eroman
\item
$\mathcal C$ is $\tT$-\textbf{linear} over a monoid $\tT$ if it
satisfies the following two properties:
\begin{enumerate}
\item
Composition is bi-additive, i.e.,
$$(g+h)f = gf+ hf, \qquad k(g+h) = kg + kh$$
for all morphisms $f: \AA\to \BA,$ $g,h:\BA \to \CA$, and $k: \CA\to
\DA.$ In particular, $\Mor (\AA,\AA)$ is a \semiring0, where
multiplication is given by composition.
\item
$\tT$ acts naturally on $\Mor(A,B)$ in the sense that the action
commutes with morphisms. (In practice, the objects will be
$\tT$-modules, and the action will be by left multiplication.)
\end{enumerate}

\item
A  $\tT$-linear N-category $\mathcal C$ with categorical sums is
called \textbf{semi-additive}.
\end{enumerate}

\end{defn}

 \subsubsection{Systemic categories}$ $

\begin{defn} $ $
\begin{enumerate} \eroman
\item
A $\tT$-\textbf{linear  category with negation} is a $\tT$-linear
category with a negation functor~$(-)$.
\item
A \textbf{semi-additive} category \textbf{with negation} is a
semi-additive  category  with a negation functor.
\end{enumerate}
\end{defn}

\begin{example}\label{cattrp}$ $ Any additive category (in the classical sense) is  $\mathbb Z$-linear and
 semi-additive with negation.
\end{example}

\begin{prop} For any $\tT$-linear  category with negation, the ``quasi-zero'' morphisms (of the form
$f^\circ:=f+((-)f)$) comprise an absorbing set.\end{prop}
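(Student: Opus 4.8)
The claim is that in any $\tT$-linear category with negation, the morphisms of the form $f^\circ = f + ((-)f)$ form an absorbing set. Recall from the definition of an absorbing set that I must show this collection of morphisms is closed under composition on both sides: if $f^\circ$ is such a morphism and $g$, $h$ are any composable morphisms, then both $g f^\circ$ and $f^\circ h$ (whenever defined) are again of the form $(\,\cdot\,)^\circ$.

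The plan is to use two ingredients only: bi-additivity of composition (part of $\tT$-linearity) and the defining property of the negation functor that composition commutes with $(-)$, i.e. $(-)(gk) = ((-)g)k = g((-)k)$. First I would handle left composition: given $f^\circ = f + ((-)f)$ with $f : \AA \to \BA$ and $g : \BA \to \CA$, bi-additivity gives
\[
g f^\circ = g\big(f + ((-)f)\big) = gf + g((-)f).
\]
By the negation functor axiom, $g((-)f) = (-)(gf)$, so $g f^\circ = gf + ((-)(gf)) = (gf)^\circ$, which is again a quasi-zero morphism. Right composition is symmetric: for $h : \mathbf D \to \AA$,
\[
f^\circ h = \big(f + ((-)f)\big) h = fh + ((-)f)h = fh + ((-)(fh)) = (fh)^\circ,
\]
using bi-additivity again and then $((-)f)h = (-)(fh)$. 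Hence the set of quasi-zero morphisms is both left and right absorbing, i.e. an absorbing set.

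I do not expect any serious obstacle here; the statement is essentially a formal unwinding of the two defining properties, and the only thing to be careful about is invoking bi-additivity in the correct one-sided form in each case and matching it with the correct one-sided form of the negation-functor compatibility $(-)(gk) = ((-)g)k = g((-)k)$. One minor point worth a sentence in the writeup: the set in question is nonempty and well-defined as a ``collection of sets of morphisms'' in Grandis's sense because for each pair of objects $(\AA,\BA)$ we take $\{f^\circ : f \in \Mor(\AA,\BA)\} \subseteq \Mor(\AA,\BA)$, and the computations above show a composite of such a morphism with an arbitrary morphism lands in the corresponding hom-set's quasi-zero part. No additional hypotheses (such as existence of a zero object or of categorical sums) are needed for this particular assertion.
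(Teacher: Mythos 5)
Your proof is correct and is essentially the same as the paper's: the paper also just records the two one-line identities $(f(-)f)g = fg(-)fg$ and $g(f(-)f) = gf(-)gf$, which are exactly your computations $f^\circ h = (fh)^\circ$ and $g f^\circ = (gf)^\circ$ via bi-additivity and compatibility of $(-)$ with composition. You merely spell out the steps more explicitly.
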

\begin{proof}
One can easily observe the following: $(f(-)f)g = fg (-) fg$ and
$g(f(-)f)  = gf (-)  gf$ and this shows the desired result.
\end{proof}

\begin{example}
Let $\mathcal{C}$ be a category with a negation functor. We can
impose an N-category structure on $\mathcal{C}$ by defining the
absorbing set $N$ of null morphisms to be the morphisms of the form
$f^\circ$; this may or may not be a closed $N$-category. On the
other hand, later in \S\ref{surpre3}, we will introduce a closed
N-category structure by means of elements of the form $a^\circ.$
\end{example}

Expressing unique negation categorically enables us both to relate
to the work \cite[\S2]{Ju} of the first author on hyperrings, and
also appeal to the categorical literature.

\begin{defn}
A \textbf{semi-abelian category  (resp.~with negation)} is a
semi-additive category (resp.~with negation) $\mathcal C$ satisfying
the following extra property:

\medskip

Let $N$ be the set of null morphisms of $\mathcal C$.
 Every morphism of $\mathcal C$ can be written as the
composite of a cokernel with respect to $N$ and a kernel with
respect $N$.

\medskip

(This property is called ``semiexact'' in
 \cite[\S~1.3.3]{grandis2013homological}.)
\end{defn}

One can define the monoidal property in terms of the adjoint
isomorphism, cf.~ \cite{Mac2}. In short, our discussion also fits
into the well-known theory  of ``rigid'' monoidal categories as
described in \cite{EGNO}, and makes the  system category amenable to
\cite{ToV}.

\begin{prop}\label{kercok1}
The category of module systems $\mathcal A $-module triples  is a
monoidal semi-abelian category,  with respect to negated
$\tT$-tensor products.
\end{prop}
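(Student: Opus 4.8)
The plan is to assemble the required structure from pieces already in place, taking for the morphisms of the category $\mathcal A$-$\mathfrak{Mod}$ of $\mathcal A$-module systems the \emph{homomorphisms} of module systems; this choice is forced, since by Example~\ref{nonmonoidal} the $\preceq$-morphisms do not yield a functorial tensor product.

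First I would check that $\mathcal A$-$\mathfrak{Mod}$ is a $\tT$-linear category with negation. Composition of homomorphisms is bi-additive by distributivity, each $\Hom(\mathcal M,\mathcal N)$ is an additive commutative semigroup and each $\Hom(\mathcal M,\mathcal M)$ a \semiring0 under composition by Proposition~\ref{endo}, and $\tT$ acts on $\Hom$ through the left-multiplication maps $\ell_a$ of \S\ref{sysrep}, commuting with morphisms; categorical coproducts exist and are module triples by Proposition~\ref{cop1}. The module negation maps induce elementwise an endofunctor $(-)$ with $((-)f)(x)=(-)(f(x))$, satisfying the axioms of Definition~\ref{negpro} because $(-)(fg)=((-)f)g=f((-)g)$. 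Designating as null objects the module systems $\mathcal M$ with $\mathcal M=\mathcal M_{\Null}$ and as null morphisms the $f$ with $f(\tT_{\mathcal M})\subseteq\mathcal N_{\Null}$ (equivalently, those factoring through a null object) turns $\mathcal A$-$\mathfrak{Mod}$ into a closed N-category with categorical sums, hence a semi-additive category with negation.

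For the semi-abelian property I would invoke \S\ref{surpre31}: given a homomorphism $f : \mathcal M\to\mathcal N$, the canonical morphism $\mathcal M\to\mathcal M/\TkerC(f)$ is the cokernel with respect to the null morphisms (using Lemma~\ref{monep} and Lemma~\ref{kercok10}), and the induced $\Null$-monic $\bar f : \mathcal M/\TkerC(f)\to\mathcal N$ of Lemma~\ref{kercok0} is a kernel with respect to the null morphisms; Lemma~\ref{kercok} then writes $f$ as the composite of this cokernel with this kernel, which is precisely the semiexactness condition. Together with the previous paragraph this makes $\mathcal A$-$\mathfrak{Mod}$ a semi-abelian category with negation.

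It remains to supply the monoidal structure and then to flag the one delicate point. The negated $\tT$-tensor product $\mathcal M_1\otimes\mathcal M_2$ of Definition~\ref{tens1} is a bifunctor on homomorphisms by Proposition~\ref{bil37}, bi-additive in each variable by construction; the universal property that bilinear maps factor through $\otimes$ (Remark~\ref{bil2}) furnishes the associativity constraint $(\mathcal M_1\otimes\mathcal M_2)\otimes\mathcal M_3\cong\mathcal M_1\otimes(\mathcal M_2\otimes\mathcal M_3)$ and the unit constraints with unit object $\mathcal A$ viewed as a module over itself, these being compatible with the extra relation $((-)x)\otimes y=x\otimes((-)y)$ since both sides correspond to $(-)(x\cdot y)$ under the comparison maps. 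The pentagon and triangle coherence identities hold by the same verifications as for modules over a semiring, negation playing no role, and the adjoint isomorphism $\Hom(\mathcal M_1\otimes\mathcal M_2,\mathcal M_3)\cong\Hom(\mathcal M_1,\Hom(\mathcal M_2,\mathcal M_3))$ of Remark~\ref{adjmap} records that the monoidal structure is moreover closed; assembling all of this yields the assertion. The step I expect to be the main obstacle is the semiexact factorization: one must verify that the congruence kernel and the congruence cokernel of \S\ref{surpre31} really satisfy the categorical universal properties \emph{relative to the designated class of null morphisms} — in particular that $\mathcal M\to\mathcal M/\TkerC(f)$ is a categorical cokernel and $\bar f$ a categorical kernel, not merely an epic and a $\Null$-monic — and that this is reconciled with the module-theoretic kernel $\Tker f$, so that Lemma~\ref{kercok} genuinely exhibits $f$ in the ``cokernel then kernel'' form required by the definition of a semi-abelian category.
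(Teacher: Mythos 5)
Your proposal is correct and matches the paper's approach: the paper's own proof is the one-line "Immediate from Lemma~\ref{kercok} and Remark~\ref{adjmap}," and you have simply unpacked these — using homomorphisms (forced by Example~\ref{nonmonoidal}), $\tT$-linearity with negation, the null-object/null-morphism N-category structure, Lemma~\ref{kercok} for the semiexact factorization, and Proposition~\ref{bil37} with Remark~\ref{adjmap} for the monoidal structure. The delicate point you flag at the end — that Lemmas~\ref{kercok0}, \ref{kercok10}, \ref{monep} establish $\Null$-monic/epic properties but do not explicitly verify the universal properties of Definition~\ref{cake} — is a genuine gap that the paper's terse proof also leaves unaddressed, so your caution is well placed rather than a defect in your argument relative to the source.
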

\begin{proof} Immediate from Lemma~\ref{kercok} and Remark~\ref{adjmap}.
\end{proof}

\begin{rem} The following properties pass from the category $\mathcal C$ to $\mathcal C ^S,$
seen componentwise:

 $\tT$-linear, semi-additive,  semi-additive with negation,
 semi-abelian.\end{rem}

There is a very well-developed theory of abelian categories, which
one would like to utilize by generalizing to  semi-abelian
categories. This has already been done for a large part in
\cite{grandis2013homological}, so one main task should be to arrange
for the category of systems to fit into Grandis' hypotheses. 

\end{document}